\documentclass{amsart}
\usepackage[english]{babel}
\usepackage[utf8]{inputenc}
\usepackage[T1]{fontenc}    
\usepackage{hyperref}
\usepackage{amsthm}
\usepackage{amsmath}
\usepackage{amssymb}
\usepackage{mathrsfs}
\usepackage{graphicx}
\usepackage{subcaption}
\usepackage{stmaryrd}
\usepackage{dsfont}
\usepackage{yfonts}
\usepackage[margin = 2.5cm]{geometry}
\usepackage{caption}
\usepackage{ulem}
\usepackage{caption}
\usepackage{blkarray}
\usepackage{amsmath,amsfonts}
\usepackage[all]{xy}
\usepackage{tikz}
\usepackage{enumerate}
\usepackage[Algorithme]{algorithm}
\usepackage{tikz-cd}
\usepackage{varioref}
\usepackage{nicematrix}
\usepackage{bm}
\usepackage[toc,page]{appendix}
\usepackage{calligra}

\newcommand{\Tr}{\mathrm{Tr\ }}

\numberwithin{equation}{subsection}

\newtheorem{theorem}{Theorem}[section]
\newtheorem{definition}{Definition}[section]
\newtheorem{prop}{Proposition}[section]
\newtheorem{corollaire}{Corollary}[section]
\newtheorem{lemma}{Lemma}[section]

\newtheorem*{assumption}{Assumption}

\newtheorem{mainthm}{Theorem}

\newtheorem{mainconj}{Conjecture}

\labelformat{mainthm}{Theorem~#1}
\labelformat{maincor}{Corollary~#1}
\labelformat{mainconj}{Conjecture~#1}

\labelformat{lemma}{Lemma~#1}
\labelformat{theorem}{Theorem~#1}
\labelformat{prop}{Proposition~#1}
\labelformat{corollaire}{Corollary~#1}

\newcommand{\N}{\mathbb{N}}
\newcommand{\Z}{\mathbb{Z}}
\newcommand{\R}{\mathbb{R}}
\newcommand{\Q}{\mathbb{Q}}

\newcommand{\C}{\mathbb{C}}
\newcommand{\A}{\mathbb{A}}
\def\FF{\mathbb{F}}
\def\TT{\mathrm{\mathbf{T}}}
\def\SS{\mathrm{\mathbf{S}}}

\newcommand{\T}{\mathcal{T}}

\def\W{\mathcal{W}}
\def\O{\mathcal{O}}

\def\L{\mathcal{L}}
\def\H{\mathcal{H}}
\def\P{\mathcal{P}}
\def\K{\mathcal{K}}

\newcommand{\toeq}{\buildrel\sim\over\rightarrow}
\newcommand{\bs}{\backslash}

\newcommand{\inj}{\hookrightarrow}

\renewcommand{\a}{\alpha}
\renewcommand{\b}{\beta}
\renewcommand{\d}{\delta}
\newcommand{\g}{\mathfrak{g}}
\newcommand{\e}{\varepsilon}
\newcommand{\w}{\omega}
\newcommand{\ph}{\varphi}
\newcommand{\s}{\sigma}
\newcommand{\vs}{\varsigma}

\def\Om{\Omega}
\def\Si{\Sigma}
\def\t{\tau}
\def\l{\lambda}
\def\La{\Lambda}
\def\i{\iota}

\def\G{\Gamma}

\def\k{\mathfrak{k}}
\def\p{\mathfrak{p}}

\def\n{\mathfrak{n}}
\def\m{\mathfrak{m}}

\def\inf{\infty}
\def\x{\times}

\def\Ker{\ensuremath{\mathrm{Ker }}}

\def\Tr{\ensuremath{\mathrm{Tr}}}
\def\vol{\ensuremath{\mathrm{vol}}}
\def\diag{\ensuremath{\mathrm{diag}}}

\def\Ad{\ensuremath{\mathrm{Ad}}}
\def\Hom{\ensuremath{\mathrm{Hom}}}
\def\End{\ensuremath{\mathrm{End}}}

\def\GL{\ensuremath{\mathrm{GL}_3}}

\def\GLn{\ensuremath{\mathrm{GL}_n}}

\def\Gal{\ensuremath{\mathrm{Gal}}}
\def\Frob{\ensuremath{\mathrm{Frob}}}

\def\glr{\ensuremath{\mathfrak{gl}_3(\R)}}

\def\SO{\ensuremath{\mathrm{SO}(3)}}

\def\Ld{\ensuremath{\mathrm{L}}}
\def\Pd{\ensuremath{\mathrm{P}}}

\title{A divisibility of automorphic periods for the real quadratic base change of $\mathrm{GL}_3$}

\author{Tristan Ricoul}
\date{}

\begin{document}

\maketitle

\begin{abstract}
We prove a  $p$-adic divisibility between the automorphic periods of a cuspidal automorphic representation $\pi$ of $\mathrm{GL}_3(\Q)$ and the periods of its Arthur--Clozel base change to a real quadratic field $E$. As a corollary, we establish a divisibility predicted by the Bloch--Kato conjecture for the adjoint motive of $\pi$ twisted by an even quadratic character. This generalizes earlier works of Tilouine--Urban and Hida in the case of classical modular forms. The divisibility we prove involves a new kind of automorphic periods for conjugate self-dual cuspidal automorphic representations of $\mathrm{GL}_3(E)$, defined within the middle degree of the cuspidal cohomology instead of the top or bottom degrees. Moreover, we prove an  \textit{à la Hida} adjoint $L$-value formula for $\mathrm{GL}_3(E)$ that relates these newly defined middle-degree periods to the usual top and bottom automorphic periods.
\end{abstract}

\tableofcontents

\section{Introduction}

A natural question in the arithmetic theory of the Langlands program is to understand the behavior of automorphic periods under Langlands functoriality. For a given functorial transfer, the Bloch--Kato conjectures predict the existence of an integral relation between the periods attached to an automorphic representation and those of its transfer. Such a relation was first established by Tilouine--Urban \cite{TU22} in the case of the quadratic base change of a classical modular form. However, no analogous results were previously known for $\GLn$, when $n\geq 3$.

The purpose of this paper\footnote{ This paper, together with~\cite{TR_SBC}, originates from a preprint~\cite{R24} posted on arXiv in November~2024 and later split into two papers. This preprint was withdrawn and subsequently expanded into the author’s PhD thesis~\cite{thesis}.} is to investigate these period relations in the case of the real quadratic base change for $\GL$. More precisely, we introduce new automorphic periods arising from the middle-degree cuspidal cohomology of $\GL(E)$, where $E$ is a real quadratic field, and formulate a conjectural integral relation between these periods and the classical periods attached to a self-dual cuspidal representation of $\GL(\Q)$. We then prove one divisibility in this conjectural relation, providing the first result of this nature in higher rank.

\subsection{A conjectural integral period relation for the real quadratic base change of $\GL$ } Let $\pi$ be a cohomological cuspidal automorphic representation of $\GL(\Q)$, which is self-dual. Let $p$ be an odd prime number and let $\O$ be the integer ring of some sufficiently large $p$-adic field $\K$. Since the cuspidal cohomology of $\GL(\Q)$ is concentrated in degrees $2$ and $3$, the representation $\pi$ is associated with two $p$-integral automorphic periods $\Om_2(\pi)$ and $\Om_3(\pi)$, respectively called the bottom and top degree periods of $\pi$. These two periods are non-zero complex numbers, which compare the $\O$-integral structure of the cohomology of $\GL(\Q)$ given by algebraic topology to the $\O$-integral structure on the space of automorphic forms given by Whittaker models.

Let $E$ be a real quadratic number field and let $\Pi= \mathrm{BC}(\pi)$ be its strong base change to $\GL(E)$, whose existence, predicted by the Langlands principle of functoriality,  has been established by Arthur-Clozel \cite{SABC}. Then $\Pi$ is a cohomological automophic representation of $\GL(\A_E)$. Let $\chi_E$ be the quadratic character of $\A_\Q^\x/\Q^\x$ associated with the quadratic field $E$ by class field theory. Assume that $\pi \not\simeq \pi \otimes \chi_E$, so that $\Pi$ is cuspidal. The cuspidal cohomology of $\GL(E)$ is concentrated in degrees $q=4,5,6$. Thus, $\Pi$ is associated with two $p$-integral automorphic periods $\Om_4(\Pi)$ (the bottom degree period) and $\Om_6(\Pi)$ (the top degree period), defined within the bottom $b=4$ and top $t=6$ degrees of the cuspidal cohomology.

In general, as for any cuspidal automorphic representation of $\GLn$, one can only attach these two top and bottom degree periods to $\Pi$. The reason for that is that $\Pi$ appears with multiplicity $1$ in extremal (top $t=6$ and bottom $b=4$) degree cuspidal cohomology groups, but not in middle degree $m=5$. The first contribution of this paper is the introduction of a new kind of automorphic periods, $\Om_5(\Pi,\e,+)$ and $\Om_5(\Pi,\e,-)$, associated with any cohomological cuspidal representation $\Pi$ of $\GL(E)$ which is conjugate self-dual, i.e. such that $\Pi^\s = \Pi^\vee$. Note that the base change $\Pi = \mathrm{BC}(\pi)$ of a self-dual representation $\pi$ is conjugate self-dual. These two periods are called the $\e$-periods of $\Pi$ and are defined within the middle degree group of the cuspidal cohomology of $\GL(E)$. The reason we introduce these middle-degree periods is that we conjecture that they are related to the periods of $\pi$, whereas, for dimensional reasons which we will explain later, it is not clear that the extremal periods are. \\

According to various famous conjectures (see Deligne \cite{De79}, Beilinson \cite{beilinson}, and Bloch-Kato \cite{BK07}), motivic periods are expected to be closely related to special values of $L$-functions. Thus, by the automorphic/motivic analogy, automorphic periods should be related  to special $L$-values too, and decompositions of $L$-functions should in turn correspond to relations of automorphic periods. This leads us to formulate the following conjectural integral relation between the periods of $\pi$ and the newly defined middle-degree periods of $\Pi$ (see \cite{thesis} for a precise motivation):

\begin{mainconj}
\label{main_conj}
Let $\n \subset \O_E$ be the mirahoric level of $\Pi$. Assume that $p$ is prime to $6N_{E/\Q}(\mathfrak{n}) h_E(\mathfrak{n})D_E$, then:
$$
\Om_5(\Pi,\e,-) \sim \Om_2(\pi) \cdot \Om_3(\pi) \cdot \nu_\pi
$$
where $ \nu_\pi := \eta_\pi \cdot \eta_\pi(H^5_\TT)[+]^{-1} \in \O$ and we write $z_1 \sim z_2$, for $z_1,z_2 \in \C^\x$ if there exists $a \in \O^\x$ such that $z_1 = a \cdot z_2$.
\end{mainconj}

In the above formula, the term $\nu_\pi \in \O$ measures the defect between the Hecke congruence number of $\pi$ and the cohomological congruence number of $\pi$ on the degree-5 cuspidal cohomology of $\GL(E)$ (see \S\ref{congruence_numbers} for a precise definition of these congruence numbers). The congruence number $\eta_\pi$ has a deep number-theoretic meaning, as it can sometimes be related to the adjoint Selmer group of (the Galois representation associated with) $\pi$, and thus be can though as an automorphic analog of the latter (see \cite{thesis}[\S1.1.5]). Moreover, in some precise sense, the above conjecture is equivalent to (an automorphic analog of) the Bloch-Kato Tamagawa number conjecture for the adjoint motive of $\pi$ twisted by the quadratic character $\chi_E$ (see the discussion below \ref{thmB}). For $\mathrm{GL}_2$, Tilouine-Urban \cite{TU22} have proven, building on some previous work by Hida \cite{Hi99}, a similar period relation for both real and imaginary quadratic base changes of a classical modular form $f$. Their method ultimately relies on some numerical coincidence between the top degree of cuspidal cohomology of $\mathrm{GL}_2(E)$ (for $E$ real or imaginary quadratic) and the real dimension of some locally symmetric manifold (in their case, the modular curve). However this numerical coincidence no longer holds in general for $\mathrm{GL}_n$ when $n\geq 3$, and the situation is more delicate,  as the periods expected to appear in such a period relation are therefore no longer the extremal periods, but rather some hypothetical intermediate degree periods which are not even defined. This is the reason why, in order to formulate \ref{main_conj}  for $n=3$, we needed to introduce the new middle-degree automorphic periods $\Om_5(\Pi,\e,\pm)$ attached to conjugate self-dual automorphic representations of $\GL(E)$. As already said, these periods are defined within the degree-5 cuspidal cohomology group, and $5$ is precisely the real dimension of the $\GL(\Q)$-locally symmetric manifold inside the $10$-dimensional $\GL(E)$-locally symmetric manifold. \\

In this paper, we prove one divisibility of \ref{main_conj}. More precisely, we establish one divisibility of (an automorphic version of) the Bloch-Kato conjecture for the adjoint motive of $\pi$, twisted by the even quadratic character $\chi_E$. In doing so, we also prove an \textit{à la Hida} adjoint $L$-value formula for $\GL(E)$, which relates the adjoint $L$-value $L(\Pi,\Ad,1)$ to the newly defined middle-degree periods $\Om_5(\Pi,\e,\pm)$ of $\Pi$.

\subsection{Main result} In this paper we prove one divisibility of (an automorphic version of) the Bloch-Kato conjecture for the twisted adjoint motive $\Ad(\pi) \otimes \chi_E$, which yields one $\O$-divisibility of periods in \ref{main_conj}. Let us now present our results in more details.

Let $E$ still denote a real quadratic field, and let $\pi$ and $\Pi= \mathrm{BC}(\pi)$ be as above. Moreover, assume that $\pi$ is only ramified at primes which are split in $E$. As mentioned in the previous pararagraph, using the fact that $\Pi$ is conjugate self-dual, we define a new kind of middle-degree automorphic periods $\Om_5(\Pi,\e,+)$ and $\Om_5(\Pi,\e,-)$, called the $\e$-periods of $\Pi$ (see \S\ref{e_periods} for a precise definition). Finally, we assume that the Galois representation $\rho_\Pi$ attached to $\Pi$ is residually irreducible. Our main result is the following theorem:
 
 \begin{mainthm}[\ref{period_divisibility}]
\label{main_div_BC}
Let $\Pi = \mathrm{BC}(\pi)$ as above. Assume that $p$ is unramified in $E$, that the cohomological weight $\mu$ of $\pi$ is $p$-small, and that $p \nmid 6N_{E/\Q}(\mathfrak{n}) h_E(\mathfrak{n})D_E$ and $p \notin S_u$. Then, under the assumption $(H = \TT_\Q)$, the following divisibility holds:
$$
\nu_\pi\cdot \Om_2(\pi) \cdot \Om_3(\pi)\, \mid \, \Om_5(\Pi,\e,-)
$$
where $\nu_\pi := \eta_\pi \cdot \eta_\pi(H^5_{\TT})[+]^{-1} \in \O$, and we write $z_1 \;  \mid \; z_2$ for $z_1,z_2 \in \C^\x$ if there exists $a \in \O$ such that $z_2 = a \cdot z_1$.
\end{mainthm}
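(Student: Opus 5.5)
The approach follows Hida and Tilouine--Urban: compute the adjoint $L$-value $L(\Pi,\Ad,1)$ in two ways and compare. Since $\Pi=\mathrm{BC}(\pi)$, the adjoint $L$-function factors as
$$L(\Pi,\Ad,s)=L(\pi,\Ad,s)\,L(\pi,\Ad\otimes\chi_E,s).$$
The plan is to get an upper bound on one factor and a lower bound on the other, and to read the divisibility off their quotient.

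\textbf{Step 1.} First I would use the \emph{à la Hida} adjoint $L$-value formula for $\GL(E)$ announced in the introduction. It expresses $L(\Pi,\Ad,1)$, up to $p$-units and explicit archimedean and Gauss-sum factors, as
$$\Om_5(\Pi,\e,+)\,\Om_5(\Pi,\e,-)\cdot \eta_\Pi(H^5)^{-1},$$
where $\eta_\Pi(H^5)$ is a congruence number on middle-degree cohomology. The formula comes from the Poincaré pairing $H^5_c\times H^5\to H^{10}_c$, which is perfect over $\O$ on the $\Pi$-isotypic part. It is made compatible with the involution induced by $\s$ composed with duality; this involution splits the rank-two $\Pi$-part of $H^5$ into $\pm$ eigenlines, and the $\e$-periods are defined on these lines.

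\textbf{Step 2.} Next I would use the classical Hida-type formula for $\GL(\Q)$:
$$L(\pi,\Ad,1)\sim \Om_2(\pi)\,\Om_3(\pi)\,\eta_\pi^{-1}.$$
This is the Balasubramanyam--Raghuram type result, valid for $p$ prime to $6N$ and $\mu$ $p$-small, together with the assumption $(H=\TT_\Q)$, which identifies the Hecke algebra with the cohomological one so that congruence numbers are well defined.

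\textbf{Step 3.} The remaining input is a period identity for the twisted factor. The $\GL(\Q)$ locally symmetric space $S_\Q$, of dimension $5$, embeds in $S_E$, of dimension $10$. Integrating a class of $H^5_c(S_E)$ over the fixed cycle of $\s$ gives a pairing, which is integral because the cycle is an $\O$-integral homology class when $p\nmid h_E(\n)D_E$. Applied to the $-$ eigenclass, this computes
$$\Om_5(\Pi,\e,-)^{-1}\times L(\pi,\Ad\otimes\chi_E,1)\times(\text{unit and archimedean factors}),$$
in the spirit of the Asai-type unfolding for $\GL_2$ used by Tilouine--Urban. Integrality of the restricted class, together with pullback of the $\pi$-eigenclass, would yield a divisibility relating the twisted $L$-value to a ratio of $\Om_5(\Pi,\e,-)$ against $\Om_2(\pi)\Om_3(\pi)$ and the congruence numbers.

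\textbf{Step 4.} Divide the formula of Step 1 by that of Step 2 and combine with Step 3. The $+$ period should be controlled by $H^5_\TT[+]$ and pair off with the factor $\eta_\pi(H^5_\TT)[+]$. The defect is then exactly $\nu_\pi=\eta_\pi\,\eta_\pi(H^5_\TT)[+]^{-1}$, and only the integrality direction survives, giving
$$\nu_\pi\,\Om_2(\pi)\Om_3(\pi)\mid\Om_5(\Pi,\e,-).$$
The residual irreducibility and $p\notin S_u$ hypotheses are used to localize at a non-Eisenstein maximal ideal, so that the cohomology is torsion-free and the pairings are perfect.

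\textbf{Main obstacle.} I expect Step 3 to be the hardest part, together with the related eigen-decomposition of the congruence module $H^5_\TT$ into $\pm$ parts. Concretely, the difficulty is showing that the cycle-restriction map
$$H^5(S_E)_{\m}\to H^5(S_\Q)_{\m}$$
is integral and lands, after Hecke projection, in the $\pi$-part with the correct congruence-number normalization. I would first attempt this via the base-change Hecke algebra homomorphism $\TT_E\to\TT_\Q$ and a Shapiro-type argument over the mirahoric level.
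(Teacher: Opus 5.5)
Your overall architecture matches the paper's. You factor $\Lambda^{imp}(\Pi,\Ad,s)=\Lambda^{imp}(\pi,\Ad,s)\,\Lambda^{imp}(\pi,\Ad\otimes\chi_E,s)$, use the middle-degree formula \ref{adjoint_L_value} for $\Pi$, use Balasubramanyam--Raghuram--Chen with $(H=\TT_\Q)$ for $\pi$, prove a divisibility for the twisted factor, and glue everything with relative congruence numbers.

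\textbf{The gap is Step~3.} Integrating over the $\s$-fixed cycle $Y_\Q\subset Y_E$ is the $\mathrm{GL}_3(\Q)$ (Flicker--Rallis) period, and for $n=3$ this is the wrong period. It is supported on conjugate self-dual representations (stable base changes from $U(3)$), not on base changes from $\mathrm{GL}_3(\Q)$. It unfolds to a residue of an Asai $L$-function, not to $L(1,\pi,\Ad\otimes\chi_E)$. Above all, nothing forces it to vanish on the non-base-change part $e_\#M_\K$, and that vanishing is exactly what is needed to control the relative congruence module of $\theta:\TT_E\to\TT_\Q$. Tilouine--Urban can use the modular curve for $n=2$ because $\mathrm{GL}_2$ and the quasi-split $U(1,1)$ share the derived group $\mathrm{SL}_2$, so the linear and unitary periods essentially agree; for $n=3$ they separate.

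The missing idea is the cycle $Y_U$ of the unitary group $U=U_1$, the fixed locus of $\e:g\mapsto{}^t\s(g)^{-1}$. It is also $5$-dimensional, since $U(\R)\simeq\mathrm{GL}_3(\R)$ for $E$ real, and integrating over it gives the Jacquet--Ye period. By Feigon--Lapid--Offen (\ref{Jacquet_conjecture}) this period vanishes on every $\Pi'$ that is not a base change from $\mathrm{GL}_3(\Q)$. The Jacquet/FLO factorization (\ref{Jacquet_decomposition}), together with the mixed-essential-vector computation at split ramified places and a Chen-type archimedean computation, evaluates it on $\d^+_\e(\phi_f)$ as $\Lambda^{imp}(\pi,\Ad\otimes\chi_E,1)/u$ up to units.

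\textbf{How the divisibility is extracted.} No Hecke projection to a ``$\pi$-part'' on the cycle is involved, so your stated main obstacle is not the real issue. One projects $L_{\mu_E}(\O)|_U\to\O$ via $P\otimes Q\mapsto\langle P,\vee(Q)\rangle_\mu$ (integral because $\mu$ is $p$-small) and integrates a top-degree class, obtaining an $\O$-linear form $\Pd$ on $M$. The mechanism is then \ref{lf_lemma}: an integral, $\e$-invariant form killing $e_\#M_\K$ takes values in $\eta^\#_{\l_\Pi}(M^*)[+]$ on an $\O$-generator of $M_{\l_\Pi}[+]$, and this gives \ref{CBC_divisibility}. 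It must be the $+$ line, not the $-$ line as in your Step~3. An $\e$-invariant form kills the $-$ line, and $\Om_5(\Pi,\e,-)$ survives only because the Poincar\'e pairing is $\e$-anti-equivariant and pairs $+$ with $-$.

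\textbf{Steps~1 and~2 are inverted.} Since $\Pi^\vee\simeq\Pi$ here, the correct statements are:
\[
\Lambda^{imp}(\Pi,\Ad,1)\sim\eta_{\l_\Pi}(M)[+]\,\Om_5(\Pi,\e,+)\,\Om_5(\Pi,\e,-),\qquad
\Lambda^{imp}(\pi,\Ad,1)\sim\eta_\pi\,\Om_2(\pi)\,\Om_3(\pi).
\]
Note that the first uses the $\e$-$+$-part $\eta_{\l_\Pi}(M)[+]$, not the full congruence number. In the second, $(H=\TT_\Q)$ means freeness of $H^3$ over $\TT_\Q$ and serves to replace $\eta_\pi(H^3)$ by $\eta_\pi$. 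Using $\eta_{\l_\Pi}(M)[+]=\eta_\pi(M_{\TT_\Q})[+]\cdot\eta^\#_{\l_\Pi}(M)[+]$ from \ref{cn_decomposition}, these give
\[
\frac{\Lambda^{imp}(\pi,\Ad\otimes\chi_E,1)}{\Om_5(\Pi,\e,+)}\sim\frac{\eta^\#_{\l_\Pi}(M)[+]\cdot\Om_5(\Pi,\e,-)}{\nu_\pi\cdot\Om_2(\pi)\,\Om_3(\pi)},
\]
and \ref{CBC_divisibility} then yields the claim.

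\textbf{Role of the hypotheses.} The condition $p\notin S_u$ only makes the uncomputed constant $u$ (local factors at $2$ and at primes ramified in $E$) a $p$-unit. Residual irreducibility gives perfectness of the localized Poincar\'e pairing. Together with $p$ unramified in $E$, it also gives surjectivity of $\TT_E\to\TT_\Q$ via Carayol's theorem.
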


The set $S_u$ is a finite set of primes introduced in \ref{thmB} below. It is expected to be empty. As said earlier, $\nu_\pi \in \O$ measures the defect between the Hecke congruence number $\eta_\pi$ of $\pi$ and the congruence number $\eta_\pi(H^5_{\TT})[+]$ on some submodule $H^5_{\TT}$ of the degree-5 cuspidal cohomology of $\GL(E)$ (see \S\ref{congruence_numbers} for a precise definition of these congruence numbers). Note that its presence in the above divisibility makes the divisibility stronger. $D_E$ is the discriminant of $E$, $\n \subset \O_E$ is the (mirahoric) level of $\Pi$ and $h_E(\mathfrak{n})$ is the cardinal of the ray class group of level $\mathfrak{n}$. The condition of $p$-smallness excludes a finite number of small primes with respect to $\mu$. Overall, the conditions on $p$ exclude only finitely many prime numbers. Finally, assumption $(H = \TT_\Q)$ concerns the freeness of the (localized) top-degree cuspidal cohomology group of $\GL(\Q)$ over the localized Hecke algebra. Conditionally on some local-global compatibilities conjecture, we give sufficient conditions on $p$ and $\pi$ (i.e. that $\pi$ is $\n$-minimal, Fontaine-Lafaille at $p$ and has enormous residual Galois image) so that this assumption holds. \\

We also prove a similar divisibility of periods when the representation $\pi$ is no longer assumed to be self-dual. However, in this case, rather restrictive conditions must be assumed on the ramification of the representation $\pi$ in order to obtain the same result. Moreover, the periods appearing in the formula are no longer the $\e$-periods of $\Pi$ (as $\Pi$ is no longer conjugate self-dual in this situation), but rather another type of middle-degree periods, called the $\s$-periods, which are defined for self-conjugate representations. For the sake of clarity, we do not include this result in the  introduction, and we refer the reader to \S4.1 for its presentation. \\

As mentioned above, the integral period relation in \ref{main_conj} is  related to the Bloch-Kato Tamagawa number conjectures for the three motives $\Ad(\pi)$, $\Ad(\pi) \otimes \chi_E$ and $\Ad(\Pi)$. Let us now briefly explain why, in order to show how \ref{main_div_BC} follows from the other main results in this article. The automorphic base change corresponds, at the level of $L$-functions, to the following decomposition of adjoint $L$-functions:

\begin{equation}
\label{L_func_dec}
L(\Pi,\Ad,1) = L(\pi,\Ad,1) \cdot L(\pi,\Ad \otimes \chi_E,1) 
\end{equation}

Generalizing a formula first established by Hida \cite{Hi81a,Hi81b} in the case of modular forms, Balasubramanyam-Raghuram \cite{BR17} and Chen \cite{Che22} have established an adjoint $L$-value formula for $\pi$, which relates the adjoint $L$-value $L(\pi,\Ad,1)$ to the cohomological congruence number of $\pi$, and the product $\Om_2(\pi) \cdot \Om_3(\pi)$ of the periods of $\pi$. Their formula is also valid for $\Pi$, and relates the adjoint $L$-value $L(\Pi,\Ad,1)$ to the product $\Om_4(\pi) \cdot \Om_6(\pi)$ of the extremal periods of $\Pi$. Under some reasonable conditions on $\pi$ and $\Pi$, one may interpret there results (see \cite{thesis}[\S 1.1] for details, in particular Proposition 1.2)  as an automorphic analog of the Bloch-Kato Tamagawa number conjecture for the adjoint motives of $\pi$ and $\Pi$, involving the product of the two extremal periods of $\pi$ and $\Pi$ instead of the motivic periods appearing in the original formulation of the Bloch-Kato conjecture \cite{BK07}[Conjecture 5.15]. Consequently, given the decomposition (\ref{L_func_dec}), one needs to study the Bloch-Kato conjecture for the twisted adjoint motive of $\pi$ in order to relate the periods of $\pi$ to the periods of $\Pi$. \\

The main result of this paper is that we prove one divisibility of (an automorphic version of) the Bloch-Kato conjecture for the twisted motive $\Ad(\pi) \otimes \chi_E$ (see \ref{thmB} below). As our formula involves the middle-degree period $\Om_5(\Pi,\e,+)$ of $\Pi$, we need to establish an adjoint $L$-value formula involving the middle-degree periods of $\Pi$ as well, instead of the top and bottom periods of $\Pi$ appearing in Balasubramanyam-Raghuram and Chen formula. This is \ref{thmC} below.  Modulo some congruence module algebra, the periods divisibility in \ref{main_div_BC} then easily follows from these results, which we now present in more details. \\

\subsection{The Bloch-Kato conjecture for the twisted adjoint motive of $\pi$}

Let $\pi$ and $\Pi = \mathrm{BC}(\pi)$ be as above. Let $\mu \in X^+(T_3)$ be the cohomological weight of $\pi$, so that $\mu_E = (\mu,\mu)$ is the cohomological weight of $\Pi$. Let $\mathfrak{n}$ be the mirahoric level of $\Pi$ and $K_f = K^*_1(\n)$ be the mixed mirahoric subgroup of level $\mathfrak{n}$. We denote by $\TT_E$ be the spherical Hecke algebra of level $K_f$, localized at the maximal ideal $\m_\Pi$ corresponding to $\Pi$. Let $M$ denote the degree $5$ cuspidal cohomology group localized at $\m_\Pi$:
$$
M = H_{cusp}^5(Y_E(K_f), \L_{\mu_E}(\O))_{\m_\Pi}
$$
where $Y_E(K_f)$ is the adelic variety of level $K_f$ for $\GL(E)$, and $\L_{\mu_E}(\O)$ is a locally constant sheaf associated to the irreducible algebraic representation $L_{\mu_E}$ of $\GL(E)$ of highest weight $\mu_E$. Then, $M$ is a $\TT_E$-module endowed with a semi-linear action of some involution denoted $\e$. Let $\eta_{\Pi}^\#(M^*)$ be the relative (for the base change) congruence number of $\Pi$ on the dual module $M^* = \Hom_\O(M,\O)$. See \S\ref{relative_congruence_numbers} for its precise definition. We prove the following result:

\begin{mainthm}[\ref{CBC_divisibility}]
\label{thmB}
Let $\pi$ and $\Pi = BC(\pi)$ as above. Assume that the Galois representation associated to $\Pi$ is residually absolutely irreducible. Assume that the cohomological weight $\mu_E$ of $\Pi$ is $p$-small and that $p$ does not divide $6N_{E/\Q}(\mathfrak{n}) h_E(\mathfrak{n})D_E$ and $p \notin S_u$. Then:
$$
\eta_\Pi^\#(M^*)[+] \quad | \quad \frac{\Lambda^{imp}(\pi,\Ad \otimes \chi_{E},1)}{\Om_5(\Pi,\e,+)}
$$
where $\eta_\Pi^\#(M^*)[+]$ is the $+$-part of $\eta_\Pi^\#(M^*)$ for the action of $\e$.
\end{mainthm}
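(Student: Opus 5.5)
The plan is to follow the Hida / Tilouine--Urban strategy: realize the twisted adjoint $L$-value as a cohomological pairing on $Y_E(K_f)$ in degree $5$, and read the congruence number off the integrality of that pairing.

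\emph{Step 1: a distinguished Hecke eigen-class.} The $\GL(\Q)$-locally symmetric space $Y_\Q$ has real dimension $5$ and embeds into the $10$-dimensional $Y_E(K_f)$. Its fundamental class, or rather a $\chi_E$-twisted version of the cycle $[Y_\Q]$ coupled with the $\GL(\Q)$-invariant vector of $L_{\mu_E} = L_\mu\otimes L_\mu^\s$ coming from self-duality of $\mu$, defines a functional
\[
\mathcal{P}\colon M \to \O, \qquad \mathcal{P}(c) = \int_{Y_\Q} c .
\]
Its integrality is where the hypotheses $p \nmid 6N_{E/\Q}(\n)h_E(\n)D_E$, $p$-smallness of $\mu_E$ and $p\notin S_u$ enter: they ensure torsion-freeness and integrality of the branching map $L_{\mu_E}(\O)\to\O$. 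Since the cycle is fixed by the involution $\e$ up to a sign determined by $\chi_E(-1)=+1$, the functional $\mathcal{P}$ lies in the $+$-part of $M^*$.

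\emph{Step 2: the unfolding.} Let $\phi_\Pi^\pm$ be the normalized $\e$-eigen generator of $M[\Pi]$, so that its Whittaker realization equals $\Om_5(\Pi,\e,\pm)^{-1}$ times a rational class. I would evaluate $\mathcal{P}(\phi_\Pi^+)$ by unfolding the period of $\Pi$ over $\GL(\A_\Q)$ twisted by $\chi_E$. By the Flicker / Jacquet--Rallis theory of distinguished representations, this period is nonzero exactly because $\Pi=\mathrm{BC}(\pi)$, and it factors as
\[
\mathcal{P}(\phi_\Pi^+)\,\Om_5(\Pi,\e,+) \sim \Lambda^{imp}(\pi,\Ad\otimes\chi_E,1)
\]
up to $p$-units. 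The local archimedean and ramified factors are handled at split primes using the mirahoric test vectors. The assumption that $\pi$ is ramified only at primes split in $E$ makes the non-archimedean local integrals trivial units, and the archimedean computation, where one must pair the middle-degree $(\g,K)$-cohomology class with the $\GL(\Q)$ cycle, gives a $p$-unit.

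\emph{Step 3: the congruence argument.} Consider the idempotent $1_\Pi$ attached to the base-change component in $\TT_E\otimes\K$. The relative congruence number $\eta^\#_\Pi(M^*)[+]$ is the index of $1_\Pi M^*[+]\cap M^*[+]$ over the $\Pi$-isotypic part. Since $\mathcal{P}$ is an integral element of $M^*[+]$, its projection to the $\Pi$-line is $\mathcal{P}(\phi_\Pi^+)$ times the dual generator, and it must be divisible by the congruence number. Concretely, $\eta \cdot 1_\Pi \mathcal{P}$ is integral, and pairing with $\phi^+_\Pi$ yields
\[
\eta^\#_\Pi(M^*)[+] \mid \mathcal{P}(\phi_\Pi^+).
\]
Combining this with Step 2 gives the statement. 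One subtlety is that the functional kills non-base-change components, which is why the \emph{relative} congruence number appears; this requires the residual irreducibility hypothesis, in order to localize at $\m_\Pi$ and use that $\mathcal{P}$ factors through the quotient of $\TT_E$ cutting out base-change forms.

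\emph{Main obstacle.} I expect the hardest step to be the archimedean computation in Step 2. There one must show that the pairing of the explicit middle-degree $(\g,K)$-cohomology class of $\Pi_\infty$, in its $\e=+$ eigencomponent, with the $5$-dimensional cycle is a nonzero $p$-adic unit multiple of the archimedean $L$-factor. I would attempt this via explicit Lie algebra cohomology generators and the known nonvanishing of archimedean Flicker-type integrals.
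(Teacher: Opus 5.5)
\textbf{There is a genuine gap in Steps 1--2: you integrate over the wrong $5$-dimensional cycle.} With $Y_\Q$, i.e.\ the group $\mathrm{GL}_3(\Q)=G_E^{\s}$, the identity $\mathcal{P}(\phi_\Pi^+)\,\Om_5(\Pi,\e,+)\sim\Lambda^{imp}(\pi,\Ad\otimes\chi_E,1)$ claimed in Step 2 fails.

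In the $\chi_E$-twisted form you propose, integrating $\phi(h)\chi_E(\det h)$ over $\mathrm{GL}_3(\Q)\bs\mathrm{GL}_3(\A)$, the period vanishes identically on $\Pi$. For $z\in\A^\times$ one has $\omega_\Pi(z)=\omega_\pi(z)^2=1$, because $\pi$ is self-dual. Hence the integrand transforms under the centre by $\chi_E(z)^3=\chi_E(z)$, which is a nontrivial idele class character.

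Without the twist, the period is nonzero on $\Pi$, but it is still the wrong functional, for two reasons.
\begin{enumerate}
\item By Flicker, a cuspidal $\Pi'$ on $\mathrm{GL}_3(\A_E)$ is $\mathrm{GL}_3(\A_\Q)$-distinguished iff its Asai $L$-function has a pole at $s=1$. This describes the image of base change from the unitary group in three variables (the stable base change of the companion paper), not base change from $\mathrm{GL}_3(\Q)$. So nothing forces $\mathcal{P}_\K$ to vanish on $e_\#M_\K$, and Step 3 then gives no information on $\eta^\#$ relative to $\theta_{BC}\colon\TT_E\to\TT_\Q$.
\item Its value is governed by $\mathrm{Res}_{s=1}L(s,\Pi,\mathrm{As})=\mathrm{Res}_{s=1}L(s,\pi,\mathrm{Sym}^2)\cdot L(1,\pi,\wedge^2\otimes\chi_E)$. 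By contrast, $L(\pi,\Ad\otimes\chi_E,1)=L(1,\pi,\mathrm{Sym}^2\otimes\chi_E)\,L(1,\pi,\wedge^2\otimes\chi_E)/L(1,\chi_E)$, so the symmetric-square part enters with the wrong twist.
\end{enumerate}
Note also that residual irreducibility does not kill non-base-change components. It is used for the surjectivity of $\theta_{BC}$ and for non-Eisenstein localization.

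\textbf{The missing idea is to use the quasi-split unitary group $U=\{g:{}^tg^{\s}g=1\}$.}
\begin{itemize}
\item $U$ is exactly the fixed-point group of $\e(g)={}^t\s(g)^{-1}$. This is why $\Pd$ is $\e$-invariant; no sign coming from $\chi_E(-1)$ is involved. By contrast, $\mathrm{GL}_3(\Q)$ is only stable under $\e$.
\item Since $E$ is real, $U(\R)\cong\mathrm{GL}_3(\R)$, so $Y_U(K_f)\subset Y_E(K_f)$ is again $5$-dimensional.
\item By Jacquet and Feigon--Lapid--Offen (for $n$ odd), the Jacquet--Ye period over $U$ characterizes base change from $\mathrm{GL}_3(\Q)$. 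This gives $\Pd_\K(e_\#M_\K)=0$, which is what \ref{lf_lemma} requires.
\item The Feigon--Lapid--Offen factorization of this period produces $L(1,\chi_E)\,L(\pi,\Ad\otimes\chi_E,1)$ times local periods.
\end{itemize}
With that functional, your Step 3 is essentially the paper's argument.

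\textbf{Further corrections to Step 2.}
\begin{itemize}
\item At split ramified primes the local periods are not units. At the mixed essential vector (essential vector at $w\in R$, transposed essential vector at $w^{\s}$, which is what makes $K_f=K_1^*(\n)$ stable under $\e$) they equal $L^{imp}(\pi_v\otimes\pi_v^\vee\otimes\chi_E,1)$. This is what turns $L$ into $\Lambda^{imp}$.
\item The local periods at $2$ and at primes ramified in $E$ are never computed. They make up the constant $u$, and the hypothesis $p\notin S_u$ is exactly what removes $u$. Integrality of the branching map comes from $p$-smallness alone.
\item The archimedean step needs an exact evaluation, not just non-vanishing. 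The paper adapts Chen's computation with Chen's generators to obtain a $p$-unit times a power of $\pi$ times $\Gamma(\pi_\infty\times\pi_\infty^\vee,1)$.
\end{itemize}
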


In the above theorem, $\Lambda^{imp}(\pi,\Ad \otimes \chi_{E},s)$ is the imprimitive completed twisted adjoint $L$-function of $\pi$, defined as an Euler product including the $\G$-factor at the archimedean places, and imprimitive local factors at places of ramification for $\Pi$. The set $S_u$ is defined to be the finite set of primes which divides a certain nonzero algebraic number $u \in \bar{\Q}^\x$ defined in \ref{linear_formula}. The constant $u$ is a product of local factors at $2$ and at primes which are ramified in $E$ and only depends on the local components of $\Pi$ above these primes. The constant $u$ is expected to be $1$ (and hence $S_u$ is expected to be empty) but is hard to compute because it involves local linear forms which are defined indirectly and have no explicit formula. Ultimately, the calculability of this constant depends on an analog of the Fundamental Lemma of Jacquet and Ye (see \cite[Theorem 3.1]{FLO12}) for ramified quadratic extension of non-Archimedean local fields (of characteristic zero) and for 2-adic fields. However, we prove that $u$ is an algebraic number (in fact, our result is a little bit more precise, see \ref{algebraicity_u2ram} in \S\ref{paragraph_algebraicity}). Following the same method, it should even be possible to prove that $u$ belongs to the rationality field $\Q(\Pi_f)$ of $\Pi_f$. Finally, note that this theorem implies in particular that the normalized twisted adjoint $L$-value appearing on the right-hand side belongs to $\O$.

We also prove a similar divisibility when the representation $\pi$ is no longer assumed to be self-dual. However, in this case, rather restrictive conditions must be imposed on the ramification of the representation $\pi$ in order to obtain the same result, and the periods appearing at the denominator of the $L$-value is different. For the sake of clarity, we do not include this result in the  introduction, and we refer the reader to \S4.1 of the paper for its presentation. 
 \\

Let us now briefly explain how \ref{thmB} is related to the Bloch-Kato conjectures. Let $M_\pi$ be the pure irreducible Grothendieck motive over $\Q$ conjecturally associated to $\pi$ according the Langlands-Clozel conjecture, and let $\rho_\pi$ be the correponding $p$-adic Galois representation, whose existence has been established by  \cite{HLTT16} and \cite{Scholze15}. Under some Calegari-Geraghty \cite{CG18} hypothesis for $\Pi$, one can show (as for the adjoint motive $\Ad(M_\pi)$ of $\pi$ in \cite{thesis}[\S1.1], see also \cite{TU22}[\S5.4]) that the Selmer group $\mathrm{Sel}(\Ad(\rho_\pi) \otimes \chi_E) :=  H_f^1(\Q,\Ad(\rho_\pi) \otimes \chi_E \otimes \K/\O)$ is finite, and that the Bloch-Kato Tamagawa number conjecture for the motive $W_\pi = \Ad(M_\pi) \otimes \chi_E$ is equivalent to the following formula:
$$
\eta_\pi^\# \sim \frac{\La(\pi, \Ad \otimes \chi_E,1)}{\Om(W_\pi) \cdot \d_\pi^\#}
$$
where $\eta_\pi^\#$ is the relative congruence number of $\pi$ with respect to the automorphic base change from $\Q$ to $E$, and $\d_\pi^\# := \# \mathrm{Sel}(\Ad(\rho_\pi) \otimes \chi_E) \cdot (\eta_\pi^\#)^{-1} \in \O$ is the so-called relative Wiles defect. Note that in general one has that $\d_\pi^\#$ divides the ratio $\d_\Pi \cdot \d_\pi^{-1}$ of the Wiles defects of $\pi$ and $\Pi$ (see \cite[\S1.1.5]{thesis} for their definition). Thus, up to the defect $\nu_\pi^\# := \eta_\pi^\# \cdot \eta_\Pi^\#(M^*)[+]^{-1}$ (which could be precisely studied) between the two relative congruence numbers $\eta_\pi^\#$ and $\eta_\Pi^\#(M^*)[+]$, \ref{thmB} can be though as proving one divisibility of an automorphic analog of the Bloch-Kato Tamagawa number conjecture, involving the automorphic period $\Om_5(\Pi,\e,+)$, instead of the motivic period $\Om(W_\pi)$. \\

\subsection{Definition of the $\e$-periods and proof of \ref{thmB}} Since this is a general method for establishing period relation, we now outline the proof of \ref{thmB}. In doing so, we explain how the $\e$-periods are defined. The key point is to construct an $\O$-linear form on the cuspidal cohomology of $Y_E(K_f)$ with coefficients in $\O$, which vanishes on non-base-changes and takes the expected twisted adjoint $L$-value on (the differential form associated to) some newform $\phi_f^\circ$ in $\Pi_f$. The idea is to construct this linear form by giving a cohomological interpretation of the Jacquet-Ye integral period:
$$
\P_{U}: \phi\in \Pi \mapsto \int_{U(\Q) \backslash U(\mathbb{A})} \phi(g) dg 
$$
where $U$ is some quasi-split unitary group embedded in $\GL(E)$. The vanishing property on non-base-changes is ensured by a result of Feigon, Lapid and Offen \cite{FLO12}, refining some earlier results of Jacquet \cite{J05a} \cite{J10}. They have shown that this period is non-zero for some $\phi \in \Pi$ if and only if $\Pi$ is the base change of some cuspidal automorphic representation of $\GL(\Q)$.

The cuspidal cohomology of $\GL(E)$ is concentrated in degree $q=4,5,6$. Since the  modular submanifold associated to $U$ is $5$-dimensional, the integral period $\P_{U}(\phi)$ should be cohomologically interpreted as integrals involving the differential $5$-form $\d(\phi)$ associated to $\phi$ through some Eichler-Shimura map $\d$. Here we face two problems. The first problem concerns the normalisation of the Eichler-Shimura map $\d$. In fact, this map is only defined up to a complex scalar since its definition depends on the choice of a generator of certain $(\g,K_\inf)$-cohomology groups. Since we want to compare the $p$-adic integral stucture given by the Whittaker model on $\Pi_f$ to the $p$-adic integral structure on the cuspidal cohomology, we need to make this choice canonical. For $\mathrm{GL}_2$, this has been done by Hida (see \cite[Section 4]{Hi94}), and the maps are normalized so that they preserve the completed $L$-functions on both sides. To the best of our knowledge, this has not been done for $\GLn$ yet. However, in the special case of $\GL$ over a totally real field, Chen \cite{Che22} has explicited some canonical choices for these generators, and shown that they give the correct adjoint $\G$-factors. The second problem is that the Eichler-Shimura map $\d$ lands in:
$$
H^5_{cusp}(Y_E(K_f), \L_{\mu_E}(\C))[\Pi_f] 
$$
which is a $2$-dimensional $\C$-vector space. Hence it may not be possible to normalize $\d$ by some complex scalar (a period) so that it lands in the canonical $\O$-structure of the above $\C$-vector space. This is why the Betti-Whittaker periods of $\Pi$ are only defined using the top degree $t=6$ and the bottom degree $b=4$ cuspidal cohomology groups, whose $\Pi_f$-part are $1$-dimensional. However, we remark that there exists an involution $\e$ acting on $\GL(E)(\A)$, which induces an involution on the cuspidal cohomology of $\GL(E)$ of level $K_f$, when $K_f$ is $\e$-invariant. Moreover if $\Pi$ is conjugate self-dual, the $\Pi$-part of the cuspidal cohomology is endowed with a non-trivial action of $\e$. Finally, this action preserves the $\O$-integral structure. We can then define two canonically normalized Eichler-Shimura maps:
$$
\d_\e^\pm: \Pi_f^{K_f} \to H^5_{cusp}(Y_E(K_f), \L_{\mu_E}(\C))[\Pi,\e = \pm]
$$
going into the $\pm$-eigenspace for $\e$. The right-hand side is a complex $1$-dimensional vector space admitting a canonical integral $\O$-structure, so we can attach two periods $\Om(\Pi,\e,\pm)$ to $\Pi$ provided it is conjugate self-dual (see paragraph~\ref{e_periods} for details). We note that in order to define $\Om(\Pi,\e,\pm)$, we need to assume that the ramification of $\Pi$ is located at places of $E$ above split primes. This condition ensures that there exists a newform theory for a family of $\e$-invariant open compact subgroups $K_f$.

We thus obtain a linear form on the middle-degree cuspidal cohomology, with the desired vanishing property. The last step consists in computing the explicit value of this linear form on the $5$-differential form $\d^{+}_\e(\phi_f)$ associated to some normalized $K_f$-newform $\phi_f \in \Pi_f$ used to define the periods $\Om_5(\Pi,\e,\pm)$ of $\Pi$. This is done using a formula due to Jacquet \cite{J01}, which expresses the Jacquet-Ye period as the twisted adjoint $L$-value $L(\pi,\Ad \otimes \chi_E,1)$. Jacquet's formula involve uncomputed ramified and archimedean local factors, which we thus have to compute. The archimedean factor is computed by adapting some computations of Chen \cite{Che22}, and give the (twisted) adjoint $\G$-factors. The ramification local factors are computed by using some explicit expressions for the essential vector given separately by Miyauchi and Matringe (see paragraph~\ref{mirahoric_theory}), and are equal to the imprimitive local $L$-factors. Finally, the divisibility in \ref{thmB} is obtained by applying the key \ref{lf_lemma}, which is an adaptation of \cite[Lemma 2.9]{TU22} to the context of Hecke-modules with a semi-linear involution. \\

\subsection{The Bloch-Kato conjecture for the adjoint motive of $\Pi$} As already explained, in order to deduce \ref{main_div_BC} from \ref{thmB}, we must relate the adjoint $L$-value $L(\Pi,\Ad,1)$ appearing in the decomposition (\ref{L_func_dec}) to the automorphic periods of $\Pi$. We have mentioned that there already exists such a formula, first established by Balasubramanyam and Raghuram \cite{BR17} up to some uncomputed archimedan factor, which has latter been computed by Chen \cite{Che22}. However, this formula relates the adjoint $L$-value $L(\Pi,\Ad,1)$ to the top and bottom periods of $\Pi$. As \ref{thmB} involves the new middle-degree periods $\Om(\Pi,\e,\pm)$, we thus need to prove a similar formula involving these periods. We keep the notation from \ref{main_div_BC}. Let $\eta_{\Pi}(M)$ be the congruence number of $\Pi$ on the degree-5 inner cohomology group $M$ (see \S\ref{congruence_numbers} for its precise definition). Our result is the following theorem: 
\begin{mainthm}[\ref{adjoint_L_value}]
\label{thmC}
Let $\Pi$ be a cohomological automorphic cuspidal representation of $\GL(\A_E)$ which is conjugate self-dual. Assume that $\Pi$ is only ramified above split primes. Under the same hypothesis on $\Pi$ and $p$ as in \ref{main_div_BC}, one has:
$$
\eta_{\Pi}^\pm(M)[\pm] \quad \sim \quad \frac{\Lambda^{imp}(\Pi,\Ad,1)}{\Om_5(\Pi,\e,\pm) \cdot \Om_5(\Pi^\vee,\e,\mp)}
$$
where $\eta_{\Pi}(M)[\pm]$ is the $\pm$-part for the action of $\e$ on $\eta_{\Pi}(M)$.
\end{mainthm}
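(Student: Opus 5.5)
The plan is to follow the Hida / Balasubramanyam--Raghuram--Chen strategy, but with the Poincaré duality pairing taken between degree $5$ and degree $10-5=5$. The extremal-degree pairing $H^4\times H^6$ is replaced by a pairing of $M[\Pi,\e=\pm]$ with the $\mp$-eigenspace for $\Pi^\vee$.

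\textbf{Step 1: the integral pairing.} Let $d=10$ be the dimension of $Y_E(K_f)$. Consider the Poincaré duality pairing
$$
\langle\ ,\ \rangle : H^5_{!}(Y_E(K_f),\L_{\mu_E}(\O))\times H^5_{!}(Y_E(K_f),\L_{\mu_E^\vee}(\O))\to \O .
$$
\begin{enumerate}[(a)]
\item Under the hypotheses ($p$ not dividing the level and class number terms, $\mu_E$ $p$-small, $\rho_\Pi$ residually irreducible so that localized inner and cuspidal cohomology agree and are torsion-free), this pairing is perfect after localizing at $\m_\Pi$.
\item The Hecke operators are adjoint up to the usual twist. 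I would check that $\e$ is self-adjoint up to a sign $s_\e$ determined by its effect on orientation, since $\e$ acts on the $10$-manifold.
\item With the convention for $\e$ on $\Pi^\vee$, the $\pm$-part of $M$ then pairs perfectly with the $\mp$-part of the $\Pi^\vee$-module. This explains the pairing of $\Om_5(\Pi,\e,\pm)$ with $\Om_5(\Pi^\vee,\e,\mp)$.
\end{enumerate}

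\textbf{Step 2: the congruence module.} By the congruence module lemma used for the other results (the adaptation of \cite[Lemma 2.9]{TU22} to modules with semilinear involution), $\eta_\Pi^\pm(M)[\pm]$ is, up to a unit, equal to $\langle v^\pm, w^\mp\rangle$. Here $v^\pm$ and $w^\mp$ are $\O$-generators of the rank-one lattices $M[\Pi,\e=\pm]$ and $M^\vee[\Pi^\vee,\e=\mp]$, which are saturated and have rank one by the definition of the $\e$-periods. Since $\d^\pm_\e(\phi_f^\circ)=\Om_5(\Pi,\e,\pm)\,v^\pm$ by definition, this gives
$$
\eta_\Pi(M)[\pm]\sim \frac{\langle \d_\e^\pm(\phi^\circ_f),\d_\e^\mp(\phi^{\vee,\circ}_f)\rangle}{\Om_5(\Pi,\e,\pm)\,\Om_5(\Pi^\vee,\e,\mp)} .
$$

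\textbf{Step 3: the archimedean and local computation.} It remains to show that the pairing of the two normalized Eichler--Shimura classes equals $\Lambda^{imp}(\Pi,\Ad,1)$ up to a $p$-unit.
\begin{enumerate}[(a)]
\item Unfold the pairing as the Petersson inner product of the newforms, tensored with the archimedean pairing of the chosen $(\g,K_\infty)$-cohomology generators in degree $5$. Degree $5$ is a sum of the $(4,6)$ and $(6,4)$ contributions over the two real places, i.e. $H^2\otimes H^3$ and $H^3\otimes H^2$.
\item By construction, $\d_\e^\pm$ is built from the $\pm$-combination of these two Künneth components, interchanged by $\e$ via the conjugate self-duality. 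The archimedean pairing is therefore a sum of two terms, each a product of Chen's $\GL_3(\R)$ pairings between degrees $2$ and $3$.
\item Chen \cite{Che22} computed these pairings to give the adjoint $\G$-factor. The cross terms vanish for degree reasons.
\item Write the Petersson norm through Rankin--Selberg as $L^S(\Pi,\Ad,1)$ times local integrals at the ramified places. These are computed via the Miyauchi/Matringe formulas for the essential vector and produce the imprimitive local factors, as in the base-change computation.
\end{enumerate}

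\textbf{Main obstacle.} The step I expect to be hardest is Step 3(b)--(c): showing that the $\e$-twisted archimedean pairing is a $p$-unit multiple (in fact $\pm2$ times) of Chen's constant. This requires keeping track of how $\e$ acts on the $K_\infty$-types and on the orientation, and checking that the relevant sign makes the two Künneth terms add rather than cancel. I would first verify this on the explicit generators of $\wedge^5\p^*$ used by Chen. The factor $2$ is harmless since $p$ is odd.
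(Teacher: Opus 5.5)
Your proposal is correct and follows essentially the same route as the paper. The paper uses a Hecke-equivariant, $\e$-anti-equivariant Poincar\'e pairing on the localized degree-$5$ cohomology, and the involution version of the Tilouine--Urban pairing lemma (their Proposition 2.3, rather than Lemma 2.9, which is the linear-form lemma used for the base-change divisibility) to identify $\eta_{\l_\Pi}(M)[\pm]$ with $[\d^\pm_\e(\phi_f),\d^\mp_\e(\phi_f')]/\bigl(\Om_5(\Pi,\e,\pm)\,\Om_5(\Pi^\vee,\e,\mp)\bigr)$; it then evaluates this pairing via the Jacquet--Shalika formula with mixed essential-vector local factors and Chen's archimedean computation, exactly as you plan.

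Two precisions. The sign you leave open is $s_\e=-1$, since $\e$ acts on $\p_\tau\oplus\p_{\s\tau}$ by $(X,Y)\mapsto(-Y,-X)$, which has determinant $-1$; this single sign makes $\pm$ pair with $\mp$ and makes the two K\"unneth terms add. Also, perfectness after localization comes from the vanishing of localized boundary cohomology at the non-Eisenstein ideal $\m_\Pi$ (Newton--Thorne), not from inner and cuspidal cohomology agreeing.
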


In the above theorem, $\Lambda^{imp}(\Pi,\Ad,s)$ is the imprimitive completed twisted adjoint $L$-function of $\Pi$. In particular, \ref{adjoint_L_value} implies that the normalized adjoint $L$-values appearing on the right-hand sides belongs to $\O$. Since $\eta_{\l_\Pi}(H^5)[\pm]$ divides the Hecke congruence number $\eta_{\l_\Pi}$ of $\Pi$, the above equality implies that if the $\wp$-adic valuation of the right hand side is non-zero  (where $\wp$ is the prime ideal of $\O$), then there exists a cuspidal automorphic representation $\Pi' \in \mathrm{Coh}(G_E,\mu_E,K_f)$ distinct from $\Pi$ which is congruent to $\Pi$ modulo $\wp$. However, we stress that $\eta_{\l_\Pi}(H^5)[\pm]$ may not be equal to $\eta_{\l_\Pi}$, since $H^5$ is not free as a $\TT$-module. \\

Combining \ref{adjoint_L_value} with Balasubramanyam-Raghuram's formula \cite[Theorem A]{BR17}, one can deduce an integral relation between the middle-degree periods and the top-bottom periods of $\Pi$, under Calegari-Geraghty setting. Assume that conjecture $\mathrm{(Gal_\m)}$ holds so that the representation $\rho_\m : \Gal(\overline{E}/E) \to\GLn(\TT_E)$ associated with $\Pi$ exists. We refer to paragraph \ref{middle_vs_extremal} for the statement of this conjecture and of the subsequent ones, as well as for the definition of condition~(CG) appearing in the corollary below. Assuming conjectures $\mathrm{(LGC_\m)}$ and $\mathrm{(Van_\m)}$, we obtain the following corollary:

\begin{corollaire}[\ref{relation_middle_top_bottom}]
\label{relation_middle_top_bottom}
Assume $p >2$. Let $\Pi$ be a cohomological cuspidal automorphic representation of $\GL(\A_E)$ which is conjugate self-dual. Assume that $\Pi$ is only ramified above split primes. Assume that $\rho_\m$ satisfies $\mathrm{(CG)}$, and that the cohomological weight $\mu_E \in X^+(T_E)$ of $\pi$ is $p$-small. Then:
$$
\Om_5(\Pi,\e,\pm) \cdot \Om_5(\Pi^\vee,\e,\mp) \sim \Om_4(\Pi) \cdot \Om_6(\Pi^\vee) \cdot  \nu_{\Pi}^\pm
$$
where $\nu_{\Pi}^\pm := \eta_{\Pi} \cdot \eta_{\Pi}(M)[\e = \pm]^{-1} \in \O$.
\end{corollaire}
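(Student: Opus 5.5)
The corollary should come from comparing two adjoint $L$-value formulas for the same value $\Lambda^{imp}(\Pi,\Ad,1)$. The first is \ref{thmC}, which expresses it through the middle-degree $\e$-periods. The second is the Balasubramanyam--Raghuram formula \cite[Theorem A]{BR17}, with the archimedean factor computed by Chen \cite{Che22}, which expresses it through the extremal periods.

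Concretely, \ref{thmC} gives
$$\Lambda^{imp}(\Pi,\Ad,1)\sim \eta_\Pi(M)[\pm]\cdot\Om_5(\Pi,\e,\pm)\,\Om_5(\Pi^\vee,\e,\mp).$$
The extremal formula gives
$$\Lambda^{imp}(\Pi,\Ad,1)\sim \eta_\Pi(H^b)\cdot \Om_4(\Pi)\,\Om_6(\Pi^\vee),$$
where $\eta_\Pi(H^b)$ is the cohomological congruence number of $\Pi$ on the localized bottom-degree (equivalently, by duality, top-degree) cuspidal cohomology. To run this we need:
\begin{itemize}
\item $p>2$;
\item $\mu_E$ $p$-small, so that the integral cohomology is torsion-free after localization at $\m$ and Poincaré duality pairs $H^4_\m$ with $H^6_\m$ perfectly;
\item the same ramification normalizations (mirahoric newforms at split places), so that the local factors match on both sides.
\end{itemize}
One must check that both formulas use the same normalization of the imprimitive completed $L$-function and of the newform $\phi^\circ_f$. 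Any discrepancy would be a $p$-adic unit under the hypotheses on $p$.

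The key step is to identify $\eta_\Pi(H^b)$ with the Hecke congruence number $\eta_\Pi$. Here the Calegari--Geraghty input is used. Under $\mathrm{(Gal_\m)}$, $\mathrm{(LGC_\m)}$, $\mathrm{(Van_\m)}$ and condition (CG) on $\rho_\m$, the Calegari--Geraghty patching method gives an $R=\TT$ theorem together with freeness of the bottom-degree localized cohomology $H^4_\m$ (and dually $H^6_\m$) over $\TT_E$. Freeness implies that the congruence module of $H^4_\m$ at $\lambda_\Pi$ equals that of $\TT_E$, so $\eta_\Pi(H^4)\sim\eta_\Pi$. This requires that $\TT_E$ be Gorenstein/complete intersection, which the patching also gives. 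Dividing the two expressions for $\Lambda^{imp}(\Pi,\Ad,1)$ then yields
$$\Om_5(\Pi,\e,\pm)\,\Om_5(\Pi^\vee,\e,\mp)\sim \Om_4(\Pi)\,\Om_6(\Pi^\vee)\cdot \eta_\Pi\,\eta_\Pi(M)[\pm]^{-1}.$$
Finally, $\nu_\Pi^\pm\in\O$ holds because $\eta_\Pi(M)[\pm]$ divides $\eta_\Pi$: the image of $M[\pm]$ in the $\lambda_\Pi$-isotypic quotient factors through the Hecke congruence module.

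I expect the main obstacle to be the freeness step. Specifically, one must verify that the pairing and period normalizations in \cite{BR17} are compatible with the congruence-module formalism used for \ref{thmC}, namely the analogue of \ref{lf_lemma} for the Poincaré pairing between degrees $4$ and $6$. I would first rewrite the BR/Chen formula in the form "pairing of integral generators equals normalized $L$-value". Then I would apply the standard lemma that for a free rank-one $\TT_E$-module with a perfect pairing, the pairing of the $\lambda_\Pi$-eigenvectors generates $\eta_\Pi$.
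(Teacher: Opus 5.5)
Your outline matches the paper's: compare \ref{adjoint_L_value} with the Balasubramanyam--Raghuram formula completed by Chen, and use Calegari--Geraghty freeness to replace the extremal cohomological congruence number by $\eta_\Pi$. The gap is in how you combine the two formulas. You divide two $\sim$-relations, but each holds as a $\sim$-relation only under extra conditions on $p$. \ref{adjoint_L_value} needs $p\nmid 6N_{E/\Q}(\n)h_E(\n)D_E$ (and \cite[Theorem A]{BR17} needs an analogous condition), because the constant $C_2=16h_E(\n)\cdot C_1$ of \S\ref{coho_interpretation_petersson} need not be a $p$-adic unit. The corollary assumes only $p>2$, $p$-smallness and (CG); for instance $p\mid h_E(\n)$ is not excluded. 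So your sentence ``any discrepancy would be a $p$-adic unit under the hypotheses on $p$'' is exactly the step that fails. As written, your argument proves the corollary only after excluding further primes, which the paper remarks right after the corollary is unnecessary because the constants cancel.

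The missing step is to compare the exact identities before passing to $\sim$. By (\ref{petersson_coho}),
$[\delta^\pm_\e(\phi^*_\Pi),\delta^\mp_\e(\phi^*_{\Pi^\vee})]=C_2\,L^{imp}(\Pi,\Ad,1)\,B([\Pi_\infty]^\pm_\e,[\Pi^\vee_\infty]^\mp_\e)$.
The same computation in degrees $(4,6)$, which underlies \cite[Theorem A]{BR17}, gives the analogous identity with the \emph{same} $C_2$.

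Be careful that the two identities live at different levels. The $\e$-periods use $K_1^*(\n)$ and the mixed essential vectors, with transposed essential vectors at $w\in\s(R)$, not mirahoric newforms as you write. The periods $\Om_4,\Om_6$ use $K_1(\n)$ and $\phi^\circ$. The constants still agree for two reasons. Both groups meet the centre in the same subgroup, so $h(K_f)=h_E(\n)$ in both cases. Both pairs of test vectors give $L^{imp}(\Pi_w\times\Pi_w^\vee,1)$ at every ramified $w$, by \ref{split_ramified_computations}. Moreover, right translation by $\varpi_{n-1}^{c_w}$ at $w\in\s(R)$ identifies $Y_E(K_1(\n))$ with $Y_E(K_1^*(\n))$, compatibly with $\O$-structures and Hecke operators away from $\n$. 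This is what lets you use $\eta_\Pi$ and the Calegari--Geraghty freeness for $M$.

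After dividing, $C_2\,L^{imp}(\Pi,\Ad,1)$ cancels identically, and only archimedean factors remain. Orthogonality of the K\"unneth summands together with \ref{B_epsilon} gives
$B([\Pi_\infty]^\pm_\e,[\Pi^\vee_\infty]^\mp_\e)=\mp2\,B([\Pi_\infty]_{\{\tau\}},\e([\Pi^\vee_\infty]_{\{\tau\}}))$.
With Chen's explicit generators, the ratio of this to the bottom--top pairing is a unit for $p>2$, which is where $p>2$ enters. The remaining integrality input is the perfect-pairing lemma on each side (\ref{pairing_lemma_2}, resp.\ \cite[Proposition 2.3]{TU22}).

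Two smaller corrections. First, $p$-smallness is what gives $L_\mu(\O)^\vee=L_{\mu^\vee}(\O)$; together with non-Eisensteinness of $\m_\Pi$ (from (CG)) this makes the Poincar\'e pairing perfect. It has nothing to do with torsion-freeness. Second, no Gorenstein or complete-intersection property of $\TT_E$ is needed. Rank-one freeness in one extremal degree already gives $C_{\lambda_\Pi}(H)\simeq C_{\lambda_\Pi}(\TT_E)$, and the other extremal degree has the same congruence number by the perfect pairing.
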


It is worth noting that there is no need to exclude specific primes (except for $\mu$-small primes), as the constants in the two adjoint $L$-value formulas cancel each other out. \\

The proof of \ref{thmC} (like the proof of all similar formulas) follows the line of the proof in Hida's pioneering paper \cite{Hi81a}. The first step is to express the Petersson product as the value at $s=1$ of the adjoint $L$-function of $\Pi$. In the case of a classical modular form, this formula is du to Shimura \cite{Shi76}. For a cuspidal automorphic representation of $\GLn$, it is a formula of Jacquet and Shalika \cite[Section 4]{J-S81-I}. However, Jacquet-Shalika's formula is only true up to ramified and archimedean local factors. Thus we need to compute these ramified local factors for the newforms $\phi_f \in \Pi_f$ and $\phi_f' \in \Pi^\vee_f$ used to define the $\e$-periods of $\Pi$ and $\Pi^\vee$.

The next step is to interpret cohomologically the Petersson product as a Poincaré pairing $[\cdot,\cdot]$ on the middle-degree-5 cuspidal cohomology groups, using the Eichler-Shimura maps. As explained earlier, the definition of these maps depends on the explicit choice of a generator $[\Pi_\inf]$ of the $(\g,K_\inf)$-cohomology groups of the archimedean part $\Pi_\inf$ of $\Pi$ twisted by the coefficients module $L_{\mu_E}(\C)$. The obtained formula, relating the adjoint $L$-value to the Poincaré pairing, involves an archimedean factor depending on the choice of this generator. In general, one can just show that this factor is non-zero. However, in the case of $\GL$ over a totally real field, Chen \cite{Che22} has computed this factor for some choice of $[\Pi_\inf]$ and shown that it is equal to the appropriate archimedean $\G$-factor. \\

When the $\Pi$-part of the cohomology groups is a 1-dimensional $\C$-vector space, one can relate the Poincaré pairing to the cohomological congruence number of $\Pi$ as follows. By normalizing the cohomological classes associated with $\phi_f$ and $\phi_f'$ by the periods of $\Pi$ and $\Pi^\vee$, we get, by definition of the periods, two $\O$-generators $\d$ of the $\Pi$-part and $\d'$ of the $\Pi^\vee$-part of the cohomology with coefficients in $\O$. The pairing $[\cdot,\cdot]$ is perfect on the full cohomology groups. However, its restriction to the cartesian product of the $\Pi$-part and the $\Pi^\vee$-part is no longer a perfect pairing. Its discriminant, which is $[\d,\d']$, is in fact precisely equal to the cohomological congruence number of $\Pi$.

Nevertheless, in our case, the $\Pi$-isotypic part of the cohomology group $M = H^5_{cusp}(Y_E(K_f),\L_{\mu_E}(\C))$ is of dimension $2$. Consequently the above reasoning does not apply as it stands, because one may not be able to normalize integrally a cohomology class with a scalar. However, when the representation  $\Pi$ is conjugate self-dual, we have explained that the $\Pi$-part of $M$ is equipped with a non-trivial involution. This involution can be used to decompose $M[\Pi]$ has a direct sum of two $1$-dimensional vector spaces $M[\Pi;\pm]$, endowed with an integral $\O$-structure. We choose two generators $[\Pi_\inf]_\pm$ of the $(\g,K_\inf)$-cohomology so that the Eichler-Shimura maps take values in these subspaces. We then normalize by appropriate periods the cohomological classes associated with $\phi_f$ and $\phi_f'$ through these maps, and we thus obtain two generators $\d_\pm$ et $\d'_\pm$ of these integral structures. Unfortunately, the two submodules $M[\pm]$ are not Hecke modules so we still need to adapt the above method. To do this, we remark that $\e$ also induces an involution on the congruence module $C_{\l_\Pi}(M)$. This involution is non-trivial and can be used to decompose the congruence module into two submodules $C_{\l_\Pi}(M)[\pm]$. Then one can prove, using some equivariance property of the Poincaré pairing with respect to the involution $\e$, that the pairing $[\d_\pm,\d'_\mp]$ is equal to the Fitting ideal $\eta_{\l_\Pi}(M)[\pm]$ of these submodules. \\

\subsection{Comments}

We conclude this introduction with a few comments. A natural question is how to prove the reciprocal divisibility in \ref{main_div_BC} (or equivalently in \ref{thmB}). In the few case where such a divisibility is established, the proof generally involves some non-vanishing modulo $p$ result. For instance, in the case of the Yoshida lift of two cuspidal modular forms, Liu and Hsieh (in some work in preparation) have been able to establish such a divisibility by using the non-vanishing modulo $p$ of the Yoshida lift's Fourier coefficients. In \cite{TU22} the reciprocal divisibility is proven by using a theorem of Cornut-Vatsal \cite{Cornut02,Vat02} on the non-vanishing modulo $p$ of twisted standard $L$-values for weight-$2$ modular forms, generalized by Chida-Hsieh \cite{CH18} to $p$-small weights. The method of Tilouine and Urban should generalize to our case and yield the reciprocal divisibility, provided that some generalization of Cornut-Vastal is established for $\GL$. However, to the knowledge of the author, no such result is known beyond $\mathrm{GL}_2$.  \\

We now discuss to what extent the results of the present paper could be generalized to larger $n$ and general extensions $E/F$ of number fields. The problem as $n$ grows and $E$ gets bigger is that the size $\ell_E$ of the cuspidal range for $\GL(E)$ becomes greater than $2$. In this situation, this is not clear how to define intermediate degree periods as the dimension of the intermediate cohomology groups can be strictly bigger than $2$, whereas the number of involutions available for cutting out $1$-dimensional subspaces remains the same. However, the methods presented in this paper may apply to other cases where $\ell_E=2$, which include:
\begin{itemize}
\item  Case 1: $n=2$ and $E$ is an imaginary bi-quadratic number field;
\item  Case 2: $n=4$ and $E$ is real quadratic;
\item  Case 3: $n=3$ and $E$ is imaginary quadratic.
\end{itemize}
The first two cases are similar to the situation in this paper, in the sense that the archimedean places of $F$ are split in $E$. Case 1 has been studied by Hida \cite[Section 7]{Hi99}. Our results for $\GL$ should easily be extended to Case 2, provided that one is able to extend Chen’s archimedean computations to $\mathrm{GL}_4$. Case 3 is investigated by Balasubramanyam and Tilouine \cite{BT}. \\

Finally, for $n \geq 3$, one can also consider the stable base change to $\GLn(E)$ from the $n$-variables quasi-split unitary group $U_{E/F}$ associated with the quadratic extension $E/F$. In a separate paper \cite{TR_SBC}, we study this situation when $n=3$ and $E$ is real quadratic. We prove a periods divisibility result for this automorphic transfer using methods similar to those developed in the present paper. The situation for the stable base change is somehow symmetric to that of the classical base change considered here, and we also refer to the introduction of the author’s thesis \cite{thesis} for a unified presentation of these two settings.\\

\subsubsection{Toward the construction of a $p$-adic $L$-function} To conclude this introduction, we mention that the results of this paper can be used to construct $p$-adic $L$-functions interpolating the values of the twisted adjoint $L$-function $L(\pi,\Ad \otimes \chi_E,1)$ as $\pi$ varies in a Hida family of self-dual cuspidal automorphic representations of $\GL(\Q)$. Let us briefly explain how. As explained above, \ref{thmB} is proven by constructing an $\O$-linear form $\Ld_\mu : H_{cusp}^5(Y_E(K_f), \mathcal{L}_{\mu}(\O)) \to \O$ on the cuspidal cohomology of the adelic variety of $\GL(E)$ of weight $\mu$. This linear form has the particular proprety that:
$$
\Ld_\mu(\d(\phi^\circ)) = L(\pi,\Ad \otimes \chi_E,1)
$$ 
where $\pi$ is a self-dual cuspidal automorphic representation of $\GL(\A_\Q)$ of weight $\mu$ and $\d(\phi^\circ)$ is some $5$-differential form associated, by some Eichler-Shimura map $\d$, to a normalized newform $\phi^\circ \in \Pi := \mathrm{BC}(\pi)$. Thus, one should be able to construct a $p$-adic $L$-function interpolating the special $L$-value of the right-hand side by constructing a linear form $\Ld_{\Lambda}$ on some module over the Iwasawa algebra $\Lambda$ of $\GL(E)$, which interpolates the linear form $\Ld_\mu= H_{cusp}^5(Y_E, \mathcal{L}_{\mu}(\O)) \to \O$ as the algebraic weight $\mu$ varies $p$-adically over the weight space. Such a construction was carried out for Bianchi modular forms in \cite{Lee21}. As elsewhere in this thesis, the main difficulty in such a construction in the case of $\GL$ lies in the need to construct such a linear form on the middle-degree cohomology group. It is natural to try to contruct this $\La$-adic linear form on the ordinary Hida cohomology $H^5_{ord}(K_f)$, which is naturally a module over the Iwasawa algebra $\La$. However, because of the so-called \textit{non-abelian Leopoldt conjecture}, formulated by Hida \cite{Hi98} and Urban, the ordinary Hida cohomology $H^*_{ord}(K_f)$ is expected to be concentrated in top degree $t=6$ (see \cite[Proposition 9]{TU22}). The idea to overcome this problem is to replace the Iwasawa algebra $\La$ with a quotient algebra $\La^{\circ}$, which we called the \textit{restricted} Iwasawa algebra and that only captures pure algebraic weights $\mu$. One can then define a \textit{restricted} Hida cohomology $H^*_{ord}(K_f)^\circ$, which is a module over the restricted Iwasawa algebra $\La^{\circ}$. Building on \cite{KT16}, one can prove a control theorem for $H^5_{ord}(K_f)^\circ$ and show in particular that it is non-zero. It is then possible to construct a $\La^\circ$-linear form on $H^5_{ord}(K_f)^\circ$ which have the expected interpolation property and from which it is possible to construct the twisted adjoint $p$-adic $L$-function for $\GL$.

\subsection{Outline} This paper is organized as follows. Section~\ref{section_periods} introduces the main objects and defines the middle-degree $\e$-periods associated with conjugate self-dual automorphic representations of $\GL(E)$. In Section~\ref{part_adjoint}, we prove \ref{thmC}. Section~\ref{part_CBC} recalls basic definitions and results about  base change for $\GL$, proves \ref{thmB}, and establishes the main period divisibility of \ref{main_div_BC}. The final paragraph of this section is devoted to the presentation of results for non-self-dual representations.

\subsection{Acknowledgements}
The author is deeply indebted to his advisor, J. Tilouine, for his constant help and support during the preparation of this paper. The author also would like to thank B. Balasubramanyam, S.-Y. Chen, E. Ghate, G.Grossi, H. Hida, M.-L.Hsieh, E. Lapid, Z. Liu, N. Matringe, M. Moakher, D. Prasad, K. Prasanna and E. Urban for stimulating conversations, comments and suggestions regarding this work. This paper was partly completed during two visits at IISER Pune on an invitation by B. Balasubramanyam and a visit at IIT Bombay on an invitation by D. Prasad, as well as during a visit at the NCTS on an invitation by M.-L. Hsieh. The author would like to thank them heartily for their hospitality. I would also like to thank Eknath Ghate and Sandeep Varma, organizers of the \textit{p-adic Methods in Number Theory conference} at TIFR in Mumbai, for giving me the opportunity to present my work there.

\subsection{Notation and conventions}
\label{notations}

Throughout this paper, we consider an odd prime number $p$. We fix an embedding $j: \overline{\Q} \inj \C$ and an isomorphism $ j_p: \overline{\Q}_p \simeq \C$. Let $\K \subset \overline{\Q}_p$ be some sufficiently large finite extension of $\Q_p$. We denote by $\O$ its integers ring, and $\wp$ its prime ideal.

Using $j_p$, we can see complex numbers as elements of $\overline{\Q}_p$. Thus, if $z_1$ and $z_2$ are two non-zero complex numbers, we write $z_1 \sim z_2$ (resp. $z_1 \mid z_2$) if the quotient $z_2/z_1$ lies in $\O^\x$ (resp. in $\O$).

Unless otherwise specified, $E$ will always denote a real quadratic field, and $\s \in \mathrm{Gal}(E/\Q)$ is the non-trivial element in $\mathrm{Gal}(E/\Q)$. We fix an embedding $\tau: E \inj \overline{\Q}$, so that via $j$ we can identify the set $\{\tau,\s\tau\}$ with the set of archimedean places of $E$. Moreover, we denote by $\chi_E$ the quadratic Dirichlet character associated with $E$.

\subsubsection{Number fields, groups and additive characters}
Let $F$ be some number field. We denote by $\O_F$ its ring of integer, $\mathfrak{d}_F$ its different and $D_F$ its discriminant. If $v$ is any place of $F$, we denote by $F_v$ the completion at $v$. When $v$ is finite, we denote by $\O_{F_v}$ (or simply $\O_v$ when the context is clear) its ring of integers, $\wp_v$ its prime ideal and $\varpi_v$ an uniformizer of $F_v$. Let $ S_\inf(F)$ be the set of archimedean places of $F$, $\A_F$ be the adeles of $F$ and $\A_{F,f}$ its finite part. When $F=\Q$, we simply write $\A$ and $\A_f$. \\

Let $n\geq 1$ be an integer. Let $\GLn$ denote the general linear group of dimension $n$, $B_n$ its standard Borel, $N_n$ and $T_n$ respectively the maximal unipotent and the maximal torus of $B_n$. Let $Z_n$ denote the center of $\GLn$. We also denote by $G_F$ the $\Q$-algebraic group $\mathrm{Res}_{F/\Q}(\mathrm{GL}_{n/F})$, whose $A$-points are given, for any $\Q$-algebra $A$, by:
$$
G_{F}(A) = \mathrm{GL}_n(F\otimes_\Q A).
$$
Let $K_n$ be the identity component of the maximal compact subgroup modulo center of $\GLn(\R)$:
$$
K_n= \mathrm{SO}(n) \R_+^\x
$$
where $\R^\x$ is seen as the center of $\GLn(\R)$. Let $\g_{n}$ and $\k_{n}$ be the Lie algebras of $\GLn(\R)$ and $K_n$, and let $\p_{n} := \g_{n}/\k_{n}$. If $F$ is a totally real field, let $K_{\inf} = \prod_{\tau \in  S_\inf(F)} K_n
$ be the identity component of the maximal compact subgroup modulo center of $G_F(\R)$ and let $\g = \oplus_\tau \g_{n}$ and $\k =\oplus_\tau \k_{n}$ be the Lie algebras of $G_F(\R)$ and $K_\inf$, and let $\p := \g /\k$. Let $\g_\C$, $\k_\C$ and $\p_\C = \g_\C /\k_\C$ denote their complexification, and $\p_\C^*$ the dual space of $\p_\C$. \\

We denote by $\psi_\Q = \bigotimes_v \psi_{\Q_v} : \Q\bs\A \to \C^\x$ the additive unramified character defined by:
$$
\begin{aligned}
\psi_{\Q_p}(x) &= \exp(-2\sqrt{-1}\pi[x]_p), \quad x \in \Q_p \\
\psi_\R(x) &= \exp(2\sqrt{-1}\pi x), \quad x \in \R \\
\end{aligned}
$$
Here $[x]_p := \sum_{k=-m}^{-1} c_k p^k$ is the $p$-fraction part of $x = \sum_{k=-m}^\inf c_kp^k$. Let $\psi_F : F \bs \A_F\to \C^\x$ be the standard non-trivial character  on $\A_F$, defined by $\psi_F = \psi_\Q \circ \Tr_{F/\Q}$. If $\psi : F \bs \A_F\to \C^\x$ is any additive character, it defines a character of $N_n(F)\bs N_n(\A_F)$, also denoted by the same symbol $\psi$, and defined by:
$$
\psi(n) = \psi(n_{1,2} + \dots + n_{n-1,n}), \quad n \in N_n(\A_F)
$$ 

\subsubsection{Haar measures}
\label{measures}
Write $\psi_F = \bigotimes_v \psi_v$ for the standard additive character on $\A_F$. The additive Haar measures on $\A_F$ are fixed as follows. Let $v$ be a place of $F$. If $v$ is non-archimedean, the Haar measure $dx_v$ is the self-dual measure on $F_v$ with respect to $\psi_v$. The volume of $\O_v$ with respect to $dx_v$ is:
$$
\vol(\O_v,dx_v) = N(\mathfrak{d}_v)^{-1/2}
$$
where $\mathfrak{d}_v$ is the different of $F_v$. If $v$ is a real archimedean place, the Haar measure $dx_v$ on $F_v$ is the Lesbesgue measure. The global Haar measure on $\A_F$ is then $dx = \prod_v dx_v$. Note that:
$$
\vol(\A_F/F,dx) = 1.
$$

The local Haar measures on the idèles $\A_F^\x$ are fixed as follows. Let $v$ be a place of $F$. If $v$ is non-archimedean, corresponding to some prime ideal $\wp$, we set the Haar measure $d^\x x_v$ on $F_v^\x$ to be:
$$
d^\x x_v = (1-N\wp^{-1})^{-1} \frac{dx_v}{|x_v|}
$$
The volume of $\O_v^\x$ with respect to $d^\x x_v$ is  $\vol(\O_v^\x,d^\x x_v) = N(\mathfrak{d}_v)^{-1/2}$. If $v$ is a real archimedean place, we normalize the Haar measure $d^\x x_v$ on $F_v^\x = \R^\x$ by $d^\x x_v = |x_v|^{-1} dx_v$. The global Haar measure on $\A_F^\x$ is then $d^\x x = \prod_v d^\x x_v$. Note that this is not the Tamagawa measure on $\A_F^\x$. In particular, one has:  \\
$$
\vol(\A_F^1/F^\x,d^\x x) = \mathrm{Res}_{s=1} \zeta_E(s).
$$
where $\A_F^1$ denotes the subgroup of idèles of norm $1$ and $\zeta_E$ is the Dedekind zeta function of $E$. \\

\section{Eichler-Shimura maps and middle-degree automorphic periods}
\label{section_periods}

In this section, we first present a new vector theory for a family of $\e$-invariant open compact subgroups of $\GLn$. We then introduce the cohomological framework and the automorphic objects needed throughout the paper. Finally, we construct the middle-degree Eichler–Shimura maps for $\GL(E)$, and define the middle-degree $\e$-periods attached to a conjugate self-dual cohomological cupsidal representation of $\GL(E)$.

\subsection{Mixed mirahoric new vector theory for $\GLn$}
\label{mirahoric_theory}

\subsubsection{Local theory}

Let $L$ be a non-archimedean local field of characteristic zero, with valuation ring $\O$ and prime ideal $\wp$. Let $q = \#(\O/\wp)$ and let $\varpi$ be an uniformizer of $L$. For an integer $c \geq 0$, let $K_1(\wp^c)$ be the mirahoric subgroup of level $\wp^c$, which is the open compact subgroup of $\GLn(L)$ formed by matrices in $\GLn(\O)$ whose last row is congruent to:
$$
e_n:= (0, \dots,0,1)
$$
modulo $\wp^c$. Let $\pi$ be an irreducible admissible representation of $\GLn(L)$ which is generic. Let $\psi$ be a non-trivial unramified additive character of $L$, i.e. such that $\psi(\O_L) = 1$ and $\psi(\varpi^{-1}) \neq 1$. We denote by $\W(\pi,\psi)$ the Whittaker model of $\pi$ with respect to $\psi$, and by $\phi \mapsto W_\phi$ the isomorphism $\pi \toeq \W(\pi,\psi)$. For any non-negative integer $c$, we denote by $V(c)$ the space of $K_1(\wp^c)$-fixed vector in $\pi$. The following theorem is due to Jacquet, Piatetski-Shapiro, and Shalika (see~\cite[Section 5]{J-PS-S81}):

\begin{theorem}
\label{essential_vector}
Let $\pi$ be an irreducible admissible representation of $\GLn(L)$ which is generic. Then, there exists a non-negative integer $c$ such that $V(c) \neq 0$. Moreover, if $c(\pi) \geq 0$ is the minimal integer with this property, then: 
$$
\dim V(c(\pi)) = 1
$$
and $c(\pi)$ coincides with the analytic conductor of $\pi$, i.e the power of $q^{-s}$ in the $\e$-factor of $\pi$ with respect to an unramified additive character $\psi$ of $L$.
\end{theorem}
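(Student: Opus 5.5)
The plan follows the Rankin--Selberg strategy of Jacquet, Piatetski-Shapiro and Shalika. Fix the Whittaker model $\W(\pi,\psi)$ with $\psi$ unramified. The guiding idea is that a $K_1(\wp^c)$-fixed Whittaker function is governed by its restriction to the mirabolic/diagonal part. The local zeta integrals $\Psi(s,W,W')$ against the unramified generic representations $\pi'$ of $\GL_{n-1}(L)$, and their functional equations, can then pin down the dimension of $V(c)$.

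First, existence of some $c$ with $V(c)\neq 0$ is easy. Since $\pi$ is smooth, any vector is fixed by a principal congruence subgroup $K(\wp^m)$. After translating by a diagonal element $t=\diag(\varpi^{a_1},\dots,\varpi^{a_n})$, the vector $\pi(t)\phi$ is fixed by the unipotent radical parts of $K_1(\wp^{c})$ for suitable $c$. Averaging over the compact group $K_1(\wp^c)$ then produces an invariant vector, which remains nonzero if one chooses $\phi$ so that its Whittaker function at the identity is nonzero. So $V(c)\neq 0$ for $c\gg 0$, and $c(\pi)$ is well defined.

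Second, the key step is an injectivity statement. Any $W\in V(c)$ is determined by its values on $\GL_{n-1}(L)$ embedded in the upper-left block, i.e. by the Kirillov restriction. Such a $W$ is right-invariant under $\GL_{n-1}(\O)$ there. By the Iwasawa decomposition, $W$ is therefore determined by its values on diagonal elements $\diag(\varpi^{\lambda},1)$ with $\lambda$ dominant.

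Next I would pair $W$ against the spherical Whittaker functions $W'$ of all unramified $\pi'$ of $\GL_{n-1}(L)$. The integral $\Psi(s,W,W')$ is a generating series in these diagonal values weighted by Schur polynomials (Shintani's formula). Consequently $W$ is determined by the rational functions $\Psi(s,W,W')$ as $\pi'$ varies. The functional equation
$$\Psi(1-s,\tilde W,\tilde W') = \omega_{\pi'}(-1)^{n-1}\,\gamma(s,\pi\times\pi',\psi)\,\Psi(s,W,W')$$
with $\tilde W(g)=W(w_n {}^t g^{-1})$ introduces the factor $\epsilon(s,\pi\times\pi',\psi)=\epsilon(0,\pi\times\pi',\psi)q^{-c(\pi)(s-1/2)}$ for unramified $\pi'$. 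The $K_1(\wp^c)$-invariance of $W$ translates, via $\tilde W$, into invariance under a conjugate group involving $\diag(\varpi^{c},1,\dots)$. Comparing degrees in $q^{-s}$ on both sides of the functional equation then shows two things. If $c<c(\pi)$, then $\Psi(s,W,W')\in L(s,\pi\times\pi')\cdot$ (polynomials whose degree range is negative), forcing $\Psi=0$ and hence $W=0$. If $c=c(\pi)$, then $\Psi(s,W,W')=W(1)\,L(s,\pi\times\pi')$, so $W$ is determined by $W(1)$ and $\dim V(c(\pi))\le 1$.

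Finally, for nonvanishing at $c(\pi)$, I would construct a $W$ whose restriction to $\GL_{n-1}$ has Mellin transform $L(s,\pi\times\pi')$ for every unramified $\pi'$. One can take the Kirillov-model function in $\GL_{n-1}(\O)$-invariant form equal to the inverse transform of $L(s,\pi\times\pi')$, which lies in the Kirillov model by the theory of derivatives. Then one checks via the functional equation that this $W$ is $K_1(\wp^{c(\pi)})$-invariant. This last part, together with the degree bookkeeping in the functional equation that identifies the minimal level with the $\epsilon$-factor exponent, is the main obstacle. In particular, producing an invariant vector at exactly the conductor requires the ``converse theorem''-style argument of \cite[Section 5]{J-PS-S81}, and I would follow their reduction to the Kirillov model there.
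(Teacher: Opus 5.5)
Your strategy is the Jacquet--Piatetski-Shapiro--Shalika argument, and that is all the paper uses: it gives no proof, only the reference \cite[Section 5]{J-PS-S81}. Deferring the construction of a nonzero vector at the conductor to that source is exactly what the paper does. But two steps you actually write out fail as stated.

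\textbf{Existence by averaging.} $K_1(\wp^c)$ always contains the mirabolic $P_n(\O)$ (matrices of $\GLn(\O)$ with last row $(0,\dots,0,1)$), hence all of $\mathrm{GL}_{n-1}(\O)$. For $n\geq 3$, a diagonal translation cannot make a vector fixed by both the upper and lower unipotents of $\mathrm{GL}_{n-1}(\O)$: it shrinks one and enlarges the other. More seriously, the averaged vector has Whittaker value $\int_{K_1(\wp^c)}W(k)\,dk$ at $1$, which is not controlled by $W(1)$. Already for $n=2$, take the Kirillov function $\chi\cdot\mathbf{1}_{\O^\times}$ with $\chi$ ramified. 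It is fixed by the unipotent parts of $K_1(\wp^c)$ for $c\gg0$ and equals $1$ at the identity, yet its average over $\diag(\O^\times,1)\subset K_1(\wp^c)$ vanishes. The correct argument is short. The Kirillov model contains $\mathrm{ind}_{N_n}^{P_n}\psi$, so some nonzero $W$ has right $P_n(\O)$-invariant restriction to $P_n$ (take the function supported on $N_nP_n(\O)$), hence is $P_n(\O)$-fixed. It is also fixed by some principal congruence subgroup $K(\wp^m)$, and $K_1(\wp^m)=P_n(\O)K(\wp^m)$. This step is in any case redundant once your last step produces a nonzero vector at the conductor.

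\textbf{Degree bookkeeping.} Your exponents are off by a factor $n-1$. For $\pi'$ unramified on $\mathrm{GL}_{n-1}(L)$, multiplicativity gives $\epsilon(s,\pi\times\pi',\psi)=\omega_{\pi'}(\varpi)^{c(\pi)}\epsilon(s,\pi,\psi)^{n-1}$. This is a constant times $q^{-(n-1)c(\pi)s}$, not $q^{-c(\pi)s}$. Likewise, the diagonal element relating ${}^tK_1(\wp^c)$ (which fixes $\tilde W$) to $K_1(\wp^c)$ is $\varpi_{n-1}^{\pm c}=\diag(\varpi^{\pm c},\dots,\varpi^{\pm c},1)$, not $\diag(\varpi^c,1,\dots,1)$. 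After the substitution $g\mapsto g\varpi^{-c}$ in the $\mathrm{GL}_{n-1}$-integral it contributes $q^{-(n-1)cs}$.

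Now set $X=q^{-s}$ and let $W_1$ be the right translate of $\tilde W$ by $\varpi_{n-1}^{-c}$, which is $K_1(\wp^c)$-fixed in $\W(\pi^\vee,\psi^{-1})$. Let $P_W=\Psi(s,W,W')/L(s,\pi\times\pi')$. This is a polynomial in $X$ with $P_W(0)=W(1)$, because $W(\diag(g,1))=0$ unless the last row of $g$ is integral; you should state this explicitly. The functional equation then reads $P_W(X)\,X^{(n-1)(c(\pi)-c)}=\kappa\, P_{W_1}(q^{-1}X^{-1})$ with $\kappa\neq 0$. Only because the two factors $n-1$ match does this give $P_W=0$ for $c<c(\pi)$ and $P_W=W(1)$ for $c=c(\pi)$. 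With your exponents the comparison does not close for $n\geq 3$; it is correct as written only for $n=2$. The same matching is what your final check, that the constructed $W$ is $K_1(\wp^{c(\pi)})$-fixed, relies on.
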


The integer $c(\pi)$ of the above theorem is called the \textit{mirahoric conductor} of $\pi$. We will say that $\wp^c$ is the \textit{mirahoric level} of $\pi$. A non-zero form $\phi$ in $V(c(\pi))$ will sometimes be called a newvector or a newform. The values of the Whittaker function $W_\phi$ associated with a newform $\phi$ on (a part of) the diagonal torus have been explicitly computed independently by Miyauchi~\cite[Theorem 4.1]{Miyauchi2012} and Matringe~\cite[Formula (1)]{Matringe13}. The proof of Miyauchi is a generalisation of Shintani's method for unramified representation and assume \ref{essential_vector}, while the proof of Matringe deduce these formulas from a new constructive proof of the results of Jacquet, Piatetski-Shapiro, and Shalika.

In order to state their formula, we need to introduce a few notation. First, for any $f = (f_1,\dots,f_{n-1}) \in \Z^{n-1}$, we note $\varpi^f = \mathrm{diag}(\varpi^{f_1},\dots, \varpi^{f_{n-1}}) \in \mathrm{GL}_{n-1}(L)$. A tuple $f \in \Z^{n-1}$ can also be seen as the element $(f_1,\dots,f_{n-1},0)$ in $\Z^n$. When $f$ is dominant as a weight for $\GLn$, i.e if $f_1 \geq \dots \geq f_{n-1} \geq 0$, we denote by $s_{f}(X_1,\dots,X_n)$ the Schur polynomial associated to $f$. Next, we recall that the standard $L$-function of $\pi$ is of degree $r \leq n$ with $r<n$ if and only if $\pi$ is ramified, i.e. if $c(\pi) >0$ (see \cite[Section 3]{Jacquet79}). Thus, we can write it as:
$$
L(\pi,s) = \prod_{i=1}^r (1-\a_i q^{-s})^{-1}
$$
with $\a_i \in \C^\x$. These notations having been presented, we have the following theorem:
\begin{theorem}
\label{explicit_essential_values}
Let $\pi$ be an irreducible admissible generic representation of $\GLn(L)$, and $\phi$ be a newvector in $\Pi$, and $W_\phi \in \W(\pi,\psi)$ its Whittaker function. Then, for all $f \in \Z^{n-1}$, we have:
$$
W_\phi \left(\begin{array}{ll}
\varpi^f & \\
& 1
\end{array}\right) = \left\{
    \begin{array}{ll}
        \d^{1/2}_{B_n}(\varpi^f)s_f(\a)W_\phi(1) & \mathrm{if} \,\,  f_1 \geq \dots \geq f_{n-1} \geq 0 \\
        0 & \mathrm{otherwise.}
    \end{array}
\right.
$$
where $s_f(\a) = s_f(\a_1, \dots,\a_r,0,\dots,0)$ and $\d_{B_n}$ is the modulus character of ${B_n}(L)$ whose value on $\varpi^f$ is given by $\d_B(\varpi^f) = q^{-\sum_{i=1}^{n-1}(n+1-2i)f_i}$.
\end{theorem}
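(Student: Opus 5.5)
We follow Miyauchi's strategy, which generalizes Shintani's computation for spherical vectors. Write $W = W_\phi$. First we establish the vanishing statement. For $u \in \O$, the unipotent element $x_i(u) = 1 + uE_{i,i+1}$ lies in $K_1(\wp^c)$ for $1 \le i \le n-1$. Right-translating $W$ by $x_i(u)$ and conjugating through the torus element $t = \diag(\varpi^f,1)$ gives
$$W(t) = \psi(\varpi^{f_i - f_{i+1}} u)\, W(t),$$
where we set $f_n = 0$. If some $f_i - f_{i+1} < 0$, we choose $u$ so that the character value differs from $1$; this is possible because $\psi$ is unramified. Hence $W(t) = 0$ unless $f$ is dominant with $f_{n-1} \geq 0$.

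For dominant $f$ we compare two generating functions. By the Jacquet--Piatetski-Shapiro--Shalika theory underlying \ref{essential_vector}, the newvector is a test vector for the $\GLn \times \mathrm{GL}_{n-1}$ zeta integral against the unramified spherical Whittaker function $W^\circ_\tau$ of an unramified generic representation $\tau$ of $\mathrm{GL}_{n-1}(L)$:
$$\int_{N_{n-1}\bs \mathrm{GL}_{n-1}(L)} W\begin{pmatrix} h & \\ & 1\end{pmatrix} \overline{W^\circ_\tau(h)}\,|\det h|^{s-1/2}\,dh \;=\; W(1)\,L(s,\pi\times\tau^\vee),$$
after normalizing $W^\circ_\tau(1)=1$. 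The key point is that the restriction of $W$ to the mirahoric-embedded $\mathrm{GL}_{n-1}$ is right $\mathrm{GL}_{n-1}(\O)$-invariant, because $\diag(k,1)\in K_1(\wp^c)$. The Iwasawa decomposition therefore reduces the integral to a sum over the dominant $\varpi^f$ with the Cartan volume factors. We insert Shintani's formula
$$W^\circ_\tau(\varpi^f) = \d_{B_{n-1}}^{1/2}(\varpi^f)\, s_f(\b),$$
where $\b$ denotes the Satake parameters of $\tau$, so the left-hand side becomes
$$\sum_f W(\varpi^f)\,\d_{B_{n}}^{-1/2}(\varpi^f)\,\overline{s_f(\b)}\,q^{-|f|(s-\ldots)}.$$

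On the other side, $L(s,\pi\times\tau^\vee) = \prod_{i\le r,\, j\le n-1}(1-\a_i\bar\b_j q^{-s})^{-1}$. The Cauchy identity expands this as
$$\sum_f s_f(\a_1,\dots,\a_r,0,\dots,0)\,\overline{s_f(\b)}\,q^{-s|f|},$$
and the padding by zeros is harmless because Schur polynomials vanish when the length of $f$ exceeds the number of nonzero variables. The Schur polynomials $s_f(\b)$ are linearly independent as functions of $\b$ as $\tau$ varies, so comparing coefficients identifies $W(\varpi^f)$ with $\d^{1/2}_{B_n}(\varpi^f)\,s_f(\a)\,W(1)$. Tracking the modulus characters $\d_{B_n}$ versus $\d_{B_{n-1}}$ and the $|\det|^{1/2}$ shift is the bookkeeping that produces the exact power of $q$ stated in the theorem.

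The main obstacle is the test-vector identity, namely that the newvector computes exactly $L(s,\pi\times\tau^\vee)$ for every unramified $\tau$. The JPSS proof of \ref{essential_vector} shows this for the essential vector, so we invoke it. Alternatively, following Matringe, one builds $W$ directly on $\mathrm{GL}_{n-1}$ as the function determined by the Cauchy expansion and checks that it extends to a $K_1(\wp^{c})$-invariant Whittaker function, with uniqueness supplied by $\dim V(c(\pi))=1$. We would cite this step rather than reprove it.
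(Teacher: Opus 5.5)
Your argument is correct, but it follows a different route from the one the paper relies on. The paper gives no proof of this statement. It quotes Miyauchi (Shintani's method, with \ref{essential_vector} as input) and Matringe (the formula comes out of a constructive reproof of the JPSS results).

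You instead black-box the JPSS test-vector identity: the $\mathrm{GL}_n\times\mathrm{GL}_{n-1}$ zeta integral of the newvector against the normalized spherical Whittaker function of any unramified generic $\tau$ equals $W(1)\,L(s,\pi\times\tau)$. This identity belongs to the JPSS theory, but it is more than \ref{essential_vector} as the paper states it. Granting it, the rest of your argument goes through:
\begin{itemize}
\item the unipotent argument gives the vanishing, since every $1+uE_{i,i+1}$ with $i\le n-1$ and $u\in\O$ lies in $K_1(\wp^c)$;
\item $h\mapsto W(\mathrm{diag}(h,1))$ is right $\mathrm{GL}_{n-1}(\O)$-invariant;
\item Shintani's formula, the Cauchy identity and linear independence of Schur polynomials then give the result.
\end{itemize}
The bookkeeping closes exactly. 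Since $\delta_{B_n}(\mathrm{diag}(t,1))=\delta_{B_{n-1}}(t)\,|\det t|$, the following three factors multiply at $\varpi^f$ to exactly $\delta_{B_n}^{-1/2}(\varpi^f)\,q^{-s|f|}$:
\begin{itemize}
\item the Iwasawa modulus factor $\delta_{B_{n-1}}^{-1}(\varpi^f)$ (this is what appears, not a Cartan volume);
\item Shintani's $\delta_{B_{n-1}}^{1/2}(\varpi^f)$;
\item $|\det\varpi^f|^{s-1/2}$.
\end{itemize}
So your ``$\ldots$'' is $0$. Comparing coefficients of $q^{-sk}$, which are homogeneous polynomials of degree $k$ in $\beta$, then gives $\delta_{B_n}^{1/2}(\varpi^f)s_f(\alpha)W(1)$.

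One slip needs fixing. $\overline{W^\circ_\tau}$ is the spherical function of $\bar\tau$, whose parameters are $\bar\beta_j$. This is $\tau^\vee$ only when $\{\bar\beta_j\}=\{\beta_j^{-1}\}$, e.g.\ for tempered $\tau$. There are two easy repairs:
\begin{itemize}
\item drop the conjugation, pair with the spherical vector of $\W(\tau,\psi^{-1})$, and use $L(s,\pi\times\tau)=\prod_{i,j}(1-\alpha_i\beta_jq^{-s})^{-1}$;
\item or restrict to tempered $\tau$, whose parameters are still Zariski dense, so Schur independence still applies.
\end{itemize}

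As for what each route buys: yours is short and transparent. It also shows that, given the $\mathrm{GL}_{n-1}(\O)$-invariance, the torus formula is equivalent to the $\mathrm{GL}_n\times\mathrm{GL}_{n-1}$ test-vector identity. The converse direction is the same kind of Cauchy-identity computation the paper runs for \ref{split_ramified_computations}. Matringe's approach, by contrast, establishes the test-vector property, the one-dimensionality of $V(c(\pi))$ and the explicit formula together, without presupposing JPSS.
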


The above result implies that $W_\phi(I_n) \neq 0$ for any non-zero newform $\phi$ in $\Pi$ (see \cite[Corollary 4.4]{Miyauchi2012}). The only newform $\phi$ such that  $W_\phi(I_n) = 1$ will be called the \textit{essential vector} of $\pi$ with respect to $\psi$, and is denoted $\phi_\pi^\circ$. Its Whittaker function is denoted $W_\pi^\circ$. When $c(\pi) = 0$, i.e. when $\pi$ is unramified, $\phi_\pi^\circ$ is rather called the \textit{spherical vector}. \\

We now study the dependance on the choice of the unramified additive character $\psi$. We recall that all additive characters of $L$ are of the form $\psi^a : x \mapsto \psi(ax)$, for $a \in L^\x$. Moreover, the map $t_a :W \mapsto W^a$, where
$$
W^a : g \mapsto W(\mathrm{diag}(a^{n-1},a^{n-2}, \dots,a,1)g),
$$
gives a $\GLn(L)$-equivariant isomorphism $\W(\pi,\psi) \toeq \W(\pi,\psi^a)$. We also denote by $t_a : \phi \in \pi \mapsto \phi^a \in \pi$ the $\GLn(L)$-equivariant automorphism of $\pi$ defined so that the following diagram commutes:
$$
\begin{tikzcd}
 \pi \arrow[r] \arrow[d, "t_a"] & \W(\pi,\psi)  \arrow[d, "t_a"] \\
 \pi \arrow[r] &  \W(\pi,\psi^a) \\
\end{tikzcd}
$$

Let $\ph$ be an other non-trivial unramified character of $L$. We can thus write it as $\ph = \psi^a$, with $a \in \O^\x$. Let $W_\pi^\circ \in \W(\pi,\psi)$ be the essential vector of $\pi$ with respect to $\psi$. Since $\mathrm{diag}(a^{n-1},a^{n-2}, \dots,a,1) \in K_1(\p^{c(\pi)})$, we have:
$$
(W_\pi^\circ)^a(1) = W_\pi^\circ(1) = 1
$$
Consequently, $(W_\pi^\circ)^a \in \W(\pi,\ph)$ is the essential vector of $\pi$ with respect to $\ph$. \\

There is a similar newvector theory for the transposed mirahoric subgroups ${}^t K_1(\wp^c)$. In fact, if $\varpi_{n-1}=\mathrm{diag}(\varpi, \dots, \varpi, 1) \in \GLn(L)$, one checks that for all $c\geq 0$ one has $\varpi_{n-1}^{-c}K_1(\wp^c) \varpi_{n-1}^c = {}^t K_1(\wp^c)$. Consequently, the action of $\varpi_{n-1}^c$ through $\pi$ induces an isomorphism between the space $V(c)$ of $K_1(\wp^{c})$-fixed vectors and the space ${}^tV(c)$ of ${}^t K_1(\wp^{c})$-fixed vectors in $\pi$. In particular, the space $^tV(c(\pi))$ is of dimension 1. We would like to choose a normalized vector in this $1$-dimensional vector space, as we did with the essential vector for the mirahoric subgroup. From what we have just explained, one knows that the vector corresponding to the Whittaker function $W=\pi(\varpi_{n-1}^c)(W_{\pi}^\circ)$ belongs to $^tV(c(\pi))$. From \ref{explicit_essential_values}, we see that $W(I_n)$ may be equal to zero if $s_{(c,\dots,c)}(\a) = 0$. Hence in general, we can't normalize the choice of a vector $\phi \in {}^tV(c(\pi))$ by the value $W_\phi(I_n)$. Instead, we will normalize the choice of $\phi \in {}^tV(c(\pi))$ by the value $W_\phi(w_n)$, where $w_n := \mathrm{antidiag}(1,\dots,1) \in \mathrm{GL}_n(L)$. We thus define ${}^\vee\phi_\pi^\circ$ to be the only vector $\phi \in {}^tV(c(\pi))$ such that $W_\phi \in \W(\pi,\psi)$ satisfies $W_\phi(w_n) = 1$. Let's check that such a vector exists. To do this, let us consider the dual representation $\pi^\vee$ of $\pi$. The Whittaker model $\mathcal{W}(\pi^\vee,\psi^{-1})$ of $\pi^\vee$ is the space of functions of the form:
$$
W^\vee(g) := W(w_n({}^tg^{-1}))
$$
for $W \in \mathcal{W}(\pi,\psi)$. Thus one sees that ${}^\vee\phi_\pi^\circ$ is the form $\phi \in \pi$ whose Whittaker function $W_\phi \in \W(\pi,\psi)$ is given by:
$$
W_\phi := (W_{\pi^\vee}^\circ)^\vee
$$
We will call ${}^\vee\phi_\pi^\circ$ it the \textit{transposed essential vector} of $\pi$ with respect to $\psi$ (note that ${}^\vee\phi_\pi^\circ$ is called the \textit{first} transposed essential vector of $\pi$ in \cite{thesis}). If $\psi$ and $\psi^a$ (for $\a \in \O^\x$) are two non-trivial unramified additive characters of $L$, then one checks that:
$$
(t_aW_{\pi^\vee}^\circ)^\vee = \w_\pi(a)^{n-1} \cdot t_a((W_{\pi^\vee}^\circ)^\vee)
$$
where $\w_\pi$ is the central character of $\pi$. Thus, if ${}^\vee\phi_\pi^\circ$ is the transposed essential vector with respect to $\psi$, then $t_a({}^\vee\phi_\pi^\circ)$ differs form the transposed essential vector with respect to $\psi^a$ by a factor $\w_\pi(a)^{n-1}$. When $\pi$ is self-dual, this factor is a sign, and is trivial if moreover $n$ is odd.   \\

\subsubsection{Global theory}
\label{global_mirahoric_theory}

Let $E$ be a number fields and let $\psi_f := \psi_{E,f}$ be the finite part of the additive character of $\A_E$ defined in \S\ref{notations}. Let $\mathfrak{n} = \prod_{\wp \mid \mathfrak{n}} \wp^{c_\wp}$ be an ideal of $\O_E$. We define the mirahoric subgroup $K_1(\mathfrak{n})$ of level $\mathfrak{n}$ as the following open compact subgroup of $\GLn(\A_{E,f})$:
$$
K_1(\mathfrak{n}) := \prod_{\wp \nmid \mathfrak{n}} \mathrm{GL}_{n}(\O_\wp) \x \prod_{\wp \mid \mathfrak{n}} K_1(\wp^{c_\wp})
$$

Let $\Pi$ be a cuspidal automorphic representation of $\GLn(\A_E)$ and let $\Pi_f$ denote its finite part. Since $\Pi$ is cuspidal, $\Pi$ and (hence) $\Pi_f$ are generic. We denote by $\W(\Pi_f,\psi_f)$ the Whittaker model of $\Pi_f$ with respect to $\psi_f$ of $\A_{E,f}$, and by $\phi_f \mapsto W_{\phi_f}$ the isomorphism $\Pi_f \toeq \W(\Pi_f,\psi_f)$. We can decompose $\Pi_f$ as a tensor product of local representations $\Pi_f = \otimes_{w} \Pi_w$ over the set ${S_f(E)}$ of finite places of $E$ (see \cite[Theorem 3]{Flath}). For any ${w \in S_f(E)}$, the representation $\Pi_w$ is an irreducible admissible generic representation of $\GLn(E_w)$. Let $c_w = c(\Pi_w)$ be its mirahoric conductor. Consider $\mathfrak{n}(\Pi_f) = \prod_{w \in S_f(E)} \wp_w^{c_w}$, where $\wp_w$ is the prime ideal of $\O_E$ corresponding to $w$. Since all but finitely many $\Pi_w$ are unramified, $\mathfrak{n}(\Pi)$ is a well defined ideal of $\O_E$, called the \textit{mirahoric level} of $\Pi$. From the local theory, we know that the space of $K_1(\mathfrak{n}(\Pi))$-fixed vectors in $\Pi_f$ is one dimensional. \\

We now assume that $E$ is a quadratic number field, and $\s$ denotes the non-trivial Galois involution of $E$. Let $\Pi$ be a cuspidal automorphic representation of $\GLn(\A_E)$ and let $\n := \mathfrak{n}(\Pi)$ be its mirahoric level. Assume that $\mathfrak{n}$ is located above primes of $\Q$ that are split in $E$ and that $\s(\mathfrak{n}) = \mathfrak{n}$. Fix a subset $R$ of the set of prime ideals dividing $\mathfrak{n}$ containing exactly one representative in each orbit for the action of $\s$. Thus, the prime ideal decomposition of $\mathfrak{n}$ can be written:
$$
\mathfrak{n} = \prod_{\wp \in R} \wp^{c_\wp} \s(\wp)^{c_\wp}.
$$ 
For such an ideal $\mathfrak{n}$ and such a choice of a subset $R$ of representatives, we can define the \textit{mixed mirahoric subgroup} $K_1^*(\mathfrak{n})$ of level $\mathfrak{n}$ and type $R$ as the open compact subgroup of $\GLn(\A_{E,f})$ defined by:
$$
K_1^*(\mathfrak{n}) = \prod_{\wp \nmid \mathfrak{n}} \mathrm{GL}_{n}(\O_\wp)  \x \prod_{\wp \in R}  K_1(\wp^{c_\wp}) \x {}^t K_1(\s(\wp)^{c_\wp})
$$

It follows from the local theory that the space of $K_1^*(\mathfrak{n})$-fixed vectors in $\Pi_f$ is one dimensional. We then consider the form $\phi^{*}_\Pi$ in $\Pi_f$ defined as the tensor product $\phi^{*}_\Pi := \otimes_{w} \phi_w$ with:

\begin{itemize}
\item if $w \nmid \n$, then $\phi_w$ is defined as for $\phi^{\circ}_\Pi$, depending whether $w$ divides $\mathfrak{d}$ or not;
\item if $w \mid \n$ and $w \in R$, then $\phi_w := \phi_{\Pi_w}^\circ$ is the essential vector of $\Pi_w$ with respect to $\psi_w$;
\item if $w \mid \n$ and $w \notin R$, then $\phi_w := {}^\vee\phi_{\Pi_{w}}^\circ$ is the transposed essential vector of $\Pi_{w}$ with respect to $\psi_w$.
\end{itemize}
Then $\phi^{*}_\Pi$ is a $K_1^*(\mathfrak{n})$-fixed vector in $\Pi_f$ which we call the \textit{mixed essential vector} of $\Pi$ (note that $\phi^{*}_\Pi$ is called the \textit{first} mixed essential vector of $\Pi$ in \cite{thesis}).  \\

\subsection{Cohomology groups and Hecke algebras}

\subsubsection{Irreducible algebraic representations of $\SO$}
\label{alg_irrep_SO3}
The irreducible algebraic representations of $\mathrm{SO}(3)$ are indexed by $\ell \in \Z_{\geq 0}$. For such a $\ell$, we denote by $(\tau_\ell,V_\ell)$ the corresponding irreducible representation of $\mathrm{SO}(3)$. It is a representation of dimension $2\ell + 1$. More precisely, following \cite[2.4.1]{Che22}, we fix a model as follows. $V_\ell$ is the quotient of the space of homogeneous polynomials over $\C$ of degree $\ell$ in variable $X_1,X_2,X_3$, by the subspace generated by $X_1^2+X_2^2+X_3^2$. The action of $\SO$ on this space is given by:
$\tau_\ell(g) \cdot P(\overline{X_1},\overline{X_2},\overline{X_3}) = P((\overline{X_1},\overline{X_2},\overline{X_3})g)$, for $g \in \SO$ and $P \in V_\ell$. A basis of $V_{\ell}$ is given by $\{ \mathbf{v}_i, \, -\ell \leq i \leq \ell \}$, where:
$$
\mathbf{v}_i = (\mathrm{sgn}(i)\overline{X_1} + \sqrt{-1}\cdot\overline{X_2})^{|i|}\overline{X_3}^{\ell-|i|}
$$
For each triplet $(j_1,j_2,j_3) \in \Z_{\geq0}^3$ such that $j_1+j_2+j_3 = \ell$, we also define:
$\mathbf{v}_{\left(\ell ;\left(j_{1}, j_{2}, j_{3}\right)\right)} = \overline{X_1}^{j_1}\overline{X_2}^{j_2}\overline{X_3}^{j_3} \in V_{\ell}
$. \\

\subsubsection{Irreducible algebraic representations of $\GL(E)$}
\label{alg_irrep}
\label{coefficients}

 Let $\K$ be a field and let $X^+(T_3)$ be the set of triplets $X^+(T_3)= \{ \mu = (m^+,m^-,v) \in \Z^3,\quad m^+\geq 0, \,m^-\geq 0\}$. For $\mu~=~(m^+,m^-,v) \in X^+(T_3)$, let $\mathcal{P}_{\mu}(\K) = \K[X,Y,Z ; A,B,C]_{m^+,m^-}$ be the space of 6-variables polynomials homogeneous of degree $m^+$ in $X,Y,Z$ and $m^-$ in $A,B,C$ with coefficients in $\K$, on which $g \in \GL(\K)$ acts by:
$$
\rho_{\mu}(g)(P(X,Y,Z ; A,B,C)) = (\det g)^{v} P((X,Y,Z)g ; (A,B,C){}^\top g^{-1}).
$$ 
We now consider the following differential operator:
$$
i_{m^+,m^-} = \frac{\partial^2}{\partial X \partial A} + \frac{\partial^2}{\partial Y \partial B} + \frac{\partial^2}{\partial X \partial A}:  \mathcal{P}_{\mu}(\K) \to \mathcal{P}_{\mu-(1,1,0)}(\K)
$$
It is equivariant with respect to $\rho_{\mu}$ and $\rho_{\mu-(1,1,0)}$. Let $L_{\mu}(\K)$ be the kernel of $i_{m^+,m^-}$. We thus obtain a representation $(\rho_{\mu},L_{\mu}(\K))$ of $\GL(\K)$. All the irreducible algebraic representations of $\GL(\K)$ are of the form $(\rho_{\mu},L_{\mu})$ where $\mu$ runs over $X^+(T_3)$ (see \cite[Theorem 13.1 and Claim 13.4]{RT:FC}). Note that the highest weight of $(\rho_{\mu},L_{\mu})$ is $\mu = \left( v+m^+, v, v-m^-\right)$, and that a vector of highest weight is given by $P_{\mu}^+ = X^{m^+}C^{m^-}$. Since $X^+(T_3)$ is in bijection with the set of dominant weights for $\GL$, by abuse we will sometimes refer to $\mu$ as the highest weight of $(\rho_{\mu},L_{\mu})$. \\

The dual weight of $\mu \in X^+(T_3)$ is defined to be $\mu^\vee := (m^-,m^+,-v) \in X^+(T_3)$. We then consider the pairing $\langle \cdot, \cdot \rangle_{\mu}: L_{\mu}(\K) \x L_{\mu^\vee}(\K) \to \K$ by:

\begin{equation}
\label{pairing_coefficients}
\langle \sum_{\underline{i}^+,\underline{i}^-} a_{\underline{i}^+,\underline{i}^-} X^{\underline{i}^+} Y^{\underline{i}^-} , \sum_{\underline{j}^+,\underline{j}^-} b_{\underline{j}^+,\underline{j}^-} X^{\underline{j}^+} Y^{\underline{j}^-}\rangle_{\mu} = \sum_{\underline{i}^+,\underline{i}^-} \binom{m^+}{i_1^+,i_2^+,i_3^+}^{-1} \binom{m^-}{i_1^-,i_2^-,i_3^-}^{-1}a_{\underline{i}^+,\underline{i}^-}  b_{\underline{i}^-,\underline{i}^+} 
\end{equation}
where $X^{\underline{i}^+}:= X_1^{i_1^+}X_2^{i_2^+}X_3^{i_3^+}$ and $Y^{\underline{i}^-}:=Y_1^{i_1^-}Y_2^{i_2^-}Y_3^{i_3^-}$. This paring is perfect and equivariant for the action of $\GL(\K)$:
$$
\langle \rho_{\mu}(g)(P),\rho_{\mu^\vee}(g)(Q)\rangle_{\mu} =\langle P,Q\rangle_{\mu}
$$
for $P,Q \in L_{\mu}(\K)$ and $g \in \GL(\K)$. \\

Now, let $E$ be a totally real field and $G_E = \mbox{Res}_{E/\Q} \mathrm{GL}_{3/E}$. Let $S_\inf = S_\inf(E)$ denote the set of archimedean places of $E$, i.e. the set of embeddings of $E$ into $\overline{\Q}$. Let $X^+(T_E) = X^+(T_3)^{ S_\inf}$, i.e. the set of tuples $\mu = (\mu_\t)_{\t \in S_\inf}$, with $\mu_\t =(m^+_\t,m^-_\t,v_\t) \in X^+(T_3)$ for all $\t \in S_\inf$. Let $E^g$ be the Galois closure of $E$ in $\overline{\Q}$ and suppose that $\K$ is a field extension of $E^g$. 

We define $L_{\mu}(\K):= \bigotimes_{\tau \in  S_\inf} L_{\mu_\tau}(\K)$. For each $\tau: E \to \overline{\Q}$, we can define a map $\tau: E \otimes_\Q \K \to \K$ by $ x \otimes k \mapsto \tau(x)k$ which induces a morphism $\tau: G_E(\K) \to \GL(\K)$. Then, we get an action of $G_E(\K)$ on $L_{\mu}(\K)$, given for $g \in G_E(\K)$ and for a pure tensor $P= \otimes_\tau P_\t \in L_{\mu}(\K)$, by:
$$
\rho_{\mu}(g)(P) = \bigotimes_{\t \in  S_\inf}  \rho_{\mu_\t}(\t(g))(P_\t).
$$
Thus, we get a representation $(\rho_{\mu},L_{\mu})$ of $\GL(\K)$. All the irreducible algebraic representations of $\GL(\K)$ are of this form, for $\mu \in X^+(T_E)$. By abuse, $\mu$ will sometimes be refered to as the highest weight of $L_{\mu}$. Moreover we have a perfect $\GL(\K)$-equivariant pairing:
$$
\langle \cdot, \cdot \rangle_{\mu}: L_{\mu}(\K) \x L_{\mu^\vee}(\K) \to \K
$$
obtained as a tensor product over $ S_\inf$ from the pairing (\ref{pairing_coefficients}). \\

\subsubsection{Integral structure and $p$-smallness.} We now suppose that $\K$ is some finite extension of $\Q_p$ and that $\O$ is its valuation ring. All the above definitions make sense over $\O$, and we get a representation $\rho_{\mu}$ of the group scheme $G_E = \mathrm{Res}_{\O/\Z_p} \mathrm{GL}_{3/\O}$ on the module $L_{\mu}(\O)$ consisting of polynomials in $L_{\mu}(\K)$ with coefficients in $\O$. We see from the expression of $\langle \cdot, \cdot \rangle_{\mu}$ that the $\O$-dual of $L_{\mu}(\O)$ in $L_{\mu^\vee}(\K)$ may not be $L_{\mu^\vee}(\O)$. To avoid this problem, we will say that $\mu = (\mu_\tau)_{\t \in S_\inf}$ is $p$\textit{-small} if for each $\tau \in  S_\inf$: 
$$
p > \mathrm{max}(m^+_\tau, m^-_\t)
$$
where $\mu_\tau = (m^+_\t,m^-_\t,v_\t)$. For a given weight $\mu$, this condition excludes a finite number of small primes. Then, as long as $\mu$ is $p$-small $\langle \cdot, \cdot \rangle_{\mu}$ restricts to a pairing defined on $\O$:
$$
\langle \cdot, \cdot \rangle_{\mu}: L_{\mu}(\O) \x L_{\mu^\vee}(\O) \to \O.
$$
It follows from (\ref{pairing_coefficients}), and from the perfectness of the pairing on $\K$, that this pairing is perfect on $\O$. See Section 1 of Polo-Tilouine in \cite{CSV} for a more theoretical explanation of these facts (be careful though that $p$-small there means $p \geq m^+_\tau + m^-_\t + 2$, for $\t \in S_\inf$).

Let $\K$ be a $p$-adic field $\K$, and $\O$ be its valuation ring. In this subsection, we consider the algebraic group $G_E = \mathrm{Res}_{E/\Q}(\mathrm{GL}_{n/E})$ for any integer $n \geq 1$ and any number field $E$. When $n=3$ and $E$ is totally real, which is our main case of concern in this thesis,  we have introduced in the precedent subsection a precise model $L_{\mu}$ for the algebraic representation of $G_E$ of highest weight $\mu$ (as explained $\mu \in X^+(T_E)$ is not strictly speaking a dominant weight but some convenient avatar of it, but since $X^+(T_E)$ is in bijection with the set of dominant weights for $G_E$, we call it the highest weight of $L_{\mu}$ by abuse). For a general $n \neq 3$ and any number field $E$, one can similarly define, for each dominant weight $\mu$ of $G_E$, a representation $L_{\mu}(\O)$ of $G_E(\O)$ which extends over $\K$ to $L_{\mu}(\K)$, the irreducible algebraic representation of $G_E(\K)$ of highest weight $\mu$ (see for example \cite{Hi98}). \\

\subsubsection{Adelic variety and sheaves}
\label{variety_sheaves}

Recall that $G_E = \mathrm{Res}_{E/\Q}(\mathrm{GL}_{3/E})$. For any open compact subgroup $K_f$ of $G_E(\A_{f})$, the adelic variety of level $K_f$ for $G_E$ is defined as:
$$
Y(K_f):= G_E(\Q) \bs G_E(\A) / K_f K_\inf
$$
where $K_\inf = \SO \R_+^\x$. Let $\mu \in X^+(T_E)$ be a dominant weight and $A$ denote $\O$, $\K$ or $\C$. Let $\L_\mu(A)$ be the locally constant sheaf of $A$-modules on $Y(K_f)$ attached to the representation $L_\mu(A)$. We have an inclusion of sheaves $\L_\mu(\O) \subset \L_\mu(\K) \subset \L_\mu(\C)$. \\

\subsubsection{Cohomology groups}
\label{cohomology_groups}

Let $A$ denote the fields $\K$ or $\C$. We consider the Betti cohomology groups $H^q(Y(K_f), \L_{\mu}(A))$ and the Betti cohomology with compact support $H^5_c(Y(K_f), \L_{\mu}(A))$. There is a natural map:
\begin{equation}
\label{std_to_compact}
i_A: H^\bullet_c(Y(K_f), \L_{\mu}(A)) \to H^\bullet(Y(K_f), \L_{\mu}(A))
\end{equation}
The interior cohomology $H^5_!(Y(K_f), \L_{\mu}(A))$ is defined to be the image of $i_A$. If $A$ is the ring $\O$, and if $?$ denotes $\emptyset, c \mbox{ or }!$ then $H_?^\bullet(Y(K_f), \L_{\mu}(\O))$ denotes the $\O$-torsion free part of the corresponding cohomology groups with coefficients in $\O$.

The cuspidal cohomology $H^\bullet_{cusp}(Y(K_f), \L_{\mu}(\C))$ is defined to be the following $(\g,K_\inf)$-cohomology group (or relative Lie algebra cohomology, see \cite[Chapter I]{BW00}):
$$
H^\bullet_{cusp}(Y(K_f), \L_{\mu}(\C)) = H^\bullet(\g,K_\inf ;\mathcal{A}_{cusp}(G_E(\Q)\bs G_E(\A)/K_f)\otimes L_{\mu}(\C))
$$
where $\mathcal{A}_{cusp}(G_E(\Q)\bs G_E(\A)/K_f, \w)$ is the space of $K_f$-fixed cusp forms on $G_E(\A)$. A priori, the cuspidal cohomology is just contained in $H^\bullet(Y(K_f), \L_{\mu}(\C))$. In fact, there is an injection:
$$
H^\bullet_{cusp}(Y(K_f), \L_{\mu}(\C)) \inj H^\bullet_!(Y(K_f),\L_{\mu}(\C))
$$
More precisely, there exists a canonical map (see for exemple \cite[2.1]{Hi99}):
$$
s: H^q_{cusp}(Y(K_f), \L_{\mu}(\C)) \inj H^q_c(Y(K_f),\L_{\mu}(\C))
$$
which a section of $i_\C: H^q_c(Y(K_f),\L_{\mu}(\C)) \to H^q(Y(K_f),\L_{\mu}(\C))$. Hence, $H^q_{cusp}(Y(K_f), \L_{\mu}(\C))$ can be viewed as a subgroup of $H^q_{c}(Y(K_f), \L_{\mu}(\C))$. 
We define the cuspidal cohomology groups with coefficients in $\K$ to be:
$$
H^\bullet_{cusp}(Y(K_f), \L_{\mu}(\K)) = H^\bullet_{cusp}(Y(K_f), \L_{\mu}(\C)) \cap H^\bullet_c(Y(K_f), \L_{\mu}(\K))
$$
It follows from \cite[Théorème 3.19]{Clozel90} that:
$$
H^\bullet_{cusp}(Y(K_f), \L_{\mu}(\K)) \otimes_\K \C = H^\bullet_{cusp}(Y(K_f), \L_{\mu}(\C))
$$
We then define the cuspidal cohomology groups with coefficients in $\O$ to be:
$$
H^\bullet_{cusp}(Y(K_f), \L_{\mu}(\O)) = H^\bullet_{cusp}(Y(K_f), \L_{\mu}(\C)) \cap H^\bullet_c(Y(K_f), \L_{\mu}(\O))
$$
In particular $H^\bullet_{cusp}(Y(K_f), \L_{\mu}(\O))$ is torsion free. \\

\subsubsection{Hecke algebras}
\label{hecke_corr}

Let $A$ denote $\O$, $\K$ or $\C$. Let $K_f = \prod_w K_w$ be some open compact subgroup of $G_E(\A_f)$ and let:
$$
S_{K_f}~=~\{ w \mbox{ finite place of }E\mbox{, s.t. } K_{w} \neq \GLn(\O_w)\}.
$$
This is a finite set. Let $S_p = \{ w \mbox{ finite place of E, s.t. } w~\mid~p \}$. We suppose that $p$ is outside the level of $K_f$, i.e that $S_p \cap S_{K_f} =\varnothing$, and we put $S :=  S_{K_f} \sqcup S_p$. 

Let $w \notin S$, and let $\varpi_w$ be an uniformizer of $E_w$. We define the Hecke operators $T_{w,i}$ at $w$ (for $i = 1,2,3$) as the characteristic functions of the double coset  $\GL(\O_w)\mathrm{diag}(\varpi_w I_i, I_{n-i})\GL(\O_w) \subset \GL(E_w)$. We will write $S_w$ for $T_{w,3}$. The spherical abstract Hecke algebra outside of $S$ with coefficients in $A$ is defined as the following tensor product:
$$
\H(K_f;A) = \bigotimes_{w \notin S} A[T_{w,1},T_{w,2},{S_{w}}^\pm]
$$

There is a well known action of $\H(K_f;A)$ (see for instance \cite[\S3.2.3]{thesis}) on the various cohomology groups $H^\bullet_?(Y(K_f), \L_\mu(A))$ (for $? = \emptyset, c, !$ or $cusp$) described in \S\ref{cohomology_groups}. Moreover, all the comparison maps described in \S\ref{cohomology_groups} are equivariant for this action. Finally, since we don't consider Hecke operators at $p$, the maps $H^\bullet_?(Y(K_f), \L_{\mu}(\O)) \to H^\bullet_?(Y(K_f), \L_{\mu}(\K))$ are $\H(K_f;\O)$-equivariant. We denote by $h(K_f;\O)$ the Hecke algebra acting faithfully on the cohomology:
$$
h(K_f,\O) := \mathrm{Im}(\H(K_f,\O) \to \End_\O H^\bullet(Y(K_f),M_\mu(\O)))
$$
It is a finite commutative $\O$-algebra. Thus, $h(K_f;\O)$ is semi-local and we have a decomposition $h(K_f;\O) \simeq \prod_{\m} h(K_f;\O)_\m$, where $\m$ ranges through maximal ideals of $h(K_f;\O)$ (there are finitely many of them), and $h(K_f;\O)_\m$ be the completion of $h(K_f;\O)$ at $\m$. In this paper, whenever such a maximal ideal $\m \subset h(K_f;\O)$ is fixed, we denote by ${\TT} = h(K_f;\O)_\m/(\O-{tors})$ the torsion-free quotient of $h(K_f;\O)_\m$. \\

\subsection{Cohomological cuspidal automorphic representations}

From now, we assume that $E$ is a totally real number field.

\subsubsection{Cohomological automorphic representations} Let $K_3 := \R_{>0} \cdot \SO \subset \GL(\R)$, and $K_\inf= \prod_{v\mid \inf} K_3 \subset \GL(E_\inf)$. Let $\g_{3}$ be the Lie algebra of $\GL(\R)$, and let $\g = \oplus_{v\mid \inf}  \g_{3}$ be the Lie algebra of $\GL(F_\inf)$. We say that a cuspidal automorphic representation $\Pi$ of $G_E$ is cohomological of cohomological weight $\mu \in X^+(T_E)$ if the following relative Lie algebra cohomology group (see \cite[Chapter I]{BW00}):
$$
H^q(\g,K_\inf ; \Pi_\inf \otimes M_\mu(\C))
$$
is non trivial for some $q \geq 0$. In that case, it is non-zero if and only if $b\leq q \leq t$, where $b=2t$ and $t= 3d$ are called the bottom and top degree of the cuspidal range (see \cite[Lemme 3.14]{Clozel90}). We denote by $\mathrm{Coh}(G_E,\mu)$ the set of such representations. For each open compact subgroup $K_f$ of $G_E(\A_f)$, we also define $\mathrm{Coh}(G_E,\mu,K_f)$ to be the subset of $\mathrm{Coh}(G_E,\mu)$ consisting of cohomological representations $\Pi$ such that $\Pi_f$ has non-zero $K_f$-fixed vectors. By definition, the property of being cohomological for a representation $\Pi$ only depends on its archimedean part $\Pi_\infty$. The cohomological weight of a cohomological representation $\Pi$ is unique \cite[Section 3.5]{Clozel90} and can be expressed from the Langlands parameter of $\Pi_\infty$ (see for example \cite[Proposition 4.1]{HN20}). \\

The cohomological weight $\mu$ of a cohomological representation $\Pi$ is necessarily \textit{pure} \cite[Lemme de pureté 4.9]{Clozel90}, in the sens of the following definition:
\begin{definition}[Pure weights]
Let $\mu \in X^+(T_E)$ be a highest weight for $G_E$. Write $\mu= (\mu_\tau)_{\tau \in  S_\inf}$, with $\mu_\tau = (m^+_\tau,m^-_\tau,v_\tau) \in X^+(T_3)$ for each $\tau \in S_\inf$. We say that $\mu$ is \textit{pure} if:
\begin{itemize}
\item $v_\tau$ does not depend on $\tau \in  S_\inf$ ;
\item $m^+_\tau=m^-_\tau$ for each $\tau \in  S_\inf$.
\end{itemize}
\end{definition}

Let $\mu \in X^+(T_E)$ and let $\Pi \in \mathrm{Coh}(G_E,\mu)$ be a cohomological cuspidal automorphic representation. Write $\Pi_\inf = \bigotimes_{\tau \in  S_\inf} \Pi_\tau$ for the archimedean part of $\Pi$. For each archimedean place $\tau \in  S_\inf$, on has that:
$$
\Pi_\tau = \mathrm{Ind}_{P_{2,1}(\R)}^{\GL(\R)} (D_{\ell_\tau} \otimes \e_\tau) \otimes | \cdot |^{v_\tau}
$$
for some quadratic character $\e_\tau$ of $\R^\x$. Here $\ell_\tau= 2m_\tau + 3$ is the minimal $\mathrm{SO}(3)$-type of $\Pi_\tau$, and $D_{\ell_\tau}$ is the discrete series representation of $\mathrm{GL}_2(\R)$ of weight $\ell_\tau$ (see \cite[Theorem 2.1(3)]{Che22} or \cite[\S 4.1]{HN20}). Note that the central character $\w_{\Pi_\tau}$ of $\Pi_\tau$ satisfies:
$$
\w_{\Pi_\tau}|_{\{\pm1\}} = \e_\tau \cdot \mathrm{sgn}
$$
where $\mathrm{sgn} : \{ \pm1\} \to \{ \pm1\}$ is the non-trivial character. Moreover, for each archimedean place $\tau \in  S_\inf$, the minimal $\mathrm{SO}(3)$-type of $\Pi_\tau$ is $\ell_\tau = 2m_\tau + 3$ (see \S\ref{alg_irrep_SO3}). \\

\subsubsection{Hecke eigensystems}
 Let $\Pi = \otimes_w \Pi_w$ be a cuspidal automorphic representation of $\GL(\A_E)$ of level $K_f$ (i.e. with non-trivial $K_f$-fixed vectors), and let $\H(K_f;\C)$ be the abstract Hecke algebra defined in \S\ref{hecke_corr}. Then, the Hecke eigensystem $\l_\Pi$ associated with $\Pi$ is the morphism of $\C$-algebras $\l_\Pi: \H(K_f;\C) \to \C$ such that for all $w \notin S$ and $i\in \{1,2,3 \}$, $\l_\Pi(T_{w,i})$ is defined to be the complex scalar through which $T_{w,i}$ acts on the $1$-dimensional $\C$-vector space of $K_w$-fixed vectors in $\Pi_w$.

Let $h(K_f;\C)$ be the Hecke algebra acting faithfully on the cohomology with coefficients in $M_\mu(\C)$. Then, if $\Pi \in \mathrm{Coh}(G_E,\mu,K_f)$ is cohomological, the Hecke eigensystem $\l_\Pi : \H(K_f;\C) \to \C$ attached to $\Pi$ factorizes into a $\C$-algebra morphism $\l_\Pi : h(K_f;\C) \to \C$. Finally, let $\K$ be a $p$-adic field containing $E$ and the rationality field $\Q(\Pi)$ of $\Pi$ and let $\O$ be its valuation ring. Let $h(K_f;\O)$ be the cohomological Hecke algebra with coefficients in $\O$, defined in \ref{hecke_corr}. Then, the image of $h(K_f;\O)$ through $\l_\Pi$ is included in $\O$, yielding an integral Hecke eigensystem $\l_\Pi : h(K_f;\O) \to \O$. \\

\subsubsection{Galois representations} 
\label{galois_reps}
Let $\Pi \in \mathrm{Coh}(G,\mu,K_f)$ be a cohomological cuspidal automorphic representation of $\GLn(\A_E)$. Let $\K$ be some sufficiently large $p$-adic field and $\O$ be its valuation ring.  Let $S$ be the finite set of places of $E$ containing the places where $\Pi$ is ramified and the places dividing $p$. The Galois representation associated with $\Pi$ according to the Langlands philosophy has been constructed by \cite[Theorem A]{HLTT16} (see also \cite[Corollary V.4.2]{Scholze15}). This is the following theorem:
\begin{theorem}
There exists a unique continuous semisimple Galois representation $\rho_{\Pi} : \Gal(\overline{E}/E) \to \GLn(\K)$ which is unramified outside of $S$, and such that for all $w \notin S$, the characteristic polynomial of $\rho_\Pi(\Frob_{w})$ is:
$$
\sum_{i=0}^n (-1)^i q_w^{i(i-1)/2} \l_\Pi(T_{w,i}) X^{n-i}
$$
where $\l_\Pi : \H(K_f;\C) \to \C$ is the Hecke eigensystem associated with $\Pi$.
\end{theorem}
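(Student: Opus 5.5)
This statement is the existence theorem for Galois representations attached to cohomological cuspidal representations of $\GLn(\A_E)$, due to Harris--Lan--Taylor--Thorne \cite[Theorem A]{HLTT16} and Scholze \cite[Corollary V.4.2]{Scholze15}. It is imported as a black box rather than proved here. If I had to reconstruct the argument, I would follow the strategy of those papers and organise it in three steps.

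\textbf{Step 1: realise the Hecke eigensystem in Shimura-variety cohomology.} Since $E$ is totally real, I would first choose a CM quadratic extension $F/E$ in which the places of $S$ behave conveniently. I would then base change $\Pi$ to $\GLn(\A_F)$; this uses Arthur--Clozel, possibly after a solvable descent to handle cases where the base change is not cuspidal. The Borel--Serre boundary of a unitary Shimura variety of signature $(n,n)$ for $U(n,n)$ over $F$ contains the locally symmetric space of the Levi $\mathrm{Res}_{F/\Q}\GLn$. The key claim is that the Hecke eigensystem $\l_\Pi$, transported to the boundary via the Satake transform, occurs in the cohomology of the Shimura variety. Harris--Lan--Taylor--Thorne prove this with coherent cohomology of a compactification and an Eisenstein series argument. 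Scholze proves the torsion version using perfectoid Shimura varieties and the Hodge--Tate period map, and then lifts to characteristic zero.

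\textbf{Step 2: take Galois representations and interpolate.} For each such occurrence, the Galois representations attached to cuspidal representations of the unitary group (Kottwitz, Shin, Chenevier--Harris) are available. A $p$-adic interpolation via pseudo-representations then produces a $2n$-dimensional representation whose Frobenius traces match $\l_\Pi$ together with its dual twist. From this I would extract an $n$-dimensional pseudo-character, and hence a semisimple $\rho_\Pi$ by Taylor's theorem on pseudo-representations. Descending from $F$ to $E$ requires patching the representations obtained from several auxiliary choices of $F$, using Chebotarev density.

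\textbf{Step 3: uniqueness and the main obstacle.} Uniqueness of $\rho_\Pi$ follows from Chebotarev density together with Brauer--Nesbitt, since a semisimple representation is determined by its characteristic polynomials on a dense set of Frobenius elements. The main obstacle is Step 1: showing that the boundary eigensystem genuinely appears in the interior cohomology of the Shimura variety. This is exactly where the deep geometric input of those papers is required.
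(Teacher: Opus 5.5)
Your proposal agrees with the paper, which likewise gives no argument of its own and simply imports the statement from \cite[Theorem A]{HLTT16} and \cite[Corollary V.4.2]{Scholze15}; uniqueness is the usual Chebotarev plus Brauer--Nesbitt argument you give. One correction to your optional sketch: in characteristic zero the boundary eigensystem need not occur in the interior cohomology of the $U(n,n)$ Shimura variety, and what Harris--Lan--Taylor--Thorne and Scholze actually prove is that it is congruent modulo $p^m$, for every $m$, to eigensystems of cuspidal representations of $U(n,n)$ (Scholze via torsion compactly supported cohomology), which is exactly why the $p$-adic interpolation of your Step 2 is needed.
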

One can find a $\Gal(\overline{E}/E)$-invariant $\O$-lattice in the representation space of $\rho_{\Pi}$. We then obtain a Galois representation $\rho_{\Pi} : \Gal(\overline{E}/E) \to \GLn(\O)$. If the residual representation $\overline{\rho_{\Pi}}$ is irreducible then there is a unique homothety classe of such an $\O$-lattice, and $\rho_{\Pi} : \Gal(\overline{E}/E) \to \GLn(\O)$ is unique. \\

\subsubsection{Structure of the cuspidal cohomology}

We now give the structure of the cuspidal cohomology. Let $q \in \Z_{\geq 0}$ and $\mu$ be some dominant weight for $G_E$. The cuspidal cohomology admits the following direct sum decomposition:
\begin{equation}
\label{deco_coho_cusp}
H^\bullet_{cusp}(Y(K_f), \L_{\mu}(\C)) = \bigoplus_{\Pi \in \mathrm{Coh}(G_E,\mu,K_f)} H^\bullet(\g, K_\inf  ; \Pi_{\inf} \otimes L_{\mu}(\C)) \otimes \Pi_f^{K_f}.
\end{equation}

In particular, it follows from \cite[Section VII]{BW00} and \cite[Section 4.5]{Clozel90} that the cuspidal cohomology group $H^q_{cusp}(Y(K_f), \L_{\mu}(\C))$ doesn't vanish only when $\mu$ is pure and $q$ belongs to the so-called Borel-Wallach interval $[b_E,t_E]$, for $b_E = 2d$ and $t_E = 3d$. In particular, as already said, the set $\mathrm{Coh}(G_E,\mu)$ is empty unless the weight $\mu$ is pure, and $\mathrm{Coh}(G_E,\mu,K_f)$ is a finite set. This decomposition justifies the nomenclature: a cuspidal automorphic representation is cohomological of cohomological weight $\mu$ if and only if it appears as a subspace of the cuspidal cohomology with coefficients in $\L_{\mu}(\C)$. A cohomological cuspidal automorphic representation $\Pi$ occurs in the cuspidal cohomology of degree $q$ for all degree $q \in [b_E,t_E]$, with multiplicity equal to the dimension of $H^q(\g, K_\inf  ; \Pi_{\inf} \otimes L_{\mu}(\C))$. Finally, recall that the cuspidal cohomology of level $K_f$ is endowed with an action of $\H(K_f,\C)$. Then, the above decomposition is a decomposition of $\H(K_f,\C)$-modules. In particular, $H^\bullet_{cusp}(Y(K_f), \L_{\mu}(\C))$ is a semi-simple $\H(K_f,\C)$-module.

\subsection{Eichler-Shimura maps and middle-degree automorphic periods}

\subsubsection{Generators of the $(\g,K_\inf)$-cohomology}
\label{chen_generators}
In this paragraph, we construct some explicit elements in the $(\g,K_\inf)$-cohomology groups appearing on the right hand side of (\ref{deco_coho_cusp}). Let $\Pi \in \mathrm{Coh}(G,\mu,K_f)$ and let $\Pi_\inf$ be its archimedean part. Let $q \in \N$ be a non-negative integer. From the Kunneth formula, we have the following direct sum decomposition into tensor products of local cohomology groups:
$$
\begin{aligned}
H^q(\g, K_\inf ; \Pi_\inf \otimes L_{\mu}(\C)) & = \bigoplus_{\sum_\tau  q_\tau = q } \left( \bigotimes_{\tau \in S_\inf} H^{q_\tau}(\g_3, K_3 ; \Pi_{\tau} \otimes L(\mu_\tau ;\C)) \right)
\end{aligned}
$$
where the direct sum is indexed by tuples $(q_\tau)_{\tau \in  S_\inf}$ of non-negative integers such that $\sum_{\tau \in  S_\inf}  q_\tau~=~q$. Let $\tau$ be an archimedean place of $E$. From a general result of Clozel (see \cite[Lemme  3.14]{Clozel90}), we know that:
$$
H^{q_\tau}(\g_3, K_3; \Pi_{\tau} \otimes L_{\mu_\tau}(\C))=\left\{
    \begin{array}{ll}
        \C & \mbox{if } q_\tau = 2,3 \\
        0 & \mbox{otherwise}
    \end{array}
\right.
$$
where $\g_3 = \glr$ and $K_3 = \mathrm{SO}(3)\R_+^\x$. In particular, together with the Kunneth formula, this gives that:
\begin{equation}
\label{g-K_dim}
\dim_\C H^q(\g, K_\inf ; \Pi_\inf \otimes L_{\mu}(\C)) = \binom{t-b}{q-b}
\end{equation}
{} \\

For the sake of brevity, the local cohomology groups $H^{q_\tau}(\g_3, K_3; \Pi_{\tau} \otimes L_{\mu_\tau}(\C))$ are denoted by $H^{q_\tau}_\tau$ in the following. Let $J \subset  S_\inf$ be a subset of the archimedean places. We define:
$$
H^q(\g, K_\inf  ; \Pi_\inf \otimes L_{\mu}(\C))_{J}:= \bigotimes_{\tau \in J} H^2_\tau  \otimes \bigotimes_{\tau \notin J} H^3_\tau
$$
It is a $1$-dimensional $\C$-vector space. A generator of this space can be constructed by choosing generators $[\Pi_v]_i$ of $H^i_\tau$ for $i = 2, 3$ at each archimedean place $\tau$. Chen \cite[Lemma 2.2]{Che22} has constructed explicit non-zero elements (thus generators)  $[\Pi_\tau]_i$ in $H^i_\tau$ and proven that they yields the correct archimedean local factors (the so-called $\G$-factors) for the adjoint $L$-function. Using these generators, we then define the following generator of $H^q(\g, K_\inf ; \Pi_{\inf} \otimes L_{\mu}(\C))_J$:
$$
[\Pi_\inf]_J = \bigotimes_{\tau \in J} [\Pi_\tau]_2 \otimes \bigotimes_{v \notin J} [\Pi_\tau]_3
$$
for $J \subset  S_\inf$. In particular, when $E$ is a real quadratic field, we obtain the following two elements:
$$
[\Pi_\inf]_{\{ \tau\}} = [\Pi_\tau]_2 \otimes [\Pi_{\s\tau}]_3, \quad \mbox{ and } \quad [\Pi_\inf]_{\{ \s\tau\}} = [\Pi_\tau]_3 \otimes [\Pi_{\s\tau}]_2,
$$
which are respective generators of $H^5_{\{ \tau\}}$ and $H^5_{\{ \s\tau\}}$. \\

\subsubsection{Eichler-Shimura maps}
\label{eichler-shimura_maps}

For each $J\subset  S_\inf$, we also denote by $H^q_{cusp}(Y_E(K_f),\L_{\mu}(\C))_J$ the subspace of $H^q_{{cusp}}(Y_E(K_f),\L_{\mu}(\C))$ defined as the direct sum:
$$
\bigoplus_{\Pi \in \mathrm{Coh}(G,\mu,K_f)}H^q(\g, K_\inf ; \Pi_\inf \otimes L_{\mu}(\C))_{J} \otimes \Pi_f^{K_f}.
$$

Let $\mu \in X^+(T_E)$ be a pure weight. Let $K_f$ be some open compact subgroup of $G_E(\A_{f})$. The space $S_{\mu}(K_f)$ of automorphic cusp forms of weight $\mu$ and level $K_f$ is defined by:
$$
S_{\mu}(K_f) := \bigoplus_{\Pi} \Pi_f^{K_f},
$$
where $\Pi$ runs through $\mathrm{Coh}(G_E,\mu,K_f)$, i.e. through the cuspidal automorphic representations of $\GL(\A_E)$ of level $K_f$ whose archimedean part is given by $\Pi_\inf = \bigotimes_{\tau \in  S_\inf} \Pi_\tau$, with:
$$
\Pi_\tau = \mathrm{Ind}_{P_{2,1}(\R)}^{\GL(\R)} (D_{\ell_\tau} \otimes \e_\tau) \otimes | \cdot |^{v_\tau}
$$
for some quadratic character $\e_\tau$ of $\R^\x$. Let $J \subset  S_\inf$ be a subset of the archimedean places. Then, the Eichler-Shimura map of type $J$: 
$$
\d_J^q :S_{\mu}(K_f) \inj H^{b + | J |}_{{cusp}}(Y_E(K_f),\L_{\mu}(\C))
$$
is defined by sending an element $\phi_f \in \Pi_f^{K_f}$ to $\phi_f \otimes [\Pi_\inf]_J \in H^{b + | J |}_{{cusp}}(Y_E(K_f),\L_{\mu}(\C))_J$, where $[\Pi_\inf]_J$ is the generator of $H^q(\g, C_\inf ; \Pi_{\inf} \otimes L_{\mu}(\C))_J$ specified in the preceding paragraph. Thus, $\d_J^q$ is an injective $\C$-linear map whose image is $H^q_{{cusp}}(Y_E(K_f),\L_{\mu}(\C))_J$. It is equivariant for the action of $\H(K_f,\C)$ on both side. Moreover, for each $\Pi \in \mathrm{Coh}(G,\mu,K_f)$, it induces an isomorphism:
$$
\d_J^q : \Pi_f^{K_f} \toeq H^q_{{cusp}}(Y_E(K_f),\L_{\mu}(\C))_{J}[\Pi_f].
$$

\mbox{} \\

\subsubsection{Eichler-Shimura maps for real quadratic fields} We here specify the above presentation to the special case where $E$ is a real quadratic field ($d=2$). We recall that in this case, we have fixed an embedding $\tau: E \inj \C$ and that $\s$ is the generator of the Galois group $\Gal(E/\Q)$, so that $ S_\inf$ can be identified with $\{\tau, \s\tau\}$.  In this situation, the cohomology is non-zero in degrees $q=4,5,6$. In middle degree $q=5$, Kunneth's formula is written:
$$
H^5(\g, K_\inf  ; \Pi_\inf \otimes L_{\mu}(\C)) = H^5(\g, K_\inf  ; \Pi_\inf \otimes L_{\mu}(\C))_{\{\tau\}} \oplus H^5(\g, K_\inf  ; \Pi_\inf \otimes L_{\mu}(\C))_{\{\s\tau\}}
$$
where $H^5(\g, K_\inf  ; \Pi_\inf \otimes L_{\mu}(\C))_{\{\tau\}} = H^{2}_{\tau} \otimes H^{3}_{\s\tau}$ and $H^5(\g, K_\inf  ; \Pi_\inf \otimes L_{\mu}(\C))_{\{\s\tau\}} = H^{3}_{\tau} \otimes H^{2}_{\s\tau}$. Consequently, there are two Eichler-Shimura maps:
$$
\d^5_J: S_{\mu}(K_f) \toeq H^5_{cusp}(Y_E(K_f),\L_{\mu}(\C))_J \subset H^5_{cusp}(Y_E(K_f),\L_{\mu}(\C))
$$
for $J = \{\tau \}, \{\s\tau \}$. \\

\subsubsection{The involution $\e$}
\label{e_involution} We now assume that $E$ is a real quadratic field, and $\s$ denotes its Galois involution. Let $\e: G_E(\A) \to G_E(\A)$ denote the conjugation-duality involution, defined by $\e:= g \mapsto {}^t\s(g)^{-1}$. We may sometimes write $g^\e:= \e(g)$. Assume that the open compact subgroup $K_f$ is invariant by $\e$. Then, since $G_E(\Q)$, $K_\inf$ and $K_f$ are invariant by $\e$, the involution $\e$ induces an involution of the adelic variety $Y(K_f)$, which we also denote $\e$. Let $\mu \in X^+(T_E)$ and let $\mu^\e:= (\mu_{\s\tau}^\vee, \mu_{\tau}^\vee) \in X^+(T_E)$. Assume that $\mu^\e = \mu$. Let $A$ denote $\O$, $\K$ or $\C$. We let $\e$ acts on $L_{\mu}(A)$ by $\e:  P_\tau \otimes P_{\s\tau} \mapsto \vee(P_{\s\tau}) \otimes \vee(P_{\tau})$, where $\vee : L_{\mu_\vs}(A) \to L_{\mu_\vs^\vee}(A)$ (for $\vs \in \{ \tau, \s\tau \}$) is defined by:
\begin{equation}
\label{dual_involution}
\vee: P(U,V,W; X,Y,Z) \mapsto P(X,Y,Z ; U,V,W)
\end{equation} \\

The involutive action of $\e$ on $Y(K_f)$ and on $L_{\mu}(A)$ induces a $A$-linear involutive map on the cohomology groups, which we also denote $\e$:
$$
\e : H^\bullet(Y(K_f), \L_{\mu}(A)) \to H^\bullet(Y(K_f), \L_{\mu}(A))
$$

As in paragraph~\ref{hecke_corr}, we can similarly construct involutions $\e$ on the compactly supported cohomology. One can then check that the arrow $\i_A$ from the compactly supported cohomology to the cohomology is equivariant for the action of $\e$ on both sides. Therefore, we obtain an $A$-linear map on the interior cohomology: 
$$
\e : H^\bullet_!(Y(K_f), \L_{\mu}(A)) \to H^\bullet_!(Y(K_f), \L_{\mu}(A)).
$$
Finally, the cuspidal cohomology $H_{cusp}^\bullet(Y(K_f), \L_{\mu}(A))$ is invariant by the action of $\e$. \\

 Let $S$ be the set of finite places associated with $K_f$ in \S\ref{hecke_corr} (i.e. containing the finite places of $E$ above $p$ and the places where $K_f$ is not hyperspecial). If $w$ is a finite place corresponding to some prime ideal $\mathfrak{q}$ of $E$, we denote by $w^\s$ the finite place of $w$ corresponding to the prime ideal $\s(\mathfrak{q})$, where $\s$ is the non-trivial Galois involution of $E$. For our choice of $K_f$, one has $\s(S) = S$. We get an involution of $\O$-algebra $\e : T \mapsto T^\e$ on $\H({K_f};\O)$, by sending a double coset $K_w x K_w$ ($w \notin S$, $x \in \GL(E_w)$) to $K_{w^\s} \e(x) K_{w^\s}$. One checks that $T_{w,1}^\e = S_{w^\s}^{-1} T_{w^\s,2}$, $T_{w,2}^\e = S_{w^\s}^{-1} T_{w^\s,1}$ and $S_w^{\e} = S_{w^\s}^{-1}$. One can also check that the action of $\e$ is semi-linear with respect to the Hecke action, i.e. that for all $T \in \H({K_f};\O)$, we have:
$$
T \circ \e = \e \circ T^\e
$$
in $\End_\O(H^\bullet(Y(K_f), \L_{\mu}(\O)))$. In particular, if $\Si \in \mathrm{Coh}(\mathrm{GL}_{n/E},\mu,K_f)$ is a cohomological cuspidal automorphic representation,  then $\e$ sends the $[\Si]$-isotypic part of $H_{cusp}^\bullet(Y(K_f), \L_{\mu}(\C))$ to the $[\Si^\e]$-isotypic part.

\subsubsection{The $\e$-periods}
\label{e_periods}
We now consider a cohomological cuspidal automorphic representation $\Pi$ which is conjugate self-dual, i.e. $\Pi^\e \simeq \Pi$. Let $\mathfrak{n}(\Pi)$ be the mirahoric level of $\Pi$. Assume that $\mathfrak{n}(\Pi)$ is only divisible by split primes and choose a subset $R$ of the places dividing $\mathfrak{n}$ containing exactly one representative in each orbit for the action of $\s$ (see \S\ref{global_mirahoric_theory}). Let $K_f$ to be the mixed mirahoric subgroup $K_1^*(\mathfrak{n}(\Pi))$ of level $\mathfrak{n}(\Pi)$ and type $R$, and let $\phi_f \in \Pi_f$ be the mixed essential vector $\phi_{\Pi}^*$ defined in \S\ref{global_mirahoric_theory}. Recall that it is a $K_f$-fixed form. \\

Let us write $H^5(A) = H^5_{cusp}(Y_E(K_f),\L_{\mu}(A))[\Pi]$ for the $\Pi$-isotypic component of $H^5_{cusp}(Y_E(K_f),\L_{\mu}(A))$, if $A$ is $\O$ or $\C$. It follows from (\ref{deco_coho_cusp}), from (\ref{g-K_dim}), and the choice of $K_f$ that $H^5(\C)$ is a 2-dimensional $\C$-vector space, and is a direct sum of the two $1$-dimensional subspaces $H^5_{\{\t\}}$ and $H^5_{\{\s\t\}}$. Since $\Pi$ is conjugate self-dual, $H^5(\C)$ is invariant by $\e$. Moreover, $\e$ exchanges $H_{\{\tau\}}^5$ and $H_{\{\s\tau\}}^5$, and thus is not trivial on $H^5(\C)$. Hence, since the $\O$-lattice $H^5(\O) \subset H^5(\C)$ is invariant by $\e$, the two eigenspaces corresponding to the $\pm1$-eigenvalues for $\e$:
$$
H^5(\C)[\pm] = H^5_{cusp}(Y_E(K_f),\L_{\mu}(\C))[\Pi; \pm]
$$
are $1$-dimensional vector spaces over $\C$, endowed with the $\O$-integral structure $H^5(\O)[\pm]$. We then define two maps (one for $+$, one for $-$):
$$
\d^\pm_\e : \Pi_f^{K_f} \to H^5(\C)[\pm]
$$
by $\d^\pm_\e : = \d_{\{\tau\}}^5 \pm \e \circ \d_{\{\tau\}}^5$. Let $\xi_\e^\pm$ be an $\O$-base of $H^5(\O)[\pm]$. Then the two $\e$-periods $\Om_5(\Pi, \e, +)$ and $\Om_5(\Pi, \e,-)$ of $\Pi$ are defined to be the complex numbers $\Om_5(\Pi, \e, \pm) \in \C^\x$ such that:
$$
\d^\pm_\e(\phi_f) = \Om_5(\Pi, \e, \pm) \cdot \xi_\e^\pm
$$
for our specific choice of $K_f$-fixed form $\phi_f$ in $\Pi_f$. Since the definition of $\Om_5(\Pi, \e, \pm)$ depends on the choice of the $\O$-base $\xi_\e^\pm$, the periods are only defined up to multiplication by an element of $\O^\x$.  These two automorphic periods are called the $\e$-periods of $\Pi$. \\

\section{An adjoint $L$-value formula for $\GL(E)$}
\label{part_adjoint}

The goal of this section is to establish the adjoint $L$-value formula of \ref{adjoint_L_value}, that relates the special $L$-value $L(\Pi,\Ad,1)$ to the $\e$-periods of $\Pi$, for a cohomological cuspidal representation $\Pi$ of $\GL(E)$ which is conjugate self-dual.

\subsection{Normalization of Haar measures}
\label{Haar_measure_GLn}

For each place $w$ of $E$, the local Haar measure on $N_n(E_w)$ is fixed to be:
$$
du_w = \prod_{1\leq i<j\leq n} du_{ij}, \quad u_v = (u_{ij}) \in N_n(E_w)
$$ 
where $du_{ij}$ is the Haar measure on $E_w$ (see \S\ref{measures}). The global Haar measure on $N_n(\A_E)$ is then $du = \prod_w du_w$. This is the Tamagawa measure of $N_n(\A_E)$. Finally, on $N_{n-1}(E_w)$, the additive Haar measure is normalized so that $\vol(N_{n-1}(\O_w)) = 1$. \\

The normalisation of our Haar measures on $\GLn$ at finite places will depend on the choice of some open compact subgroup $K_f \subset \GLn(\A_{E,f})$. It will always be clear from the context what this subgroup is. Write $K_f = \prod_{w} K_w$, and let $w$ be a finite place of $E$. The Haar measure $dg_w$ on $\GLn(E_w)$ is normalized so that:
$$
\mathrm{vol}(K_w, dg_w) = 1.
$$ 
The Haar measure $dg_w$ on $\mathrm{GL}_{n-1}(E_w)$ is normalized so that:
$$
\mathrm{vol}(\mathrm{GL}_{n-1}(\O_w), dg_w) = 1.
$$ 

If $w$ is a real archimedean place, the Haar mesure $dg_w$ on $\GLn(E_w) = \GLn(\R)$ is normalized using the Iwasawa decomposition $g_w = nak$, with $n \in N_n(\R)$, $a \in (\R^\x)^n$ and $k \in \mathrm{O}(n)$ by:
$$
dg_v = \d^{-1}_{B_n(\R)}(a)du d^\x a dk
$$
where $du$ is the Haar measure on $N_n(\R)$, $d^\x a =  \prod_{i=1}^n d^\x a_i$ and $dk$ is the probability measure on $\mathrm{O}(n)$. The modulus character $\d_{B_n(\R)}$ is defined by $\d_{B_n(\R)}(a) = \prod_{i=1}^n |a_i |^{n-2i+1}$. Let $dg= \prod_v dg_v$ be the global Haar measure on $\GLn(\A_F)$. Since it will always be clear if our integrals are local or global we also write $dg$ for $dg_v$.

\subsection{The Petersson product as an adjoint $L$-value}

In this subsection we temporarily adopt general notation. Let $E$ be a number field and $n \geq 1$ is any integer. We fix $\psi_E$ to be the additive character $\psi_E := \psi_\Q \circ \mathrm{Tr}_{E/\Q} : E\bs \A_E \to \C^\x$. Write $\psi_E = \bigotimes_w \psi_w$ for its factorization as a tensor product of local additive characters. In this section we consider a cuspidal automorphic representation $\Pi$ of $\mathrm{GL}_n(\A_E)$. Let $\n$ be the mirahoric level of $\Pi$ and write $K_1(\n) = \prod_w K_w$. If $w$ is a finite place of $E$, the Haar measure $dg_v$ on $\GLn(E_w)$ is normalized so that $\vol(K_{w})= 1$. If $w$ is archimedean, the Haar measure $dg_w$ on $\GLn(E_w)$ is specified in \S\ref{measures}. The global Haar measure on $\GL(\A_E)$ is $dg = \prod_w dg_w$. The measure $dn = \prod_w dn_w$ on $N_n(\A_E)$ is the Tamagawa measure.

\subsubsection{Rankin-Selberg and adjoint $L$-functions}
\label{L-functions}

Let $\Pi$ be a cuspidal automorphic representation of $\GLn(\A_E)$ and let $\Pi^\vee$ be the contragredient of $\Pi$. Let $w$ be a finite place of $E$ and let $q_w$ be the cardinality of the residuefield of $E_w$. Let us write the standard local $L$-factors associated with $\Pi_w$ and $\Pi^\vee_w$ as (see \cite{ZFSA}): 
\begin{equation}
\label{standard_L_factor}
L(\Pi_w,s) = \prod_{i=1}^{r} (1 - \a_i q_w^{-s})^{-1} \quad \mbox{ and } \quad L(\Pi_w^\vee,s) = \prod_{j=1}^{r'} (1 - \b_j q_w^{-s})^{-1}
\end{equation}
with $\a_i, \b_i \in \C^*$. One has that $r \leq n$ and that $\Pi$ is ramified if and only if $r<n$ (see \cite[Section 3]{Jacquet79}). Then, the \textit{imprimitive} Rankin-Selberg $L$-factor of $\Pi_w$ is defined by:
$$
L^{imp}(\Pi_w \x \Pi_w^\vee,s) = \prod_{i=1}^r \prod_{j=1}^r (1 - \a_i \b_j q_w^{-s})^{-1}
$$

One can also consider the \textit{primitive} Rankin-Selberg local $L$-factor $L(\Pi_w \x \Pi_w^\vee,s)$, which is defined as the greatest common divisor in $\C[q_w^{\pm s}]$ of some local zeta integrals (see \cite{J-PS-S83} for details).
These two definitions are related by the following lemma (see \cite[Corollary 3.3]{Jo22}, the last statement is du to \cite[Section 2]{J-S81-I}):
\begin{lemma}
\label{comparison_local_factors}
There exists a polynomial $P \in \C[X]$ satisfying $P(0) = 1$ such that:
$$
L^{imp}(\Pi_w \x \Pi_w^\vee,s) = P(q_w^{-s}) \cdot L(\Pi_w \x \Pi_w^\vee,s)
$$
Moreover, $P=1$ when $\Pi$ is unramified at $w$.
\end{lemma}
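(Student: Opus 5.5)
The plan is to compare both local factors with the parameters of the Langlands--Weil--Deligne parameter of $\Pi_w$, using the Bernstein--Zelevinsky/Zelevinsky classification of generic representations. Write $\Pi_w$ as the irreducible generic induced representation $\mathrm{Ind}(\Delta_1\otimes\cdots\otimes\Delta_k)$ from segments $\Delta_i = [\rho_i,\rho_i\nu^{\ell_i-1}]$ with $\rho_i$ supercuspidal. By Jacquet--Piatetski-Shapiro--Shalika \cite{J-PS-S83}, the primitive factor is multiplicative:
$$L(\Pi_w\x\Pi_w^\vee,s)=\prod_{i,j}L(\Delta_i\x\Delta_j^\vee,s),$$
and each $L(\Delta_i\x\Delta_j^\vee,s)$ is a product of factors $L(\rho_i\nu^{a}\x\rho_j^\vee\nu^{b},s)$ for suitable shifts. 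Moreover, for supercuspidals $\rho,\rho'$ one has $L(\rho\x\rho'^\vee,s)=\prod(1-\zeta q_w^{-s})^{-1}$ over a set of roots of unity times unramified twists, nontrivial only when $\rho'\simeq\rho\nu^{t}$.

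Next I would identify the standard factor. We have $L(\Pi_w,s)=\prod_i L(\Delta_i,s)$, and $L(\Delta_i,s)=(1-\a q_w^{-s})^{-1}$ when $\rho_i$ is an unramified character of $\GL_1$, and equals $1$ otherwise. So the $\a_i$ are exactly the ``last'' unramified characters $\chi_i\nu^{\ell_i-1}$ of the segments built from unramified characters of $\GL_1$. The same holds for $\b_j$ with $\Pi_w^\vee$. Each pair $(\a_i,\b_j)$ then corresponds to the factor $(1-\chi_i\chi_j^{-1}(\varpi)q_w^{-(\ell_i-1)/2\cdot\ldots}q_w^{-s})^{-1}$. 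The key claim is that each such factor appears among the Euler factors of $L(\Delta_i\x\Delta_j^\vee,s)$. This follows from the explicit formula $L(\Delta_i\x\Delta_j^\vee,s)=\prod_{m}L(\chi_i\chi_j^{-1}\nu^{\ldots},s)$ for segments of characters \cite{J-PS-S83}, since the term corresponding to the pair of top ends is one of the factors.

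Given this, the ratio $L^{imp}/L$ is the inverse of a product of distinct-index factors $(1-c\,q_w^{-s})$. Here one must check that the multiset of factors in $L^{imp}$ is a sub-multiset of those of $L$, counting multiplicity over all pairs $(i,j)$. The consequence is that $P:=L^{imp}/L = \prod(1-c_kq_w^{-s})$ is a polynomial in $q_w^{-s}$ with constant term $1$.

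Finally, if $\Pi_w$ is unramified, every $\Delta_i$ is an unramified character with $\ell_i=1$, so $r=n$ and both factors equal $\prod_{i,j}(1-\a_i\b_j q_w^{-s})^{-1}$ by the unramified computation in \cite[Section 2]{J-S81-I}; hence $P=1$. The main obstacle is the multiplicity bookkeeping in the middle step, namely ensuring that the pair-of-top-ends factors are distinct entries of the primitive product and are not double-counted. I would handle it by fixing, for each pair of segments, the single factor with shift given by both endpoints, as in \cite[Corollary 3.3]{Jo22}.
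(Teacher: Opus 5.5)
Your argument is correct, but it is not the paper's route. The paper gives no proof of its own: it quotes \cite[Corollary 3.3]{Jo22}, and \cite[Section 2]{J-S81-I} for the unramified case.

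The natural argument behind that citation comes from newform theory. The Rankin--Selberg zeta integral of the essential vectors of $\Pi_w$ and $\Pi_w^\vee$ against $\Phi_{w,c}$ computes $L^{imp}(\Pi_w\x\Pi_w^\vee,s)$ up to a nonzero constant. Hence $L^{imp}$ lies in the $\C[q_w^{\pm s}]$-span of zeta integrals, which is generated by the gcd $L(\Pi_w\x\Pi_w^\vee,s)$. The quotient is therefore a Laurent polynomial in $q_w^{-s}$. Since both factors are reciprocals of polynomials in $q_w^{-s}$ with constant term $1$, it is in fact a polynomial $P$ with $P(0)=1$.

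You instead use three ingredients: the Zelevinsky classification, the multiplicativity $L(\Pi_w\x\Pi_w^\vee,s)=\prod_{i,j}L(\delta(\Delta_i)\x\delta(\Delta_j^\vee),s)$ for generic representations, and the explicit segment formula of \cite{J-PS-S83}. Take $\Delta_i=[\chi_i,\chi_i\nu^{\ell_i-1}]$ and $\Delta_j^\vee=[\chi_j^{-1}\nu^{1-\ell_j},\chi_j^{-1}]$. The factors of $L(\delta(\Delta_i)\x\delta(\Delta_j^\vee),s)$ are $L(s,\chi_i\chi_j^{-1}\nu^{e})$ for $e=\max(\ell_i-\ell_j,0),\dots,\ell_i-1$. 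The top one, $e=\ell_i-1$, is exactly $(1-\a_i\b_jq_w^{-s})^{-1}$, because $\a_i=\chi_i(\varpi)q_w^{1-\ell_i}$ and $\b_j=\chi_j^{-1}(\varpi)$. Distinct pairs $(i,j)$ index distinct factors of the product, so the multiplicity bookkeeping you flag as the main obstacle is automatic.

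What each route buys:
\begin{itemize}
\item Your route gives an explicit description of $P$: it collects the non-top exponents, plus every pair involving a segment not built from an unramified character. The price is the classification machinery.
\item The cited route is shorter and reuses the essential-vector computations the paper needs anyway (cf.\ \ref{split_ramified_computations}).
\end{itemize}

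Two small slips to fix:
\begin{itemize}
\item The exponent in your displayed factor should be $q_w^{-(\ell_i-1)}$.
\item $L^{imp}/L$ is the product, not the inverse of the product, of the leftover factors $(1-c\,q_w^{-s})$.
\end{itemize}
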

Thus, when $\Pi$ is unramified at $w$, the primitive and the imprimitive local $L$-factors at $w$ are equal. Moreover, since $L(\Pi_w \x \Pi_w^\vee,s)$ has no pole at $s=1$, the above lemma implies that $L^{imp}(\Pi_w \x \Pi^\vee_w,s)$ has no pole at $s=1$ either. \\

The global \textit{imprimitive} Rankin-Selberg $L$-function of $\Pi$ is defined to be the product of the local $L$-factors:
$$
L^{imp}(\Pi \x \Pi^\vee,s) = \prod_{w} L^{imp}(\Pi_w \x \Pi^\vee_w,s)
$$
The following theorem follows from the work of Jacquet, Piatetskii-Shapiro and Shalika (see \cite{J-S81-I} and \cite{J-PS-S83}):
\begin{theorem}
\label{J-PS-S-83}
The function $L^{imp}(\Pi \x \Pi^\vee,s)$ converges absolutely, uniformly on compact subsets, in some right half-plane. It has analytic continuation as a meromorphic function to the right half plane $Re(s) > 1-\e$, for some small $\e>0$. It has an unique pole in $Re(s) > 1-\e$, which is located at $s = 1$ and is simple. \\
\end{theorem}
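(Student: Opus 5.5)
The statement to prove is \ref{J-PS-S-83}: absolute convergence of $L^{imp}(\Pi\x\Pi^\vee,s)$ in a right half-plane, meromorphic continuation to $Re(s)>1-\e$, and a unique, simple pole at $s=1$. The plan is to reduce everything to the corresponding statements for the primitive Rankin--Selberg $L$-function $L(\Pi\x\Pi^\vee,s)=\prod_w L(\Pi_w\x\Pi_w^\vee,s)$. For that function, the results of Jacquet--Shalika \cite{J-S81-I} and Jacquet--Piatetski-Shapiro--Shalika \cite{J-PS-S83} give convergence for $Re(s)>1$ and meromorphic continuation. By \ref{comparison_local_factors}, the two functions differ only at the finitely many places $w$ where $\Pi_w$ is ramified, by a factor $P_w(q_w^{-s})$ with $P_w(0)=1$. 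So
$$
L^{imp}(\Pi\x\Pi^\vee,s)=\prod_{w \in S_{ram}} P_w(q_w^{-s})\cdot L(\Pi\x\Pi^\vee,s),
$$
a finite product of entire functions times the primitive $L$-function. I will establish the three properties in this order.

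\textbf{Convergence.} Absolute and locally uniform convergence of the Euler product in $Re(s)>1$ follows from the Jacquet--Shalika bound $|\a_i|<q_w^{1/2}$ on Satake parameters of unitary cuspidal representations. One can also use the non-negativity of the Dirichlet coefficients of $\log L(\Pi\x\Pi^\vee,s)$ together with Landau's lemma. Multiplying by finitely many polynomials in $q_w^{-s}$ does not affect this.

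\textbf{Continuation.} The completed function $\Lambda(\Pi\x\Pi^\vee,s)$ is meromorphic on $\C$, being a global Rankin--Selberg integral of $\phi\cdot\phi'$ against an Eisenstein series, and its archimedean factors are nonvanishing near $Re(s)\geq 1$. Hence $L(\Pi\x\Pi^\vee,s)$ is meromorphic near the line $Re(s)=1$, and so is $L^{imp}$.

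\textbf{The pole.} The poles of $\Lambda(\Pi\x\Pi'\!,s)$ come from the Eisenstein series. When $\Pi'=\Pi^\vee$, the only poles are at $s=0,1$, and they are simple, with residue at $s=1$ proportional to the Petersson inner product. This is nonzero because one can choose test vectors whose local zeta integrals equal the local $L$-factors. For holomorphy elsewhere on $Re(s)=1$, I will invoke the nonvanishing results of Shahidi and Jacquet--Shalika for $L(\Pi\x\Pi^\vee,1+it)$, combined with the fact that $\Lambda$ is holomorphic away from $0$ and $1$ by the Eisenstein series analysis. Then the compactness of the region near any finite segment yields an $\e>0$.

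\textbf{The ramified factors and the main obstacle.} It remains to check that the factors $P_w(q_w^{-s})$ neither cancel the pole at $s=1$ nor introduce new ones; they are polynomials, so they cannot add poles. The hard point is that $P_w(q_w^{-1})\neq0$. I would argue this by noting that $P_w=L^{imp}/L$ and that the imprimitive factor has no zero. It is a product of terms $(1-\a_i\b_jq_w^{-s})^{-1}$ with the unitary bound $|\a_i\b_j|<q_w$ on $\a_i\b_j$, so each term is finite and nonzero at $s=1$. The primitive factor $L(\Pi_w\x\Pi_w^\vee,s)$ also has no pole at $s=1$ for unitary generic $\Pi_w$, so $L(\Pi_w\x\Pi_w^\vee,1)$ is finite and nonzero, being the inverse of a polynomial. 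Their ratio $P_w(q_w^{-1})$ is therefore finite and nonzero, so the simple pole at $s=1$ survives in $L^{imp}$.
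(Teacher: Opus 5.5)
Your proposal is correct and is essentially the paper's route. The paper just cites Jacquet--Shalika and Jacquet--Piatetski-Shapiro--Shalika for the primitive $L(\Pi\x\Pi^\vee,s)$ and passes to $L^{imp}$ through \ref{comparison_local_factors}; your check that $P_w(q_w^{-1})\neq 0$ is exactly what keeps the simple pole at $s=1$. Two minor slips do not affect the statement. First, the bound $|\a_i|,|\b_j|<q_w^{1/2}$ only gives $|\a_i\b_j|<q_w$, hence absolute convergence for $Re(s)>2$; convergence for $Re(s)>1$ needs the positivity/Landau argument you mention. Second, the nonvanishing of $L(\Pi\x\Pi^\vee,1+it)$ plays no role in holomorphy, which already follows from the Eisenstein series analysis.
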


The \textit{imprimitive} adjoint $L$-function $L^{imp}(\Pi,\Ad,s)$ of $\Pi$ is defined by the following formula:
$$
L^{imp}(\Pi\x\Pi^\vee,s) = \zeta_E(s) \cdot L^{imp}(\Pi,\Ad,s)
$$
where $\zeta_E$ is the Dedekind zeta of $E$. By definition, $L^{imp}(\Pi,\Ad,s)$ is a meromorphic function on the right half plane $Re(s) > 1-\e$, for some small $\e>0$. Moreover, it is holomorphic and nonzero at $s=1$. The point $s=1$ is critical (in the sens of \cite{De79}) if and only if $n=2$ and $E$ is totally real (see \cite[Proposition 3.4.1]{BR17}).

\subsubsection{The Jacquet-Shalika formula}

In this paragraph we present a formula, du to Jacquet and Shalika, expressing the Petersson product of two vectors of a cuspidal automorphic representation $\Pi$ of $\mathrm{GL}_n(\A_E)$ and its dual $\Pi^\vee$, as the product of the special value at $s=1$ of the adjoint $L$-function of $\Pi$ by some local ramified and archimedean factors. It is a generalization of a Shimura's formula \cite{Shi76} in the case of modular forms. More precisely, if $\phi \in \Pi$ and $\phi' \in \Pi^\vee$, we recall that the Petersson product of $\phi$ and $\phi'$ is defined by:
\begin{equation}
\label{petersson_product}
\langle \phi, \phi' \rangle = \int_{[Z_n\bs\mathrm{GL}_n]}\phi(g){\phi'(g)}dg
\end{equation}

Let $\W(\Pi, \psi_E)$ denote the Whittaker model of $\Pi$ with respect to $\psi_E$. It decomposes as a restricted tensor product of local Whittaker models:
$$
\W(\Pi, \psi)= \bigotimes_w  \W(\Pi_w, \psi_w)
$$
For each place $w$ of $E$, and each Whittaker functions $W_w  \in \W(\Pi_w, \psi_w)$ and $W'_w  \in \W(\Pi_w^\vee, \psi_w^{-1} )$, we define the following bilinear pairing:
\begin{equation}
\label{pairing_whittaker}
\langle W_w,W_w' \rangle_w = \int_{N_{n-1}(E_w)\bs G_{n-1}(E_w)}W_{w}\left(\begin{array}{ll}
g & \\
& 1
\end{array}\right) {W'_{w}}\left(\begin{array}{ll}
 g & \\
& 1
\end{array}\right)dg
\end{equation}
The above pairing is well-defined (see \cite[Proposition 3.6]{Jo22}) and equivariant for the action of $\mathrm{GL}_{n}(E_w)$. We recall that $S_\Pi$ is the set of finite places where $\Pi$ is ramified. The Jacquet and Shalika's formula is stated in the following proposition:

\begin{prop}
\label{jacquet-shalika_formula}
Let $S = S_\inf \cup S_\Pi$. Let $\phi = \otimes_w \phi_w \in \Pi$ and $\phi' = \otimes_w \phi'_w \in \Pi^\vee$ decomposable vectors such that $\phi_w$ (resp. $\phi_w'$) is the local factor at $w$ of $\phi_{\Pi}^\circ$ (resp. $\phi_{\Pi^\vee}^\circ$) whenever $w \notin S$. Then, we have the following equality:
\begin{equation}
\langle \phi, \phi' \rangle =  n \cdot {N_{E/\Q}(\mathfrak{n})^n \cdot D_E^{-n(n+3)/4}}\cdot c_n(E_\inf) \cdot L^{S}(\Pi, \Ad,1) \cdot \prod_{w \in S} \langle W_{\phi,w},W_{\phi',w} \rangle_w
\end{equation}
where $\mathfrak{n}$ is the mirahoric level of $\Pi$, and $c_n(E_\inf) \in \R_{>0}$ is some computable constant depending on our choice of Haar measures at archimedean places.  In particular, when $n=3$ and $E$ is totally real of dimension $d$, one has $c_n(E_\inf) = (4\pi)^{-d}$.
\end{prop}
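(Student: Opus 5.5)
We follow the classical Rankin--Selberg unfolding of Jacquet--Shalika: we express $\langle\phi,\phi'\rangle$ as a residue at $s=1$ of a global zeta integral. Take a Schwartz--Bruhat function $\Phi=\otimes_w\Phi_w$ on $\A_E^n$ and form the Eisenstein series $E(g,\Phi,s)=|\det g|^s\int_{\A_E^\x/E^\x}\sum_{\xi\in E^n\setminus 0}\Phi(t\xi g)|t|^{ns}d^\x t$ on $P_n(E)\bs\GLn(\A_E)$ with trivial central character. Consider the integral
$$I(s)=\int_{Z_n(\A_E)\GLn(E)\bs\GLn(\A_E)}\phi(g)\phi'(g)E(g,\Phi,s)\,dg .$$
By Poisson summation, $E(g,\Phi,s)$ has a simple pole at $s=1$ whose residue is the constant $\vol(\A_E^1/E^\x)\,n^{-1}\hat\Phi(0)$ (the factor $n$ comes from $|t|^{ns}$). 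Hence $\mathrm{Res}_{s=1}I(s)=n^{-1}\mathrm{Res}_{s=1}\zeta_E(s)\,\hat\Phi(0)\,\langle\phi,\phi'\rangle$, using the normalization of $d^\x x$ fixed in \S\ref{measures}.

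Next we unfold $I(s)$. Fourier-expanding $\phi$ along the mirahoric gives $I(s)=\int_{N_n(\A_E)\bs\GLn(\A_E)}W_\phi(g)W_{\phi'}(g)\Phi(e_ng)|\det g|^s\,dg$, which factors as $\prod_w\Psi_w(s,W_w,W'_w,\Phi_w)$. At $w\notin S$, with spherical $W_w,W'_w$ normalized by $W(1)=1$ and $\Phi_w=\mathbf 1_{\O_w^n}$, the unramified computation (Shintani/Cauchy identity, consistent with \ref{explicit_essential_values} for $c=0$) gives $\Psi_w=\vol(\GLn(\O_w))\,L(\Pi_w\x\Pi_w^\vee,s)$ times a local volume factor $N(\mathfrak d_w)^{\ast}$. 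For $w\in S$ we use the standard formula: when $\Phi_w$ is chosen so that $\hat\Phi_w$ concentrates near $e_n$, the residue of the local integral reduces, via the Iwasawa decomposition $g=n\,\mathrm{diag}(h,1)\,z\,k$, to the pairing $\langle W_{\phi,w},W_{\phi',w}\rangle_w$ of (\ref{pairing_whittaker}). More concretely, the local integral at $s=1$ equals $\langle W_w,W'_w\rangle_w$ up to the $\hat\Phi_w(0)$ normalization. Dividing by $\zeta_E(s)$ then turns $L^S(\Pi\x\Pi^\vee,1)$ into $\mathrm{Res}\,\zeta_E\cdot L^S(\Pi,\Ad,1)$, so the residue of $\zeta_E$ cancels.

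The remaining work is bookkeeping of constants. Self-dual measures give $\vol(\O_w)=N(\mathfrak d_w)^{-1/2}$. Collecting the contributions from $N_n$ (which has $n(n-1)/2$ coordinates), from $\Phi$ and $\hat\Phi$, and from the mirahoric normalization $\vol(K_w)=1$ versus $\vol(\GLn(\O_w))$, yields the factor $D_E^{-n(n+3)/4}$. The index $[\GLn(\O_w):K_1(\wp^c)]$ is comparable to $q_w^{c(n-1)}$; the exact $N(\mathfrak n)^n$ arises once one also accounts for the $\Phi_w$ adapted to the conductor. We verify this power by comparing measures on $P_n\bs\GLn$.

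The archimedean constant $c_n(E_\inf)$ comes from comparing the chosen Iwasawa Haar measure with the measure used in the residue, applied with $\Phi_\inf$ Gaussian. For $n=3$ this gives $(4\pi)^{-1}$ per real place, matching Chen's computation \cite{Che22}; we would import that calculation directly. The main obstacle is the exact constant tracking: the powers of $D_E$, the factor $N(\mathfrak n)^n$ from the mirahoric volume normalization, and the archimedean $(4\pi)^{-d}$. The analytic identity itself is standard (\cite[Section 4]{J-S81-I}, \cite{Jo22}), so we would rely on it and carefully redo only the normalizations.
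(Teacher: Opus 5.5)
Your outline is the paper's: the Jacquet--Shalika residue (your $\mathrm{Res}_{s=1}I(s)=n^{-1}\vol(E^\times\backslash\A_E^1)\,\widehat\Phi(0)\,\langle\phi,\phi'\rangle$ is (\ref{Rankin-Selberg_residue})), $\Psi_w=L(\Pi_w\times\Pi_w^\vee,s)$ at unramified places, the identity $\Psi_w(1,\Phi_w,W,W')=c_n(E_w)\,\widehat\Phi_w(0)\,\langle W,W'\rangle_w$ for $w\in S$, and Chen's $c_3(\R)=(4\pi)^{-1}$. But this proposition adds to \cite[\S4]{J-S81-I} and \cite[Prop.~3.1]{Zh14} only the explicit constant, and there the mechanism you propose for $N_{E/\Q}(\mathfrak n)^n$ is wrong.

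First, the identity at $w\in S$ holds for \emph{every} $\Phi_w$, with no condition on $\widehat\Phi_w$. Write $g=u\,\mathrm{diag}(h,1)\,z\,k$ with $w$ finite and $k\in\GLn(\O_w)$. The central characters cancel, $|\det h|^{s-1}=1$ at $s=1$, and the $\GLn(E_w)$-invariance of $\langle\cdot,\cdot\rangle_w$ gives $\Psi_w(1)=\langle W,W'\rangle_w\int_{\GLn(\O_w)}\int_{E_w^\times}\Phi_w(ze_nk)|z|^n\,d^\times z\,dk$. This is a fixed multiple of $\widehat\Phi_w(0)=\int\Phi_w$, and that $\widehat\Phi_w(0)$ cancels the same factor of $\widehat\Phi(0)^{-1}$ in the residue. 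So the choice of $\Phi_w$ at $w\in S$ contributes nothing to $N(\mathfrak n)^n$.

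Second, $[\GLn(\O_w):K_1(\wp_w^c)]$ is the number of primitive vectors in $(\O_w/\wp_w^c)^n$, i.e.\ $q_w^{nc}(1-q_w^{-n})$, not roughly $q_w^{c(n-1)}$. For $w\nmid\mathfrak d_E$ and the measure with $\vol(\GLn(\O_w))=1$, the double integral above equals $(1-q_w^{-n})^{-1}\widehat\Phi_w(0)$ (test it on $\mathbf 1_{\O_w^n}$). Passing to $\vol(K_w)=1$ multiplies by the index, so $c_n(E_w)=q_w^{nc}$. This is the paper's $c_n(E_w)=\widehat{\Phi_{w,c}}(0)^{-1}$, read off from \cite[Thms.~3.2 and 3.7]{Jo22}, and it is the entire source of $N(\mathfrak n)^n$. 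Your count, taken literally, gives the wrong power.

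The power of $D_E$ is likewise only asserted. The paper tracks it through $\widehat\Phi(0)=D_E^{-n/2}N_{E/\Q}(\mathfrak n)^{-n}$ (self-dual measures, $\vol(\O_w)=N(\mathfrak d_w)^{-1/2}$). It also uses the change of variables by $\mathrm{diag}(d_w^{n-1},\dots,d_w,1)$ in (\ref{change_of_variable}) at the places $w\mid\mathfrak d_E$ outside $S$, where $\psi_w$ is ramified. At those places the test vector fixed by the statement is $g\mapsto W^\circ_{\Pi_w}(\mathrm{diag}(d_w^{n-1},\dots,d_w,1)g)$, built from the $\psi_w^\circ$-spherical vector. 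It is not the spherical vector normalized by $W(1)=1$: its value at $1$ is the Shintani value $W^\circ_{\Pi_w}(\mathrm{diag}(d_w^{n-1},\dots,d_w,1))$, a Schur-polynomial expression in the Satake parameters. Your normalization (when it is possible at all) therefore changes the local factor by a parameter-dependent scalar, not by a volume. The ``factor $N(\mathfrak d_w)^{\ast}$'' is exactly what the change of variables has to compute.

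So the proposal reproduces the cited analytic identity but leaves unproved, and partly mis-derives, the normalizations that make up the statement.
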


The above result computes all the local factor at places where $\Pi$ is unramified, even when $\psi_w$ is not unramified. Since it is slightly more precise than \cite[Proposition 3.1]{Zh14}, we quickly recall the proof here and refer to this reference for a more detailed presentation. Note that for the choice of Haar measures made in \cite{Zh14}, the constant $c_n(E_\inf)$ equals $1$.

\begin{proof}

Let $\Phi \in \mathcal{S}(\A_E^n)$ be a Schwartz–Bruhat function which is a tensor product $\Phi = \bigotimes_w \Phi_w$ of local Schwartz–Bruhat functions $\Phi_w \in \mathcal{S}(E_w^n)$ with:
\begin{itemize}
\item $\Phi_w$ is the characteristic function $\mathbf{1}_{\O_w^n}$ of $\O_w^n$ when $w\notin S_\Pi$ (i.e when $\Pi$ is unramified at $w$);
\item $\Phi_w$ is the characteristic function $\Phi_{w,c}$ of $(\wp_w^c)^{n-1} \x (1 + \wp^c_w)$ in $E_w^n$ when $w \in S_\Pi$, where $c = v_{\wp_w}(\n)$ is the mirahoric conductor of $\Pi_w$;
\item $\Phi_w$ is such that $\widehat{\Phi_w}(0) = 1$ when $w \in  S_\inf$. \\
\end{itemize}

We recall that the Fourier transform of $\Phi_w \in \mathcal{S}(E_w^n)$ is defined by:
$$
\widehat{\Phi_w}(x)= \int_{E_w^n} \Phi_w(y) \psi_w(\langle x,y \rangle) dy
$$
Consequently, when $w$ is a finite place:
$$
\widehat{\Phi_w}(0) = \left\{
    \begin{array}{ll}
        \mathrm{vol}(\O_w^n) = N(\mathfrak{d}_w)^{-n/2} & \mbox{if } w \notin S_\Pi \\
         N(\wp_w)^{-nc} \cdot \mathrm{vol}(\O_w)^n =  N(\wp_w)^{-nc} N(\mathfrak{d}_w)^{-n/2} & \mbox{if } w \in S_\Pi
    \end{array}
\right. 
$$
and thus $\widehat{\Phi}(0) = D_E^{-n/2} \cdot N_{E/\Q}(\n)^{-n}$. Jacquet and Shalika \cite[Section 4]{J-S81-I} have shown, using the Rankin-Selberg method, that (see \cite[Proposition 3.1]{Zh14} for details):

\begin{equation}
\label{Rankin-Selberg_residue}
\langle \phi, \phi' \rangle =  \frac{n}{\mathrm{vol}(E^\x \bs \A_E^1)\cdot  \widehat{\Phi}(0)} \cdot \mathrm{Res}_{s=1} \left( \prod_{w \notin S}\Psi(s,\Phi_w,W_{\phi,w},W_{\phi',w}) \right) \cdot  \prod_{w \in S } \Psi(1,\Phi_w,W_{\phi,w},W_{\phi',w})
\end{equation}
where the local zeta integrals are defined by:
$$
\Psi(s,\Phi_w,W_{\phi,w},W_{\phi',w}) = \int_{N_n(E_w) \bs \GLn(E_w)} \Phi_w((0,0,\dots,1)g) W_{\phi,w}(g) W_{\phi',w}(g) |\det(g)|_w^s dg
$$
for each place $w$ of $E$. It thus remains to compute the local factors $\Psi(s,\Phi_w,W_{\phi,w},W_{\phi',w})$ in (\ref{Rankin-Selberg_residue}). \\

\textit{Spherical places not dividing $\mathfrak{d}_E$.} Let $w$ be a finite place of $E$ where $\Pi$ is unramified. If $w$ does not divide the different $\mathfrak{d}_E$ of $E$, then the local component $\psi_w$ of our fixed additive character $\psi_E$ is unramified, i.e. $\psi_w(\O_w) = 1$ and $\psi_w(\varpi_w^{-1}) \neq 1$. In this case, $\vol(N_n(\O_w)) = 1$. Then $W_{\phi,w} = W_{\Pi_w}^\circ$ and $W_{\phi',w} = W_{\Pi_w^\vee}^\circ$ are the spherical vectors of $\Pi_w$ and $\Pi_w^\vee$ with respect to $\psi_w$ and $\psi_w^{-1}$, and we know from \cite[Lemma 2.3]{J-S81-I} that:
$$
\Psi(s,\Phi_w,W_{\phi,w},W_{\phi',w}) = L(\Pi_w \x \Pi_w^\vee,s)
$$

\textit{Spherical places dividing $\mathfrak{d}_E$.}  When $w$ divides $\mathfrak{d}$, $\psi_w$ has conductor $\mathfrak{d}_{w}^{-1}$. By assumption one has:
$$
W_{\phi,w}(g) = W_{\Pi_w}^\circ(\mathrm{diag}(d_w^{n-1},\dots, d_w,1)g)
$$
where $W_{\Pi_w}^\circ \in \W(\Pi_w,\psi_w^\circ)$ is the spherical vector with respect to the unramified additive character $\psi^\circ_w$ and $d_w \in E_w$ is the representative of $\mathfrak{d}_{w}^{-1}$ chosen in \S\ref{global_mirahoric_theory} so that $\psi_w(x) = \psi^\circ_w(d_wx) $. Then making change of variable and using \cite[Lemma 2.3]{J-S81-I}, we get:
\begin{equation}
\label{change_of_variable}
\begin{aligned}
\Psi(s,\Phi_w,W_{\phi,w},W_{\phi',w}) &= |d_w^{-1}|_w^{n(n-1)s/2} \Psi(s,\Phi_w,W_{\Pi_w}^\circ,W_{\Pi_w}^\circ) \\
&= |d_w^{-1}|_w^{n(n-1)(s-1/2)/2} L(\Pi_w \x \Pi_w^\vee,s)
\end{aligned}
\end{equation}
where the factor $|d_w^{-1}|_w^{-n(n-1)/4}$  appearing in the second equality is the volume of $N_n(\O_w)$ for our choice of measure. Gathering the above unramified local computations, we get:
$$
\Psi(s,\Phi_w,W_{\phi,w},W_{\phi',w}) =| \mathfrak{d}_E^S |^{n(n-1)(s-1/2)/2} L^S(\Pi\x \Pi^\vee,s) \prod_{w\in S} \Psi(s,\Phi_w,W_{\phi,w},W_{\phi',w})
$$

Substituting the above local computations in (\ref{Rankin-Selberg_residue}), we get:
$$
\langle \phi, \phi' \rangle =  \frac{n \cdot | \mathfrak{d}_E^S |^{n(n-1)/4} \cdot \mathrm{Res}_{s=1}\zeta_E(s)}{\mathrm{vol}(E^\x \bs \A_E^1)\cdot \widehat{\Phi}(0)} \cdot L^{S}(\Pi, \Ad,1) \cdot  \prod_{w \in S} \Psi(1,\Phi_w,W_{\phi,w},W_{\phi',w})
$$
With our choice of Haar measure on $\A_E$, one has that $\mathrm{vol}(E^\x \bs \A_E^1) = \mathrm{Res}_{s=1}\zeta_E(s)$ (see for exemple \cite[Theorem 4.11.3]{Leahy}) so the front constant simplifies. It remains to simplify the local factors at ramification and archimedean places. \\

\textit{Ramification and archimedean places.} If $w \in S$, there exists a constant $c_n(E_w) \in \R$, depending on our choice of Haar measures, such that (see \cite[(3.9)]{Zh14}):
$$
\Psi(1,\Phi_w,W_{\phi,w},W_{\phi',w})= c_n(E_w)  \cdot \widehat{\Phi}_w(0) \cdot \langle W_{\phi,w},W_{\phi',w} \rangle_w
$$
for all $W_{\phi,w} \in \W(\Pi_w,\psi_w)$, $W_{\phi',w} \in \W(\Pi_w^\vee,\psi_w^{-1})$ and $\Phi_w \in \mathcal{S}(E_w^n)$. This constant is equal to $1$ for the choice of Haar measures made in \cite[\S 2.1]{Zh14}, which is different from ours. The constant $c_n(\R) \in \R$ is computed in \cite[\S 14.11, Corollary]{AGBI}. When $n=3$, one has that $c_n(\R) = (4\pi)^{-1}$ (see \cite[Remark 5.2]{Che22}). If $w$ is non-archimedean, let $c>0$ be the mirahoric conductor of $\Pi_w$. Then one can see by comparing \cite[Theorem 3.2]{Jo22} and \cite[Theorem 3.7]{Jo22} that:
$$
c_n(E_w) = \left\{
    \begin{array}{ll}
        \widehat{\Phi_{w,c}}(0)^{-1} & \mbox{if } w  \mbox{ does not divide } \mathfrak{d}_E \\
        \widehat{\Phi_{w,c}}(0)^{-1} |\mathfrak{d}_w|_w^{n(n-1)/4} & \mbox{if } w  \mbox{ divides } \mathfrak{d}_E
    \end{array}
\right. 
$$
where $\Phi_{w,c}$ is the characteristic function of $(\wp_w^c)^{n-1} \x (1 + \wp^c_w)$ in $E_w^n$. Gathering all the above formulas, we get the announced formula, with $c_n(E_\inf) := \prod_{w\mid \inf} c_n(E_w)$.
\end{proof}

\subsubsection{Local factors at ramification places}

From now, assume that $E$ is a quadratic number field, that $\Pi$ is conjugate self-dual and is only ramified above split primes. Let $S_\Pi$ be the set of finite places where $\Pi$ is ramified. We choose a particular subset $R$ of $S_\Pi$ such that $S_\Pi = R \sqcup \s(R)$, where $\s$ is the Galois involution of $E$. Let $K_f$ be the mixed mirahoric subgroup $K_1^*(\mathfrak{n}(\Pi))$ of level $\mathfrak{n}(\Pi)$ and type $R$. We then refine \ref{jacquet-shalika_formula} by computing the ramified local factors when the test vectors $\phi_f \in \Pi_f$ and $\phi_f' \in \Pi_f^\vee$ are the mixed essential vectors $\phi_\Pi^*$ of $\Pi$ and $\phi_{\Pi^\vee}^*$ of $\Pi^\vee$, defined in \S\ref{global_mirahoric_theory}. \\

The local computations in this paragraph are valid for $\GLn$ and for any non-archimedean local field $L$ of characteristic zero. Thus, we keep general notation in the following lemma:
\begin{lemma}
\label{split_ramified_computations}
Let $L$ be a non-archimedean local field of characteristic zero and let $\pi$ be an irreductible admissible representation of $\GLn(L)$ which is generic. Let $\psi$ be a non-trivial unramified additive character of $L$. Let $\langle \cdot,\cdot \rangle : \W(\pi,\psi) \x \W(\pi^\vee,\psi^{-1}) \to \C$ be the pairing defined by:
$$
\langle W,W' \rangle = \int_{N_{n-1}(L)\bs \mathrm{GL}_{n-1}(L)}W\left(\begin{array}{ll}
g & \\
& 1
\end{array}\right) {W'}\left(\begin{array}{ll}
 g & \\
& 1
\end{array}\right)dg
$$
Suppose that $\pi$ is ramified. Let $W_\pi^\circ$ (resp. $W_{\pi^\vee}^\circ$) be the essential vector of $\pi$ (resp. $\pi^\vee$) with respect to $\psi$ (resp. $\psi^{-1}$). Then one has:
$$
\langle W_\pi^\circ,W_{\pi^\vee}^\circ \rangle = \langle (W_{\pi^{\vee}}^\circ)^\vee,(W_{\pi}^\circ)^\vee\rangle = L^{imp}(\pi\x\pi^\vee,1)
$$
\end{lemma}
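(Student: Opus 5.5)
The plan is to compute both pairings directly from the explicit torus values in \ref{explicit_essential_values}, using an Iwasawa decomposition of $\mathrm{GL}_{n-1}(L)$.

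\textbf{First equality.} Write $W^\vee(g) = W(w_n {}^tg^{-1})$ and substitute $g \mapsto w_{n-1}{}^tg^{-1}w_{n-1}$ in the integral over $N_{n-1}\bs\mathrm{GL}_{n-1}$. This map preserves $N_{n-1}$, so it descends to the quotient, and it preserves the Haar measure. For $h=\mathrm{diag}(g,1)$ one has $w_n {}^th^{-1} = \mathrm{diag}(w_{n-1}{}^tg^{-1}w_{n-1},1)\, w_n'$, where $w_n'$ is a permutation matrix. Equivalently, I will use the known fact that $(W,W')\mapsto \langle W'^\vee, W^\vee\rangle$ is again a $\mathrm{GL}_n$-invariant pairing on $\W(\pi,\psi)\x\W(\pi^\vee,\psi^{-1})$. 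The cleaner route is the identity $\langle W,W'\rangle = \langle W'^\vee,W^\vee\rangle$ for all $W,W'$. This follows from uniqueness of the invariant pairing up to scalar: both sides are $\mathrm{GL}_n$-invariant (the second because $g\mapsto {}^tg^{-1}$ intertwines). The scalar is then pinned down by the integral formula itself after the change of variables above, checking that the extra permutation is absorbed because the pairing only integrates over the mirabolic-type embedding. If this shortcut fails, I will instead do the change of variables explicitly on the integrand. Either way the first equality reduces to the symmetry of the pairing under $\vee$.

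\textbf{Second equality.} Use the Iwasawa decomposition $\mathrm{GL}_{n-1}(L) = N_{n-1}T_{n-1}\mathrm{GL}_{n-1}(\O)$, with $dg = \d_{B_{n-1}}^{-1}(t)\,dt\,dk$ and $\vol(\mathrm{GL}_{n-1}(\O))=1$. Since $\mathrm{diag}(k,1)\in K_1(\wp^c)$ for $k\in \mathrm{GL}_{n-1}(\O)$, both $W_\pi^\circ$ and $W_{\pi^\vee}^\circ$ are right-invariant under it, so the $k$-integral is trivial. The torus integral splits as $t=\varpi^f u$ with $u\in T_{n-1}(\O)$, and $u$ acts trivially by the same invariance (unit torus elements lie in the mirahoric). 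This gives
$$\langle W_\pi^\circ,W_{\pi^\vee}^\circ\rangle=\sum_{f\in\Z^{n-1}} W_\pi^\circ\!\left(\begin{smallmatrix}\varpi^f&\\&1\end{smallmatrix}\right)W_{\pi^\vee}^\circ\!\left(\begin{smallmatrix}\varpi^f&\\&1\end{smallmatrix}\right)\d_{B_{n-1}}^{-1}(\varpi^f).$$
Next apply \ref{explicit_essential_values} to both factors. The parameters of $\pi^\vee$ are the $\b_j=\a_j^{-1}$ for $j\le r$, and the normalization gives $W(1)=1$. The sum runs over dominant $f$, and the term is $\d_{B_n}(\varpi^f)\d_{B_{n-1}}^{-1}(\varpi^f)\,s_f(\a)s_f(\b)$. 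A short check shows $\d_{B_n}\d_{B_{n-1}}^{-1}(\varpi^f)=q^{-|f|}$.

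\textbf{Main step.} The sum is now $\sum_f s_f(\a,0)s_f(\b,0)\,q^{-|f|}$, and I will apply the Cauchy identity $\sum_\lambda s_\lambda(x)s_\lambda(y)=\prod_{i,j}(1-x_iy_j)^{-1}$. The sum restricted to partitions with at most $n-1$ parts is harmless: because $r\le n-1$ (since $\pi$ is ramified), $s_f$ with $r$ nonzero variables vanishes unless $f$ has at most $r$ parts. Scaling $x=\a q^{-1}$ with $y=\b$ gives $\prod_{i,j\le r}(1-\a_i\b_jq^{-1})^{-1}=L^{imp}(\pi\x\pi^\vee,1)$. This is exactly where the ramified hypothesis matters, and convergence follows from the absolute convergence of the Cauchy series for $|\a_i\b_j q^{-1}|<1$. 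I expect the main obstacle to be bookkeeping rather than this identity: getting the exponent of the modulus characters right, and verifying the precise form of the symmetry in the first equality.
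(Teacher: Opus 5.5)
Your computation of the second equality is correct and follows the paper's argument. You use the Iwasawa decomposition, the invariance of both essential vectors under $\mathrm{diag}(\mathrm{GL}_{n-1}(\mathcal{O}),1)\subset K_1(\wp^c)$, the Miyauchi--Matringe torus values, $\delta_{B_n}\delta_{B_{n-1}}^{-1}(\varpi^f)=q^{-|f|}$, and the Cauchy identity, with $r\le n-1$ making the restriction to partitions with at most $n-1$ parts harmless. One side remark is false, though you never use it. For ramified $\pi$ the parameters of $\pi^\vee$ are not the $\alpha_j^{-1}$ in general: for the Steinberg representation of $\mathrm{GL}_2$ one has $\alpha_1=\beta_1=q^{-1/2}$. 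Your final product is right only because you keep the actual $\beta_j$ of $L(\pi^\vee,s)$.

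The first equality, however, is not proved.

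\begin{itemize}
\item Uniqueness of invariant pairings only gives $\langle W'^\vee,W^\vee\rangle=c_\pi\langle W,W'\rangle$ for some constant $c_\pi$. The whole content is $c_\pi=1$, and applying the identity twice only yields $c_\pi^2=1$.
\item Your way of fixing the constant rests on a false factorization. In fact $w_n\,\mathrm{diag}({}^tg^{-1},1)=\mathrm{diag}(1,w_{n-1}{}^tg^{-1}w_{n-1})\,w_n$. The change of variables therefore turns $\langle W'^\vee,W^\vee\rangle$ into $\int_{N_{n-1}\backslash\mathrm{GL}_{n-1}}W(\mathrm{diag}(1,h)w_n)\,W'(\mathrm{diag}(1,h)w_n)\,dh$, a pairing over the lower-right copy of $\mathrm{GL}_{n-1}$. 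The permutation is not absorbed, and comparing this pairing with the upper-left one is exactly the missing step.
\item Your fallback of computing directly also fails: \ref{explicit_essential_values} only gives $W_\pi^\circ$ on $\mathrm{diag}(\varpi^f,1)$, not on $\mathrm{diag}(1,h)w_n$.
\end{itemize}

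The paper instead works with the specific vectors. By \cite[Proposition 4.7]{AM17}, $(W^\circ_{\pi^\vee})^\vee$ and $(W^\circ_\pi)^\vee$ are explicit $\varepsilon$-factor multiples of $\pi(\varpi_{n-1}^c)W^\circ_\pi$ and $\pi^\vee(\varpi_{n-1}^c)W^\circ_{\pi^\vee}$. The two $\varepsilon$-factors multiply to $1$, and $\mathrm{GL}_n$-invariance of the pairing concludes.

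You can even avoid the explicit constants. Since $c(\pi^\vee)=c(\pi)$ and the ${}^tK_1(\wp^c)$-fixed lines in $\pi$ and $\pi^\vee$ are one-dimensional, one has $(W^\circ_{\pi^\vee})^\vee=a\,\pi(\varpi_{n-1}^c)W^\circ_\pi$ and $(W^\circ_\pi)^\vee=b\,\pi^\vee(\varpi_{n-1}^c)W^\circ_{\pi^\vee}$. Apply $\vee$ to the first relation, using $(\pi(h)W)^\vee=\pi^\vee({}^th^{-1})W^\vee$ and $(W^\vee)^\vee=W$. This gives $W^\circ_{\pi^\vee}=ab\,W^\circ_{\pi^\vee}$, hence $ab=1$. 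Invariance then gives the first equality, and a posteriori $c_\pi=1$.
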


\begin{proof}
Let $\O_L$ be the valuation ring of $L$, $\varpi$ be an uniformizer of $L$ and let $q = \#(\O_L/(\varpi))$. We recall that the Haar measures on $\mathrm{GL}_{n-1}(L)$ and $N_{n-1}(L)$ are normalized so that $\vol(\mathrm{GL}_{n-1}(\O_L)) = \vol(N_{n-1}(\O_L))=1$. We first prove $(i)$. Using \cite[Proposition 4.7]{AM17}, one checks that:
$$
\begin{aligned}
\langle (W_{\pi^{\vee}}^\circ)^\vee,(W_{\pi}^\circ)^\vee\rangle &= \e(\pi^{\vee},\psi^{-1},1/2)^{n-1}\e(\pi,\psi,1/2)^{n-1} \langle \pi(\varpi_{n-1}^c)(W_\pi^\circ) ,\pi^\vee(\varpi_{n-1}^c)(W_{\pi^\vee}^\circ)\rangle\\
& =\langle W_\pi^\circ,W_{\pi^\vee}^\circ \rangle
\end{aligned}
$$
where the second equality follows from the $\GLn(L)$-equivariance of $\langle \cdot,\cdot \rangle$ and the fact that $\e(\pi^{\vee},\psi^{-1},1/2) = \e(\pi,\psi,1/2)^{-1}$. Now, using the Iwasawa decomposition one has that:
$$
\begin{aligned}
\langle W^\circ_{\pi},W^\circ_{\pi^\vee}  \rangle  &=  \int_{T_{n-1}(L)} \int_{\mathrm{GL}_{n-1}(\O_L)} W^\circ_{\pi}\left(\begin{array}{ll}
tk & \\
& 1
\end{array}\right) W^\circ_{\pi^\vee} \left(\begin{array}{ll}
tk & \\
& 1
\end{array}\right) \d_{B_{n-1}(L)}^{-1}(t)dkdt \\
&= \int_{T_{n-1}(L)} W^\circ_{\pi}\left(\begin{array}{ll}
t & \\
& 1
\end{array}\right) W^\circ_{\pi^\vee} \left(\begin{array}{ll}
t & \\
& 1
\end{array}\right) \d_{B_{n-1}(L)}^{-1}(t)dt \\
&=  \sum_{t \in T_{n-1}(L)/T_{n-1}(\O_L)} W^\circ_{\pi}\left(\begin{array}{ll}
t & \\
& 1
\end{array}\right) W^\circ_{\pi^\vee} \left(\begin{array}{ll}
t & \\
& 1
\end{array}\right) \d_{B_{n-1}(L)}^{-1}(t)\\
&= \sum_{\l \in \Z^{n-1}} W^\circ_{\pi}\left(\begin{array}{ll}
\varpi^\l & \\
& 1
\end{array}\right) W^\circ_{\pi^\vee} \left(\begin{array}{ll}
\varpi^\l & \\
& 1
\end{array}\right) \d_{B_{n-1}(L)}^{-1}(\varpi^\l) \\
\end{aligned}
$$
where $\d_{B_m(L)}(\varpi^\mu) = q^{-\sum_{i=1}^m (m+1-2i) \mu_i}$ (for $m \geq 1$ and $\mu \in \Z^m$) is the modulus character of $B_m(L)$. The second equality holds because $W^\circ_{\pi}$ and $W^\circ_{\pi^\vee}$ are fixed by the mirabolic subgroup and because the volume of $\mathrm{GL}_{n-1}(\O_L)$ with respect to the Haar measure on $\mathrm{GL}_{n-1}(L)$ is equal to 1. Write the standard $L$-functions of $\pi$ and $\pi^\vee$ as:
$$
L(\pi,s) = \prod_{i=1}^{r} (1 - \a_i q^{-s})^{-1} \quad \mbox{ and } \quad L(\pi^\vee,s) = \prod_{j=1}^{r} (1 - \b_j q^{-s})^{-1}
$$
where $r<n$ and $\a_i, \b_i \in \C^\x$, as in (\ref{standard_L_factor}). Let $\a := (\a_i)_{i=1}^{r}$ and $\b := (\b_i)_{i=1}^{r}$. Then, using the explicit values of the essential vectors on $T_{n-1}(L)$, we obtain:
$$
\begin{aligned}
\langle W^\circ_{\pi},W^\circ_{\pi^\vee}  \rangle &= \sum_{\l} \d_{B_n(L)}\left(\begin{smallmatrix} \varpi^\l & 0\\ 0 & 1\end{smallmatrix}\right)  \d_{B_{n-1}(L)}^{-1}(\varpi^\l) s_\l(\a) s_\l(\b)\\
&= \sum_{\l}  q^{-\Tr\l}s_\l(\a) s_\l(\b) =  \sum_{\l} s_\l(q^{-1/2}\a) s_\l(q^{-1/2}\b) 
\end{aligned}
$$
where sums are indexed by $\l \in \Z^{n-1}$ such that $\l_1 \geq \dots \geq \l_{n-1} \geq 0$ and $\Tr\l:= \l_1+ \dots + \l_{n-1}$ is equal to the degree of $s_\l$. Then, by \cite[Formula (4.3) p.63]{Macdonald}, we have:
$$
\langle W^\circ_{\pi},W^\circ_{\pi^\vee}  \rangle = \prod_{i,j} ( 1 - \a_i \b_jq^{-1})^{-1} = L^{imp}(\pi \x \pi^\vee,1)
$$
See also \cite[Theorem 3.7]{Jo22} for a slightly different proof using Matringe's formula for essential vectors \cite[Formula (1)]{Matringe13}.
\end{proof}

Let $w$ be finite place of $E$ where $\Pi$ (and $\Pi^\vee$) is ramified. Then, the local components $\phi_w$ and $\phi_w'$ of $\phi_f$ and $\phi_f'$ at $w$ are either both the essential vectors $\phi^\circ_{\Pi_w}$ and $\phi^\circ_{\Pi_w^\vee}$ or both the transposed essential vector ${}^\vee\phi^\circ_{\Pi_w}$ and ${}^\vee\phi^\circ_{\Pi_w^\vee}$ with respect to $\psi_w$ (which is unramified, since $w$ is split over $\Q$). In both cases, it follows from \ref{split_ramified_computations} that:
$$
\langle W_{\phi,w}, W_{\phi,w'} \rangle_w =L^{imp}(\pi\x \pi^\vee,1)
$$
\newline

The above computations, together with \ref{jacquet-shalika_formula}, give the following proposition:

\begin{prop}
Assume that $\Pi$ is conjugate self-dual and that $\Pi$ is only ramified above split primes. Let $\phi_f \in \Pi_f$ and $\phi_f' \in \Pi_f^\vee$ be the mixed essential vectors of $\Pi$ and $\Pi^\vee$. Let $\phi = \phi_f \otimes \phi_\inf$ and $\phi' = \phi'_f \otimes \phi'_\inf$, for some decomposable archimedean form $\phi_\inf \in \Pi_\inf$ and $\phi_\inf' \in \Pi_\inf^\vee$. Then: \\
\begin{equation}
\label{full_jacquet-shalika_formula}
\langle \phi, \phi' \rangle =  n \cdot N_{E/\Q}(\mathfrak{n})^n \cdot D_E^{-n(n+3)/4} \cdot c_n(E_\inf) \cdot L^{imp}(\Pi, \Ad,1) \cdot  \prod_{w \in S_\inf}  \langle W_{\phi,w},W_{\phi',w} \rangle_w
\end{equation}
where $ c_n(E_\inf) \in \R^\x$ is some computable constant. In particular $c_3(E_\inf) = (4\pi)^{-1}$.
\end{prop}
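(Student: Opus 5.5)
The plan is to specialize \ref{jacquet-shalika_formula} to the mixed essential vectors and then absorb the ramified local pairings into the imprimitive $L$-value. Take $S = S_\inf \cup S_\Pi$. At every finite place $w \notin S_\Pi$ we must check that the local components of $\phi_f$ and $\phi'_f$ are exactly the vectors required by \ref{jacquet-shalika_formula}. For mixed essential vectors, these components are the spherical vectors normalized as for $\phi^\circ_\Pi$ and $\phi^\circ_{\Pi^\vee}$; this includes the places dividing $\mathfrak{d}_E$, where the translate by $\mathrm{diag}(d_w^{n-1},\dots,1)$ was already handled inside the proof of \ref{jacquet-shalika_formula}. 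So the hypotheses hold, and we obtain
\[
\langle \phi,\phi'\rangle = n\, N_{E/\Q}(\mathfrak n)^n D_E^{-n(n+3)/4}\, c_n(E_\inf)\, L^{S}(\Pi,\Ad,1) \prod_{w\in S}\langle W_{\phi,w},W_{\phi',w}\rangle_w .
\]

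Next we treat the ramified finite places $w \in S_\Pi = R \sqcup \s(R)$. Since $w$ lies above a split prime, $E_w \simeq \Q_\ell$, and $\psi_w$ is unramified, so \ref{split_ramified_computations} applies to $\pi = \Pi_w$.

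1. If $w \in R$, both test vectors are essential vectors, and the first expression in the lemma gives $\langle W^\circ_{\Pi_w},W^\circ_{\Pi_w^\vee}\rangle = L^{imp}(\Pi_w\x\Pi_w^\vee,1)$.
2. If $w \in \s(R)$, both are transposed essential vectors. Their Whittaker functions are $(W^\circ_{\Pi_w^\vee})^\vee$ and $(W^\circ_{\Pi_w})^\vee$, and the second equality of the lemma gives the same value. We need to note here that ${}^\vee\phi^\circ_{\Pi^\vee_w}$ indeed has Whittaker function $(W^\circ_{\Pi_w})^\vee$, using $(\Pi_w^\vee)^\vee \simeq \Pi_w$ and $\psi_w^{-1}$ in place of $\psi_w$.

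Multiplying these local factors into $L^S(\Pi\x\Pi^\vee,1)$ gives the imprimitive Rankin–Selberg value at all finite places. Dividing by $\zeta_E$, whose local factors at the places of $S_\Pi$ are unchanged, converts $L^S(\Pi,\Ad,1)$ together with the ramified pairings into $L^{imp}(\Pi,\Ad,1)$, where $L^{imp}$ is the finite-part imprimitive adjoint $L$-function. Only the archimedean pairings then remain.

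The constant $c_3(E_\inf)$ is read off from \ref{jacquet-shalika_formula}: it equals $(4\pi)^{-d}$, which is $(4\pi)^{-2}$ for real quadratic $E$. The stated value $(4\pi)^{-1}$ seems to refer to one real place, so we will follow the proposition's convention for $c_n(E_\inf)$.

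The main obstacle is bookkeeping rather than substance. First, we must confirm that conjugate self-duality is not actually needed beyond guaranteeing that $\s(\mathfrak n) = \mathfrak n$, so that the mixed mirahoric type is defined. Second, we must check carefully that the transposed essential vector of $\Pi^\vee_w$ pairs with that of $\Pi_w$ exactly as in \ref{split_ramified_computations}, keeping track of the $\psi$ versus $\psi^{-1}$ convention. Everything else is direct substitution.
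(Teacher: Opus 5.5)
Your proposal is correct and follows the paper's proof exactly: you specialize \ref{jacquet-shalika_formula} with $S = S_\inf \cup S_\Pi$ and evaluate each ramified pairing by \ref{split_ramified_computations}, using essential vectors at $w \in R$ and transposed essential vectors at $w \in \s(R)$. Your remark on the constant is also right: the paper's own \ref{jacquet-shalika_formula} gives $c_3(E_\inf) = \prod_{w \mid \inf} c_3(\R) = (4\pi)^{-2}$ for real quadratic $E$, so the stated $(4\pi)^{-1}$ is the value at a single real place.
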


\subsection{A cohomological interpretation of the Jacquet-Shalika formula}
\label{proof_adjoint}
From now we assume that $E$ is a real quadratic field. Let $\Pi$ be a cohomological cuspidal automorphic representation of $\GL(\A_E)$ which conjugate self-dual and only ramified above split places. Its cohomological weight is denoted $\mu = (\mu_\tau,\mu_{\s\tau}) \in X^+(T_E)$. Recall that $K_f$ denotes the mixed mirahoric subgroup $K_1^*(\mathfrak{n}(\Pi))$ of level $\mathfrak{n}(\Pi)$ and type $R$, and $\phi_f$ be the mixed essential vector $\phi^*_\Pi \in \Pi_f$ of $\Pi$.

\subsubsection{A Poincaré pairing}
\label{pairings}

Let $A$ be any ring and $M$, $N$ two $A$-modules. Let $[\cdot,\cdot]: M \x N \to$ be a pairing (i.e a bilinear map) between $M$ and $N$. From $[\cdot,\cdot]$ we can construct the two following $A$-linear maps:
$$
M \to \Hom_A(N,A) \quad \mbox{ and } \quad N \to \Hom_A(M,A)
$$
We recall that $[\cdot,\cdot]$ is said to be non-degenerate if these two maps are injective, and perfect if they are isomorphisms. If $A$ is a field and $M$ and $N$ have finite dimensions, these two notions coincide.

\paragraph{A pairing on the cuspidal cohomology.} Let first $K$ denote $\C$ or some sufficiently large $p$-adic field $\K$. Consider the pairings:
$$
[ \cdot,\cdot ]:  H^{5}(Y(K_f), \L_{\mu}(K)) \x H_{c}^{5} ( Y(K_f), \L_{\mu^\vee}(K)) \to K
$$
$$
[ \cdot,\cdot ]:  H^{5}_c(Y(K_f), \L_{\mu}(K)) \x H^{5} ( Y(K_f), \L_{\mu^\vee}(K)) \to K
$$
constructed using cup product and the perfect pairing $\langle \cdot,\cdot\rangle_{\mu}: \L_{\mu}(K)\otimes
\L_{\mu^\vee}(K) \to K$ described in paragraph~\ref{alg_irrep}. By Poincaré duality (see for exemple Theorem 4.8.9 of \cite{LAG1}), these pairings are perfect. Moreover, we have the following commutative diagram:
$$
\xymatrix{
H^{5}(Y(K_f), \L_{\mu}(K)) \x H_c^{5} ( Y(K_f), \L_{\mu^\vee}(K)) \ar[r] & K\\
    H^{5}_c (Y(K_f), \L_{\mu}(K)) \x H_c^{5} ( Y(K_f), \L_{\mu^\vee}(K)) \ar@<-50pt>@{=}[d] \ar@<50pt>[d] \ar@<-50pt>@{=}[u] \ar@<50pt>[u] \ar[r] & K \ar@{=}[d]  \ar@{=}[u] \\
     H^{5}_c(Y(K_f), \L_{\mu}(K)) \x H^{5} ( Y(K_f), \L_{\mu^\vee}(K)) \ar[r] & K \\
}
$$
so these two pairings induce a pairing on the interior cohomology, again denoted $[ \cdot,\cdot ]$:
\begin{equation}
\label{poincare_pairing}
[ \cdot,\cdot ] : H^{5}_! (Y(K_f), \L_{\mu}(K)) \x H_!^{5} ( Y(K_f), \L_{\mu^\vee}(K)) \to K
\end{equation}

It is not difficult to see that this pairing is perfect as its non-degeneracy follows from the non-degeneracy of the two first pairings. Let $\O$ be the integer ring of $\K$. We now describe how to define the above pairing on the torsion-free quotient of the cohomology groups with coefficients in $\O$. Let $\L_{\mu}(\O)^\vee \subset  \L_{\mu^\vee}(\K)$ be the dual lattice of $\L_{\mu}(\O)$ for $\langle \cdot,\cdot\rangle_{\mu}$. Recall from paragraph~\ref{alg_irrep}, that when $\mu$ is $p$-small, one has in fact $\L_{\mu}(\O)^\vee= \L_{\mu^\vee}(\O)$. As above, we have the following two pairings:
$$
[\cdot,\cdot]: H^{5}(Y(K_f), \L_{\mu}(\O)) \x H_{c}^{5} ( Y(K_f), \L_{\mu}(\O)^\vee) \to \O
$$
$$
[\cdot,\cdot]:  H^{5}_c(Y(K_f), \L_{\mu}(\O)) \x H^{5} (Y(K_f), \L_{\mu}(\O)^\vee) \to \O
$$
where the cohomology groups are the torsion-free part of the cohomology groups with coefficients in $\O$. These two pairing are perfect (see the second part of Theorem 4.8.9 of \cite{LAG1}, be careful that there non-degenerate mean perfect). The same diagram as above shows the existence of a pairing on the inner cohomology:
\begin{equation}
\label{integral_poincare_pairing}
[ \cdot,\cdot ] : H^{5}_! (Y(K_f), \L_{\mu}(\O)) \x H_!^{5} (Y(K_f), \L_{\mu}(\O)^\vee) \to \O
\end{equation}
An important property of this pairing is that it is Hecke-equivariant:
\begin{lemma} 
\label{poincare_equivariance}
The pairing $[ \cdot,\cdot ]$ is Hecke-equivariant, i.e. for all $T \in \H(K_f; \O)$:
$$
[ T\cdot x, y ] = [ x, T^\vee \cdot  y ], 
$$
for $x \in H^{5}_{!}(Y(K_f), \L_{\mu}(\O))$ and $y \in H_{!}^{5} ( Y(K_f), \L_{\mu}(\O)^\vee)$, where $T^\vee$ has been implicitly defined in \ref{e_involution}.
\end{lemma}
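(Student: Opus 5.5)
The plan is to realise each Hecke operator as a correspondence on adelic varieties. Its transpose for the cup-product pairing is then the correspondence with the two legs swapped, and the remaining work is to identify that transpose with the operator $T^\vee$ of \S\ref{e_involution}. Since the pairing (\ref{integral_poincare_pairing}) is induced from the two perfect pairings $H^5\times H^5_c\to\O$ and $H^5_c\times H^5\to\O$ through the commutative diagram of \S\ref{pairings}, it suffices to prove the identity for those two pairings. I also check that the Hecke action commutes with the maps $i_\O$, which was already recorded in \S\ref{hecke_corr}.

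\emph{Step 1: correspondences.} By multiplicativity it is enough to treat a single double coset $T=[K_wxK_w]$ with $w\notin S$ and $x\in \GL(E_w)$. Set $K'=K_f\cap xK_fx^{-1}$. There are finite étale maps $p_1:Y(K')\to Y(K_f)$, the natural projection, and $p_2:Y(x^{-1}K'x)\to Y(K_f)$, together with the isomorphism $[x]:Y(K')\to Y(x^{-1}K'x)$ given by $g\mapsto gx$. With these, $T=p_{1,*}\circ[x]^*\circ p_2^*$ on every version of the cohomology. Because $x$ is a finite adele supported at $w\nmid p$, right translation by $x$ acts trivially on the fibres $L_\mu(\O)$ and $L_\mu(\O)^\vee$. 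The cup product composed with the $G_E(\O)$-invariant pairing $\langle\cdot,\cdot\rangle_\mu$ is therefore compatible with $[x]^*$.

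\emph{Step 2: projection formula.} For a finite covering $p$ between oriented manifolds of the same dimension $10$, and classes $a\in H^5$, $b\in H^5_c$, one has $\int p_*(a)\cup b=\int a\cup p^*(b)$. The same identity holds for $p^*$ and $p_*$ with compactly supported and ordinary cohomology interchanged, since $p$ is proper. It holds integrally on torsion-free quotients, and no degree factor is introduced. Applying this twice, together with invariance of integration under the diffeomorphism $[x]$, gives
$$
[\,Tx,\;y\,]=[\,x,\;p_{2,*}\circ[x^{-1}]^*\circ p_1^*\,y\,]=[\,x,\;[K_wx^{-1}K_w]\,y\,].
$$

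\emph{Step 3: identification with $T^\vee$.} This is the step I expect to be the real obstacle, because it is here that the naive adjoint must be matched with the Galois-twisted operator of \S\ref{e_involution}. By the Cartan decomposition I may take $x$ diagonal, so that
$$
K_wx^{-1}K_w=K_w\,{}^tx^{-1}K_w=K_w\,\sigma^{-1}\big(\e(x)\big)K_w .
$$
Hence the naive adjoint is obtained from $T^\e=[K_{w^\s}\e(x)K_{w^\s}]$ by transporting back along $\s$, from the place $w^\s$ to the place $w$. I would take $T^\vee$ to be exactly this twisted form of $T^\e$, namely $T\mapsto (T^\e)^\s$, as implicitly defined in \S\ref{e_involution}. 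The formulas $T_{w,1}^\e=S_{w^\s}^{-1}T_{w^\s,2}$, $T_{w,2}^\e=S_{w^\s}^{-1}T_{w^\s,1}$ and $S_w^\e=S_{w^\s}^{-1}$ then give
$$
T_{w,1}^\vee=S_w^{-1}T_{w,2},\qquad T_{w,2}^\vee=S_w^{-1}T_{w,1},\qquad S_w^\vee=S_w^{-1}.
$$
These agree with the double cosets $K_wx^{-1}K_w$ for $x=\diag(\varpi_w I_i,I_{3-i})$, which I would verify directly by rewriting $x^{-1}=\varpi_w^{-1}\diag(I_i,\varpi_w I_{3-i})$.

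Finally, the identity holds for the generators $T_{w,1},T_{w,2},S_w^{\pm1}$ of $\H(K_f;\O)$. It extends to all of $\H(K_f;\O)$ because $T\mapsto T^\vee$ is a ring homomorphism and the Hecke algebra is commutative, so taking transposes, which reverses the order of products, is harmless.
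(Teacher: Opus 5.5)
Your proposal is correct, and it is the standard argument: realise $T$ as the correspondence $p_{1,*}\circ[x]^*\circ p_2^*$, move it across the cup product with the projection formula to obtain $[K_wx^{-1}K_w]$, and identify this with $T^\vee$ through the Cartan decomposition; the paper states the lemma without proof and implicitly relies on exactly this. One detail needs tidying: $K_1^*(\mathfrak{n})$ is not neat (it contains $\mathrm{diag}(h,1)$ for torsion elements $h\in\mathrm{GL}_2(\O_E)$), so $p_1,p_2$ are not genuine coverings of manifolds, and you should either phrase Step 2 via restriction/corestriction in group cohomology or check the identity with $\K$-coefficients, which suffices because the torsion-free quotients embed into $\K$-cohomology and the $\O$-pairing is the restriction of the $\K$-pairing.
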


The pairing (\ref{integral_poincare_pairing}) coincides with the pairing on the cohomology groups with coefficient in $\K$ when extending the scalars to $\K$. Thus, this pairing is non-degenerate but may not be perfect. However, it is perfect when localized at some non-Eisenstein ideal, as we now explain. Let $h(K_f; \O)$ be the $\O$-algebra of Hecke correspondences on $H^{\bullet}(Y(K_f), \L_{\mu}(\O))$. We let $h(K_f; \O)$ act on $H_{!}^{5} ( Y(K_f), \L_{\mu}(\O)^\vee)$ by:
$$
t(x) = ([\, \vee \,] \circ t \circ [\, \vee \,]^{-1})(x)
$$
where $[\, \vee \,]: H^{5}_{!}(Y(K_f), \L_{\mu}(\O)) \to H^{5}_{!}(Y(K_f^\vee), \L_{\mu}(\O)^\vee)$ has been implicitly defined in \ref{e_involution}. Be careful that $t$ on the right-hand side acts on the cohomology of level $K_f^\vee$. Then we see from \ref{poincare_equivariance} that the pairing $[\cdot, \cdot ]$ is equivariant with respect to this action of $h(K_f; \O)$. Let $\m \subset h(K_f; \O)$ be some non-Eisenstein maximal ideal. By localization, we obtain the following pairing:
\begin{equation}
\label{localized_poincare_pairing}
[ \cdot,\cdot ]: H^{5}_! (Y(K_f), \L_{\mu}(\O))_{\m} \x H_!^{5} (Y(K_f), \L_{\mu}(\O)^\vee)_{\m} \to \O
\end{equation}
As explained in \cite[\S 4.2.4]{BR17}, this pairing is perfect if the localized boundary cohomology $H^5(\partial(Y(K_f)),\L_{\mu}(\O))_{\m}$ has no torsion. However, it follows from \cite[Theorem 4.2]{NT16} that $H^5(\partial(Y(K_f)),\L_{\mu}(\O))_{\m} = 0$ if $\m$ is non-Eisenstein. We record this discussion in the following lemma:

\begin{lemma}
Let $\m \subset h(K_f;\O)$ be a maximal ideal. Then the pairing:
$$
[ \cdot,\cdot ]: H^{5}_! (Y(K_f), \L_{\mu}(\O))_{\m} \x H_!^{5} (Y(K_f), \L_{\mu}(\O)^\vee)_{\m} \to \O
$$
is $\TT$-equivariant. Moreover, if $\m$ is non-Eisenstein, it is perfect.
\end{lemma}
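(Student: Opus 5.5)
The plan is to deduce both claims from the corresponding statements for the unlocalized pairing (\ref{integral_poincare_pairing}), using that localization at $\m$ is exact and that $h(K_f;\O)\simeq\prod_{\m'} h(K_f;\O)_{\m'}$.

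\emph{Equivariance.} By \ref{poincare_equivariance}, $[T x, y] = [x, T^\vee y]$ for every $T\in\H(K_f;\O)$. With $h(K_f;\O)$ acting on the dual side through the twisted action $t(x) = ([\,\vee\,]\circ t\circ[\,\vee\,]^{-1})(x)$, the operator $T^\vee$ on $H^5_!(Y(K_f),\L_\mu(\O)^\vee)$ is exactly the twisted action of $T$. Hence $[tx,y]=[x,ty]$ for all $t\in h(K_f;\O)$. Both modules are finite $h(K_f;\O)$-modules, so each splits as the direct sum of its localizations at the finitely many maximal ideals. If $x$ lies in the $\m$-component and $y$ in the $\m'$-component with $\m\neq\m'$, choose the idempotent $e_\m$. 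Then
$$[x,y] = [e_\m x, y] = [x, e_\m y] = 0.$$
Therefore the pairing restricts to the $\m$-parts, and the result is $h(K_f;\O)_\m$-equivariant. Since the target $\O$ is torsion free, the pairing factors through the torsion-free quotient $\TT$, which gives $\TT$-equivariance. The orthogonality argument also shows that the full pairing is the orthogonal direct sum of its localized pieces. Consequently, whenever the full pairing is perfect, each localized pairing is perfect.

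\emph{Perfectness.} The two pairings $H^5\times H^5_c\to\O$ and $H^5_c\times H^5\to\O$ are perfect on torsion-free quotients (Theorem 4.8.9 of \cite{LAG1}). Their localizations at $\m$ stay perfect by the orthogonal-decomposition argument above. Next, use the long exact sequence
$$H^4(\partial Y,\L_\mu(\O))\to H^5_c(Y,\L_\mu(\O))\xrightarrow{i} H^5(Y,\L_\mu(\O))\to H^5(\partial Y,\L_\mu(\O)).$$
It is Hecke-equivariant, so it can be localized at $\m$. When $\m$ is non-Eisenstein, \cite[Theorem 4.2]{NT16} gives vanishing of the localized boundary cohomology. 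I would apply it in the degrees $4$ and $5$ and for both coefficient systems $\L_\mu(\O)$ and $\L_\mu(\O)^\vee$. The sequence then shows that $i_\m$ is an isomorphism, so $H^5_{c,\m}=H^5_{!,\m}=H^5_\m$ modulo torsion. The pairing on $H^5_{!,\m}$ is therefore identified with the perfect localized pairing $H^5_\m\times H^5_{c,\m}\to\O$, and it is perfect.

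\emph{Main obstacle.} The hardest step is justifying the boundary vanishing in degree $4$ as well as degree $5$, and for the dual coefficients. Only degree $5$ is quoted from \cite{NT16}. The paper's citation of \cite[\S4.2.4]{BR17} needs only torsion-freeness of the degree-$5$ boundary cohomology, so I would instead follow that route. Its idea is that $H^5_{!}$ modulo torsion is saturated in $H^5$ exactly when the image of $H^5$ in $H^5(\partial)$ is torsion free, and this holds trivially after localization because that group is $0$. Applying the same reasoning on the compact-support side by duality should then yield perfectness.
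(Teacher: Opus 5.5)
Your proposal is correct and is essentially the paper's argument. Equivariance comes from \ref{poincare_equivariance} once $h(K_f;\O)$ acts on the dual side through the $\vee$-twisted action. Perfectness comes from the vanishing of the localized degree-$5$ boundary cohomology \cite[Theorem 4.2]{NT16}, fed into the saturation criterion of \cite[\S4.2.4]{BR17}, which is the route you settle on.

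On that route your worry about degree $4$ and the dual coefficients is unnecessary. The vanishing $H^5(\partial Y(K_f),\L_\mu(\O))_\m=0$ makes $H^5_c\to H^5$ surjective after localization. Hence $\Hom_\O(H^5_!(Y(K_f),\L_\mu(\O))_\m,\O)\cong H^5_c(Y(K_f),\L_\mu(\O)^\vee)_\m$ (torsion-free parts), so the map $H^5_!(Y(K_f),\L_\mu(\O)^\vee)_\m\to\Hom_\O(H^5_!(Y(K_f),\L_\mu(\O))_\m,\O)$ is surjective. Its injectivity is just non-degeneracy.
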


Before going into the details of archimedean computations, we record the following consequence of the above formula:
\begin{lemma}
Suppose that $\mu^\e = \mu$ and $K_f^\e = K_f$ so that $\e$ acts on the inner cohomology groups. Then the pairing $[ \cdot , \cdot ]$ defined on the cuspidal cohomology is anti-equivariant with respect to $\e$, i.e:
$$
[ \e \cdot x, y ] = - [ x, \e \cdot  y ], 
$$
where $x \in H^{5}_{!}(Y(K_f), \L_{\mu}(\O))$ and $y \in H_{!}^{5} (Y(K_f), \L_{\mu}(\O)^\vee)$.
\end{lemma}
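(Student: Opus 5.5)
The plan is to prove the equivalent identity $[\e x,\e y]=-[x,y]$. The statement follows from it by replacing $y$ with $\e y$ and using $\e^2=1$. Since the integral pairing (\ref{integral_poincare_pairing}) is the restriction of the pairing with coefficients in $\K$, it is enough to work with $\K$ (or $\C$) coefficients.

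There, $[x,y]$ is built in three steps: take the cup product $x\cup y$, push it through the coefficient pairing $\langle\cdot,\cdot\rangle_\mu$ to get a class in $H^{10}_c(Y(K_f),K)$, and evaluate on the fundamental class of the $10$-dimensional orbifold $Y(K_f)$. The action of $\e$ on cohomology is pullback along $\e:Y(K_f)\to Y(K_f)$, composed with the sheaf map induced by $\e$ on $L_\mu$. Pullback commutes with cup products, so $\e^*x\cup\e^*y=\e^*(x\cup y)$. The identity therefore reduces to two claims.
\begin{enumerate}
\item[(a)] The coefficient pairing is $\e$-invariant: $\langle \e P,\e Q\rangle_\mu=\langle P,Q\rangle_\mu$.
\item[(b)] The map $\e$ reverses the orientation of $Y(K_f)$, so $\int_{Y}\e^*\omega=-\int_Y\omega$ for top-degree compactly supported classes $\omega$.
\end{enumerate}

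For (a), recall that $\e(P_\tau\otimes P_{\s\tau})=\vee(P_{\s\tau})\otimes\vee(P_\tau)$. The hypothesis $\mu^\e=\mu$ means $\mu_\tau=\mu_{\s\tau}^\vee$, so the two tensor factors are matched correctly after the swap. It then suffices to check, directly from formula (\ref{pairing_coefficients}), that $\langle \vee P,\vee Q\rangle_{\mu^\vee}=\langle Q,P\rangle_{\mu}$. This holds because $\vee$ exchanges the two blocks of variables, the multinomial weights are symmetric under $m^+\leftrightarrow m^-$, and the formula pairs the index $(\underline i^+,\underline i^-)$ with $(\underline i^-,\underline i^+)$. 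Swapping the two places then returns the original product $\langle P_\tau,Q_\tau\rangle\langle P_{\s\tau},Q_{\s\tau}\rangle$.

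For (b), I will compute the differential of $\e$ on the symmetric space $X=G_E(\R)/K_\inf=X_\tau\times X_{\s\tau}$, with $X_\tau\simeq\GL(\R)/\SO\R_+^\x$ of dimension $5$. The map $\e$ is the composite of two maps.
\begin{itemize}
\item The factor swap induced by $\s$, which exchanges the two archimedean places. Swapping two $5$-dimensional factors has orientation sign $(-1)^{5\cdot5}=-1$.
\item The map $g\mapsto {}^tg^{-1}$ on each factor. At the base point, its differential on $\p_3$ (traceless symmetric matrices) is $X\mapsto -{}^tX=-X$, with determinant $(-1)^5=-1$. Over the two factors this contributes $(-1)^2=+1$.
\end{itemize}
Because $\e$ is induced by a group automorphism, this sign is the same at every point. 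Hence $\e$ reverses the orientation of $X$.

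The step I expect to be most delicate is turning this local statement into a global one on $Y(K_f)=\bigsqcup_j\G_j\bs X$. I must check that the orientations of the components (indexed via $\det$ by a ray class group) can be chosen compatibly with the $G_E(\Q)$- and $K_f$-actions, and that $\e$, which may permute the components, sends the chosen orientation of each component to minus that of its image. This requires $\e$-equivariance of the orientation convention used to define the fundamental class, together with the fact that elements of $G_E(\Q)$ act on $X$ preserving orientation. The second point holds because $K_\inf$ contains only the identity component data $\SO\R_+^\x$ and orientation-reversing elements act compatibly on both sides.

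Granting (a) and (b), we get $[\e x,\e y]=\int_Y\e^*\langle x\cup y\rangle=-[x,y]$, which is the claimed anti-equivariance.
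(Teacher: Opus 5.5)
Your proof is correct. The paper records this lemma without proof. The only sign mechanism it uses nearby is the parallel statement \ref{B_epsilon} for the $(\g,K_\inf)$-pairing $B$, where $s(\e\alpha,\e\beta)=-s(\alpha,\beta)$ because $\e$ acts by $-1$ on $\bigwedge^{10}\p_\C^*$; this is exactly your computation $\det(d\e|_{\p})=-1$. One could also get the lemma on cuspidal classes from (\ref{petersson_coho_delta}), using the $\e$-invariance of the Petersson product and of $\langle\cdot,\cdot\rangle_\mu$ together with the anti-invariance of $s$. Your topological route (pullback, cup product, degree of an orientation-reversing homeomorphism) is more direct. It gives the identity on all of $H^5\times H^5_c$, hence on $H^5_!$ with integral coefficients, which matches the quantifiers in the statement.

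Two small repairs are needed.

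In (a), the identity to check is $\langle\vee P,\vee Q\rangle_{\mu^\vee}=\langle P,Q\rangle_{\mu}$ for $P\in L_\mu$, $Q\in L_{\mu^\vee}$. As written, $\langle Q,P\rangle_\mu$ should carry the subscript $\mu^\vee$. You would then also need the symmetry $\langle Q,P\rangle_{\mu^\vee}=\langle P,Q\rangle_\mu$. Both identities follow from (\ref{pairing_coefficients}) by relabelling $\underline{i}^+\leftrightarrow\underline{i}^-$.

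In (b), the global step is not actually delicate.
\begin{itemize}
\item Since $K_\inf$ is connected, $\det(\Ad(k)|_{\p})>0$ for $k\in K_\inf$. So left-translating a fixed orientation of $\p$ gives a well-defined $G_E(\R)$-invariant orientation on every component of $X$.
\item This orientation is preserved by all of $G_E(\R)\supset G_E(\Q)$, so it descends to $Y(K_f)$.
\item The relation $\e(gx)=\e(g)\e(x)$ then shows that $\e$ has constant sign $-1$ for this orientation, with no bookkeeping over components.
\end{itemize}
Your remark about orientation-reversing elements acting compatibly can be dropped, since for this orientation there are none. You should, however, say explicitly that the fundamental class is taken for this invariant orientation. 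With independently chosen orientations on components exchanged by $\e$, the sign could fail.
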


\subsubsection{A pairing on the $(\g,K_\inf)$-cohomology}
In this paragraph, we define a pairing:
\begin{equation}
\label{gK_pairing}
B : H^5(\g, K_\inf  ; \W(\Pi_\inf,\psi_\inf) \otimes L_{\mu}(\C)) \x H^5(\g, K_\inf  ; \W(\Pi_\inf^\vee,\psi_\inf^{-1}) \otimes L_{\mu^\vee}(\C)) \to \C
\end{equation}
 on the $(\g,K_\inf)$-cohomology, which is the counterpart of the Poincaré pairing $[ \cdot, \cdot ]$. Recall that we have the following expression for this cohomology group
(see \cite[II, Proposition 3.2]{BW00}):
$$
H^5(\g, K_\inf  ; \W(\Pi_\inf,\psi_\inf) \otimes L_{\mu}(\C)) =\left(\bigwedge^5 \p_\C^* \otimes \W(\Pi_\inf,\psi_\inf) \otimes L_{\mu}(\C) \right)^{K_\inf}
$$
Then $B$ is defined as a tensor product on the right hand side. First, consider the pairing $s: \bigwedge^5 \p_{\C}^* \x \bigwedge^5  \p_{\C}^* \to \C$ defined by:
$$
X_1^* \wedge \dots \wedge X_5^* \wedge Y_1^* \wedge \dots \wedge Y_5^* =  s(X_1^* \wedge \dots \wedge X_5^*,Y_1^* \wedge \dots \wedge Y_5^*) \times \bigwedge_{i=1}^5 X_{i,\s}^* \wedge  \bigwedge_{i=1}^5 X_{i,\s\tau}^*
$$
where $(X_{i,\s})$ (resp. $(X_{i,\s\tau})$) is some ordered basis of $\p_{\tau,\C} = \p_{3,\C}$ (resp. of $\p_{\s\tau,\C} = \p_{3,\C}$). Then $B$ is defined by tensor product:
$$
B(\a \otimes W_1 \otimes P, \b \otimes W_2 \otimes Q) = s(\a,\b) \cdot  \langle W_1, W_2\rangle_\inf \cdot \langle P, Q \rangle_{\mu}
$$
where $\langle \cdot, \cdot \rangle_\inf:\W(\Pi_\inf,\psi_\inf) \x \W(\Pi_\inf^\vee,\psi_\inf^{-1}) \to \C$ and  $\langle \cdot, \cdot \rangle_{\mu}: L_{\mu}(\C) \x L_{\mu^\vee}(\C) \to \C$ have been defined respectively in \ref{pairing_whittaker} and \ref{pairing_coefficients}. \\

Recall from \S\ref{eichler-shimura_maps} that we have the following decomposition by the Kunneth formula:
$$
H^5(\g, K_\inf ;  \Pi_\inf \otimes L_{\mu}(\C)) = H^5(\Pi_\inf \otimes L_{\mu}(\C))_{\{\tau\}} \oplus H^5(\Pi_\inf \otimes L_{\mu}(\C))_{\{\s\tau\}}
$$
The one checks that the summands $H^5(\Pi_\inf \otimes L_{\mu}(\C))_{\{\tau\}}$ and $H^5(\Pi_\inf \otimes L_{\mu}(\C))_{\{\s\tau\}}$ are respectively orthogonal to $H^5(\Pi_\inf^\vee \otimes L_{\mu^\vee}(\C))_{\{\tau\}}$ and $H^5(\Pi_\inf^\vee \otimes L_{\mu^\vee}(\C))_{\{\s\tau\}}$ for  the pairing $B$. \\

We conclude this paragraph with the following lemma, concerning the anti-equivariant property of $B$ with respect to the involutive action of  $\e$:
\begin{lemma}
\label{B_epsilon}
Suppose that $\mu^\e = \mu$ and $\Pi_\inf^\e = \Pi_\inf$, then $B$ is anti-equivariant with respect to the conjugaison-duality involution $\e$, i.e:
$$
B(\e(c), \e(c')) = - B(c, c')
$$
for all $c \in H^5(\g, K_\inf ;  \W(\Pi_\inf,\psi_\inf) \otimes L_{\mu}(\C))$ and $c' \in H^5(\g, K_\inf ;  \W(\Pi_\inf^\vee,\psi_\inf^{-1}) \otimes L_{\mu^\vee}(\C))$
\end{lemma}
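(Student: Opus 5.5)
We prove \ref{B_epsilon} by factoring $B$ as the product of three pairings, $s$ on $\bigwedge^5\p_\C^*$, $\langle\cdot,\cdot\rangle_\inf$ on Whittaker models, and $\langle\cdot,\cdot\rangle_\mu$ on coefficients. We then check how $\e$ acts on each factor separately. The claimed sign $-1$ should come entirely from the exterior algebra factor $s$, while the other two factors are $\e$-invariant.

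\emph{Action of $\e$ at the archimedean place.} On $G_E(\R)=\GL(\R)_\tau\x\GL(\R)_{\s\tau}$, the involution is $\e(g_\tau,g_{\s\tau})=({}^tg_{\s\tau}^{-1},{}^tg_\tau^{-1})$. Its differential on $\p=\p_\tau\oplus\p_{\s\tau}$ is $(X,Y)\mapsto(-{}^tY,-{}^tX)$. Since $\p_3$ is realised by symmetric matrices modulo scalars, this is $(X,Y)\mapsto(-Y,-X)$: it swaps the two factors and acts by $-1$ on each.

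On $(\g,K_\inf)$-cochains, $\e$ acts by $\a\otimes W\otimes P\mapsto \e^*\a\otimes W^\e\otimes \e(P)$. Here $W^\e(g)=W(\e(g))$ is the natural map $\W(\Pi_\inf,\psi_\inf)\to\W(\Pi_\inf^\e,\psi_\inf^{\e})$, possibly composed with translation by $w_3$ at each place so that one lands back in a $\psi$-Whittaker model. The coefficient involution is the map $\vee$ of (\ref{dual_involution}) with the places swapped.

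\emph{The coefficient and Whittaker factors.} The coefficient factor is invariant: $\langle \e P,\e Q\rangle_\mu=\langle P,Q\rangle_\mu$. This follows directly from (\ref{pairing_coefficients}), since $\vee$ exchanges the two sets of variables and the multinomial weights are symmetric under this exchange.

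The Whittaker factor is also invariant, using $W\mapsto W^\vee(g)=W(w_3{}^tg^{-1})$ and the place swap. We have $\langle W_1^\e,W_2^\e\rangle_\inf=\langle W_1,W_2\rangle_\inf$. To see this, note that $\langle\cdot,\cdot\rangle_\inf$ is the unique (up to scalar) invariant pairing, and both sides are $\GL(\R)^2$-invariant pairings between the same spaces. The scalar is then fixed to be $1$ by evaluating the Rankin--Selberg integral: after the change of variables $g\mapsto w_{2}{}^tg^{-1}w_2$ on $N_2\bs\mathrm{GL}_2$, it is a unimodular substitution, and the place swap only permutes the factors of the product over $\tau,\s\tau$. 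If a twist by a central character sign appears here, it cancels between $\Pi$ and $\Pi^\vee$, because the two central characters are inverse to each other.

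\emph{The exterior factor, where the sign comes from.} By definition of $s$, we have $\e^*\a\wedge\e^*\b=\det(\e|_{\p})\cdot(\a\wedge\b)$ in top degree $\bigwedge^{10}\p_\C^*$. The determinant is a product of two contributions:
\begin{itemize}
\item the factor $-1$ on each summand contributes $(-1)^{10}=1$;
\item the swap of two $5$-dimensional spaces has determinant $(-1)^{5\cdot5}=-1$.
\end{itemize}
Hence $s(\e^*\a,\e^*\b)=-s(\a,\b)$. Multiplying the three factors gives $B(\e c,\e c')=-B(c,c')$.

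The main obstacle is the Whittaker factor. The issue is to pin down exactly which normalisation of $\e$ on $\W(\Pi_\inf,\psi_\inf)$ is compatible with the global action of $\e$ on cusp forms, including the $w_3$-translation and the sign $\w_\Pi(-1)$. Once this is fixed, one must check that the Jacquet--Shalika local pairing is preserved with scalar exactly $1$ rather than a sign. I would check this first on $K_\inf$-types, using Chen's explicit generators $[\Pi_\tau]_i$.
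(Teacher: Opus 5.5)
Your factorisation of $B$, the $\e$-invariance of the coefficient pairing, and the computation $\det(d\e|_{\p})=-1$ are all correct. This is clearly the intended mechanism; the paper states the lemma without proof. The gap is the Whittaker factor, and it is not a technicality. A priori $\langle \e W_1,\e W_2\rangle_\inf$ and $\langle W_1,W_2\rangle_\inf$ differ by a scalar, and that scalar alone decides whether $B$ is equivariant or anti-equivariant.

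Your change of variables does not determine this scalar. Since $w_3\,\diag({}^tg^{-1},1)=\diag(1,w_2\,{}^tg^{-1}w_2)\,w_3$, the substitution $g\mapsto w_2\,{}^tg^{-1}w_2$ does not return $\int_{N_2\bs\mathrm{GL}_2}W_1^\vee(\diag(g,1))\,W_2^\vee(\diag(g,1))\,dg$ to the original pairing. It turns it into an integral of $w_3$-translates along the lower-right embedding $h\mapsto\diag(1,h)$. Identifying that with the standard mirabolic integral is a statement about invariant forms, not a substitution. The remark about central characters does not address this, and the action of $\e$ on Whittaker functions is left vague.

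You can close the gap without any computation on $K$-types.

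First, make $\e$ explicit. Use left invariance of cusp forms under $w_3$ and $a:=\diag(1,-1,1)\in\GL(E)$, together with $\psi_E\circ\s=\psi_E$. This gives $W_{\phi\circ\e}(g)=W_\phi(a\,w_3\,{}^t\s(g)^{-1})$. So at infinity $\e(W_\tau\otimes W_{\s\tau})=L_aW_{\s\tau}^\vee\otimes L_aW_\tau^\vee$ with $L_aF(g)=F(ag)$, and the same formula holds on $\W(\Pi_\inf^\vee,\psi_\inf^{-1})$. The translation $L_a$ does not change the mirabolic integral, since $\diag(1,-1)$ normalises $N_2$.

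Next, note that $(\pi(h)W)^\vee=\pi^\vee({}^th^{-1})W^\vee$. So $(W_1,W_2)\mapsto\langle W_2^\vee,W_1^\vee\rangle$ is again an invariant pairing on $\W(\pi,\psi)\times\W(\pi^\vee,\psi^{-1})$. By uniqueness, $\langle W_2^\vee,W_1^\vee\rangle=c_\pi\langle W_1,W_2\rangle$ for some scalar $c_\pi$.
\begin{itemize}
\item Applying this twice and using $(W^\vee)^\vee=W$ gives $c_\pi^2=1$.
\item The symmetry of the integrand, with $L_a$ to pass between $\psi$ and $\psi^{-1}$, gives $c_{\pi^\vee}=c_\pi$.
\end{itemize}

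Finally, $\e$ swaps the two archimedean places and $\Pi_{\s\tau}\simeq\Pi_\tau^\vee$. So the Whittaker factor is multiplied by $c_{\Pi_\tau}c_{\Pi_{\s\tau}}=c_{\Pi_\tau}c_{\Pi_\tau^\vee}=c_{\Pi_\tau}^2=1$. It is therefore invariant whatever the value of $c_{\Pi_\tau}$, and your three-factor argument then yields $B(\e(c),\e(c'))=-B(c,c')$.
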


\subsubsection{A cohomological interpretation of the Jacquet-Shalika formula}
\label{coho_interpretation_petersson}

In this paragraph, we give a cohomological interpretation of Jacquet-Shalika formula, using the following comparison isomorphism:
$$
\delta:  \Pi_f^{K_f} \otimes H^5(\g,K_\inf ; \W(\Pi_\inf,\psi_\inf) \otimes L_{\mu}(\C)) \toeq H^5_{cusp}(Y_E(K_f),\L_{\mu}(\C))[\Pi_f]
$$
used in \S\ref{eichler-shimura_maps} to construct the Eichler-Shimura maps.
Let $\mathfrak{X} = \sum_{i\in I} \w_i \otimes W_i \otimes P_i$ and $\mathfrak{X}' = \sum_{j\in J} \w_j \otimes W_j' \otimes P_j'$ be some elements respectively in $H^5(\g,K_\inf ; \W(\Pi_\inf,\psi_\inf) \otimes L_{\mu}(\C))$ and $H^5(\g,K_\inf ; \W(\Pi_\inf^\vee,\psi_\inf^{-1}) \otimes L_{\mu^\vee}(\C))$. Then, if $\phi_f \in \Pi_f$ and $\phi_f' \in \Pi_f^\vee$ are some $K_f$-fixed vectors, one has that:
$$
[ \d(\phi_f \otimes \mathfrak{X}),\d(\phi_f' \otimes \mathfrak{X}') ] =\int_{Y_E(K_f)} \underbrace{\sum_{i\in I} \sum_{j\in J} s(\w_i,\eta_j) \cdot \langle P_i, P_j' \rangle_\mu \cdot \big( \phi_i \wedge \phi_j' \big)}_{=: \,  \w \in H^{6}_c(Y_E(K_f),\C)}
$$
where $\phi_i \in \Pi$ (resp. $\phi_j'\in \Pi^\vee$) is the form $\phi_f \otimes \phi_{i}$ (resp. $\phi_f' \otimes \phi_{j}'$), for $\phi_{i} \in \Pi_\inf$ (resp. $\phi_{j}' \in \Pi_\inf^\vee$) corresponding to the Whittaker function $W_i$ (resp. $W_j'$). Then, similarly to \cite[paragraph 3.3.3]{BR17}, one has: 
$$
\begin{aligned}
\int_{Y_E(K_f)} \w dg &= \overbrace{\vol(K_\inf)^{-1}}^{=4} \int_{Z_\inf^\circ \GL(E) \bs \GL(\A_E) / K_f} \w dg \\
&= 4 \cdot \int_{Z_\inf \GL(E) \bs \GL(\A_E) / K_f} \left( \sum_{c \in Z_\inf /Z_\inf^\circ} c \cdot \w \right) dg \\
&= 16 \cdot \int_{Z_\inf \GL(E) \bs \GL(\A_E) / K_f} \w dg \\
&= 16 \cdot h(K_f) \int_{Z_3(\A_E) \GL(E) \bs \GL(\A_E) / K_f} \w dg \\
&= 16 \cdot h(K_f) \cdot \vol(K_f, dg)^{-1} \cdot \int_{Z_3(\A_E) \GL(E) \bs \GL(\A_E)} \w dg
\end{aligned}
$$
The third equality, follows from the fact that $Z_\inf /Z_\inf^\circ$ acts via the trivial character $1$ on $\w$. The Haar measure on $\GL(\A_E)$ is normalized so that the volume of $K_f$ is $1$. Finally, we get:
\begin{equation}
\label{petersson_coho_delta}
[ \d(\phi_f \otimes \mathfrak{X}),\d(\phi_f' \otimes \mathfrak{X}') ]= 16 h(K_f) \sum_{i\in I} \sum_{j \in J} s(\w_i,\w_j) \cdot  \langle P_i,P_j' \rangle_{\mu} \cdot \langle \phi_i,\phi_j' \rangle
\end{equation}
where $\phi_i \in \Pi$ (resp. $\phi_j'\in \Pi^\vee$) is the form $\phi_f \otimes \phi_{i}$ (resp. $\phi_f' \otimes \phi_{j}'$), for $\phi_{i} \in \Pi_\inf$ (resp. $\phi_{j}' \in \Pi_\inf^\vee$) corresponding to the Whittaker function $W_i$ (resp. $W_j'$). Note that this formula is true for any $\Pi \in \mathrm{Coh}(G_E,\mu,K_f)$. In addition:
$$
h(K_f) := \mathrm{vol}(Z(E)\bs Z(\A_{E,f}) / K_f \cap Z(\A_{E,f})) = \mathrm{vol}(E^\x \bs \A_{E,f}^\x / U(\mathfrak{n})) = \#\mathrm{Cl}_E(\mathfrak{n})
$$
where $U(\mathfrak{n}) = \prod_{w} U_w(\n_w) \subset \A_{E,f}$ and $U_w(\mathfrak{n}_w) = \{\a \in \O_w, \a \cong 1 \mod \mathfrak{n}_w\}$. Then $h(K_f)$ is simply the cardinal $h_E(\mathfrak{n})$ of $\mathrm{Cl}_E(\mathfrak{n})$ the wide ray class group of level $\mathfrak{n}$. Suppose now that $\phi_f$, $\phi_f'$ and each $W_i$, $W_j'$ are pure tensors. Using formula~(\ref{full_jacquet-shalika_formula}), we obtain that:
$$
[ \d(\phi_f \otimes \mathfrak{X}),\d(\phi_f' \otimes \mathfrak{X}') ] = C_2  \cdot {L^{imp}(\Pi,\Ad,1)} \x B(\mathfrak{X},\mathfrak{X}')
$$
where $C_2 = 16 h_E(\mathfrak{n}) \cdot C_1$, with $C_1$ is the front constant of formula~(\ref{full_jacquet-shalika_formula}). Recall that $\Pi$ is supposed to be conjugate self-dual. As a particular case of the above formula, we have:
\begin{equation}
\label{petersson_coho}
[ \d_\e^\pm(\phi_f),\d_\e^\pm(\phi_f')] = C_2  \cdot {L^{imp}(\Pi,\Ad,1)} \x B([\Pi_\inf]_\e^\pm,[\Pi_\inf^\vee]_\e^\mp)
\end{equation}
where $\d_\e^\pm$ are the Eichler-Shimura maps defined in paragraph~\ref{e_periods} using the explicit elements $[\Pi_\inf]_\e^\pm$ and $[\Pi_\inf^\vee]_\e^\pm$. \\

\subsubsection{Archimedean computations} 
\label{archimedean_adjoint}

We now compute the explicit value of $B([\Pi_\inf]^\pm_\e,[\Pi_\inf^\vee]^\mp_\e)$ for the explicit choice of $[\Pi_\inf]^\pm_\e$ and $[\Pi_\inf^\vee]^\pm_\e$ giving the Eichler-Shimura isomorphisms $\d_\e^\pm$ defined in paragraph~\ref{eichler-shimura_maps}. We write $\Pi_\inf = \Pi_\tau \otimes \Pi_{\s\tau}$ for the archimedean part of $\Pi$. Since $\Pi$ is conjugate self-dual, we have that $\Pi_{\s\tau} = \Pi_\s^\vee$. The maps $\d_\e^\pm$ corresponds to $[\Pi_\inf]^\pm_\e = [\Pi_\inf]_{\{ \tau\}} \pm \e([\Pi_\inf]_{\{\tau\}})$. Using the equivariance properties of $B$ with respect to $\e$, as well as the explicit action of $\e$ on the Chen's generators, the same kind of calculations as in the proof of \cite[Lemme 5.3]{Che22}) gives:
$$
B([\Pi_\inf]^\pm_\e,[\Pi_\inf^\vee]^\mp_\e) \sim \pi^2 \cdot L(\Pi_\inf \x \Pi_\inf^\vee,1) 
$$

Finally, gathering all the previous computations, we get:

\begin{prop}
\label{pairing_computation}
Let $\phi_f \in \Pi_f$ (resp. $\phi_f' \in \Pi_f^\vee$) be the mixed essential vector of $\Pi$ (resp. of $\Pi^\vee$). Assume that $p \nmid 6N_{E/\Q}(\n)h_E(\n)D_E$. Then:
$$
[ \d^\pm_\e(\phi_f),\d^\mp_\e(\phi_f') ] \sim\Lambda^{imp}(\Pi,\Ad,1)
$$
\end{prop}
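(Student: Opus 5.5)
The plan is to combine formula (\ref{petersson_coho}) with the archimedean computation of \S\ref{archimedean_adjoint}, and then check that every auxiliary constant is a $p$-adic unit under the hypothesis $p \nmid 6N_{E/\Q}(\n)h_E(\n)D_E$.

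First, I apply (\ref{petersson_coho}) with $\phi_f=\phi^*_\Pi$ and $\phi_f'=\phi^*_{\Pi^\vee}$. This is legitimate because the mixed essential vectors are pure tensors. At every ramified place, which lies above a split prime, the local components are both essential or both transposed essential, so the ramified local pairings are the imprimitive factors given by \ref{split_ramified_computations}. The result is
\[
[\d^\pm_\e(\phi_f),\d^\mp_\e(\phi_f')] = C_2\cdot L^{imp}(\Pi,\Ad,1)\cdot B([\Pi_\inf]^\pm_\e,[\Pi^\vee_\inf]^\mp_\e).
\]
Here $C_2 = 16\,h_E(\n)\cdot 3\cdot N_{E/\Q}(\n)^3 D_E^{-9/2} (4\pi)^{-d}$, with $d=2$ (the displayed proposition (\ref{full_jacquet-shalika_formula}) writes $(4\pi)^{-1}$, but the general formula gives $(4\pi)^{-d}$). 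Note that the cross terms between the $\{\tau\}$ and $\{\s\tau\}$ components are controlled by the orthogonality statement for $B$. The sign pairing $\pm$ with $\mp$ is forced by \ref{B_epsilon}: anti-equivariance of $B$ kills $B(+,+)$ and $B(-,-)$.

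Second, I insert the archimedean evaluation $B([\Pi_\inf]^\pm_\e,[\Pi_\inf^\vee]^\mp_\e)\sim \pi^2 L(\Pi_\inf\x\Pi_\inf^\vee,1)$. In carrying this out I make the factor hidden in $\sim$ explicit: it is $\pm 2$ times a power of $2$ and $\pi$ coming from Chen's Lemma 5.3 computation, after expanding $[\Pi_\inf]_{\{\tau\}}\pm\e[\Pi_\inf]_{\{\tau\}}$ and using that $\e$ swaps the two Künneth summands. The resulting product $C_2\,\pi^{2}(4\pi)^{-2} L(\Pi_\inf\x\Pi^\vee_\inf,1)L^{imp}(\Pi,\Ad,1)$ is then rewritten as a unit times $\Lambda^{imp}(\Pi,\Ad,1)$. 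To do this I use $L(\Pi_\inf\x\Pi^\vee_\inf,s)=\zeta_{E,\inf}(s)L(\Pi_\inf,\Ad,s)$, and the definition of the completed imprimitive adjoint $L$-function as the imprimitive finite part times the archimedean adjoint $\G$-factors. The leftover $\zeta_{\R}(1)^2=\pi^{-1}$ together with the powers of $\pi$ must cancel exactly. The remaining rational constant involves only the primes $2$, $3$, the primes dividing $D_E$, $N_{E/\Q}(\n)$ and $h_E(\n)$, and is therefore in $\O^\x$ by hypothesis. Since $D_E^{-9/2}$ is a half-integral power of $D_E$, its unit property uses $p\nmid D_E$ and that $\O$ contains the relevant square root, which holds because $\K$ is taken sufficiently large.

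The main obstacle is the archimedean step: tracking the exact powers of $\pi$ and $2$ in Chen's computation once adapted to the $\e$-symmetrized classes. In particular, the explicit action of $\e$ on Chen's generators $[\Pi_\tau]_i$ must send $[\Pi_\tau]_2\otimes[\Pi_{\s\tau}]_3$ to a multiple of $[\Pi^\vee\ldots]_3\otimes[\ldots]_2$ by a $p$-unit (ideally $\pm1$); otherwise the period normalization breaks. I would verify this first by computing $\vee$ from (\ref{dual_involution}) on Chen's explicit vectors $\mathbf{v}_i$ together with the induced map on $\bigwedge\p_\C^*$, and then check that the total transcendental factor is exactly the archimedean $\G$-factor.
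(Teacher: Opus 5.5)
Your plan is the paper's own argument: apply (\ref{petersson_coho}) to the mixed essential vectors, with the ramified factors supplied by \ref{split_ramified_computations}, then insert $B([\Pi_\inf]^\pm_\e,[\Pi_\inf^\vee]^\mp_\e)\sim\pi^2 L(\Pi_\inf\x\Pi_\inf^\vee,1)$, and note that the remaining constants are $p$-adic units when $p\nmid 6N_{E/\Q}(\n)h_E(\n)D_E$; your correction $c_3(E_\inf)=(4\pi)^{-2}$ is right, and it is actually needed, since with $(4\pi)^{-1}$ a transcendental factor $\pi/4$ would be left over. One slip in the bookkeeping: $\zeta_\R(1)=\pi^{-1/2}\Gamma(1/2)=1$ rather than $\pi^{-1/2}$, so no extra $\pi^{-1}$ appears, and the cancellation is simply $\pi^2\cdot(4\pi)^{-2}=1/16$ (also, $C_2$ already contains $(4\pi)^{-2}$, so it should not be multiplied in a second time).
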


\subsection{Congruence numbers and perfect pairings}
\label{congruence_numbers}

\subsubsection{Congruence modules} Let $\TT$ be local $\O$-algebra which is finite, flat and reduced. Let $\l: \TT \to \O$ be an augmentation, i.e. a surjective map of $\O$-algebras. We can therefore decompose $\TT_\K \cong \K \x \SS_\K$ such that $\l_\K:= \l \otimes_\O \K$ corresponds to the projection on $\K$. We denote $e_\l$ the idempotent of $\TT_\K$ corresponding to $(1,0)$ through the above isomorphism.

Now, let $M$ be a $\TT$-module, which is finite flat over $\O$. We define:
$$
M^\l = e_\l \cdot M \quad \mbox{ and } \quad M_\l = e_\l \cdot M_\K \cap M
$$
where $M_\K:= M \otimes_\O \K$. Note that $M_\lambda = M[\lambda] := \{ m \in M \mid t \cdot m = \lambda(t) m, \, \forall t \in \TT \}$, so we will sometimes switch between these two notations in the following. Moreover, we define the \textit{$\l$-rank} of $M$ to be $\mathrm{rank}_\l(M) := \mathrm{dim}_\K(M[\lambda] \otimes_\O \K)$. One checks that $M_\lambda \subset M^\lambda$. We then define the \textit{congruence module} of $\lambda$ on $M$ to be:
$$
C_\lambda(M) := M^\lambda /M_\lambda.
$$
Its Fitting ideal $\eta_\lambda(M) := \mathrm{Fitt}_\O(C_\lambda(M))$ is called the \textit{congruence number} of $\lambda$ on $M$. When $M = \TT$, $C_\lambda(M)$ and $\eta_\lambda(M)$ are denoted respectively by $C_\lambda$ and $\eta_\lambda$, and are simply called the {congruence module} and {congruence number} of $\lambda$. In that case $C_\lambda \simeq \O/\eta_\l$ is in fact a ring. More generally, for a $\TT$-module $M$ of $\l$-rank $1$, one has that $C_\lambda(M) \simeq \O/\eta_\l(M)$ and that $\eta_\l \subset \eta_\l(M)$.

\subsubsection{Hecke modules with a semi-linear involution}
\label{hmod_involution}

We retain the notation from the previous paragraph. Suppose now that $\TT$ is endowed with an involution $\i : \TT \to\TT$ of $\O$-algebra. If $T\in \TT$ we will write $T^\i$ for $\i(T)$. Let $M$ be a $\TT$-module as above. We will say that $M$ is endowed with a semi-linear involution if there exists a non-trivial $\O$-linear involution, also denoted $\i$, which is semi-linear with respect to the $\TT$-action, i.e. such that $\forall T\in \TT$:
$$
\i \circ T = T^\i \circ \i 
$$
in $\End_\O(M)$. Let $\l: \TT \to \O$ be a $\O$-algebra morphism. Suppose that $\l$ is invariant with respect to the $\i$-action on $\TT$, i.e. that $\l(T^\i) = \l(T)$, for all $T\in \TT$. In this case, the $\i$-action on $M$ commutes with $e_\l$. Consequently, the congruence module:
$$
C_\l(M):= M^{\lambda}/M_{\lambda}
$$
inherits an involution $\O$-linear action of $\i$. Let $C_\l(M)[\pm]$ denote the eigenspace associated to the eigenvalue $\pm 1$ of $\i$. Then, let $\eta_\l(M)[\pm]$ be the Fitting ideal of the $\O$-module $C_\l(M)[\pm]$. Since $C_\l(M) = C_\l(M)[+] \oplus C_\l(M)[-]$, we have that:
$$
\eta_\l(M) = \eta_\l(M)[+]\x \eta_\l(M)[-]
$$

Suppose now that the $\l$-rank of $M$ is $2$ and that the action of $\i$ on $M[\lambda]$ is not trivial, so that $M_\l[\pm]$ and $M^\l[\pm]$ are $\O$-modules of rank 1. In this case, one checks that:
$$
C_\l(M)[\pm] = \O/\eta_\l(M)[\pm] \quad \mbox{ et } \quad \eta_\l(M)[\pm] \,\, | \,\,\eta_\l
$$

We will need the following lemma, which is a variant of \cite[Proposition 2.3]{TU22} adapted to the situation of Hecke modules with a semi-linear involution:
\begin{lemma}
\label{pairing_lemma_2}
Let $M$ and $N$ be two $\TT$-modules, finite flat over $\O$, endowed with a semi-linear involution $\i$, and let:
$$
\langle\cdot,\cdot\rangle: M \x N \to\O
$$
be a perfect pairing which is $\TT$-equivariant and $\i$-anti-equivariant. Then, if $\l:\TT \to \O$ is $\i$-invariant, we have that $\eta_\l(M)[\pm] = \eta_\l(N)[\mp]$. Moreover, assume that $M$ an and $N$ are of $\l$-rank $2$ and that the $\i$-action on $M_\l$ and $N_\l$ is non-trivial. Then, if $m_\pm$ and $n_\mp$ are respective $\O$-basis of $M_\l[\pm]$ and $N_\l[\mp]$, one has:
$$
\langle m_\pm,n_\mp\rangle = \eta_\l(M)[\pm]
$$
\end{lemma}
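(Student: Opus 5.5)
We adapt the proof of \cite[Proposition 2.3]{TU22}, keeping track of the involution $\i$ throughout. The perfect pairing $\langle\cdot,\cdot\rangle$ gives an $\O$-linear isomorphism $N \toeq M^* := \Hom_\O(M,\O)$. By $\TT$-equivariance it is $\TT$-linear for the transposed action on $M^*$; by $\i$-anti-equivariance it intertwines $\i$ on $N$ with $-{}^t\i$ on $M^*$. Since $\l$ is $\i$-invariant, $e_\l$ commutes with $\i$ on both sides. Moreover the idempotent $e_\l$ is self-adjoint for the $\K$-linear extension of the pairing, because it is a polynomial in Hecke operators. So we may work with $M^*$ directly.

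\textbf{Step 1: identify $N_\l$ and $N^\l$ through the pairing.} We first check the two standard duality facts.
\begin{itemize}
\item $N_\l = N \cap e_\l N_\K$ is the orthogonal of $(1-e_\l)M_\K \cap M$ inside $N$. Equivalently, it is the $\O$-dual of the torsion-free quotient $M/(M\cap(1-e_\l)M_\K) \simeq e_\l M = M^\l$.
\item $N^\l = e_\l N$ is identified with the $\O$-dual of $M_\l$. Indeed, restriction to $M_\l$ gives $N \to M_\l^*$, which is surjective because $M_\l$ is saturated in $M$ and the pairing is perfect. Its kernel is the orthogonal of $M_\l$, which equals $N\cap(1-e_\l)N_\K$, the kernel of $N \to e_\l N$.
\end{itemize}
Hence the inclusion $N_\l \subset N^\l$ is the $\O$-dual of $M_\l \subset M^\l$, so
$$
C_\l(N) \simeq \mathrm{Ext}^1_\O(C_\l(M),\O) \simeq \Hom_\O(C_\l(M),\K/\O).
$$
All these identifications are $\i$-equivariant up to the sign coming from anti-equivariance. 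Therefore the $\pm$-eigenspace of $C_\l(N)$ is the Pontryagin dual of the $\mp$-eigenspace of $C_\l(M)$. Here we use that $p$ is odd, so eigenspace decompositions commute with duality. Finite $\O$-modules and their Pontryagin duals have the same Fitting ideal, which gives $\eta_\l(N)[\pm] = \eta_\l(M)[\mp]$. This is the first claim, with the roles of $M$ and $N$ swapped, which is harmless.

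\textbf{Step 2: the rank-$2$ case.} Decompose $M_\l = M_\l[+]\oplus M_\l[-]$ and similarly for $N_\l$, $M^\l$, $N^\l$. This is possible since $2$ is invertible in $\O$; each summand has rank $1$ because $\i$ acts nontrivially on a rank-$2$ module. Anti-equivariance gives $\langle m_\pm, n_\pm\rangle = -\langle \i m_\pm,\i n_\pm\rangle = -\langle m_\pm,n_\pm\rangle$, so $\langle M_\l[\pm],N_\l[\pm]\rangle = 0$. Consequently, the pairing between the $\pm$-parts of $M^\l$ and $N_\l$ is perfect only across opposite signs. By Step 1, $N_\l[\mp]$ is exactly the $\O$-dual of $M^\l[\pm]$ under the pairing. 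Choose a generator $m^\pm$ of $M^\l[\pm]$; then $\langle m^\pm, n_\mp\rangle \in \O^\x$. By definition $m_\pm = a\, m^\pm$ with $(a) = \mathrm{Fitt}(M^\l[\pm]/M_\l[\pm]) = \eta_\l(M)[\pm]$. Hence $\langle m_\pm,n_\mp\rangle = a\langle m^\pm,n_\mp\rangle$ generates $\eta_\l(M)[\pm]$, as claimed.

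\textbf{Main obstacle.} The delicate point is Step 1: checking that the identifications $N_\l \simeq (M^\l)^*$ and $N^\l \simeq (M_\l)^*$ are compatible with the involutions, and tracking the sign so that the duality exchanges the $+$ and $-$ eigenspaces rather than preserving them. I would verify this sign carefully on the explicit rank-$2$ model before writing the general argument.
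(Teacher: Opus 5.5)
Your proof is correct and takes essentially the route the paper indicates. The paper gives no details and only says the lemma is an easy adaptation of \cite[Proposition 2.3]{TU22}; that adaptation is exactly the duality $N_\l \simeq (M^\l)^*$, $N^\l \simeq (M_\l)^*$ you carry out, with anti-equivariance (and $p$ odd) forcing eigenspaces of opposite sign to pair perfectly. As in the paper's own remark before the lemma, your rank-$1$ claim for the eigenspaces reads ``non-trivial'' as ``non-scalar'' (i.e.\ $\i \neq \pm 1$ on $M_\l$), which is what the statement needs anyway.
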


The proof is easily adapted from the proof of \cite[Proposition 2.3]{TU22} (see \cite[Lemma 3.6]{thesis}).

\subsection{The adjoint $L$-value formula}
Let $p$ be an odd prime number. Let $\K$ be some sufficiently large $p$-adic field, $\O$ its valuation ring, and $\wp$ its prime ideal. Let $h(K_f;\O)$ be the spherical Hecke algebra of level $K_f$ acting faithfully on the cohomology. We recall that $\Pi$ is a conjugate self-dual cohomological automorphic cuspidal representation of $\GL(\A_E)$ of mirahoric level $\n$ and cohomological weight $\mu_E = (\mu,\mu^\vee) \in X^+(T_E)$. Let $K_f = K_1(\n)$ be the mirahoric subgroup of level $\n$. Let $\m_\Pi$ denote the maximal ideal of $h(K_f;\O)$ corresponding to $\Pi$ and let $\TT:= h(K_f;\O)_{\m_{\Pi}}/(\O-\mathrm{tors})$. Let $\l_\Pi: \TT \to \O$ be the Hecke eigensystem associated with $\Pi$, and let $\eta_{\l_\Pi}(M)$ be the congruence number of $\l_\Pi$ on the $\TT$-module:
$$
M = H_{cusp}^5(Y_E(K_f), \L_{\mu_E}( \O))_{\m_\Pi}
$$
When $\Pi$ is $\s$-invariant, $M$ is endowed with a semi-linear action of the Galois involution $\s$, and with a semi-linear action of the conjugation-duality involution $\e$ when $\Pi$ is $\e$-invariant. We prove the following theorem:

\begin{theorem}
\label{adjoint_L_value}
Let $\Pi$ be conjugate self-dual cohomological automorphic cuspidal representation of $\GL(\A_E)$, which is only ramified above split primes. Assume that the Galois representation associated with $\Pi$ is residually absolutely irreducible. Assume that the cohomological weight $\mu_E \in X^+(T_E)$ of $\Pi$ is $p$-small, and that $p$ doesn't divide $6N_{E/\Q}(\mathfrak{n}) h_E(\mathfrak{n})D_E$. Then:
$$
\eta_{\l_\Pi}(M)[\pm] \quad \sim \quad \frac{\Lambda^{imp}(\Pi,\Ad,1)}{\Om_5(\Pi,\e,\pm)\cdot \Om_5(\Pi^\vee,\e,\mp)}
$$
where $\eta_{\l_\Pi}(M)[\pm]$ is the $\pm$-part for the action of $\e$.
\end{theorem}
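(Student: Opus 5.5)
The plan is to combine the cohomological interpretation of the Jacquet--Shalika formula (Proposition~\ref{pairing_computation}) with the congruence-module \ref{pairing_lemma_2}, applied to the localized Poincaré pairing on middle-degree cohomology.

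\emph{Step 1: set up the modules.} Let $M = H^5_{cusp}(Y_E(K_f),\L_{\mu_E}(\O))_{\m_\Pi}$ and $N = H^5_{cusp}(Y_E(K_f),\L_{\mu_E}(\O)^\vee)_{\m_{\Pi}}$. Since $\mu_E$ is $p$-small, $\L_{\mu_E}(\O)^\vee = \L_{\mu_E^\vee}(\O)$. Because $\overline{\rho_\Pi}$ is absolutely irreducible, $\m_\Pi$ is non-Eisenstein. Then three identifications hold:
\begin{itemize}
\item the localized boundary cohomology vanishes by \cite[Theorem 4.2]{NT16};
\item $H^5_!$ and $H^5_{cusp}$ agree after localization;
\item the localized Poincaré pairing $[\cdot,\cdot]: M\x N\to\O$ is perfect and $\TT$-equivariant (with $\TT$ acting on $N$ through the twisted action $t\mapsto [\vee]\circ t\circ[\vee]^{-1}$).
\end{itemize}
By the lemma on $\e$-anti-equivariance, the pairing satisfies $[\e x,\e y]=-[x,y]$. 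Since $\Pi$ is conjugate self-dual, $\l_\Pi$ is $\e$-invariant. From (\ref{deco_coho_cusp}) and (\ref{g-K_dim}), $M$ and $N$ have $\l_\Pi$-rank $2$, and $\e$ acts non-trivially on $M_{\l_\Pi}$ and $N_{\l_\Pi}$ because it swaps the $\{\tau\}$ and $\{\s\tau\}$ summands. So all hypotheses of \ref{pairing_lemma_2} hold. One point needs checking here: the $\Pi^\vee$-part of $H^5$ with $\L_{\mu^\vee}$-coefficients is exactly the $\l_\Pi$-part of $N$ for the twisted Hecke action.

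\emph{Step 2: identify lattice generators with Eichler--Shimura images.} By the definition of the $\e$-periods (\S\ref{e_periods}), $\xi^\pm_\e := \Om_5(\Pi,\e,\pm)^{-1}\d^\pm_\e(\phi_f)$ is an $\O$-basis of $H^5(\O)[\pm]=M_{\l_\Pi}[\pm]$. Similarly, ${\xi'}^\mp_\e:=\Om_5(\Pi^\vee,\e,\mp)^{-1}\d^\mp_\e(\phi_f')$ is a basis of $N_{\l_\Pi}[\mp]$, where $\phi_f'$ is the mixed essential vector of $\Pi^\vee$. Here I would check that the $\O$-structure on the $\Pi$-isotypic part defined via intersection with $H^5_c(\O)$ coincides with $M[\l_\Pi]$ after localization, which is true because torsion-freeness is built into the definitions. \ref{pairing_lemma_2} then gives $[\xi^\pm_\e,{\xi'}^\mp_\e]\sim \eta_{\l_\Pi}(M)[\pm]$, i.e.
$$[\d^\pm_\e(\phi_f),\d^\mp_\e(\phi_f')] \sim \eta_{\l_\Pi}(M)[\pm]\cdot \Om_5(\Pi,\e,\pm)\,\Om_5(\Pi^\vee,\e,\mp).$$

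\emph{Step 3: compare with the $L$-value.} Proposition~\ref{pairing_computation} identifies the left side with $\Lambda^{imp}(\Pi,\Ad,1)$ up to $\O^\x$, provided $p\nmid 6N_{E/\Q}(\n)h_E(\n)D_E$. The constant $C_2=16h_E(\n)C_1$, the powers of $N(\n)$ and $D_E$, the factor $n=3$, and $\pi^2$ together with $4\pi$ are all absorbed into $\Lambda^{imp}$ or into units. Rearranging gives the theorem.

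The main obstacle is Step 3's archimedean input: showing that $B([\Pi_\inf]^\pm_\e,[\Pi^\vee_\inf]^\mp_\e)$ equals $\pi^2 L(\Pi_\inf\x\Pi^\vee_\inf,1)$ up to a $p$-unit. This requires the explicit action of $\e$ on Chen's generators and the orthogonality of the $J$-summands under $B$. If this fails, I would first try to compute $\e([\Pi_\inf]_{\{\tau\}})$ as a scalar (a sign, I expect) times $[\Pi_\inf]_{\{\s\tau\}}$. Then $B([\Pi_\inf]^\pm_\e,[\Pi^\vee_\inf]^\mp_\e)$ reduces to twice $B([\Pi_\inf]_{\{\tau\}},[\Pi^\vee_\inf]_{\{\tau\}})$, a product of the two local pairings computed in \cite[Lemma 5.3]{Che22}.
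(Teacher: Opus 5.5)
Your proposal is correct and follows the paper's proof essentially step for step. You apply \ref{pairing_lemma_2} to the perfect, $\TT$-equivariant, $\e$-anti-equivariant localized Poincaré pairing $M\times N\to\O$ with the bases $\d^\pm_\e(\phi_f)/\Om_5(\Pi,\e,\pm)$ and $\d^\mp_\e(\phi_f')/\Om_5(\Pi^\vee,\e,\mp)$, and then conclude with \ref{pairing_computation}.

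One correction concerns only your fallback remark. Since $\dim\p_3=5$, $B$ vanishes on $\{\tau\}$-classes paired against $\{\tau\}$-classes; the paper records this orthogonality. So expanding $B([\Pi_\inf]^\pm_\e,[\Pi^\vee_\inf]^\mp_\e)$ gives $\mp 2\,B([\Pi_\inf]_{\{\tau\}},\e[\Pi^\vee_\inf]_{\{\tau\}})$, which pairs a $\{\tau\}$-class with a $\{\s\tau\}$-class. It does not give twice $B([\Pi_\inf]_{\{\tau\}},[\Pi^\vee_\inf]_{\{\tau\}})$, which is zero.
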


\begin{proof}[Proof of \ref{adjoint_L_value}] To prove the equalities of  \ref{adjoint_L_value}, we now apply the formalism of congruence numbers introduced in the last paragraphs to the localized Hecke algebra $\TT = h(K_f; \O)_{\m_\Pi}/(\O-\mathrm{tors})$, the $\TT$-modules $M = H^5_{cusp}(Y(K_f), \L_{\mu}(\O))_{\m_\Pi}$ and $N = H^5_{cusp}(Y(K_f), \L_{\mu}(\O)^\vee)_{\m_\Pi}$, and the perfect pairing $[\cdot,\cdot]: M \x N \to \O$ defined in (\ref{localized_poincare_pairing}). We recall that the action of $t \in \TT$ on $N$ is given by $[\, \e \,] \circ t \circ [\, \e \,]^{-1}$ so that the pairing $[\cdot,\cdot]$ is equivariant for the action of $\TT$ (see the discussion just after the \ref{poincare_equivariance}). Note that since the action of $h(K_f ; \O)$ on $H^5_{cusp}(Y(K_f), \L_{\mu}(\O)^\vee)_{\m_\Pi}$ is twisted by $\vee$, the $\Pi^\vee$-isotypic part is then localized at $\m_{\Pi}$. In fact, the $\l_\Pi$-isotypic part of $N$ for this action is the $\l_{\Pi^\vee}$-istoypic part for the standard  action of the spherical Hecke algebra on $H^5_{cusp}(Y(K_f),\L_{\mu^\vee}(\C))$:
$$
M_{\l_\Pi} = H^5(Y(K_f), \L_{\mu}(\O))_{\m_\Pi}[\Pi_f] \quad \mbox{ et } \quad N_{\l_\Pi} = H^5(Y(K_f), \L_{\mu}(\O)^\vee)_{\m_\Pi}[\Pi_f^\vee].
$$

Moreover, by the definition of the $\e$-periods, $\d^\pm_\e(W_{\phi_f})/\Om_5(\Pi, \e, \pm)$ and $\d^\pm_\e(W_{\phi'_f})/\Om_5(\Pi^\vee,\e, \pm)$ are respective basis of $M_{\l_\Pi}[\pm]$ and $N_{\l_\Pi}[\pm]$. Then \ref{pairing_lemma_2} yields: 
$$
\eta_{\l_\Pi}(M)[\pm] \sim \frac{[\d_\pm(W_{\phi_f}),\d_\mp(W_{\phi'_f}) ]}{\Om_5(\Pi, \e, \pm)\cdot \Om_5(\Pi^\vee, \e, \mp)}
$$
The theorem then follows from \ref{pairing_computation}.
\end{proof}

\subsubsection{Relation with top and bottom degree periods}
\label{middle_vs_extremal}
As explained in the introduction, a cohomological cuspidal automorphic representation $\Pi$ of $\GLn(\A_E)$ (for $E$ a general number field) is associated with some top and bottom degree periods. In particular, when $n=3$ an $E$ is a real quadratic field, one can define two $p$-integrally normalized periods attached to $\Pi$, the top degree period $\Om_6(\Pi) \in \C^\x / \O^\x$ and the bottom degree period $\Om_4(\Pi) \in \C^\x / \O^\x$, defined respectively within the top and bottom degrees of the cuspidal cohomology of $\GL(E)$. We refer to \cite[\S3.2.5]{BR17} for a precise definition. Then, combining \ref{adjoint_L_value} with Balasubramanyam-Raghuram's formula \cite[Theorem A]{BR17}, we can deduce an integral relation between the middle-degree periods and the extremal-degree periods of $\Pi$, under Calegari-Geraghty setting.

More precisely, let $\tilde{\TT}_E$ denote the \textit{full} Hecke algebra of $\GL(E)$ acting on the cohomology, localized at the maximal ideal $\m_\Pi$. By \textit{full}, we mean here that $\tilde{\TT}_E$ is defined by acting on the full cohomology $\tilde{H}_E$ of $Y_E(K_f)$ with coefficients in $\O$, not only on its $\O$-torsion-free part, as in \S\ref{hecke_corr}. In particular $\tilde{\TT}_E$ may contain $\O$-torsion, and $\TT_E = \tilde{\TT}_E /(\O-tors)$ is simply the torsion-free quotient of $\tilde{\TT}_E$. We now assume that the following three conjectures hold :
\begin{itemize}
\item $\mathrm{(Gal_{\m_\Pi})}$ The Galois representation $\rho_{\m_\Pi}$ with coefficients in $\tilde{\TT}_E$ attached to $\Pi$ exists
\item $\mathrm{(LGC_{\m_\Pi})}$ The Galois representation $\rho_{\m_\Pi}$ satisfies local-global compatibilities at minimal, Fontaine-Laffaille, and Taylor-Wiles places 
\item $\mathrm{(Van_{\m_\Pi})}$ The residual cohomology groups $H^q(Y_E(K_f),\L_{\mu}(\FF))_{\m_\Pi}$ vanishes unless $q \in [4,6]$
\end{itemize}
Moreover, we will say that $\rho_{\m_\Pi}$ satisfy $\mathrm{(CG)}$ if it satsifies the following three conditions:
\begin{itemize}
\item $\rho_{\m_\Pi}$ is $\n$-minimal;
\item $p-3 > 2m$, where $\mu = (m,m,0) \in X^+(T_3)$ (this ensures that $\rho_{\m_\Pi}$ is Fontaine-Laffaille above $p$);
\item the residual representation $\overline{\rho}_{\m_\Pi}$ has enormous image.
\end{itemize}

Under these conditions, and conditionally on the three conjectures $\mathrm{(Gal_{\m_\Pi})}$, $\mathrm{(LGC_{\m_\Pi})}$ and $\mathrm{(Van_{\m_\Pi})}$, the Calegari-Geraghty theory \cite{CG18} implies that the full cohomology is free over the full Hecke algebra $\tilde{\TT}_E$. We refer to the author's thesis \cite[\S3.2.4-5]{thesis} for a more detailed presentation of Calegari-Geraghty theory.  Be careful that $H^\bullet$ and $\TT$ denote the full cohomology and Hecke algebra there, while their torsion-free parts are denoted by $\bar{H}^\bullet$ and $\bar{\TT}$. Then, under Calegari-Geraghty setting, we obtain the following corollary:

\begin{corollaire} 
\label{relation_middle_top_bottom}
Assume $p >2$. Let $\Pi$ be a cohomological cuspidal automorphic representation of $\GL(\A_E)$ which is conjugate self-dual, and is only ramified above split primes. Assume that conjectures $\mathrm{(Gal_{\m_\Pi})}$, $\mathrm{(LGC_{\m_\Pi})}$ and $\mathrm{(Van_{\m_\Pi})}$ hold, and that $\rho_{\m_\Pi}$ satisfies $\mathrm{(CG)}$. Then:
$$
\Om_5(\Pi,\e,\pm) \cdot \Om_5(\Pi^\vee,\e,\mp) \sim \Om_4(\Pi) \cdot \Om_6(\Pi^\vee) \cdot  \nu_{\Pi}^\pm
$$
where $\nu_{\Pi}^\pm := \eta_{\Pi} \cdot \eta_{\Pi}(M)[\pm]^{-1} \in \O$.
\end{corollaire}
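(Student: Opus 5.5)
The plan is to compare two expressions for the same normalized adjoint $L$-value $\Lambda^{imp}(\Pi,\Ad,1)$. By \ref{adjoint_L_value},
$$\eta_{\l_\Pi}(M)[\pm]\sim \frac{\Lambda^{imp}(\Pi,\Ad,1)}{\Om_5(\Pi,\e,\pm)\,\Om_5(\Pi^\vee,\e,\mp)}.$$
Balasubramanyam--Raghuram's formula \cite[Theorem A]{BR17}, with Chen's archimedean computation \cite{Che22}, gives
$$\eta_{\l_\Pi}(H^6)\sim \frac{\Lambda^{imp}(\Pi,\Ad,1)}{\Om_4(\Pi)\,\Om_6(\Pi^\vee)}.$$
Here $\eta_{\l_\Pi}(H^6)$ is the congruence number of $\l_\Pi$ on the top-degree (equivalently, by Poincaré duality, bottom-degree) localized cuspidal cohomology. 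Both formulas use the same Jacquet--Shalika input (\ref{jacquet-shalika_formula}) and the same Haar measures. I will check that they involve the same ramified test vectors, or at least that the ratio of their local factors is a $p$-unit; the mixed essential vectors give $L^{imp}$ factors by \ref{split_ramified_computations}. I will also check that they share the archimedean normalization $\pi^2 L(\Pi_\inf\x\Pi_\inf^\vee,1)$ via Chen's generators. Once this is done, the constants $16h_E(\n)N(\n)^3D_E^{-9/2}(4\pi)^{-2}$ cancel in the quotient, and dividing the two formulas gives
$$\frac{\Om_5(\Pi,\e,\pm)\,\Om_5(\Pi^\vee,\e,\mp)}{\Om_4(\Pi)\,\Om_6(\Pi^\vee)}\sim \frac{\eta_{\l_\Pi}(H^6)}{\eta_{\l_\Pi}(M)[\pm]}.$$
This cancellation is also why no extra primes need to be excluded beyond $p$-smallness. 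For $p\mid 6N(\n)h_E(\n)D_E$ I will argue that the common constants literally cancel before $p$-integrality is invoked: redo the cohomological interpretation with the same volume factor on both sides, so that only the ratio of pairings matters.

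The remaining step is to identify $\eta_{\l_\Pi}(H^6)$ with the Hecke congruence number $\eta_\Pi=\eta_{\l_\Pi}$. This is where Calegari--Geraghty enters. Under $\mathrm{(Gal_{\m_\Pi})}$, $\mathrm{(LGC_{\m_\Pi})}$, $\mathrm{(Van_{\m_\Pi})}$ and $\mathrm{(CG)}$, the patching argument of \cite{CG18} (as recalled in \cite[\S3.2.4--5]{thesis}) shows two things. First, the full Hecke algebra $\tilde\TT_E$ is a complete intersection. Second, the top-degree cohomology $\tilde H^6_{\m_\Pi}$ is free of rank one over $\tilde\TT_E$; dually, the bottom degree is free over its dual. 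I expect the main obstacle to lie here: CG freeness is a statement about full, possibly torsion, cohomology and Hecke algebras, while congruence numbers are defined on torsion-free quotients. I will handle this as follows. Top-degree cohomology is the last nonzero degree by $\mathrm{(Van)}$, so its torsion-free quotient $H^6$ is free of rank one over $\TT_E=\tilde\TT_E/(\O\text{-tors})$, because torsion in $\tilde\TT_E$ acts through torsion of $\tilde H^6$. Then $C_{\l_\Pi}(H^6)\cong C_{\l_\Pi}(\TT_E)$, which gives $\eta_{\l_\Pi}(H^6)=\eta_\Pi$. Non-Eisensteinness, supplied by residual irreducibility (implied by enormous image), ensures that the cuspidal, interior and full cohomology agree after localization, as in the discussion following \ref{poincare_equivariance}.

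Substituting, I obtain $\Om_5(\Pi,\e,\pm)\Om_5(\Pi^\vee,\e,\mp)\sim \Om_4(\Pi)\Om_6(\Pi^\vee)\cdot\eta_\Pi\,\eta_\Pi(M)[\pm]^{-1}$. Finally, $\nu_\Pi^\pm\in\O$ because $\eta_{\l_\Pi}(M)[\pm]\mid\eta_{\l_\Pi}$, as noted in \S\ref{hmod_involution}: $M$ has $\l$-rank $2$, $\e$ acts non-trivially on $M[\l_\Pi]$, and $\TT$ surjects onto each $\O$-line $M^\l[\pm]/M_\l[\pm]$-type quotient.
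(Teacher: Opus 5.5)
Your proposal follows the paper's own route: divide the middle-degree formula of \ref{adjoint_L_value} by the Balasubramanyam--Raghuram--Chen formula for $\Om_4(\Pi)\,\Om_6(\Pi^\vee)$ and let the common constants cancel. Then use Calegari--Geraghty freeness of the localized top-degree cohomology to replace its congruence number by $\eta_\Pi$, exactly as the paper does for $\pi$ via $(H=\TT_\Q)$; your extra care in matching the level-$K_1(\n)$ essential vectors with the mixed essential vectors at level $K_1^*(\n)$ is a point the paper passes over silently. One small correction that does not affect the argument: when $\ell_0>0$, Calegari--Geraghty do not make $\tilde\TT_E$ a complete intersection (they present it as a quotient of a regular local ring by $\ell_0$ more equations than its codimension), but you never use that claim.
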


It is worth noting that there is no need to exclude specific primes (except in $\mathrm{(CG)}$ for primes which are small with respect to $\mu$), as the constants in the two adjoint $L$-value formulas cancel each other out. \\

\section{A divisibility of automorphic periods for the real quadratic base change}
\label{part_CBC}

After recalling the relevant properties of the base change for $\GL$, we prove \ref{thmB} (see \ref{CBC_divisibility}). We then deduce the divisibility of \ref{main_div_BC} (see \ref{period_divisibility}) between the periods of a cuspidal automorphic representation of $\GL(\Q)$ and the periods of its base change to $\GL(E)$. In paragraph \S\ref{non_self_dual}, we present the results for non self-dual representations. \\

Let $\psi_F := \psi_\Q \circ \mathrm{Tr}_{F/\Q} : F \bs \A_F \to \C^\x$ and $\psi_E : =  E \bs \A_E \to \C^\x$ defined by:
$$
\psi_E(x) = \psi_F(\mathrm{Tr}_{E/F}(x)) =   \psi_F(x+ \s(x)), \quad x \in \A_E
$$
with $\s$ being the non-trivial element in $\mathrm{Gal}(E/F)$. We write $\psi_E = \bigotimes_w \psi_{E,w}$ for its factorization as a tensor product of local additive characters.

\subsection{Quadratic base change}
\label{CBC}

We temporarily adopt general notation. Let $n\geq 1$ be any integer and $E/F$ a general quadratic extension of number fields. Let $\pi$ be a cuspidal automorphic representation of $\GLn(\A_F)$. In this subsection, we recall some facts about the base change automorphic lifting from $\GLn(\A_F)$ to $\GLn(\A_E)$.

\subsubsection{Local base change}
For a place $v$ of $F$, we note $K= F_v$. Suppose first that $v$ is split in $E$, so that $E_v = K\x K$. Then the local base change of $\pi_v$ is defined to be:
$$
\mathrm{BC}(\pi_v) = \pi_v \otimes \pi_v
$$

Suppose now that $v$ is non-split, so that $L := E_v$ is a field. Let $\L_K$ (resp. $\L_L$) be the Weil-Deligne group of $K$ (resp. $L$). Let $\phi_\pi : \L_K \to \GLn(\C)$ be the $n$-dimensional representation of $\L_K$ associated to $\pi$ by the local Langlands correspondence for $\GLn(K)$. Then the base change $\mathrm{BC}(\pi_v)$ of $\pi_v$ is defined to be the unique irreducible admissible representation of $\GLn(L)$ corresponding to the restriction $\phi_\pi |_{\L_L}$ through the local Langlands correspondence for $\GLn(L)$.

When $\pi_v$ is unramified, the local base change has a simple expression in term of the Satake parameters. Let $ S_{\pi_v} = \mathrm{diag}(\a_1,\dots,\a_n) \in \GLn(\C)$ be (a diagonal representative of) the Satake parameters of $\pi_v$. Then the Satake parameter of $\mathrm{BC}(\pi_v)$ is simply given by (the conjugacy class of):
$$
S_{\pi_v}^{f_v} =\mathrm{diag}(\a_1^{f_v},\dots,\a_n^{f_v}) \in \GLn(\C)
$$
where $f_v$ is the residual degree of $E_w/F_v$.

\subsubsection{Global base change.}

We say that an automorphic representation $\Pi$ of $\GLn(\A_E)$ is a (strong) base change of $\pi$ if for all places $v$ of $F$, we have that:
$$
\Pi_v \simeq \mathrm{BC}(\pi_v)
$$
The following theorem is due to Arthur and Clozel (see \cite[Theorem 4.2 and Theorem 5.1]{SABC}):

\begin{theorem}[Existence of cuspidal base change]
Let $\pi$ be a cuspidal automorphic representation of $\GLn(\A_F)$. If $\pi \not\simeq \pi \otimes \chi_{E/F}$, then $\pi$ admits a unique strong base change $\Pi$. Moreover, $\Pi$ is $\s$-invariant and cuspidal.
\end{theorem}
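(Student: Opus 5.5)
This result is not proved in the paper but quoted from Arthur--Clozel, so what follows is the strategy I would follow to reprove it, taking the main trace-formula inputs of \cite{SABC} as given. The plan is to compare the $\s$-twisted trace formula for $\GLn(\A_E)$ with the ordinary trace formula for $\GLn(\A_F)$, along the lines of \cite[Chapters 1--3]{SABC}.

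\textbf{Matching of test functions.} Write $\theta$ for the semi-direct product $\GLn \rtimes \langle \s \rangle$, and consider the norm map from $\s$-conjugacy classes in $\GLn(E)$ to conjugacy classes in $\GLn(F)$, $g \mapsto g\s(g)$. Locally, I would establish that every $\phi_v \in C_c^\infty(\GLn(E_v))$ has an associated $f_v \in C_c^\infty(\GLn(F_v))$ whose stable orbital integrals match the twisted orbital integrals of $\phi_v$ under the norm. At almost all places I would also need the base change fundamental lemma: the unit of the spherical Hecke algebra of $\GLn(E_v)$, and more generally its Satake-transferred Hecke functions, correspond to the matching functions on $\GLn(F_v)$. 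In the split case this is elementary, since $E_v = F_v \x F_v$ and the transfer is convolution. In the non-split case it is Arthur--Clozel's theorem, relying on Kottwitz and Labesse.

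\textbf{Global comparison and local character identities.} Plugging matching functions into both trace formulas, the geometric sides coincide once one controls the non-elliptic and weighted terms. Arthur--Clozel handle these by a simple form of the formulas, using cuspidal or very regular test functions at auxiliary places together with an induction on $n$. The resulting identity of spectral sides is
$$
\sum_{\Pi} \Tr(\Pi(\phi)I_\s) + \cdots = \sum_{\pi} \Tr \pi(f) + \cdots .
$$
I would then isolate a fixed $\pi$ using the linear independence of Hecke eigencharacters at unramified places. Together with the local character identities $\Tr(\mathrm{BC}(\pi_v)(\phi_v)I_\s) = \Tr\pi_v(f_v)$, which are themselves proved by a global-to-local argument, this produces a $\s$-stable automorphic representation $\Pi$ with $\Pi_v \simeq \mathrm{BC}(\pi_v)$ at every place $v$ of $F$.

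\textbf{Cuspidality and uniqueness.} The spectral identity also tracks multiplicities. The representations $\pi$ and $\pi \otimes \chi_{E/F}$ have the same base change, and a $\s$-invariant cuspidal $\Pi$ receives exactly the pair $\{\pi, \pi\otimes\chi_{E/F}\}$. A non-cuspidal base change arises from a residual or Eisenstein contribution, namely an isobaric sum $\tau \boxplus \tau^\s$ with $\tau$ cuspidal on $\GL_{n/2}(\A_E)$. Comparing coefficients in the identity shows that such a contribution comes from a $\pi$ automorphically induced from $E$, which forces $\pi \simeq \pi \otimes \chi_{E/F}$. Hence the hypothesis $\pi \not\simeq \pi\otimes\chi_{E/F}$ gives that $\Pi$ is cuspidal.

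Uniqueness then follows from strong multiplicity one (Jacquet--Shalika) on $\GLn(\A_E)$, since $\Pi$ is determined by its local components at almost all places. $\s$-invariance also follows: $\Pi^\s$ has local components $\mathrm{BC}(\pi_v)^\s \simeq \mathrm{BC}(\pi_v)$, because the Langlands parameter $\phi_{\pi}|_{\L_L}$ is $\s$-stable, so $\Pi^\s \simeq \Pi$ by uniqueness.

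The main obstacle is the combination of the fundamental lemma and transfer for twisted orbital integrals with the stabilization of the non-elliptic terms. This is where the actual depth of \cite{SABC} lies, and here I would simply invoke their results rather than attempt to reprove them.
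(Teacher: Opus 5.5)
Your final move, invoking Arthur--Clozel, is the same as the paper's, since the paper gives no proof and only cites \cite[Theorems 4.2 and 5.1]{SABC}. Your arguments for uniqueness (strong multiplicity one), cuspidality (the only degeneration is $\tau\boxplus\tau^\s$, which forces $\pi\simeq\pi\otimes\chi_{E/F}$) and the fibre $\{\pi,\pi\otimes\chi_{E/F}\}$ are sound.

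However, the global step as you describe it would fail. Arthur--Clozel do \emph{not} use a simple trace formula with cuspidal or very regular test functions at auxiliary places. That simple form, which is the approach of Kazhdan's earlier work, only detects those $\pi$ whose component at the auxiliary place is supercuspidal, or at least square-integrable. A given cuspidal $\pi$ need not have any such component. For instance, $\Ad(f)$ for a level-one eigenform $f$ is a self-dual cohomological cuspidal representation of $\GL(\A_\Q)$. It is an unramified principal series at every prime, and $\GL(\R)$ has no discrete series. So the simple trace formula cannot reach even the main case of this paper. Covering every cuspidal $\pi$ is exactly what forces a comparison of the full invariant trace formula with its $\s$-twisted analogue. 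That comparison includes the weighted orbital integrals and the residual and continuous spectrum, which is the ``advanced theory'' of their title. Taken literally, your second step proves the theorem only for $\pi$ with a square-integrable local component somewhere.

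There is a second, smaller point about definitions. At non-split places, the local base change produced by \cite{SABC} is characterized by twisted character identities. The paper instead defines $\mathrm{BC}(\pi_v)$ through $\phi_\pi|_{\L_L}$ and the local Langlands correspondence. Your existence step produces the former, while your $\s$-invariance step uses the latter. To obtain a strong base change in the paper's sense you must identify the two. That identification is the compatibility of the local Langlands correspondence with cyclic base change, proved later by Harris--Taylor and Henniart, and it should be invoked explicitly. You do not actually need the Langlands-parameter argument for $\s$-invariance: it is automatic from the construction, because only $\s$-stable representations contribute to the twisted trace formula.
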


\subsubsection{Factorization of adjoint $L$-functions}
\label{twisted_adjoint_L-func}
Let  $\chi_{E/F}$ be the quadratic character of $\A_F^\x/F^\x$ associated with the quadratic extension $E/F$ by class field theory. The \textit{primitive} twisted adjoint $L$-function $L(\pi,\Ad \otimes \chi_{E/F},s)$ of $\pi$ is defined by the following formula:
$$
L(\pi\otimes\pi^\vee \otimes \chi_{E/F},s) = L(\chi,s) \cdot L(\pi,\Ad \otimes \chi_{E/F},s)
$$
where $L(\chi_{E/F},\cdot)$ is the Dirichlet $L$-function of $\chi_{E/F}$, and $L(\pi\otimes\pi^\vee \otimes \chi_{E/F},s)$ is the \textit{primitive} twisted Rankin-Selberg $L$-function of $\pi$, which can be defined as in \S\ref{L-functions}. Assume that $\pi \not\simeq \pi \otimes \chi_{E/F}$, so that $\mathrm{BC}(\pi)$ is cuspidal. We then have the following factorization of primitive adjoint $L$-functions:
$$
L(\mathrm{BC}(\pi),\Ad,s) = L(\pi,\Ad,s) \cdot L(\pi,\Ad \otimes \chi_{E/F},s)
$$
which is also valid for the completed $\Lambda$-functions, which are obtained by adding the $\Gamma$-factors at archimedean places. \\

One can define the \textit{imprimitive} twisted Rankin-Selberg $L$-function $L^{imp}(\pi\otimes\pi^\vee \otimes \chi_{E/F},s)$ of $\pi$ analogously to the imprimitive Rankin-Selberg $L$-function of $\pi$ in \S\ref{L-functions}. Then, the \textit{imprimitive} twisted adjoint $L$-function $L^{imp}(\pi,\Ad \otimes \chi_{E/F},s)$ of $\pi$ is defined by the following formula:
$$
L^{imp}(\pi\otimes\pi^\vee \otimes \chi_{E/F},s) = L(\chi_{E/F},s) \cdot L^{imp}(\pi,\Ad \otimes \chi_{E/F},s)
$$
By definition, $L^{imp}(\pi,\Ad \otimes \chi_{E/F},s)$ is a meromorphic function on the right half plane $Re(s) > 1-\e$, for some small $\e>0$. Moreover, it is holomorphic and nonzero at $s=1$. It only differs from $L(\pi,\Ad \otimes \chi_{E/F},s)$ by a finite product of factors at places where $\pi$ is ramified. \\

Assume that $\pi$ is only ramified at places of $F$ which are split in $E$, and that $\pi \not\simeq \pi \otimes \chi_{E/F}$, so that $\mathrm{BC}(\pi)$ is cuspidal. Then, as for the primitive $L$-functions, one has the following factorization of adjoint imprimitive $L$-functions:
\begin{equation}
\label{decomposition_ad}
L^{imp}(\mathrm{BC}(\pi),\Ad,s) = L^{imp}(\pi,\Ad,s) \cdot L^{imp}(\pi,\Ad \otimes \chi_{E/F},s)
\end{equation}
which is also valid for the completed imprimitive $\Lambda^{imp}$-functions.\\

\subsubsection{Base change for Hecke algebras}
\label{hecke_CBC}

In this paragraph, we explain how the base change is described from the perspective of Hecke algebras. More precisely we construct a morphism $\theta: \TT_{E} \to \TT_{F}$ between the localized cohomological Hecke algebras, which corresponds to the base change transfer. Under some conditions, we prove that this morphism is surjective.  \\

Let $\pi$ be a cuspidal automorphic representation of $\GLn(\A_F)$ such that $\pi \not\simeq \pi \otimes \chi_{E/F}$ and let $\Pi$ be its strong base change to $E$, which is a cuspidal automorphic representation of $\GLn(\A_E)$. Let $k_f$ and $K_f$ be the mirahoric level of $\pi$ and $\Pi$. Let $\H_{F} = \H_{F}(k_f;\O)$ and $\H_{E} = \H_{E}(K_f;\O)$ be the abstract spherical Hecke algebras associated with $k_f$ and $K_f$, with coefficients in $\O$, defined in \S\ref{hecke_corr}. Let:
$$
\theta : \H_{E} \to \H_{F}
$$
be the morphism defined by the following formulas ($v$ is a finite place of $F$ where $\pi$ is unramified and $w$ is a place of $E$ dividing $v$):
\begin{itemize}
\item If $v$ is split or ramified in $E$ : $\theta(T_{w,i}) :=T_{v,i}$;
\item If $v$ is inert in $E$:
$$
\theta(T_{w,i}) :=  \sum_{k=\max(0,2i-n)}^{\min(2i,n)} (-q_v)^{(i-k)^2} T_{v,k} T_{v,2i-k}.
$$
where $q_v = \#(\O_v/ \wp_v)$ and $T_{v,0} = 1$. \\
\end{itemize}
This morphism describes the base change automorphic transfer at the level of Hecke algebras, as we have the following lemma:
\begin{lemma}
\label{hecke_base_change}
Let $\rho$ be a cuspidal  automorphic representation of $\GLn(\A_F)$ of level $k_f$ and let $\Si = \mathrm{BC}(\rho)$. Let $\theta_\C : \H_E(K_f,\C) \to \H_F(k_f,\C)$ denote the scalar extension of $\theta$ to $\C$. Then for all $T \in \H_E(K_f,\C)$, we have:
$$
\l_\Si(T) = \l_\rho(\theta_\C(T))
$$
where $\l_\Si : \H_E(K_f,\C) \to \C$ and $\l_\rho : \H_F(k_f,\C) \to \C$ are the Hecke eigensystems associated with $\Si$ and $\rho$.
\end{lemma}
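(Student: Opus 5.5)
Both sides of the identity are multiplicative in the spherical Hecke algebras, which are tensor products of local algebras. So I will reduce to a single finite place $v$ of $F$ where $\rho$ is unramified, and to the generators $T_{w,i}$ for $w \mid v$. By strong base change, $\Si_w \simeq \mathrm{BC}(\rho_v)$ for each $w \mid v$. Since $\Si$ has level $K_f$ and $\rho$ has level $k_f$, the relevant local components are unramified at every place outside $S$. By definition, $\l_\Si(T_{w,i})$ is the eigenvalue of $T_{w,i}$ on the spherical line of $\Si_w$. I will express it through the Satake isomorphism: if $\mathrm{diag}(\b_1,\dots,\b_n)$ is the Satake parameter of $\Si_w$, then
$$
\l_\Si(T_{w,i}) = q_w^{i(n-i)/2}\, e_i(\b_1,\dots,\b_n),
$$
where $e_i$ is the $i$-th elementary symmetric polynomial. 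This is consistent with the characteristic polynomial formula in \S\ref{galois_reps}. The same formula holds for $\rho_v$, with parameters $\a_j$ and $q_v$ in place of $\b_j$ and $q_w$.

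If $v$ is split, the place $w$ corresponds to one of the two factors of $E_v = F_v \times F_v$. Then $\Si_w \simeq \rho_v$, $q_w = q_v$ and the parameters coincide, so $\l_\Si(T_{w,i}) = \l_\rho(T_{v,i})$, which matches $\theta(T_{w,i}) = T_{v,i}$. If $v$ is ramified, the residual degree is $f_v = 1$. Then $q_w = q_v$ and the Satake parameter of $\mathrm{BC}(\rho_v)$ equals that of $\rho_v$ by the local description recalled above, so again $\theta(T_{w,i}) = T_{v,i}$ works. In both cases the only input is the unramified local base change formula $S_{\Si_w} = S_{\rho_v}^{f_v}$.

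The inert case is the only real computation, and the step I expect to need care. Here $f_v = 2$, $q_w = q_v^2$ and $\b_j = \a_j^2$. One must check the identity
$$
q_v^{i(n-i)} e_i(\a_1^2,\dots,\a_n^2) = \sum_{k} (-q_v)^{(i-k)^2}\, q_v^{k(n-k)/2} q_v^{(2i-k)(n-2i+k)/2}\, e_k(\a) e_{2i-k}(\a).
$$
For this I would use the generating function identity $\prod_j (1 - \a_j^2 X^2) = \prod_j (1-\a_j X)\prod_j(1+\a_j X)$. Comparing coefficients of $X^{2i}$ gives
$$
(-1)^i e_i(\a^2) = \sum_k (-1)^k e_k(\a)\, e_{2i-k}(\a),
$$
equivalently $e_i(\a^2) = \sum_k (-1)^{i-k} e_k e_{2i-k}$. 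It then remains to verify the bookkeeping of powers of $q_v$. Writing $k = i - j$, the exponent $\frac{k(n-k)+(2i-k)(n-2i+k)}{2}$ simplifies to $i(n-i) - j^2$. The factor $(-q_v)^{j^2}$ restores $q_v^{i(n-i)}$, and its sign is $(-1)^{j^2} = (-1)^{j} = (-1)^{i-k}$, exactly the sign required. The summation range $\max(0,2i-n) \le k \le \min(2i,n)$ is just the range where $e_k e_{2i-k}$ is nonzero.

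Finally, I will note that $\l_\Si(S_w^{\pm1})$ is handled by the same formula with $i = n$ (or follows from the central character), since $T_{w,n} = S_w$. Extending by multiplicativity and $\C$-linearity from generators to all of $\H_E(K_f,\C)$ gives $\l_\Si = \l_\rho \circ \theta_\C$.
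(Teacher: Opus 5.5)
Your proposal is correct and follows essentially the same route as the paper. You reduce to the generators $T_{w,i}$, use the Satake description $\lambda(T_{v,i}) = q_v^{i(n-i)/2}\sigma_i(\alpha)$ together with $S_{\Sigma_w} = S_{\rho_v}^{f_v}$, dispose of split and ramified places directly, and treat inert places via the identity from $\prod_j(1-\alpha_j^2X^2)=\prod_j(1-\alpha_jX)(1+\alpha_jX)$. Your explicit check that the exponents of $q_v$ collapse to $i(n-i)$ and that $(-1)^{(i-k)^2}=(-1)^{i-k}$ supplies the correct sign is bookkeeping the paper leaves implicit.
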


\begin{proof}
Let $S_\Si$ be the set of finite places where $\Si$ is ramified, $S_p(E)$ the set of places of $E$ dividing $p$ and $S_E = S_\Si \cup S_p(E)$. Since $\H_{E}(K_f;\C)$ is generated over $\C$ by the $T_{w,i}$, for $w \notin S_E$ and $0\leq i \leq n$, we only need to prove the formula for these Hecke operators. Let $w \notin S_E$ and let $v$ be the place of $F$ below $w$. Let   $0\leq i \leq n$. Let $\a = \diag(\a_1,\dots,\a_n)$ be a diagonal representative of the Satake parameter of $\rho_v$. We recall that:
$$
\l_\rho(T_{v,i}) = q_v^{i(n-i)/2} \s_i(\a_1,\dots,\a_n)
$$
where $q_v = \# \O_v / (\varpi_v)$ and $\s_i(X_1,\dots,X_n)$ is the $i$-th symmetric polynomial (see \cite[Proposition 7.2 p.53]{LALF}). The Satake parameters of $\Si_w$ is given by the conjugacy class of:
$$
\b:= \diag(\b_1,\dots,\b_n) = \diag(\a_1^{f_v},\dots,\a_n^{f_v})
$$
When $v$ is split or ramified, $f_v =1$ and $q_w = q_v$. Thus:
$$
\l_\rho(T_{w,i}) = q_w^{i(n-i)/2} \s_i(\b_1,\dots,\b_n) = q_v^{i(n-i)/2} \s_i(\a_1,\dots,\a_n) = \l_\rho(T_{v,i}) = \l_\rho(\theta(T_{w,i}))
$$
When $v$ is inert, $f_v =2$ and $q_w = q_v^2$. Thus:
$$
\l_\rho(T_{w,i}) = q_w^{i(n-i)/2} \s_i(\a_1^2,\dots,\a_n^2)
$$
In this case, the lemma then follows from the definition of $\theta$ at inert places and from the following formula:
$$
(-1)^i \s_i(X_1^2,\dots,X_n^2) =  \sum_{k=\max(0,2i-n)}^{\min(2i,n)} (-1)^k \s_k(X_1,\dots,X_n)\s_{2i-k}(X_1,\dots,X_n)
$$
which can be proven using the identity $\prod_{i=1}^n(X^2-X_i^2) = \prod_{i=1}^n(X-X_i)(X+X_i)$.
\end{proof}

From now we assume that $\pi$, and thus $\Pi$, are cohomological. Let $\mu_F \in X^+(T_F)$ and $\mu_E = (\mu_F,\mu_F) \in X^+(T_E)$ be the cohomological weights of $\pi$ and $\Pi$.  Let  $h_{F}$ and $h_{E}$ be the cohomological Hecke algebras acting faithfully on the cuspidal cohomology of $Y_F(k_f)$ and $Y_E(K_f)$ with coefficients in $\L_{\mu_F}(\O)$ and $\L_{\mu_E}(\O)$, defined in \S\ref{hecke_corr}. The morphism $\theta$ induces a morphism between these cohomological Hecke algebras. To see this, we consider the following two decompositions of the cuspidal cohomology:
$$
H^\bullet_{cusp}(Y_E(K_f), \L_{\mu_E}(\C)) = \bigoplus_{\Si \in \mathrm{Coh}(G_E,\mu_E,K_f)} H^\bullet(\g, K_\inf  ; \Si_{\inf} \otimes L_{\mu_E}(\C)) \otimes \Si_f^{K_f}
$$
and:
$$
H^\bullet_{cusp}(Y_F(k_f), \L_{\mu_F}(\C)) =  \bigoplus_{\rho \in \mathrm{Coh}(G_{F},\mu_F,k_f)}  H^\bullet(\g_n, K_n ; \rho_{\inf} \otimes L_{\mu_F}(\C)) \otimes \rho_f^{k_f}
$$
These two decompositions are equivariant with respect to the action of $\H_{E}$ and $\H_{F}$, acting on the left-hand sides by Hecke correspondences and on the right-hand sides via Hecke operators. Thus, the Hecke action on the cuspidal cohomology groups is completely determined by the Hecke action on finite parts of cuspidal automorphic representations. Thus, if $T$ and $T'$ are two operators in $\H_E$ defining the same correspondence on $H^\bullet_{cusp}(Y_E(K_f), \L_{\mu_E}(\C))$, it follows from \ref{hecke_base_change} that $\theta(T)$ and $\theta(T')$ define the same correspondence on $H^\bullet_{cusp}(Y_F(k_f), \L_{\mu_F}(\C))$. Since $h_F$ acts faithfully on the latter, this shows that $\theta$ induces a morphism on the cohomological Hecke algebras, also denoted $\theta$:
$$
\theta : h_{E} \to h_{F}
$$

Let $\l_\pi: h_{F} \to \O$ be the Hecke eigensystem associated with $\pi$. We denote by $\overline{\lambda_\pi}$ its reduction modulo $\varpi$ and by $\m_\pi$ the kernel of $\overline{\lambda_\pi}$. It is a maximal ideal of $h_{F}$, and we denote by $\TT_{F}$ the localization of $h_F$ at $\m_\pi$. Similarly, let $
\l_{\Pi}: h_{E} \to \O$ be the eigensystem associated with $\Pi =\mathrm{BC}(\pi)$. Thus, from \ref{hecke_base_change}, we have $\lambda_{\Pi} = \lambda_\pi \circ \theta$. Let $\m_{\Pi} = \Ker \overline{\lambda_{\Pi}}$, and let $\TT_{E}$ be the localization of $h_E$ at $\m_{\Pi}$. Since $t \in \m_{\Pi} \Leftrightarrow \theta(t) \in \m_\pi$, $\theta$ induces, by localization, a morphism of $\O$-algebras, which we still denote $\theta$:
$$
\theta : \TT_{E} \to \TT_{F}
$$
We then have the following lemma:
\begin{lemma}
\label{BC_surjectivity}
Assume that $F=\Q$. Assume that $p$ is unramified in $E$, that $\pi$ is spherical at primes which are ramified in $E$, and that the Galois representation $\rho_{\pi}$ associated with $\pi$ is residually absolutely irreducible. Then the morphism $\theta :  \TT_{E} \to \TT_{F}$ is surjective.
\end{lemma}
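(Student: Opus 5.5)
Since $\TT_F$ is generated as an $\O$-algebra by the images of the Hecke operators $T_{v,i}$ ($v\notin S$, $i=1,2,3$, together with $S_v^{\pm1}$), and $\TT_F$ is a finite local $\O$-algebra, it suffices to show that each of these generators lies in the image of $\theta$. The plan is to treat split and inert primes separately. At a prime $v$ split in $E$ (or ramified in $E$, where $\pi$ is spherical by hypothesis), the definition gives $\theta(T_{w,i}) = T_{v,i}$, so these generators are hit directly. The whole difficulty is at inert primes, where only the ``squared'' combinations $\theta(T_{w,i})$, i.e. the coefficients of the characteristic polynomial of $\mathrm{Frob}_v^2$, lie in the image.

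To handle the inert primes, I would pass to Galois representations, via a pseudo-representation (determinant) argument. Residual absolute irreducibility of $\bar\rho_\pi$ gives a Galois determinant, or by Carayol a genuine representation $\rho_{\TT_F}\colon \Gal(\overline{\Q}/\Q)\to \mathrm{GL}_3(\TT_F)$, constructed by gluing the representations attached to the eigensystems of $\TT_F\otimes\K$ (Harris--Lan--Taylor--Thorne/Scholze) and using that $\TT_F$ is reduced. Let $A\subset \TT_F$ be the image of $\theta$, which is a closed local $\O$-subalgebra. By Chebotarev and \ref{hecke_base_change}, the traces (characteristic polynomial coefficients) of $\rho_{\TT_F}|_{\Gal(\overline{\Q}/E)}$ at Frobenius elements $\mathrm{Frob}_w$ for $w$ split, and $\mathrm{Frob}_v^2$ for $v$ inert, lie in $A$. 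Since these Frobenii are dense in $\Gal(\overline{\Q}/E)$, the whole determinant of $\rho_{\TT_F}|_{G_E}$ takes values in $A$.

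The key step is then to show that the restriction to $G_E$ determines the $G_\Q$-determinant over $A$. I would use that $\bar\rho_\pi|_{G_E}$ is still absolutely irreducible; this has to be checked, and it is where the condition $p$ unramified in $E$ (together with $p$ odd) would enter. Alternatively, if $\bar\rho_\pi|_{G_E}$ were reducible, $\bar\rho_\pi$ would be induced from $G_E$, which is impossible for odd dimension $3$. Given this irreducibility, Carayol's theorem descends $\rho_{\TT_F}|_{G_E}$ to $\rho_A\colon G_E\to \mathrm{GL}_3(A)$. One then wants to show that $\mathrm{tr}\,\rho_{\TT_F}(\mathrm{Frob}_v)\in A$ for inert $v$. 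The idea is that $\rho_{\TT_F}(c)$, for $c$ a lift of $\sigma$, conjugates $\rho_A$ to its $\sigma$-twist; by Schur's lemma over the local ring $A$, the intertwiner is defined over $A$ up to a unit of $\TT_F$, and fixing its normalization (a cube root / sign choice, using $n=3$ odd and $p\neq 2$) shows that the extension to $G_\Q$ is defined over $A$. This normalization is the step I expect to be the main obstacle: the two extensions $\rho$ and $\rho\otimes\chi_E$ have the same restriction, and one must pin down the correct one over $A$. I would do this using $\det\rho_{\TT_F}$, which is known (a power of the cyclotomic character times a finite-order character, with $\chi_E^3=\chi_E$ distinguishing the two choices), together with the fact that $2$ is invertible in $\O$.

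Finally, once $\mathrm{tr}\,\rho_{\TT_F}(\mathrm{Frob}_v)$ and the other characteristic polynomial coefficients lie in $A$ for all $v\notin S$, the Eichler--Shimura-type relation expressing $T_{v,i}$ via the characteristic polynomial of $\mathrm{Frob}_v$ (the theorem in \S\ref{galois_reps}) shows that $T_{v,i}\in A$. This uses that the coefficients $q_v^{i(i-1)/2}$ are units, since $v\neq p$. Hence $A=\TT_F$, which gives surjectivity.
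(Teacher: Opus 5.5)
Your route differs from the paper's, and its decisive step is not actually carried out. You descend $\rho|_{G_E}$ to $A=\theta(\TT_E)$ and write $\rho(c)=uM$ with $M\in\mathrm{GL}_3(A)$ and $u\in\TT_F^\times$ a scalar. Everything then hinges on showing $u\in A$, and the fix you propose does not do it. Normalizing via $\det\rho_{\TT_F}$ requires $\det\rho_{\TT_F}(c)\in A$ for some $c\notin G_E$. The description ``power of the cyclotomic character times a finite-order character'' does not give this, because the finite-order part is a priori a $\TT_F^\times$-valued character that varies over the component.

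Your account of where the hypotheses enter is also off. Absolute irreducibility of $\bar\rho_\pi|_{G_E}$ follows from your own odd-dimension argument and has nothing to do with $p$ being unramified in $E$.

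The step can be closed directly:
\begin{itemize}
\item Since $c^2\in G_E$, you have $u^2 I=\rho(c^2)M^{-2}$, so $u^2\in A^\times$.
\item $A$ is a complete local ring with the same residue field $\O/\wp$ as $\TT_F$, and $p$ is odd. Hensel's lemma therefore gives $b\in A$ with $b^2=u^2$ and $b\equiv u$ modulo the maximal ideal.
\item Then $(u-b)(u+b)=0$ with $u+b$ a unit, which forces $u=b\in A$.
\end{itemize}
Hence $\rho(G_\Q)\subset\mathrm{GL}_3(A)$, and your final paragraph applies. No determinant or cube-root choice is needed. The two extensions $\rho$ and $\rho\otimes\chi_E$ correspond to $\pm b$, which are residually distinct because $p\neq 2$.

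The paper never treats inert primes at all, and this is exactly where its hypotheses are used. Since $p$ is unramified in $E$ and $\pi$ is spherical at the primes ramified in $E$, no prime of $S_\Q=S_\pi\cup\{p\}$ ramifies in $E$. But $E$ ramifies somewhere, so $E$ is not contained in the field $H$ cut out by $\rho$. Hence $E\cap H=\Q$ and $\Gal(HE/E)\simeq\Gal(H/\Q)$. By Chebotarev, the Frobenii at primes split in $E$ are then already dense in $\Gal(H/\Q)$. Their traces $T_{q,1}=\theta(T_{w,1})$ lie in the closed subalgebra $\theta(\TT_E)$, and Carayol's theorem that $\TT_F$ is generated by traces finishes the proof.

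In your language: take $c$ in the inertia group at a prime $\ell\mid D_E$, mapping nontrivially to $\Gal(E/\Q)$. Since $\rho$ is unramified at $\ell$, $\rho(c)=1$ and $\rho(G_\Q)=\rho(G_E)$ at once, with no descent or intertwiner. The paper's argument is much shorter. Your repaired argument costs two applications of Carayol's theorem and a Schur/Hensel step, but it does not depend on the linear disjointness of $E$ and $H$.
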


\begin{proof} This proof was communicated to us by J. Tilouine, although any potential mistakes or inaccuracies in the following proof are the fault of the author. Let $S_\Q = S_\pi \cup \{p\}$, where $S_\pi$ the set of finite places where $\pi$ is ramified, and let $q \notin S_\Q$ be a prime split in $E$. It follows from the defining formula of $\theta$ that (the image in $\TT_\Q$ of) $T_{q,i}$ is in $\theta(\TT_E)$, which is a closed complete local sub-$\O$-algebra of $\TT_\Q$. One knows that there exists a Galois representation $\rho: \Gal(\overline{\Q}/\Q) \to \GLn(\TT_\Q)$, with coefficients in $\TT_\Q$, such that if $q \notin S_\Q$ is a finite place of $\Q$, then the characteristic polynomial of $\rho(\Frob_{q})$ is:
$$
\sum_{i=0}^n (-1)^i q^{i(i-1)/2} T_{q,i} X^{n-i}
$$
This representation can be obtained from the Scholze's Galois representation \cite[Corollary 5.4.4]{Scholze15}. Alternatively, one can construct this representation following the section 2.2 of \cite{Carayol} by putting together the Galois representations $\rho_\pi$ attached to each $\pi \in \mathrm{Coh}(G_\Q,\mu,k_f)$ by \cite{HLTT16}. By a theorem of Carayol, it follows from the existence of $\rho$ and the hypothesis that it is residually absolutely irreducible, that $\TT_\Q$ is generated by the traces, i.e. by the $\Tr(\rho(g))$, for $g \in \Gal(\overline{\Q}/\Q)$. Since $S_\Q$ does not contain any prime ramified in $E$, the subfield $H$ of $\overline{\Q}$ fixed by ${\Ker \rho}$ is linearly disjoint from $E$. Thus, by Chebotarev's density theorem, the set of $\Frob_q$, for $q\notin S_\Q$ split in $E$, is dense in $\Gal(H/{\Q})$, hence its image is dense in the image of $\rho$. Since $\TT_\Q$ is generated by the traces, and since $\Tr(\rho(\Frob_q)) = -T_{q,1} \in \theta(\TT_E)$ when $q \notin S_\Q$ is split in $E$, this proves that $\theta(\TT_E) = \TT_\Q$ and thus that $\theta$ is surjective.
\end{proof}

\subsection{Jacquet-Ye period and Jacquet's conjecture}

We still assume that $E/F$ is a quadratic extension of number fields.

\subsubsection{Quasi-split groups and Jacquet-Ye periods}

 Let $X$ be the symmetric space of Hermitian matrices of rank $n$ with respect to $E/F$, whose $A$-points are given for any $F$-algebra $A$, by:
$$
X(A) = \{ x \in  \GLn(E\otimes_F A), \,{}^t x^\s = x \}
$$

For every $x \in X(F)$, we define the unitary group determined by $x$ to be the $F$-algebraic group $U_x$ whose $A$-points are given, for any $F$-algebra $A$, by:
$$
U_x(A) =\{g \in \GLn(E\otimes_F A), {}^t g^\s \cdot x \cdot g = x \}
$$
Let $Z_U$ be the center of $U_x$ whose $A$-points are given by:
$$
Z_U(A) =\{z \in (E\otimes_\Q A)^\x, N_{E/F}(z):=zz^{\s} = 1 \}
$$
Let $x \in X(F)$ and let $\Pi$ be a cuspidal automorphic representation of $\GLn(\A_E)$. We say that $\Pi$ is distinguished by $U_x$ (or $U_x$-distinguished) if the so-called \textit{Jacquet-Ye period} defined by: 
$$
\P_{U_x}(\phi) = \int_{U_x(F) \bs U_x(\A_F)} \phi(h)dh
$$
is non-zero for some $\phi \in \Pi$.

\subsubsection{Jacquet's conjecture}When $n=3$ and if every archidean places of $F$ is split in $E$, Jacquet \cite[Theorem 2]{J05a} has shown that $\Pi$ is a quadratic base change from $G_F$ if and only if $\Pi$ is $U_x$-distinguished for some $x \in X(F)$. In fact, Jacquet has also proven \cite[Theorem 1]{J10} that this is equivalent for $\Pi$ to be $U_{E/F}$-distinguished. Feigon, Offen and Lapid \cite[Theorem 0.1]{FLO12} have latter generalized the results of Jacquet to any $n$ and any $x \in X(F)$, and removing the splitting assumption at archimedean places. We record in the following theorem some of their results that we will need later:
\begin{theorem}[Feigon-Lapid-Offen]
\label{Jacquet_conjecture}
Suppose that $n$ is odd. Let $\Pi$ be a cuspidal automorphic representation of $\mathrm{GL}_n(\A_E)$, and let $x \in X(F)$. Then $\Pi$ is a base change from $\mathrm{GL}_n(\A_E)$ if and only if $\Pi$ is $U_x$-distinguished. 
\end{theorem}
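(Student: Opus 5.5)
This theorem is not proved in the present paper: it is quoted from Feigon--Lapid--Offen \cite[Theorem 0.1]{FLO12}, which refines Jacquet \cite{J05a,J10}. The plan below only indicates how I would organize a proof following their method. (In the statement, ``base change from $\mathrm{GL}_n(\A_E)$'' should read $\mathrm{GL}_n(\A_F)$.) The approach is Jacquet's comparison of two relative trace formulas.

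\textbf{The two trace formulas.} On the $E$-side, take $f\in C_c^\infty(\mathrm{GL}_n(\A_E))$ and form the kernel $K_f(x,y)$. Integrate it against $U_x(F)\bs U_x(\A_F)$ in one variable and against $(N_n(E)\bs N_n(\A_E),\psi_E)$ in the other. Spectrally, the cuspidal part is $\sum_\Pi\sum_\phi \P_{U_x}(\Pi(f)\phi)\,\overline{W_\phi(1)}$. On the $F$-side, take $f'$ on $\mathrm{GL}_n(\A_F)$ and form the Kuznetsov formula for $(N_n,\psi)\times(N_n,\psi\cdot\eta)$, where $\eta$ is the twist by $\chi_{E/F}\circ\det$ appropriate for odd $n$. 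Its cuspidal part is a sum of products of Whittaker coefficients of forms in $\pi$. The geometric sides are sums of relative orbital integrals, indexed by matching regular double cosets.

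\textbf{Matching and comparison.} Next I would invoke the local inputs: the fundamental lemma (Jacquet's conjecture, proved by Ngô in equal characteristic and transferred to characteristic $0$) and Jacquet's existence of smooth transfer $f\leftrightarrow f'$ with matching orbital integrals. Summing the $E$-side over the relevant classes of hermitian forms $x$ then gives a geometric identity, hence equality of spectral sides. Choose $f$ to be spherical-type at almost all places and to isolate a given Hecke eigensystem, for instance through a supercuspidal or strongly positive component at one auxiliary place. Strong multiplicity one then yields the following. A nonzero sum of $U_x$-periods against $\Pi$ forces a $\pi$ on $\mathrm{GL}_n(\A_F)$ with the same unramified parameters, via the Satake map $\theta$ of \ref{hecke_base_change}, so $\Pi=\mathrm{BC}(\pi)$. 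Conversely, for $\Pi=\mathrm{BC}(\pi)$ the $F$-side is nonzero, which gives a nonvanishing period for some $x$.

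\textbf{Passing to a fixed $x$.} For $n$ odd all $U_x$ are locally quasi-split at finite places, and rescaling $x$ by $F^\x$ preserves $U_x$. FLO show that the relevant local linear forms (Jacquet's explicit local periods) are nonzero for every $x$ and every local base change. Their period factorization then upgrades ``distinguished by some $x$'' to ``distinguished by every $x$''.

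\textbf{Main obstacle.} The hardest step is the non-cuspidal spectrum: Eisenstein and residual contributions on both sides, together with the convergence and regularization of the periods. Without simple test functions, FLO handle this by truncation and careful analysis of the continuous spectrum. A secondary difficulty is ensuring that the sum over the genus of $x$ does not cancel, which is why the local nonvanishing of the $U_x$-invariant functionals for odd $n$ is essential.
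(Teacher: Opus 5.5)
You are right that the paper gives no proof here. It simply quotes Feigon--Lapid--Offen (Theorem~0.1), building on Jacquet. You are also right that ``base change from $\mathrm{GL}_n(\mathbb{A}_E)$'' should read $\mathrm{GL}_n(\mathbb{A}_F)$, and the relative-trace-formula architecture you outline is the right one. Two details are off. First, the $F$-side cannot carry a character ``$\psi\cdot\eta$'' with $\eta=\chi_{E/F}\circ\det$, since $\det$ is trivial on $N_n$; in Jacquet's comparison the quadratic character enters through the transfer factor of the orbital integrals. Second, an auxiliary supercuspidal place is available only for those $\Pi$ that have such a component, so it cannot give the general statement. The continuous spectrum really must be matched, as you concede at the end.

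The genuine gap is in ``Passing to a fixed $x$''. You claim the local $U_x(F_v)$-invariant functionals are nonzero for every $x$ and every local base change. This is true at places of $F$ split in $E$, and, for $n$ odd, at finite non-split places: there $U_{ax}=U_x$ and scaling by a non-norm swaps the two discriminant classes, so $U_x$ is quasi-split. It is false at a real place $v$ of $F$ with $E_v\simeq\mathbb{C}$. There $U_x(F_v)\simeq\mathrm{U}(p,q)$, which for $n\geq 3$ need not be quasi-split. Suppose it is the compact $\mathrm{U}(n)$, for instance $x=1_n$ with $E/F$ CM. Averaging over the compact group $U_x(F_v)$ shows that $\int_{U_x(F)\backslash U_x(\mathbb{A}_F)}\phi$ vanishes identically on $\Pi$ unless $\Pi_v$ has a $\mathrm{U}(n)$-fixed vector. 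That fails, for example, when $\Pi_v=\mathrm{BC}(\pi_v)$ with $\pi_v$ cohomological, since its parameter restricted to $\mathbb{C}^\times$ contains characters nontrivial on the unit circle. So for such $x$ the ``if'' direction of the statement, and your argument for it, both fail. You need one of two extra conditions. Either every archimedean place of $F$ splits in $E$, as in the paper's application ($F=\mathbb{Q}$, $E$ real quadratic, where $U_x(\mathbb{R})\simeq\mathrm{GL}_n(\mathbb{R})$ and the local form is the nonzero Whittaker pairing). Or you assume $\Pi_v$ is $U_x(F_v)$-distinguished at the remaining real places. A cleaner way to fix $x$ than upgrading via the factorization is to take the $E$-side test function supported, at each $v$, on the $\mathrm{GL}_n(E_v)$-orbit of $x$ in $X(F_v)$. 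By the Hasse principle for hermitian forms, only the $U_x$-period then survives. The ``if'' direction then reduces exactly to two facts: local $U_x(F_v)$-distinction of $\Pi_v$ at every $v$, and $L(\chi_{E/F},1)\,L^S(\pi,\mathrm{Ad}\otimes\chi_{E/F},1)\neq 0$.
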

When $n$ is even, one has to add a local condition at inert places of $F$ where $U_x$ is not quasi-split. These local conditions do not appear when $n$ is odd since in that case all unitary groups over $p$-adic fields are quasi-split. \\

When $\Pi$ is distinguished by the unitary group $U_1$ determined by the identity matrix, Jacquet \cite[Theorem 1]{J01} has been able to compute (for $n=3$ and $E/F$ split at archimedean places) the global period $\P_{U_1}$  as a product of local periods. His results has also been completely generalized by Feigon, Offen and Lapid \cite[Theorem 10.2]{FLO12}. In order to state their result we need to introduce some notation. Suppose that $\Pi$ is the base change of a cuspidal automorphic representation $\pi$ of $\GLn(\A_F)$. Let $S_F$ be a finite set of places of $F$ containing the archimedean places, the even places and the places ramified in $E$, as well as the places where $\pi$ is ramified. Let $\W(\Pi,\psi_E)$ be the Whittaker model of $\Pi$ with respect to $\psi_E$. It decomposes as a restricted tensor product $\bigotimes_w \W(\Pi_w,\psi_{E,w})$, where $\psi_E = \otimes_w \psi_{E,w}$. Then (\cite[Theorem 10.2, Corollary 7.2]{FLO12}):

\begin{theorem}
\label{Jacquet_decomposition}
Let $\pi$ be a cuspidal automorphic representation of $\GLn(\A_F)$ and $\Pi$ its base change to $\GLn(\A_E)$. Let $x \in X(F)$. Then, for all place $v$ of  $F$, there exists a non-zero linear form $\P_{x,v} :\W(\Pi_v, \psi_{E,v}) \to \C$ which is $U_{x}(F_v)$-invariant, such that if $\phi= \otimes_w \phi_w \in \Pi$ is a pure tensor, then there exists some finite set $S_\phi \supset S_F$ such that:
$$
\P_{U_{x}}(\phi)= 2 \cdot \w_{BC} \cdot L^{S_\phi}(\chi_{E/F},1) \cdot L^{S_\phi}(\pi,\Ad \otimes \chi_{E/F},1) \times \prod_{v \in S_\phi}{\P_{x,v}(W_{\phi_v})}
$$
where $\w_{BC} \in \C^\x$ is some constant depending on the choice of the Haar measures. Moreover, suppose that $v$ is split in $E$ and write $x =(h,{}^th)$, with $h \in \GLn(F)$. Then, the local period $\P_{x,v}$ is explicitly given by:
$$
\P_{x,v}(W_1 \otimes W_2) =  \int_{N_{n-1}(F_v)\bs \mathrm{GL}_{n-1}(F_v)} W_1\left( \left(\begin{array}{ll}
g & 0\\
0 & 1
\end{array}\right) h \right)W_2 \left(w \left(\begin{array}{cc}
{}^tg^{-1} & 0 \\
0 & 1
\end{array}\right) \right) dg
$$
\end{theorem}

In the following we will only consider the unitary group $U_1$ associated to the identity. Thus, to simplify the notation, we write $U$ for $U_1$ and $\P_v$ for $\P_{1,v}$. \\

\subsection{A cohomological interpretation of the Feigon-Offen-Lapid formula} 
\label{proof_CBC}
From now, $E$ is a real quadratic field. Let $\pi$ be a self-dual cuspidal automorphic representation of $\GL(\A_\Q)$, which is cohomological of weight $\mu = (m,m,0) \in X^+(T_3)$. Assume that $\pi$ is only ramified at primes that splits in $E$. Let $\Pi$ be its base change to $\GL(\A_E)$. Then $\Pi$ is a cohomological cuspidal automorphic representation of $\GL(\A_E)$, of cohomological weight $\mu_E = (\mu,\mu)$, which is self-conjugate : $\Pi^\s \simeq \Pi$. Moreover, since $\pi$ is self-dual, $\Pi$ is conjugate self-dual .Let $S_\Pi$ be the set of finite places where $\Pi$ is ramified. We choose a particular subset $R$ of $S_\Pi$ such that $S_\Pi = R \sqcup \s(R)$, where $\s$ is the Galois involution of $E$. Let $K_f$ denote the mixed mirahoric subgroup $K_1^*(\mathfrak{n})$ of level $\mathfrak{n}$ and type $R$, where $\n := \mathfrak{n}(\Pi)$ is the mirahoric level of $\Pi$. Finally, let $\phi_f$ be the mixed essential vector $\phi_\Pi^*$ of $\Pi$, defined in \S\ref{global_mirahoric_theory}. The Haar measures on $U$ are normalized as in \cite[\S 1.2]{FLO12} except at $v= \inf$ and $v \mid N$ (in both cases $v$ is split in $E$), where the local measure $dg_v$ on $U_E(F_v) = \GL(F_v)$ are normalized as specified in \S\ref{Haar_measure_GLn}.  \\

\subsubsection{Some linear forms}
\label{linear_forms_CBC}

We first define some linear forms on the different cohomological objects under consideration. Let:
$$
Y_U(K_f) = U(\Q) \bs U(\A)/K_{f,U}K_3
$$
be the adelic variety of level $K_{f,U} = K_f \cap U(\A_f)$ associated with $U$. It is a $5$-dimensionnal subvariety of $Y_E(K_f)$. The restriction to $U(\O)$ of the representation $\rho$ of $G_E(\O)$ on $L_{\mu_E}(\O)$ decomposes as a tensor product:
$$
L_{\mu_E}(\O)|_{U} = L_{\mu}(\O)|_{U} \otimes L_{\mu}(\O)|_{U}^\vee
$$
Here $L_{\mu}(\O)|_U$ and $L_{\mu}(\O)|_{U}^\vee$ are the representations of $U(\O)$ whose underlying space is both $L_{\mu}(\O)$ and on which $g \in U(\O)$ acts respectively by $\rho_{\mu}(\tau(g))$ and $\rho_{\mu}({}^t\tau(g)^{-1})$. Suppose that $\mu_E$ is $p$-small, i.e. that $m<p$ where $\mu = (m,m,v)$. Then we can consider the $U(\O)$-equivariant map onto the trivial representation defined by:
$$
L_{\mu_E}(\O)|_{U} \to \O, \quad P \otimes Q \mapsto \langle P , \vee(Q) \rangle_{\mu}
$$
where $\vee : L_{\mu}(\O) \to L_{\mu^\vee}(\O)$ has been defined in paragraph~\ref{alg_irrep}.  We denote by $\Pd$ the $\O$-linear form obtained by composing the following map:
$$
\begin{aligned}
\Pd: H^5_{cusp}(Y_E(K_f),\L_{\mu_E}(\O)) & \inj H^5_{c}(Y_E(K_f),\L_{\mu_E}(\O)) \\
&  \to H^5_{c}(Y_U(K_f),\L_{\mu_E}(\O)|_{U}) \\
&\to H^5_{c}(Y_U(K_f),\O) \\
&\to \O
\end{aligned}
$$
where the third map is obtained by functoriality from the above $U(\O)$-equivariant map. $\Pd$ is invariant by the action of the conjugation-duality involution $\e$ on the cuspidal cohomology described in paragraph~\ref{e_involution}.

\paragraph{A linear form on the $(\g,K_\inf)$-cohomology.} We now define a linear form:
$$
\mathfrak{P} : H^5(\g,K_\inf; \W(\Pi_\inf,\psi_\inf) \otimes L_{\mu_E}(\C)) \to \C
$$
which is the counterpart on the $(\g,K_\inf)$-cohomology of the above linear form. It is constructed as a tensor product of the three following linear forms:

\begin{itemize}
\item The linear form $\mathfrak{P}_{\mu_E}: L_{\mu_E}(\C) \to \C$ defined by $\mathfrak{P}_{\mu_E}(P_\tau \otimes P_{\s\tau}) = \langle P_\tau, \vee(P_{\s\tau})\rangle_{\mu}$ (recall $\mu_E =(\mu, \mu)$). The pairing $\langle \cdot,\cdot \rangle_{\mu} : L_{\mu}(\C) \x L_{\mu^\vee}(\C) \to \C$ and the map $\vee : L_{\mu}(\C) \to L_{\mu^\vee}(\C)$ have been defined in paragraph~\ref{alg_irrep}.
\item The linear form $\mbox{\textcalligra{p}} : \bigwedge^5 \mathfrak{p}_{\inf,\C}^* \to \C$ obtained by functoriality of the exterior product from the map:
$$
X_\tau + Y_{\s\tau} \in \p_{\C}^*  \mapsto X_\tau - Y_{\s\tau} \in \p_{3,\C}^*,
$$ 
where $\p_{\C}^* = \p_{\tau,\C}^* \oplus \p_{\s\tau,\C}^* = \p_{3,\C}^* \oplus \p_{3,\C}^*$, and by identifying $\bigwedge^5\p_{3,\C}^*$ with $\C$.
\item The linear form $\P_\inf: \W(\Pi_\inf,\psi_\inf) \to \C$ defined in \ref{Jacquet_decomposition} by: 
$$
\P_\inf\left(W_{\tau} \otimes W_{\s\tau}\right)=\int_{N_{2}(\R) \backslash \mathrm{GL}_2(\R)} W_{\tau}\left(\begin{array}{ll}
g & \\
& 1
\end{array}\right) W_{\s\tau}\left( w \left(\begin{array}{ll}
{}^tg^{-1} & \\
& 1
\end{array}\right)\right) dg  .
$$
where $\inf$ denotes the archimedean place of $\Q$.
\end{itemize}

Thus, the linear form $\mathfrak{P}$ is simply defined on pure tensors by:
$$
\mathfrak{P}(W \otimes P \otimes \w) = \P_\inf(W) \x \mathfrak{P}_{\mu}(P) \x \mbox{\textcalligra{p}}(\w)
$$
It is obviously invariant for the action of $\e$ on the $(\g,K_\inf)$-cohomology. \\

\subsubsection{A cohomological interpretation of the Feigon-Offen-Lapid formula}
\label{coho_CBC}

In this paragraph only, $\Pi$ is \textit{any} cohomological cuspidal representation in $\mathrm{Coh}(G_E,\mu_E,K_f)$. Write $\w_\Pi =\w_{\Pi_f} \otimes \w_{\Pi_\inf}$ for its central character. Suppose that $\w_{\Pi_f}$ is trivial on $Z_U(\A_f)$. We give a cohomological interpretation of the Jacquet-Ye period, using the Eichler-Shimura map:
$$
\d_\e^+:\Pi_f^{K_f}\to H_{cusp}^5(Y_E(K_f),\L_{\mu_E}(\C))[\Pi_f;\e =+]
$$
defined in paragraph~\S\ref{e_periods}. Recall that this map is associated with the choice of an explicit element $[\Pi_\inf]_\e^+ \in H^5(\g,K_\inf ; \W(\Pi_\inf,\psi_\inf) \otimes L_{\mu_E}(\C))$. Let $\phi_f \in \Pi_f$ fixed by $K_f$. As in \cite[\S4.6.2]{TR_SBC}, one can see that:
$$
\Pd(\d_\e^+(\phi_f)) = 0
$$
if $\w_{\Pi_\inf}(-1,-1) = -1$. Suppose that $\w_{\Pi_\inf}(-1,-1) = 1$. This is the case in particular when $\Pi$ is a base change from $G_\Q$. Since $\w_{\Pi_\inf}|_{Z_U(\R)^\circ}$ is trivial (it is determined by the cohomological weight $\mu_E$ which is self-conjugate), thus the central character $\w_{\Pi}$ of $\Pi$ is trivial on $Z_U(\A)$. Then one has that:
$$
\P_U(\phi) = \mathrm{vol}(Z_U(\Q) \bs Z_U(\A)) \int_{Z_U(\A)U(\Q) \bs U(\A)} \phi(h)dh
$$
for all $\phi \in \Pi$. Recall that $Z_U(\Q) \bs Z_U(\A) \subset E \bs \A_E^1$ is compact with:
\begin{equation}
\label{vol_ZU}
\vol(Z_U(\Q) \bs Z_U(\A)) = 2 \cdot \frac{\vol(E \bs \A_E^1)}{\vol(\Q \bs \A_\Q^1)} = 2 \cdot  \mathrm{Res}_{s=1} \zeta_E(s)
\end{equation}
for our choices of Haar measures. Write $[\Pi_\inf]_\e^+ = \sum_{i\in I} \w_i \otimes W_i \otimes P_i$. Then, similarly to \cite[paragraph 3.3.3]{BR17}, one has that:
$$
\Pd(\d_\e^+(\phi_f)) = \frac{4h(K_{f,U})}{\mathrm{vol}(Z_U(\Q) \bs Z_U(\A))} \cdot \sum_{i \in I} \,\mbox{\textcalligra{p}}(\w_i) \times  \mathfrak{P}_{\mu_E}(P_i) \times \P_U(\phi_i)
$$
where $\phi_i \in \Pi$ is the form $\phi_f \otimes \phi_{i,\inf}$, with $\phi_{i,\inf} \in \Pi_\inf$ corresponding the Whittaker function $ W_i \in \W(\Pi_\inf,\psi_\inf)$, and:
$$
h(K_{f,U}) := \vol(Z_U(\Q) \bs Z_U(\A_f)/Z_U(\A_f) \cap K_{f,U})
$$
Recall that $K_f = K^*_1(\n)$ is the mixed mirahoric of level $\n$. Thus $K_{f,U}$ is the mirahoric subgroupe $K_1(N)$ of level $N = \mathfrak{n} \cap \Z$ in $U(\A_f)$ (see \cite[\S3.2.2]{TR_SBC} for its definition). We write $h_U(N)$ for $h(K_{f,U})$. Note that $h_U(N)$ divides $h_E(\n)$. The above cohomological interpretation formula, together with (\ref{vol_ZU}), \ref{Jacquet_conjecture} and \ref{Jacquet_decomposition}, has for direct consequence the following proposition:
\begin{prop}
\label{lf_vanishing_CBC}
Let $\Pi \in \mathrm{Coh}(G_E,\mu_E,K_f)$ such that $\w_{\Pi_f}$ is trivial on $Z_U(\A_f)$.
\begin{itemize}
\item If $\Pi$ is not a base change from $G$, then $\Pd$ vanishes on $H^5_{cusp}(Y_E(K_f),\L_{\mu_E}(\C))[\Pi_f]$
\item If $\Pi$ is the base change of some cuspidal automorphic representation $\pi$ of $\GL(\A)$, and if $\Pi$ is $\e$-dual, then for all $K_f$-fixed form $\phi_f \in \Pi_f$ which is a pure tensor:
\begin{equation}
\label{coho_jacquet_decomposition}
\Pd(\d_\e^+(\phi_f))= 4 \cdot {\w}_{BC} \cdot h_U(N) \cdot L^{imp}(\pi, \Ad \otimes \chi_E,1) \x \prod_{v \in S_{\phi_f}} \P_v^\natural(W_{\phi_f,v}) \x \mathfrak{P}([\Pi_\inf]_\e^+)
\end{equation}
where $S_{\phi_f}$ is some sufficiently large finite set of non-archimedean places of $\Q$ which contains the even places and the places which ramifies in $E$, as well as the places where $\pi$ is ramified, and: 
$$
\P_v^\natural(W_v) :=  L^{imp}(\pi_v \otimes \pi_v^\vee \otimes \chi_E,1)^{-1} \cdot \P_v(W_v) \\
$$
for $W_v \in \W(\Pi_v,\psi_{E,v}) \to \C$. The constant $\w_{BC}$ is the defined in \ref{Jacquet_decomposition}.
\end{itemize}
\end{prop}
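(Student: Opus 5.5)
The argument starts from the cohomological interpretation formula established just before the statement:
$$
\Pd(\d_\e^+(\phi_f)) = \frac{4h_U(N)}{\mathrm{vol}(Z_U(\Q) \bs Z_U(\A))} \cdot \sum_{i \in I} \,\mbox{\textcalligra{p}}(\w_i) \, \mathfrak{P}_{\mu_E}(P_i) \, \P_U(\phi_i),
$$
valid for any $\Pi \in \mathrm{Coh}(G_E,\mu_E,K_f)$ with $\w_{\Pi_f}$ trivial on $Z_U(\A_f)$ and $\w_{\Pi_\inf}(-1,-1)=1$. When $\w_{\Pi_\inf}(-1,-1)=-1$, the form already vanishes on $\d_\e^+(\phi_f)$.

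\textbf{First item.} Suppose $\Pi$ is not a base change. By \ref{Jacquet_conjecture} (with $n=3$ odd and $x=1$), $\Pi$ is not $U$-distinguished, so $\P_U$ vanishes identically on $\Pi$. Hence $\Pd(\d_J^5(\phi_f))=0$ for both $J=\{\tau\}$ and $J=\{\s\tau\}$; the formula above applies verbatim with $[\Pi_\inf]_J$ in place of $[\Pi_\inf]^+_\e$, being just unfolding of the integral over $Y_U(K_f)$. These images span $H^5_{cusp}(Y_E(K_f),\L_{\mu_E}(\C))[\Pi_f]$, since $\Pi_f^{K_f}$ is one-dimensional here or, more generally, $\d_J^5$ is an isomorphism onto the $J$-part. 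So $\Pd$ vanishes on this isotypic part.

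\textbf{Second item.} Suppose $\Pi=\mathrm{BC}(\pi)$ is $\e$-dual. Write $[\Pi_\inf]^+_\e=\sum_i \w_i\otimes W_i\otimes P_i$ with each $W_i$ a pure tensor, and apply \ref{Jacquet_decomposition} to each $\phi_i=\phi_f\otimes\phi_{i,\inf}$, taking a common finite set $S_{\phi_f}\cup\{\inf\}$. This gives
$$
\P_U(\phi_i)=2\w_{BC}\,L^{S}(\chi_E,1)\,L^{S}(\pi,\Ad\otimes\chi_E,1)\prod_{v\in S_{\phi_f}}\P_v(W_{\phi_f,v})\cdot \P_\inf(W_i).
$$
At each finite $v\in S_{\phi_f}$, factor $\P_v=L^{imp}(\pi_v\otimes\pi_v^\vee\otimes\chi_E,1)\,\P_v^\natural$. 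The local factors $L^{imp}(\pi_v\otimes\pi_v^\vee\otimes\chi_E,1)=L(\chi_{E,v},1)L^{imp}(\pi_v,\Ad\otimes\chi_E,1)$ then complete the partial $L$-values to $L(\chi_E,1)\,L^{imp}(\pi,\Ad\otimes\chi_E,1)$. Here we use that unramified factors are primitive, and that at the archimedean place only $L^{S}$ appears, so finite completion suffices.

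Summing over $i$ and using $\mathfrak{P}=\P_\inf\otimes\mathfrak{P}_{\mu_E}\otimes\mbox{\textcalligra{p}}$ recovers $\mathfrak{P}([\Pi_\inf]^+_\e)$. The front constant is
$$
\frac{4h_U(N)}{2\,\mathrm{Res}_{s=1}\zeta_E(s)}\cdot 2\w_{BC}\,L(\chi_E,1),
$$
using (\ref{vol_ZU}). By the class number formula factorization $\zeta_E=\zeta_\Q L(\chi_E,\cdot)$, the quotient $L(\chi_E,1)/\mathrm{Res}_{s=1}\zeta_E(s)$ equals $1$, so the constant simplifies to $4\w_{BC}h_U(N)$.

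\textbf{Main obstacle.} The delicate point is the bookkeeping of measure normalizations, and checking the claimed $1$ in the $Z_U$-volume versus $\mathrm{Res}\,\zeta_E$ cancellation. Any discrepancy can be absorbed into $\w_{BC}$, which is defined only up to the Haar measure choices. The second delicate point is ensuring that the centre-unfolding step is legitimate, which requires $\w_\Pi$ trivial on $Z_U(\A)$. This holds because $\w_{\Pi_f}$ is trivial on $Z_U(\A_f)$ by hypothesis, and $\w_{\Pi_\inf}$ is trivial on $Z_U(\R)$ by the sign condition together with the self-conjugacy of the weight.
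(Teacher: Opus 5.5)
Your proposal is correct and follows the paper's argument. The paper likewise treats the proposition as a direct consequence of the unfolding formula for $\Pd(\d_\e^+(\phi_f))$, the volume identity (\ref{vol_ZU}), and the two Feigon--Lapid--Offen results (non-distinguishedness of non-base-changes and the Euler factorization of $\P_U$); your check that $L(\chi_E,1)=\mathrm{Res}_{s=1}\zeta_E(s)$, via $\zeta_E=\zeta\cdot L(\chi_E,\cdot)$, gives exactly the constant $4\w_{BC}h_U(N)$, so nothing has to be absorbed into $\w_{BC}$.

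One point to tidy: in the first item, when $\w_{\Pi_\inf}(-1,-1)=-1$, you need the vanishing for each class $\d^5_J(\phi_f)$. This follows from the same central-character argument, using right translation by $(-1,-1)_\inf\in Z_U(\R)$, and it is needed because $\d_\e^+$ need not make sense when $\Pi$ is not conjugate self-dual.
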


\subsection{Local computations}
Let $\phi_f = \phi_\Pi^*\in \Pi_f$ be the mixed mirahoric vector for the mixed mirahoric subgroup $K_f = K_1^*(\n)$. Then the set $S_{\phi_f}$ in (\ref{coho_jacquet_decomposition}) can be chosen to be $S_\Q$, the finite set of places of $\Q$ containing the archimedean place, the even place, the places which are ramified in $E$, as well as the places where $\pi$ is ramified. We then compute the local factors in (\ref{coho_jacquet_decomposition}) for this specific choice of newform $\phi_f$.

\subsubsection{Local factors at ramification places}
\label{ram_CBC}

Let $v \in S_\pi$ which is split in $E$, and let $w \in S$ and $w' = w^\s$ the two places of $E$ above $v$. We now compute the local factor $\P_v(W_{\phi_f,v})$ for the newform $\phi_f = \phi_\Pi^*$ of $\Pi_f$. Since $\P_v(W_1 \otimes W_2) = \langle W_1,W^\vee_2 \rangle_v$, it follows from the definition of $\phi_\Pi^*$ and from \ref{split_ramified_computations} that:
$$
\P_v(W_{\phi_f,v}) = \langle W_{\Pi_w}^\circ,(( W_{\Pi_w^\vee}^\circ)^\vee)^\vee \rangle_v = \langle W_{\Pi_w}^\circ,W_{\Pi_w^\vee}^\circ \rangle_v = L^{imp}(\Pi_w \otimes \Pi_w^\vee,1) = L^{imp}(\pi_v \otimes \pi_v^\vee \otimes \chi_E,1)
$$

\subsubsection{Archimedean computations}
\label{archimedean_CBC}

In this paragraph, we compute the archimedean factor of (\ref{coho_jacquet_decomposition}). Recall that the cohomological type of $\pi$ is $\mu=(m,m,0) \in X^+(T_3)$ and that the minimal $\SO$-type of $\pi$ is given by:
$$
\ell = 2m +3
$$
The archimedean part of $\Pi$ is of the form $\Pi_\inf = \pi_\inf \otimes \pi_\inf$. In this paragraph we compute the value:
$$
\mathfrak{P}([\Pi_\inf]^{+}_\e) = \mathfrak{P}([\Pi_\inf]_{\{\tau\}})
$$
where $[\Pi_\inf]^{+}_\e = [\Pi_\inf]_{\{\tau\}} + \e([\Pi_\inf]_{\{\tau\}})$ and $[\Pi_\inf]_{\{\tau\}} = [\pi_\inf]_2 \otimes [\pi_\inf]_3$ is the Chen's generator defined in paragraph~\S\ref{chen_generators}. To compute the above archimedean value, we imitate the proof of \cite[Lemma 5.3]{Che22}. In fact, by carefully looking at the calculation there, one sees that it only uses the $\SO$-equivariance property of the involved pairings and is actually true for any $\SO$-equivariant pairings. Consequently, since the three linear forms $\P_\inf$, $\mathfrak{P}_{\mu}$ and $\mbox{\textcalligra{p}}$ are $\SO$-equivariant, we get:
$$
\mathfrak{P}([\Pi_\inf]_{\pm}) = (-1)^{v} 4 \frac{(2\ell+1)!}{(\ell!)^2} \x \mbox{\textcalligra{p}}(\Om_2,\Om_3^-) \x \mathfrak{P}_{\mu}(Q_{\mu} \otimes Q_{\mu}^-) \x  \P_\inf(W_{\pi_\inf,0} \otimes W_{\pi_\inf,0}),
$$
where $\Om_2 = X_{-1}^*\wedge X_{-2}^*$ and $\Om_3^- = X_{0}^*\wedge X_{1}^*\wedge X_{2}^*$,
$$
Q_{\mu} = \rho_{\mu} \left(\left(\begin{array}{ccc}1 & 0 & 1 \\ \sqrt{-1} & 0 & -\sqrt{-1} \\ 0 & 1 & 0\end{array}\right)\right) (P_{\mu}^{+}) \quad \mbox{ and } \quad Q_{\mu}^- = \rho_{\mu} \left(\left(\begin{array}{ccc}-1 & 0 & -1 \\ \sqrt{-1} & 0 & -\sqrt{-1} \\ 0 & 1 & 0\end{array}\right)\right) (P_{\mu}^{+}),
$$
and $W_{\pi_\inf,0} \in W(\pi_\inf,\psi_\inf)$ is the image of $\mathbf{v}_0$ throught the Miyazaki's embedding (see \cite[\S3.4.3]{thesis} for details). One has that $\mbox{\textcalligra{p}}(\Om_2,\Om_3^-)= 4 \sqrt{-1}$. Moreover, one can check that:
$$
\mathfrak{P}_{\mu}(Q_{\mu} \otimes Q_{\mu}^-) = \left\langle \rho_{\mu}\left(\left(\begin{array}{ccc}-2 & 0 & 0 \\ 0 & 1 & 0 \\ 0 & 0 & -2 \end{array}\right)\right)(P_{\mu}^+),\i(P_{\mu}^+)\right\rangle = 4^v (-2)^{m^+-m^-} = 1
$$
where $(m^+,m^-,v)= \mu = (m,m,0)$.
To compute the last factor of the above formula, we recall from \cite[Lemma 3.3]{thesis} that:
$$
{W^\vee_{\pi_\inf,j}} = - W_{\pi^\vee_\inf,j}
$$
for all $-\ell \leq j \leq \ell$. Hence, we are reduced to \cite[Lemma 5.4]{Che22}:
$$
\P_\inf(W_{\pi_\inf,0} \otimes W_{\pi_\inf,0}) = - \langle W_{\pi_\inf,0}, W_{\pi_\inf^\vee,0} \rangle_\inf = - \frac{2^{\ell+4}\pi}{2\ell +2} \binom{2\ell+1}{\ell}^{-1} \cdot \Gamma(\pi_\inf \x \pi_\inf^\vee,1)
$$
Consequently:
$$
\mathfrak{P}([\Pi_\inf]_{\pm}) = - (-1)^{v+1/2}  2^{\ell + 7}\pi \cdot \Gamma(\pi_\inf \x \pi_\inf^\vee,1) 
$$ \\

Finaly, similarly than in the proof of \ref{jacquet-shalika_formula}, one can see that if $p$ does not divide $6N$, then $\w_{BC} \sim \pi^{-1}$ for our choices of Haar measures. Thus, gathering all the above local computations, we get:

\begin{prop}
\label{linear_formula}
Let $\pi$ and $\Pi = \mathrm{BC}(\pi)$ be as above. Let $\phi_f \in \Pi_f$ be the mixed essential vector of $\Pi$. Assume that $p \nmid 6N_{E/\Q}(\n)h_E(\n)D_E$. Then:
$$
\Pd(\d_\e^+(\phi_f)) \sim \frac{\Lambda^{imp}(\Pi, \Ad \otimes \chi_E,1)}{u}
$$
with $u := \P_2^\natural(W_{\phi_f,2})^{-1} \x \prod_{v\in S_{ram}} \P_v^\natural(W_{\phi_f,v})^{-1} \in \C^\x$. \\
\end{prop}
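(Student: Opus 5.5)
The plan is to specialize \ref{lf_vanishing_CBC}, formula (\ref{coho_jacquet_decomposition}), to the mixed essential vector $\phi_f=\phi_\Pi^*$. Then I will substitute the local computations of \S\ref{ram_CBC} and \S\ref{archimedean_CBC}, and finally check that every constant which is not absorbed into $u$ or into the completed $L$-value is a $p$-adic unit under the hypothesis $p\nmid 6N_{E/\Q}(\n)h_E(\n)D_E$.

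\textbf{Choice of $S_{\phi_f}$.} First I will argue that $S_{\phi_f}$ can be taken to be $S_\Q$: the archimedean place, $2$, the primes ramified in $E$, and the primes in $S_\pi$. At every other prime $v$ the vector $\phi_f$ is spherical, and $\psi_{E,v}$ is unramified because $v\nmid D_E$. By the unramified computation underlying \ref{Jacquet_decomposition}, $\P_v(W_{\phi_f,v})$ then equals the unramified local factor $L(\pi_v\otimes\pi_v^\vee\otimes\chi_E,1)$, so $\P^\natural_v(W_{\phi_f,v})=1$. Consequently the finite product in (\ref{coho_jacquet_decomposition}) splits into three pieces:
\begin{itemize}
\item the factors at $v\in S_\pi$. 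These primes are split by assumption, so \S\ref{ram_CBC} gives $\P_v(W_{\phi_f,v})=L^{imp}(\pi_v\otimes\pi_v^\vee\otimes\chi_E,1)$, hence $\P_v^\natural=1$;
\item the factors at $v=2$ and at $v\in S_{ram}$, which are by definition collected into $u^{-1}$;
\item the archimedean factor $\mathfrak{P}([\Pi_\inf]^+_\e)$.
\end{itemize}

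\textbf{Archimedean factor and $\Gamma$-factors.} By \S\ref{archimedean_CBC}, $\mathfrak{P}([\Pi_\inf]^+_\e)=\pm\sqrt{-1}\,2^{\ell+7}\pi\,\Gamma(\pi_\inf\times\pi_\inf^\vee,1)$. I will rewrite $\Gamma(\pi_\inf\times\pi^\vee_\inf,1)$ as $\Gamma(\chi_{E,\inf},1)\,\Gamma(\pi_\inf,\Ad\otimes\chi_E,1)$. Since $\chi_E$ is even, $\chi_{E,\inf}$ is trivial, so $\Gamma(\chi_{E,\inf},1)=\Gamma_\R(1)=1$. Multiplying by $L^{imp}(\pi,\Ad\otimes\chi_E,1)$ then produces the completed value $\Lambda^{imp}(\pi,\Ad\otimes\chi_E,1)$. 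Because $p$ is odd, the power of $2$ and $\sqrt{-1}$ are units. The factor $\pi$ is cancelled by $\w_{BC}\sim\pi^{-1}$.

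\textbf{Units and the main obstacle.} The remaining constants are $4$ and $h_U(N)$. Since $h_U(N)\mid h_E(\n)$ and $p\nmid 2h_E(\n)$, both are units. The main difficulty is the claim $\w_{BC}\sim\pi^{-1}$. To establish it, I will track the Haar measures of \cite[\S1.2]{FLO12} against ours, exactly as in the proof of \ref{jacquet-shalika_formula}. The measures differ at $\infty$ and at $v\mid N$, and there are volume factors coming from $\mathrm{Res}_{s=1}\zeta_E$, $\mathrm{vol}(Z_U(\Q)\bs Z_U(\A))$, the different, and $\vol(N_n(\O_w))$. I expect these to contribute only powers of $2$, $3$, $D_E$ and $N$, together with one factor of $\pi$ coming from the archimedean measure normalization, as in $c_3(\R)=(4\pi)^{-1}$. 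All of these are units under the hypothesis on $p$, apart from the explicit factor $\pi^{-1}$.

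Putting the three steps together gives $\Pd(\d_\e^+(\phi_f))\sim\Lambda^{imp}(\pi,\Ad\otimes\chi_E,1)/u$. This is the stated formula, since the $\Lambda^{imp}(\Pi,\Ad\otimes\chi_E,1)$ appearing there should be read as the twisted adjoint value of $\pi$.
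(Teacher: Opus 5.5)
Your proposal is correct and takes essentially the same route as the paper. You specialize the cohomological Feigon--Lapid--Offen formula (\ref{coho_jacquet_decomposition}) to $\phi_f=\phi_\Pi^*$ with $S_{\phi_f}=S_\Q$, use \S\ref{ram_CBC} to get $\P_v^\natural=1$ at the split ramified primes, collect the factors at $2$ and $S_{ram}$ into $u$, treat $4$, $h_U(N)$ and the powers of $2$ and $\sqrt{-1}$ as units, and let $\w_{BC}\sim\pi^{-1}$ cancel the $\pi$ from \S\ref{archimedean_CBC}. Your extra bookkeeping ($\Gamma_\R(1)=1$ because $\chi_E$ is even) and your reading of $\Lambda^{imp}(\Pi,\Ad\otimes\chi_E,1)$ as a typo for $\Lambda^{imp}(\pi,\Ad\otimes\chi_E,1)$ both agree with what the paper leaves implicit.
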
 

In the above proposition, $S_{ram}$ is the set of rational primes which are ramified in $E$. As explained in the introduction, the factor $u \in \C^\x$ is expected to be $1$ but is hard to compute. In the following paragraph, we prove that at least this factor is an algebraic number (our result is a little bit more precise, see \ref{algebraicity_u2ram}). \\

\subsection{Algebraicity of uncomputed local factors}
\label{paragraph_algebraicity}

In this paragraph, we prove that the uncomputed factor $u \in \C^\x$ of \ref{linear_formula} is an algebraic number. Our method is inspired by \cite[\S 2]{Che25}. The results presented in this paragraph hold for any extension $E/F$ of number fields and for any integer $n \geq 2$.\\

Let $v$ be a finite place of $F$ and let $K=F_v$ and $L=E_v$. In order to simplify the notation, we first adopt local notation unless otherwise stated, and write $\pi = \pi_v$, $\Pi = \Pi_v$ and $\P_x = \P_{x,v}$, as well as $\psi_K = \psi_{F,v}$ and $\psi_L = \psi_{E,v} := \psi_K \circ \mathrm{Tr}_{L/K}$. Finally, we write $G_K=\GLn(K)$ and $G_L = \GLn(L)$. \\

Let $\vs \in \mathrm{Aut}(\C)$ be an automorphism of $\C$, and let ${}^\vs \pi$ and ${}^\vs \Pi$ be the $\vs$-conjugate of the representations $\pi$ and $\Pi$ (see for instance the proof of \cite[Lemma 3.2]{TR_SBC} for a precise definition of these representations). Then one has that ${}^\vs \Pi = \mathrm{BC}({}^\vs \pi)$. Let $\Q^{ab}$ be the maximal abelian extension of $\Q$ in $\C$. Suppose $\vs \in \mathrm{Aut}(\C/\Q^{ab})$, i.e. that $\vs$ fixes $\Q^{ab}$. Since $\psi_K$ takes its values in $\Q^{ab}$, we have that $\vs(\psi_K(k)) = \psi_K(k)$ for all $k \in K$. The representation ${}^\vs \pi$ is generic and the map $t_{\vs} : W \mapsto {}^\vs W$ defined by:
$$
{}^\vs W : g \mapsto \vs(W(g))
$$
is a $\GLn(K)$-equivariant isomorphism $\W(\pi,\psi_K) \toeq \W({}^\vs \pi,\psi_K)$ which is $\s$-semi-linear.  The representation ${}^\vs \Pi$ is also generic, and one similarly defines a map $t_{\vs}: \W(\Pi,\psi_L) \toeq \W({}^\vs \Pi,\psi_L)$. \\

Let $ {}^\vs \P_x : \W({}^\vs \Pi, \psi_L) \to \C$ be the linear form defined in \ref{Jacquet_decomposition} applied to ${}^\vs \Pi = \mathrm{BC}({}^\vs \pi)$, for $x \in X(F)$. We then prove the following algebraicity result:

\begin{lemma}
\label{equivariance_Px}
Let $\vs \in \mathrm{Aut}(\C/\Q^{ab})$. Let $W  \in \W(\Pi, \psi_L)$ and ${}^\vs W = t_\vs(W) \in \W({}^\vs \Pi, \psi_L)$. Then:
$$
 {}^\vs \P_x({}^\vs W) =  \vs(\P_x(W))
$$
\end{lemma}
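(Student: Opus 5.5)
The plan is to use the characterization of the local periods $\P_x$ coming from \ref{Jacquet_decomposition}: $\P_x$ is the local factor of the global Jacquet--Ye period, and in Feigon--Lapid--Offen it is defined through a $U_x(K)$-invariant linear form on $\W(\Pi,\psi_L)$ with a specific normalization. I will not try to compute $\P_x$ explicitly. Instead I will transport everything by $\vs$ and invoke uniqueness together with the normalization.

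First, set $\P'_x := \vs^{-1}\circ {}^\vs\P_x \circ t_\vs$ on $\W(\Pi,\psi_L)$. Since $t_\vs$ is $\vs$-semilinear and $\GLn(L)$-equivariant, $\P'_x$ is $\C$-linear and $U_x(K)$-invariant. The $U_x(K)$-invariant linear forms on $\Pi$ form a space of dimension at most one. This holds unconditionally in the split case, and Jacquet and FLO establish it for these distinguished representations; the paper's $\P_x$ spans it because it is nonzero. Hence $\P'_x = c\,\P_x$ for a scalar $c$, and the task reduces to showing $c=1$.

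To pin down $c$, I will use the concrete normalization of $\P_x$ in FLO. In \cite[\S 7]{FLO12} the local form is defined as a regularized integral: the limit or analytic continuation of an integral of $W$ against a ($\psi_K$-valued) Whittaker function of $\pi$ over $N_n(K)\backslash P_n(K)$. More precisely, FLO's Corollary 7.2 realizes $\P_x$, after twisting by $x$, via the Bernstein-type functional $W\mapsto \int_{N_{n}(K)\bs P_n(K)} W(p)\,\overline{W'}(p)\,dp$. This is the same shape as the split formula stated in \ref{Jacquet_decomposition}. I will then check that each ingredient commutes with $\vs$.

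The ingredients, in order, are the following.
\begin{enumerate}
\item $\psi_K$ takes values in $\Q^{ab}$, so $\vs$ fixes it, and ${}^\vs W$ lies in $\W({}^\vs\Pi,\psi_L)$.
\item The measures are $\Q$-valued on compact opens. The relevant integrals are eventually finite sums over $N\backslash P/K'$ for small $K'$ (compactly supported on the mirabolic modulo $N$), or are given by a rational function in $q^{-s}$ evaluated at a point, and $\vs$ commutes with such evaluations.
\item The dependence on the auxiliary $\pi$-data, namely Whittaker functions or the Bessel function of $\pi$, transforms to that of ${}^\vs\pi$.
\end{enumerate}
Together these give ${}^\vs\P_x({}^\vs W)=\vs(\P_x(W))$ directly, without even needing uniqueness. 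One caveat concerns the factor $L(\ldots,1)$ normalizations appearing in FLO's local forms. They involve $L$-factors at $s=1$ with $q^{-1/2}$ powers. Because $\vs$ fixes $\Q^{ab}\ni\sqrt{q}$, these normalizations transform compatibly, with $L({}^\vs\pi\times\ldots,1)=\vs(L(\pi\times\ldots,1))$.

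The main obstacle is that FLO's definition of $\P_x$ at non-split places is indirect: it is defined through a regularization or analytic continuation and a comparison with the split case via the fundamental lemma and germ expansions. Verifying $\vs$-compatibility of the continuation is therefore the delicate point. I would handle it by writing the regularized integral as the value at $s=0$ of a family $\int W(p)\ldots|\det p|^s dp$. That family is a rational function in $q^{-s}$ whose coefficients are $\vs$-twisted term by term, since it is a sum over the Iwasawa decomposition of values of $W$ with rational volumes. Then $\vs$ commutes with evaluation at $s=0$ because $q^{-s}$ specializes to $1$.

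If this direct route fails, the fallback is the global argument. Choose $\vs$-compatible global data, and use that the global period $\P_{U_x}(\phi)$ and the partial $L$-values at $1$ transform under $\vs$ up to the same transcendental factor. This makes the ratio $c$ a product of local constants that are each forced to be $1$, by varying $W$ at one place while fixing the others.
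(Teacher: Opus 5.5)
\textbf{There is a genuine gap at the non-split places, which are exactly where the lemma is needed.}

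\emph{The direct route.} At a place $v$ split in $E$, $\P_x$ is the explicit integral recorded in \ref{Jacquet_decomposition}. There your term-by-term transport works, including the rational-function trick, which you need because ${}^\vs\pi$ need not be unitary. But the lemma is applied at primes ramified in $E$ (and at $v=2$ unless $2$ splits), where $L=E_v$ is a field. At such places there is no formula expressing $\P_x$ as a regularized integral of $W$ against Whittaker data of $\pi$ over $N_n(K)\bs P_n(K)$. Such an integral has no reason to be $U_x(K)$-invariant. The Bernstein-type functional $\int_{N_n\bs P_n} W(p)W'(p)\,dp$ is the $\GLn(L)$-invariant pairing on Whittaker models, and the explicit formula of \ref{Jacquet_decomposition} is specific to $E_v=K\times K$. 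At non-split places $\P_x$ exists only through the relative trace formula comparison, so your ingredients (1)--(2) have no integral to act on.

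\emph{The uniqueness reduction.} This cannot take over, because $(\GLn(L),U_x(K))$ is not a Gelfand pair. FLO's forms are attached to $\pi$, not just to $\Pi$: the Bessel identity has $\pi$ on its right-hand side. Consider an unramified principal series $\pi_v=\chi_1\times\chi_2\times\chi_3$, which is precisely the situation at primes ramified in $E$. Twisting some of the $\chi_i$ by $\chi_{E/F}$ gives other generic representations with the same base change, and these in general contribute further invariant forms. So $\vs^{-1}\circ{}^\vs\P_x\circ t_\vs$ need not be a multiple of $\P_x$.

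\emph{The global fallback.} This assumes that global Jacquet--Ye periods and $L$-values transform compatibly under $\vs$. That is a rationality statement at least as strong as what the lemma is meant to produce, and it again relies on the scalar $c$.

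\textbf{The missing idea.} Use the property that actually characterizes $\P_x$ in FLO (\S3.3): it is the unique linear form satisfying the local Bessel identity $\mathfrak{B}_{\mathfrak{W}(\Pi)}^{\P_x,\d_e(\Pi^\vee)}(f_L)=\mathfrak{B}_{\mathfrak{W}(\pi)}^{\d_w(\pi),\d_e(\pi^\vee)}(f_K)$ for all $x$-matching pairs $(f_L,f_K)$.
\begin{enumerate}
\item Uniqueness here does not need multiplicity one. The map $\ell\mapsto\mathfrak{B}^{\ell,\d_e}$ is injective, since $\ell\circ\Pi(f)$ sweeps out $\Pi^\vee$ as $f$ varies, and every $f_L$ has a matching $f_K$.
\item Now apply $\vs$. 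It fixes $\psi_K$ and $\psi_L$, and $\langle {}^\vs W,{}^\vs W'\rangle_\vs=\vs(\langle W,W'\rangle)$.
\item Using FLO Lemma 2.1(5), the right-hand side becomes the Bessel distribution of ${}^\vs\pi$.
\item The left-hand side becomes that of ${}^\vs\Pi$ with linear form $\vs\circ\P_x\circ t_\vs^{-1}$. Both are evaluated at the $\vs$-conjugated test functions, which still match because the transfer factors and characters involved are $\Q^{ab}$-valued.
\item The characterization for ${}^\vs\Pi=\mathrm{BC}({}^\vs\pi)$ then forces $\vs\circ\P_x\circ t_\vs^{-1}={}^\vs\P_x$.
\end{enumerate}
Your ingredient (3), the transport of $\pi$-data such as its Bessel function, is the right one. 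It has to be applied to this identity rather than to a putative integral formula.
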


\begin{proof}[Proof of \ref{equivariance_Px}] Let us first recall some notation from \cite[\S2]{FLO12}. Let $\mathcal{D} =(\pi,\hat{\pi},B)$, where $\hat{\pi}$ is some irreducible admissible representation of $G_K$, and $B : \pi \x \hat{\pi} \to \C$ is a non-degenerate $G_K$-invariant bilinear form. We denote by $\Lambda_{\mathcal{D}} : \pi^\vee \to \hat{\pi}$ the identification induced by $B$. For some linear forms $\ell \in \pi^*$ and $\hat{\ell} \in \hat{\pi}^*$, the Bessel distribution (with respect to $\mathcal{D}$, $\ell$ and $\hat{\ell}$) is simply defined by:
$$
\mathfrak{B}_{\mathcal{D}}^{\ell, \hat{\ell}}(f) := \hat{\ell}(\Lambda_{\mathcal{D}}(\ell \circ \pi(f)), \quad f \in \mathcal{S}(G_K),
$$
where $\mathcal{S}(G_K)$ is the space of Schwartz functions on $G_K$. In particular, one can take $\mathcal{D}$ to be:
$$
\mathfrak{W}(\pi) = (\W(\pi,\psi_K), \W(\pi^\vee,\psi_K^{-1}), \langle \cdot,\cdot \rangle),
$$
with $\langle \cdot,\cdot \rangle$ being the Whittaker pairing defined by (\ref{pairing_whittaker}). For any $g \in G_K$ we denote by $\d_g(\pi)$ the linear form on $\W(\pi,\psi_K)$ given by the evaluation at $g$. Let $w := \mathrm{antidiag}(1,\dots,1)$. Let $\Pi = \mathrm{BC}(\pi)$ and let $x \in X(F)$. The linear form $\P_x : \W(\Pi,\psi_L) \to \C$ appearing in \ref{Jacquet_decomposition} is characterized by the following local Bessel identity:
\begin{equation}
\label{LHS}
\mathfrak{B}_{\mathfrak{W}(\Pi)}^{\P_{x}, \d_e(\Pi^\vee)}(f_L) = \mathfrak{B}_{\mathfrak{W}(\pi)}^{\d_{w}(\pi),\d_{e}(\pi^\vee)}(f_K)
\end{equation}
for every Schwartz functions $f_L \in \mathcal{S}(G_L)$ and $f_K \in \mathcal{S}(G_K)$ having $x$-matching orbital (see \cite[\S 3.3]{FLO12}). Let $\vs \in \mathrm{Aut}(\C/\Q^{ab})$ and let apply $\vs$ to the above Bessel identity. Write:
$$
\mathfrak{W}({}^\vs \pi) = (\W({}^\vs\pi,\psi_K), \W({}^\vs\pi^\vee,\psi_K^{-1}), \langle \cdot,\cdot \rangle_\vs)
$$
where the pairing $\langle \cdot,\cdot \rangle_\vs : \W({}^\vs \pi,\psi_K) \x \W({}^\vs \pi^\vee,\psi_K^{-1}) \to \C$ is defined by formula (\ref{pairing_whittaker}). On the right hand side, one easily checks that:
$$
\vs\left( \mathfrak{B}_{\mathfrak{W}(\pi)}^{\d_{w}(\pi),\d_{e}(\pi^\vee)}(f_K) \right) =  \mathfrak{B}_{\mathfrak{W}({}^\vs \pi)}^{\d_{w}({}^\vs \pi),\d_{e}({}^\vs \pi^\vee)}(f_K)
$$
for any Schwartz function $f_K \in \mathcal{S}(G_K)$. The equality follows from \cite[Lemma 2.1(5)]{FLO12} and from the following formula:
$$
\langle {}^\vs W,{}^\vs W' \rangle_\vs = \vs(\langle W, W' \rangle)
$$
for all $W \in \W(\pi,\psi_K)$ and $W' \in \W(\pi^\vee,\psi_K^{-1})$. Similarly, on the left hand side, one can check that:
$$
\vs\left( \mathfrak{B}_{\mathfrak{W}(\Pi)}^{\P_x,\d_{e}(\Pi^\vee)}(f_L) \right) =  \mathfrak{B}_{\mathfrak{W}({}^\vs \Pi)}^{\vs \circ \P_x \circ t_\vs^{-1},\d_{e}({}^\vs \Pi^\vee)}(f_L)
$$
for any Schwartz function $f_L \in \mathcal{S}(G_L)$. Then, the lemma follows from the fact that the linear form ${}^\vs \P_x : \W({}^\vs \Pi, \psi_L) \to \C$ is characterized by the following local Bessel identity:
$$
\mathfrak{B}_{\mathfrak{W}({}^\vs \Pi)}^{{}^\vs \P_{x}, \d_e({}^\vs \Pi^\vee)}(f_L) = \mathfrak{B}_{\mathfrak{W}({}^\vs \pi)}^{\d_{w}({}^\vs \pi),\d_{e}({}^\vs \pi^\vee)}(f_K)
$$
for $f_L \in \mathcal{S}(G_L)$ and $f_K \in \mathcal{S}(G_K)$ having $x$-matching orbital. 

\end{proof}

We now go back to global notation, i.e. $\pi$ and $\Pi = \mathrm{BC}(\pi)$ now denote the cohomological cuspidal automorphic representations of \ref{linear_formula}.  Let $\Q(\Pi_f) := \C^{S(\Pi_f)}$ with $S(\Pi_f) = \{ \vs \in \mathrm{Aut}(\C), {}^\vs \Pi_f = \Pi_f \}$ be the rationality field of $\Pi_f$. Since $\Pi$ is a cohomological cuspidal automorphic representation, we know from \cite{Clozel90} that $\Q(\Pi_f) $ is a number field. Let us write $u(\Pi_f)$ for the constant $u$ of \ref{CBC_divisibility} defined in \S\ref{final_proof_CBC}, in order to emphasize its dependence on the finite part $\Pi_f$ of $\Pi$. As a corollary of the previous lemma, we get that $u(\Pi_f)$ is an algebraic number: 
\begin{corollaire}
\label{algebraicity_u2ram}
For every $\vs \in \mathrm{Aut}(\C/\Q^{ab})$, one has that $\vs(u(\Pi_f)) = u({}^\vs \Pi_f)$. Thus, it follows that:
$$
u(\Pi_f) \in \Q(\Pi_f) \cdot \Q^{ab}
$$

\end{corollaire}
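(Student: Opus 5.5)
Write $u(\Pi_f) = \prod_{v \in S} \P_v^\natural(W_{\phi_f,v})^{-1}$ with $S = \{2\} \cup S_{ram}$. The plan is to apply \ref{equivariance_Px} place by place: for each $v\in S$ and each $\vs \in \mathrm{Aut}(\C/\Q^{ab})$ we show that $\vs\big(\P_v^\natural(W_{\phi_f,v})\big) = {}^\vs\P_v^\natural(W_{{}^\vs\phi_f,v})$. Here the right-hand side is the same quantity built from ${}^\vs\Pi = \mathrm{BC}({}^\vs\pi)$ and its mixed essential vector. Multiplying over $v \in S$ and inverting then gives $\vs(u(\Pi_f)) = u({}^\vs\Pi_f)$.

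\textbf{Step 1.} We check that $t_\vs$ carries the chosen local test vector of $\Pi_v$ to the corresponding one of ${}^\vs\Pi_v$. At places in $S$, $\Pi$ is unramified, because $\pi$ is ramified only at split primes. So $W_{\phi_f,v}$ is the spherical Whittaker function, normalized either by $W(1)=1$ or by $W(\mathrm{diag}(d_w^{n-1},\dots,1))$ with $d_w$ as in \ref{jacquet-shalika_formula}. The map $t_\vs$ preserves $\GLn(\O_w)$-invariance and the value $1$ at the normalizing point. It fixes $\psi_{E,v}$, since $\psi$ takes values in $\Q^{ab}$. Hence $t_\vs(W_{\phi_f,v}) = W_{{}^\vs\phi_f,v}$. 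Applying \ref{equivariance_Px} then gives $\vs(\P_v(W_{\phi_f,v})) = {}^\vs\P_v(W_{{}^\vs\phi_f,v})$.

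\textbf{Step 2.} We show that $\vs$ transforms the normalizing factor $L^{imp}(\pi_v\otimes\pi_v^\vee\otimes\chi_E,1)$ into the corresponding factor for ${}^\vs\pi_v$. Since $\pi_v$ is unramified, this factor is $\prod_{i,j}(1-\chi_E(\varpi_v)\a_i\a_j^{-1}q_v^{-1})^{-1}$. The Satake parameters $\a_i$ may involve $q_v^{1/2}$, but the products $\a_i\a_j^{-1}$ do not, because the twists cancel. The map $\vs$ permutes the Satake parameters of $\pi_v$ into those of ${}^\vs\pi_v$ (up to the same half-integral twist, which cancels here), and it fixes $q_v^{-1}$ and $\chi_E(\varpi_v)\in\{\pm1\}$. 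So $\vs$ maps this factor to the corresponding one for ${}^\vs\pi_v$. The identity $\vs(u(\Pi_f)) = u({}^\vs\Pi_f)$ follows.

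\textbf{Step 3 (rationality).} Suppose $\vs \in \mathrm{Aut}(\C/\Q(\Pi_f)\Q^{ab})$. Then ${}^\vs\Pi_f=\Pi_f$, so $\vs(u)=u$. We must check that $u({}^\vs\Pi_f)$ depends only on ${}^\vs\Pi_f$. The only other inputs are ${}^\vs\pi_v$ at $v\in S$, and this is where we expect the main care to be needed. Strong multiplicity one and the injectivity of base change for $\pi\not\simeq\pi\otimes\chi_E$ determine ${}^\vs\pi$ from ${}^\vs\Pi$ up to a twist by $\chi_E$. Such a twist changes neither the linear forms $\P_v$ nor the twisted adjoint factor, since both depend only on $\pi_v\otimes\pi_v^\vee$, respectively on $\Pi_v$. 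Thus $u$ is fixed by $\mathrm{Aut}(\C/\Q(\Pi_f)\Q^{ab})$, and it lies in the fixed field of this group, which is $\Q(\Pi_f)\cdot\Q^{ab}$ because that field is algebraic over $\Q$.
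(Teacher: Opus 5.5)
Your proposal is the paper's argument made explicit: the paper simply invokes \ref{equivariance_Px} factor by factor over $\{2\}\cup S_{ram}$, and your Steps~1--2 supply the checks it leaves implicit, namely that $t_\vs$ carries the chosen test vectors to those of ${}^\vs\Pi$ because $\psi$ is $\Q^{ab}$-valued, and that the normalizing $L$-factors are $\vs$-equivariant (note that at $v\in S_{ram}$ the factor $L^{imp}(\pi_v\otimes\pi_v^\vee\otimes\chi_E,1)$ is just $1$ because $\chi_{E,v}$ is ramified, not the product you wrote). The only soft spot is Step~3: $\P_v$ is characterized through $\pi_v$ (via $\d_w(\pi_v)$ in the Bessel identity), so its independence of the choice within $\{\pi_v,\pi_v\otimes\chi_{E,v}\}$ is not automatic locally; but you only need independence for the product defining $u$, and that follows from the global identity of \ref{Jacquet_decomposition}, whose left side depends only on $\Pi$, whose $L$-value is invariant under $\pi\mapsto\pi\otimes\chi_E$, and whose split local factors (including the archimedean one, since $E$ is real) are unchanged and nonzero for suitable test vectors.
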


To conclude, by following the same method and through a more careful analysis, it should even be possible  to prove that the constant $u$ belongs to the rationality field $\Q(\Pi_f)$ of $\Pi_f$. We did not have the opportunity to pursue this direction. \\

\subsection{Relative congruences numbers and linear forms}
\label{relative_congruence_numbers}

\subsubsection{Relative congruence modules}
\label{sss_transfer}

Let $\TT'$ and $\TT$ be two local, finite and flat $\O$-algebras such that $\TT'_\K$ and $\TT_\K$ are semisimple $\K$-algebras. Suppose that there exists a surjective $\O$-algebras morphism $\theta: \TT' \to \TT$. After tensoring by $\K$, we obtain a decomposition of $\TT'_\K$:
$$
 \TT'_\K \simeq \TT_\K \times \TT_\K^\#
$$
as a product of $\TT_\K$ and some semisimple $\K$-algebra $\TT_\K^\#$, such that $\theta_\K :  \TT'_\K \to \T_\K$ corresponds to the first projection : $\mathbf{pr}_1 \circ \i_\K$. We denote $e_\theta$ (resp. $e_\#$) the idempotent of $\TT'_\K$ corresponding to $(1,0)$ (resp. to $(0,1)$) through the above isomorphism. As before, we consider an $\O$-algebra morphism $\l: \TT \to \O$ and denote by $\l' = \l \circ \theta$ its transfer to $\TT'$.

Let $M$ be a $\TT'$-module which is finite flat over $\O$, and let $\eta_{\l'}(M)$ be its congruence ideal. We define $M_{\TT} := M \cap e_\theta \cdot M_\K$. It is endowed with a $\TT$-module structure: an element $t \in \TT$ acts on $m\in M$ by $t \cdot m:= t' \cdot m$, where $t' \in \TT'$ is such that $\theta(t') = t$ (such an element always exists by the surjectivity of $\theta$). This action does not depend on the choice of $t'$. In fact, let $t_1',t_2' \in \TT'$ such that $\theta(t'_1) = \theta(t'_2) = t$, that is $t'_1 e_\theta = t'_2e_\theta$. Since $m$ belongs to $e_\theta \cdot M_\K$, we can write $m = e_\theta \cdot m_\K$ with $m_\K \in M_\K$ and therefore $t_1' \cdot m = t_2' \cdot m$. 

We define the \textit{relative congruence module} of $\l$ on $M$ by: 
$$
C_\l^\#(M) = M^{\l'} / (M_\TT)^\l
$$
Its Fitting ideal, denoted by $\eta_\l^\#(M)$, is called the \textit{relative congruence number} of $\l$ on $M$. By definition, we have that $(M_\TT)_\l = M_{\l'}$ and we obtain the following exact sequence:
$$
0 \to \frac{(M_\TT)^{\l}}{(M_\TT)_{\l}} \to \frac{M^{\l'}}{M_{\l'}} \to \frac{M^{\l'}}{(M_\TT)^\l} \to 0
$$

By multiplicativity of the Fitting ideals we obtain the following fondamental relation between the congruence module associated to $M$ and the congruence module associated to its pushforward $M_\TT$: 
\begin{equation}
\label{rel_mult}
\eta_{\l'}(M) = \eta_\l(M_\TT) \cdot \eta_\l^\#(M)
\end{equation}

\paragraph{The case $M = \TT'$.} In the special case where $M$ is $\TT'$ we simply call $\eta_\l^\#(M)$ the \textbf{relative congruence number} of $\l$, and denote it by $\eta_\l^\#$. Moreover, in this case $M_\TT^\l$ is a $\O$-module of rank $1$ and so $\eta_\l(M_\TT)$ divides $\eta_\l$. Hence (\ref{rel_mult}) gives the following divisibility:
$$
\eta_{\l'}\,\,  | \,\,  \eta_\l \cdot \eta_\l^\#
$$

\subsubsection{Relative congruence modules with an involution} We now switch to the relative setting and consider some surjective $\O$-algebra morphism $\theta: \TT' \to \TT$. Let $\l: \TT \to \O$ a Hecke eigensystem, and let $\l':= \l \circ \theta$ be its transfer to $\TT'$. Moreover we suppose that $\TT'$ is given with an involution denoted $\i$, and that $\l'$ is $\i$-invariant. Let $M$ be a $\TT'$-module of rank $2$ such that $M$ is given an action of $\i$ which is semi-linear. Then the action of $\i$ on $M$ commutes with $e_{\lambda'}$, and the three congruences modules:
$$
C_{\l'}(M):= M^{\lambda'}/M_{\lambda'}, \quad C_\l(M_\TT):= (M_\TT)^{\lambda}/(M_\TT)_{\lambda} \quad \mbox{ and } \quad C_{\l}^{\#}(M):= M^{\lambda'}/(M_\TT)^{\lambda}
$$
where $M_\TT = M \cap e_\theta \cdot M_\K$, are endowed with an action of $\i$ induced by that on $M$. We then assume that the action of $\i$ is non-trivial on $M_{\l'}$, so that it is not trivial on the three congruence modules either. Then we denote by $\eta_{\l'}(M)[\pm]$, $\eta_{\l}(M_\TT)[\pm]$ and $\eta_{\l}^{\#}(M)[\pm]$ the Fitting ideals of respectively $C_\l(M)[\pm]$, $C_\l(M_\TT)[\pm]$ and $C_{\l}^{\#}(M)[\pm]$. We have the following lemma:
\begin{lemma}
\label{cn_decomposition}
 Let $M$ be a $\TT'$-module, finite flat over $\O$, given with a semi-linear involution $\i$. Then:
$$
\eta_{\l'}(M)[\pm] = \eta_{\l}(M_\TT)[\pm] \cdot \eta_{\l}^{\#}(M)[\pm]
$$
\end{lemma}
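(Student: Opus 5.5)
The plan is to refine the short exact sequence
$$
0 \to \frac{(M_\TT)^{\l}}{(M_\TT)_{\l}} \to \frac{M^{\l'}}{M_{\l'}} \to \frac{M^{\l'}}{(M_\TT)^\l} \to 0
$$
into an exact sequence of $\O[\i]$-modules, split it into $\pm$-eigenparts, and then use multiplicativity of Fitting ideals on each part separately, exactly as in the proof of (\ref{rel_mult}).

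The first step is to check that $\i$ preserves every lattice in the sequence. By assumption $\l'$ is $\i$-invariant, so $\i$ commutes with $e_{\l'}$. For $e_\theta$ I will use that $\i$ acts on $\TT'$ by an $\O$-algebra involution, together with the hypothesis that $\l'$ (and, implicitly, the transfer $\theta$) is compatible with $\i$. Since $\l$ ranges over eigensystems of $\TT$ and $\theta$ is surjective, the invariance of $\l\circ\theta$ under $\i$ means $\i$ permutes the factors of $\TT'_\K$. It should preserve the $\theta$-block $\TT_\K$, so that $\i(e_\theta)=e_\theta$. This is the point I expect to need care: in the application, $\theta$ is base change and $\i=\e$ is induced by $\s$-conjugation-duality, which fixes all base changes of self-dual $\pi$ and hence stabilizes $\ker\theta_\K$. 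I would either verify this directly in that setting or add it as a standing hypothesis that $\i(\ker\theta)=\ker\theta$.

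Once this holds, semi-linearity gives $\i(e\cdot m)=\i(e)\cdot\i(m)=e\cdot\i(m)$ for $e\in\{e_{\l'},e_\theta\}$. Hence $M^{\l'}$, $M_{\l'}$, $M_\TT$, $(M_\TT)^\l=e_{\l'}M_\TT$ and $(M_\TT)_\l=M_{\l'}$ are all $\i$-stable, and both maps in the sequence (an inclusion and a projection) are $\i$-equivariant.

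Next, since $p$ is odd, $2\in\O^\x$. The projectors $\tfrac{1\pm\i}{2}$ are therefore exact functors on $\O[\i]$-modules, and every module splits as $C=C[+]\oplus C[-]$. Applying $[\pm]$ gives short exact sequences
$$
0\to C_\l(M_\TT)[\pm]\to C_{\l'}(M)[\pm]\to C^\#_\l(M)[\pm]\to 0
$$
of finite torsion $\O$-modules. All three are torsion because $M^{\l'}$ and $M_{\l'}$ have the same $\K$-span, and similarly for the others. Fitting ideals of finite-length modules over the DVR $\O$ are multiplicative in short exact sequences, since $\mathrm{Fitt}=\wp^{\mathrm{length}}$ and length is additive. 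This yields $\eta_{\l'}(M)[\pm]=\eta_\l(M_\TT)[\pm]\cdot\eta^\#_\l(M)[\pm]$.

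The main obstacle is the $\i$-stability of $e_\theta$, that is, the compatibility of $\i$ with $\theta$. Everything else is formal.
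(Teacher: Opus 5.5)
Your proof is correct and is essentially the paper's own (implicit) argument: the exact sequence behind (\ref{rel_mult}) is $\i$-equivariant, it splits into $\pm$-parts because $2\in\O^\x$, and Fitting ideals of finite-length $\O$-modules are multiplicative in short exact sequences. You are also right that $\i(e_\theta)=e_\theta$ does not follow from the $\i$-invariance of $\l'$ alone. The paper uses it tacitly when it says $C^\#_\l(M)$ inherits an $\i$-action. In the base-change setting it does hold: $\e$ sends $\mathrm{BC}(\pi')$ to $\mathrm{BC}(\pi'^\vee)$, so, $\pi$ being self-dual, it permutes (rather than fixes) the base-change eigensystems in the block, and hence stabilizes $\ker\theta_\K$.
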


\subsubsection{An useful lemma} Moreover we have the following lemma: 

\begin{lemma}
\label{lf_lemma}
Let $M$ be $\TT'$-module, finite flat over $\O$, given with a semi-linear involution $\i$. Assume that the $\l$-rank of $M$ is $2$. Let $\Ld \in \mathrm{Hom}_\O(M,\O)[\pm]$ be a $\i$-$\pm$-invariant linear form (i.e $\Ld(\i^{-1}(m)) = \pm \Ld(m)$ for all $m \in M$) such that  $M_\K^\# \subset \mathrm{Ker}(\Ld_\K)$ where $M_\K^\#:= e_\# M_\K$. Then for every $\d$ in $M_{\lambda'}[\pm]$ we have:

$$
\Ld(\d) \in \eta^{\#}_{\lambda'}(M^*)[\pm]
$$
\end{lemma}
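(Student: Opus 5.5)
The plan is to adapt the proof of \cite[Lemma 2.9]{TU22}, working throughout with the tautological pairing $M \times M^* \to \O$ and keeping track of the $\pm$-eigenspaces of $\i$.

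First we fix the structures on $M^* = \Hom_\O(M,\O)$. The algebra $\TT'$ acts on $M^*$ by transposition, $(t\cdot f)(m) = f(tm)$. The involution acts by $\i(f) = f\circ \i^{-1}$; this action is semi-linear for the induced involution on $\TT'$. Hence $e_{\l'}$, $e_\theta$ and $e_\#$ act on $M^*_\K$ by transposition. We then check that $\Ld$ lies in $(M^*)_\TT = M^* \cap e_\theta M^*_\K$. Indeed, $\Ld_\K$ vanishes on $M^\#_\K = e_\# M_\K$, so $\Ld_\K = \Ld_\K \circ e_\theta = e_\theta \cdot \Ld_\K$. By hypothesis $\Ld$ lies in the $\pm$-eigenspace of $\i$.

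Next, let $\d \in M_{\l'}[\pm]$. Since $\d = e_{\l'}\d$ in $M_\K$, we get
$$\Ld(\d) = \Ld(e_{\l'}\d) = (e_{\l'}\Ld)(\d).$$
The form $e_{\l'}\Ld$ equals $e_\l \cdot \Ld$ for the $\TT$-structure on $(M^*)_\TT$, because $e_{\l'} = e_{\l'}e_\theta$ corresponds to $e_\l$ under $\theta$. So $e_{\l'}\Ld$ belongs to $((M^*)_\TT)^{\l}$. As $\l'$ is $\i$-invariant, $e_{\l'}$ commutes with $\i$, and so $e_{\l'}\Ld$ lies in $((M^*)_\TT)^\l[\pm]$.

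It remains to compare $((M^*)_\TT)^\l[\pm]$ with $(M^*)^{\l'}[\pm]$. Their quotient is $C^\#_\l(M^*)[\pm]$, whose Fitting ideal is $\eta^\#_{\l'}(M^*)[\pm]$ by definition. Three steps are needed.
\begin{enumerate}
\item The module $M_{\l'}$ is saturated in $M$, being the intersection of $M$ with a $\K$-subspace, hence it is an $\O$-direct summand. Therefore restriction gives a surjection $M^* \to \Hom_\O(M_{\l'},\O)$. Since $f|_{M_{\l'}} = (e_{\l'}f)|_{M_{\l'}}$, this restriction induces an isomorphism $(M^*)^{\l'} \simeq \Hom_\O(M_{\l'},\O)$. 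In other words, the pairing $M_{\l'} \times (M^*)^{\l'} \to \O$ is perfect.
\item Since $p$ is odd, everything splits into $\pm$-parts: $M_{\l'} = M_{\l'}[+]\oplus M_{\l'}[-]$, and likewise for $(M^*)^{\l'}$, each summand of $\O$-rank one (this uses $\l$-rank $2$ and non-triviality of $\i$). The involution is compatible with the pairing in the sense that $\langle \i m, \i f\rangle = \langle m, f\rangle$, so $[+]$ pairs trivially with $[-]$. Therefore the perfect pairing restricts to a perfect pairing $M_{\l'}[\pm] \times (M^*)^{\l'}[\pm] \to \O$.
\item Fix generators $\d_\pm$ of $M_{\l'}[\pm]$ and $f_\pm$ of $(M^*)^{\l'}[\pm]$, so that $f_\pm(\d_\pm) \in \O^\x$. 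The rank-one lattice $((M^*)_\TT)^\l[\pm]$ is generated by $a f_\pm$ with $(a) = \eta^\#_{\l'}(M^*)[\pm]$. Consequently every element of $((M^*)_\TT)^\l[\pm]$ takes values in $\eta^\#_{\l'}(M^*)[\pm]$ on $M_{\l'}[\pm]$. Applying this to $e_{\l'}\Ld$ and $\d$ gives $\Ld(\d) \in \eta^\#_{\l'}(M^*)[\pm]$.
\end{enumerate}

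The main obstacle is the bookkeeping of the signs in step 2. One must fix the convention for $\i$ on $M^*$ so that the $\pm$-eigenspace of $\Ld$ as in the statement, $\Ld\circ \i^{-1} = \pm \Ld$, matches the $\pm$ of $\d$. With $\i(f) = f\circ\i^{-1}$ the pairing is $\i$-equivariant, and the $[+]$/$[-]$ orthogonality then follows directly. The alternative would be an anti-equivariant convention, which would swap the signs. We must also check that $\i$ preserves $(M^*)_\TT$ and commutes with $e_{\l'}$ and $e_\theta$. This holds because $\theta$, and hence $e_\theta$, is compatible with the involution whenever $\l'$ is $\i$-invariant; I would verify this point using the definition of the decomposition $\TT'_\K \simeq \TT_\K\times\TT^\#_\K$.
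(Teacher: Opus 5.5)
Your proposal is correct and is essentially the argument the paper defers to, namely the proof of \cite[Proposition 2.9]{TU22} run on $\pm$-eigenspaces: $\Ld\in(M^*)_\TT$, then $\Ld(\d)=(e_{\l'}\Ld)(\d)$ with $e_{\l'}\Ld\in((M^*)_\TT)^{\l}[\pm]$, and the perfect duality $M_{\l'}[\pm]\times(M^*)^{\l'}[\pm]\to\O$ turns the index of $((M^*)_\TT)^{\l}[\pm]$ in $(M^*)^{\l'}[\pm]$ into the stated membership. One correction to your final remark: $\i$-stability of $e_\theta$ does not follow from the $\i$-invariance of the single character $\l'$; it is an implicit standing hypothesis, already needed for $C^\#_\l(M^*)$ to carry an $\i$-action at all, and it holds in the paper's application because $\mathrm{BC}(\rho)^\e\simeq\mathrm{BC}(\rho^\vee)$, so $\e$ permutes the base-change eigensystems.
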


The proof is easily adapted to the case of congruence modules with an involution from the proof of \cite[Proposition 2.9]{TU22} (see \cite[Lemma 3.8]{thesis}). \\

\subsection{The main divisibility}
\label{final_proof_CBC} 

Let $\pi$ be a cohomological automorphic cuspidal representation of $\GL(\A_\Q)$ of cohomological weight $\mu = (m,m,0) \in X^+(T_3)$, which is self-dual. Let $E$ be a real quadratic field and let $\Pi= \mathrm{BC}(\pi)$ be its strong base change to $\GL(\A_E)$. Assume that $\pi \not\simeq \pi \otimes \chi_E$ so that $\Pi$ is a cuspidal automorphic representation, and assume that $\pi$ is only ramified at primes which are split in  $E$. $\Pi$ is self-conjugate, and since $\pi$ is self-dual, $\Pi$ is also conjugate self-dual. Thus $\Pi$ is associated with two middle-degree $\e$-periods $\Om_5(\Pi,\e,\pm)$, as explained in \S\ref{e_periods}. Let $\mu_E = (\mu,\mu) = (\mu,\mu^\vee)$ be the cohomological weight of $\Pi$. Let $\mathfrak{n} =\mathfrak{n}(\Pi)$ be the mirahoric level of $\Pi$, and let $K_f$ denote the mixed mirahoric subgroup $K_f = K_1^*(\mathfrak{n})$ of level $\mathfrak{n}$.\\

Let $\l_\Pi : \TT_E \to \O$ be the Hecke-eigensystem associated with $\Pi$. Let $M$ denote the cuspidal cohomology of $G_E$ localized at the maximal ideal $\m_\Pi$ of $h(K_f;\O)$ corresponding to $\Pi$: 
$$
M = H_{cusp}^5(Y_E(K_f), \L_{\mu_E}(\O))_{\m_\Pi}
$$
It is a $\TT_E$-module. Since $\Pi$ is conjugate self-dual, $M$ is equipped with a semi-linear action of the involution $\e$. Let $\eta_{\l_\Pi}^\#(M^*)$ be the relative congruence numbers of $\Pi$ on the $\TT_E$-module $M^* := \Hom_\O(M,\O)$, for the base change map $\theta_{{BC}}: \TT_E \to \TT_\Q$ described in paragraph~\ref{hecke_CBC}. The main result of this section is the following theorem, establishing a divisibility between this congruence number and the imprimitive completed twisted adjoint $L$-function of $\pi$ (see \S\ref{twisted_adjoint_L-func}), normalized by the $\e$-periods associated to $\Pi$:

\begin{theorem}
\label{CBC_divisibility}
Let $\pi$ and $\Pi = \mathrm{BC}(\pi)$ be as above. Assume that the Galois representation associated with $\Pi$ is residually absolutely irreducible. Assume that the cohomological weight $\mu$ of $\pi$ is $p$-small and that $p$ doesn't divide $6N_{E/\Q}(\n)h_E(\n)D_E$. Then, there exists a nonzero algebraic number $u \in \bar{\Q}^\x$, depending only on the local components of $\Pi$ above $2$ and ramified primes in $E$, such that:
$$
\eta_{\l_\Pi}^\#(M^*)[+] \quad | \quad \frac{\Lambda^{imp}(\pi,\Ad \otimes \chi_{E},1)}{u \cdot \Om_5(\Pi,\e,+)}
$$
where $\eta_{\l_\Pi}^\#(M^*)[+]$ is the $+$-part for the action of $\e$. The constant $u$ is defined in \ref{linear_formula}.
\end{theorem}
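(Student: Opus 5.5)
The plan is to apply \ref{lf_lemma} to the linear form $\Pd$ built in \S\ref{linear_forms_CBC}, with $\TT' = \TT_E$, $\TT = \TT_\Q$, $\theta = \theta_{BC}$ and involution $\i = \e$. The lemma is stated with $\Ld$ as a linear form on $M$ and conclusion in $\eta^\#(M^*)$. So I would read it in the dual direction: view $\Pd$ (localized at $\m_\Pi$) as an element of $M^* = \Hom_\O(M,\O)$, which is again a finite flat $\TT_E$-module with the transposed semi-linear involution. The evaluation pairing $M^* \x M \to \O$ is perfect, and elements of $M$ become linear forms on $M^*$. The lemma, applied to $M^*$ with $\Ld = \mathrm{ev}_{\d}$ for $\d = \d^+_\e(\phi_f)/\Om_5(\Pi,\e,+) \in M_{\l_\Pi}[+]$, needs the roles arranged so that the conclusion reads $\Pd(\d) \in \eta^\#_{\l_\Pi}(M^*)[+]$. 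I would check which of the two symmetric applications matches the lemma's hypotheses exactly.

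\textbf{Hypotheses.} First, $\theta$ must be surjective so that the relative congruence formalism applies; this is \ref{BC_surjectivity}. It uses that $p$ is unramified in $E$, that $\pi$ is spherical at primes ramified in $E$ (true, since $\pi$ is only ramified at split primes), and that $\bar\rho$ is irreducible (which follows from irreducibility of $\bar\rho_\Pi$). Second, the $\l_\Pi$-rank of $M$ is $2$ and $\e$ acts nontrivially on $M_{\l_\Pi}$, as shown in \S\ref{e_periods}. Third, $\Pd$ is $\e$-invariant, as noted after its construction. Fourth, and this is the key point, $\Pd$ must kill $e_\# M_\K$. The $\K$-space $e_\# M_\K$ is spanned, after extending to $\C$, by isotypic parts $H^5[\Si_f]$ for $\Si \in \mathrm{Coh}(G_E,\mu_E,K_f)$ congruent to $\Pi$ whose eigensystem does not factor through $\theta$, i.e.\ $\Si$ not a base change. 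By \ref{lf_vanishing_CBC}, $\Pd$ vanishes on such parts, using Feigon--Lapid--Offen (\ref{Jacquet_conjecture}). The remaining hypothesis of \ref{lf_vanishing_CBC} is that $\w_{\Si_f}$ be trivial on $Z_U(\A_f)$, along with the sign condition at infinity. I expect this to be the main obstacle, and I would handle it as follows. Components where $\w_{\Si}$ is nontrivial on $Z_U(\A)$ are killed by $\Pd$ directly, since integrating over the compact torus $Z_U(\Q)\bs Z_U(\A)$ annihilates them. The hypothesis $p\nmid h_E(\n)$ is used here to ensure that projecting onto the trivial central-character part is integral, or at least does not affect the $\K$-statement. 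Similarly, representations with $\w_{\Si_\inf}(-1,-1)=-1$ are killed, as remarked before \ref{lf_vanishing_CBC}. Hence $\Pd_\K$ vanishes on all of $e_\#M_\K$.

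\textbf{Conclusion.} Compute $\Pd(\d)$. By \ref{linear_formula}, $\Pd(\d^+_\e(\phi_f)) \sim \Lambda^{imp}(\pi,\Ad\otimes\chi_E,1)/u$, since the left side of \ref{linear_formula} is the twisted adjoint value of $\pi$ and the assumption $p\nmid 6N_{E/\Q}(\n)h_E(\n)D_E$ controls the constants $4\w_{BC}h_U(N)$ and the archimedean factor $2^{\ell+7}\pi$. Here $\pi$-powers are absorbed into $\Lambda$, and $p$-smallness makes $\Pd$ integral, since the pairing $\langle\cdot,\vee\cdot\rangle_\mu$ is $\O$-valued. Dividing by $\Om_5(\Pi,\e,+)$ gives $\Pd(\d) \sim \Lambda^{imp}(\pi,\Ad\otimes\chi_E,1)/(u\,\Om_5(\Pi,\e,+))$. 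Combined with the containment $\Pd(\d)\in\eta^\#_{\l_\Pi}(M^*)[+]$ from the first step, this gives the claimed divisibility. Algebraicity and nonvanishing of $u$ come from \ref{algebraicity_u2ram} and the nonvanishing of the local FLO periods. Its dependence only on places above $2$ and primes ramified in $E$ is read off from the definition in \ref{linear_formula}, because the split ramified factors were computed exactly in \S\ref{ram_CBC}.
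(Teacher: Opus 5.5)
Your proposal follows the paper's proof. You apply \ref{lf_lemma} directly to $M$ with $\Ld=\Pd$, which is $\e$-invariant and kills $e_\# M_\K$ by \ref{lf_vanishing_CBC}, and with $\d=\d^+_\e(\phi_f)/\Om_5(\Pi,\e,+)$, then evaluate $\Pd(\d)$ via \ref{linear_formula}. Your explicit checks of the surjectivity of $\theta$ (\ref{BC_surjectivity}) and of the central-character hypothesis just spell out what the paper leaves implicit.

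The detour through $M^*$ is unnecessary, and only the direct reading is valid. The lemma's conclusion already lies in $\eta^\#_{\l_\Pi}(M^*)[+]$ because $\Ld\in M^*$. The ``dual'' application would instead need $\Pd$ to be a $\l_\Pi$-eigenvector in $M^*$, which it is not, and would land in $\eta^\#(M)$.
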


\begin{proof}[Proof of \ref{CBC_divisibility}] We prove \ref{CBC_divisibility} using the congruence number formalism introduced in the last paragraphs. We consider the $\TT_E$-module $M = H^5_{cusp}(Y_E(K_f),\L(\mu_E ;\O))_{\m_\Pi}$, and the linear form $\Pd: M \to \O$ obtained by restricting to $M$ the linear form $\Pd$ defined in paragraph~\ref{linear_forms_CBC}. Since $\Pi$ is conjugate self-conjugate, we have that $t \in \m_\Pi \iff t^\e \in \m_\Pi$, and the action of $\e$ on $h(\mathrm{GL}_{n/E},\O)$ induces an action on $\TT_E:= h(\mathrm{GL}_{n/E},\O)_{\m_\Pi}$, and thus on $\TT_E$. $M$ also inherits from an action of $\e$ and this action is semi-linear, in the sense of \S\ref{hmod_involution}. We have seen that $\Pd$ is naturally $\e$-invariant. \ref{lf_vanishing_CBC} implies that $\Pd_\K$ vanishes on $e_\#M_\K$. Here $e_\# := 1 - e_{\theta}$, for the idempotent $e_{\theta}$ of $\TT_E$ associated with the base change transfer $\theta_{{BC}}: \TT_{E} \to \TT_{\Q}$ defined in \S\ref{hecke_CBC} (see \S \ref{sss_transfer} for a precise definition of $e_{\theta}$). Then \ref{lf_lemma} implies:
$$
\eta_{\l_\Pi}^\#(M^*)[+] \quad | \quad \Pd \left(\frac{\d^+_\e(\phi_f)}{\Om_5(\Pi,\e,+)}\right)
$$
because by definition of the $\e$-periods, the cohomology class $\d^+_\e(W_{\phi_f})/\Om_5(\Pi,\e,+)$ is an $\O$-base of $M_{\l_\Pi}[+]$. Since $p$ does not divide $N_{E/\Q}(\n) h_E(\n) D_E$, we know from (\ref{coho_jacquet_decomposition}), and the above ramified and archimedean computations, that:
$$
\Pd(\d_\e^+(\phi_f)) \sim \frac{\Lambda^{imp}(\Pi, \Ad \otimes \chi,1)}{u}
$$
where $u$ is the constant of \ref{linear_formula}, which is proven to be an algebraic number in \ref{algebraicity_u2ram}.
\end{proof}

\subsection{A divisibility of automorphic periods}
We keep the same notation as in the last paragraph. Let $N$ be the mirahoric level of $\pi$ and let $k_f = K_1(N) \subset \GL(\A_f)$ be the mirahoric subgroup of level $N$ for $G_\Q$. The cuspidal cohomology of $\GL(\Q)$ is concentrated in degrees $q=2,3$. Then, using the two Eichler-Shimura maps:
$$
\d_q: \W(\pi_f)^{k_f} \to H^{q}_{cusp}(Y_\Q(k_f), \L_{\mu}(\C))[\pi] =: H^{q}(\C)[\pi]
$$
described in paragraph~\ref{eichler-shimura_maps}, one can define two periods $\Om_2(\pi)$ (the bottom degree period) and $\Om_3(\pi)$ (the top degree period) associated with $\pi$. In fact, one knows that $H^{q}(\C)[\pi]$ are of $1$-dimensional $\C$-vector spaces given with an integral $\O$-structure $H^{q}(\O)[\pi]$. If we choose some $\O$-base $\xi_q$ of $H^{q}(\O)[\pi]$, the periods are then defined to be the complex numbers such that:
$$
\d_q(\phi_f) = \Om_q(\pi) \cdot \xi_q
$$
where $\phi_f = \phi_\pi^\circ \in \pi_f^{k_f}$ is the essential vector of $\pi$. Since this definition depends on the choice of $\xi_q$, the periods are only defined up to some $\O$-units. Then we have the following equality, proven up to some non-explicit archimedean factor by Balasubramanyam-Raghuram \cite{BR17} and completed by the archimedean computations of \cite[Theorem 5.5]{Che22}:
\begin{theorem}[Balasubramanyam-Raghuram, Chen]
\label{BR-Chen}
Let $\pi$ be a cohomological cuspidal automorphic representation of $\GL(\A_\Q)$ of mirahoric level $N$. Assume that the Galois representation associated to $\pi$ is residually absolutely irreducible. Assume that the cohomological weight $\mu = (m,m,v)$ of $\pi$ is $p$-small and that $p \nmid 6N\ph(N)$, where $\ph$ is the Euler's phi function. Then we have the following equality: 
$$
\eta_{\pi}(H^3) \sim \frac{\Lambda^{imp}(\pi,\Ad,1)}{\Om_2(\pi)\cdot \Om_3(\pi^\vee)}
$$
\end{theorem}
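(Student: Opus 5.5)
The statement is the Balasubramanyam--Raghuram--Chen adjoint formula for $\GL(\Q)$. I would prove it by the same route as \ref{adjoint_L_value}, in the simpler situation where the relevant isotypic components are $1$-dimensional and no involution is needed.

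\textbf{Step 1 (analytic side).} Apply \ref{jacquet-shalika_formula} with $E=\Q$ and $n=3$ to the essential vectors $\phi_\pi^\circ$ and $\phi_{\pi^\vee}^\circ$. Here $D_\Q=1$ and $c_3(\R)=(4\pi)^{-1}$. The finite ramified local pairings are evaluated by \ref{split_ramified_computations}, which gives $\langle W_\pi^\circ,W_{\pi^\vee}^\circ\rangle = L^{imp}(\pi_\ell\times\pi_\ell^\vee,1)$ at each ramified prime $\ell$. This yields
$$\langle\phi,\phi'\rangle = 3\,N^3(4\pi)^{-1}L^{imp}(\pi,\Ad,1)\,\langle W_{\phi,\infty},W_{\phi',\infty}\rangle_\infty .$$

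\textbf{Step 2 (cohomological interpretation).} Use Poincaré duality on the $5$-dimensional $Y_\Q(k_f)$, pairing degree $2$ with degree $3$:
$$[\cdot,\cdot]: H^2_!(Y_\Q(k_f),\L_\mu(\O))_{\m_\pi}\times H^3_!(Y_\Q(k_f),\L_\mu(\O)^\vee)_{\m_\pi}\to\O .$$
This pairing is Hecke-equivariant. It is perfect after localization because $\m_\pi$ is non-Eisenstein, by residual irreducibility and \cite{NT16}. Exactly as in (\ref{petersson_coho_delta}), one gets
$$[\d_2(\phi_f),\d_3(\phi'_f)] = c\,\varphi(N)\,\langle\phi,\phi'\rangle\, B([\pi_\infty]_2,[\pi^\vee_\infty]_3).$$
Here $c$ is a power of $2$ coming from $\vol(K_\infty)$ and the component group, and $\varphi(N)=h_\Q(N)$ is the ray class number.

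\textbf{Step 3 (archimedean factor).} Chen's \cite[Lemma 5.3, Theorem 5.5]{Che22} computes
$$B([\pi_\infty]_2,[\pi_\infty^\vee]_3)\cdot\langle W_{\pi_\infty,0},W_{\pi^\vee_\infty,0}\rangle_\infty \sim \pi\cdot\Gamma(\pi_\infty\times\pi_\infty^\vee,1),$$
up to powers of $2$, $3$ and factors $(2\ell+1)!/(\ell!)^2$. These are $p$-units because $\mu$ is $p$-small and $p>3$. Combining with the $\zeta(1)$-residue normalisation, this gives
$$[\d_2(\phi_f),\d_3(\phi'_f)]\sim\Lambda^{imp}(\pi,\Ad,1),$$
using $p\nmid 6N\varphi(N)$ to discard the constants.

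\textbf{Step 4 (congruence module).} Set $\TT=\TT_\Q$, $M=H^2_{cusp}(\O)_{\m_\pi}$ and $N=H^3_{cusp}(\O)_{\m_\pi}$ with the twisted Hecke action. Then $M_\lambda$ and $N_\lambda$ have rank $1$, and by definition of the periods they are generated by $\d_2(\phi_f)/\Om_2(\pi)$ and $\d_3(\phi'_f)/\Om_3(\pi^\vee)$. The non-involutive version of \ref{pairing_lemma_2}, i.e. \cite[Proposition 2.3]{TU22}, gives $\eta_\pi(M)=\eta_\pi(N)$. It also shows that the pairing of the two generators generates $\eta_\pi(H^3)$. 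Dividing the formula of Step 3 by $\Om_2(\pi)\Om_3(\pi^\vee)$ yields the theorem.

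The main obstacle is Step 3. The archimedean constant must be matched with Chen's normalisation of the generators, and one has to check that every stray rational factor (including $2^{\ell}$, $\binom{2\ell+1}{\ell}$ and $\pi^{\pm1}$ from the measures) is either absorbed in $\Gamma(\pi_\infty\times\pi_\infty^\vee,1)$ or is a $p$-unit under $p$-smallness. A secondary point is the perfectness of the integral pairing in Step 2, which requires the vanishing of localized boundary cohomology.
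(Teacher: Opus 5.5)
Your route is the right one, and it is the argument the paper relies on. The paper does not reprove this statement: it quotes \cite{BR17} for the cohomological part and \cite[Theorem 5.5]{Che22} for the archimedean constant. Your Steps 1, 2 and 4 are exactly that argument, i.e.\ the one-dimensional, involution-free version of the paper's proof of \ref{adjoint_L_value}.

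However, one claim in Step 3 is false, and it is precisely the point you identified as the main obstacle. The $p$-smallness hypothesis only gives $p>m$, whereas $\ell=2m+3$, so $(2\ell+1)!/(\ell!)^2$ need not be a $p$-unit. For example, $m=2$, $p=5$ satisfies $p>m$ and $p\nmid 6$, yet $v_5(15!/(7!)^2)=1$. If that factor really survived, you would need an extra hypothesis such as $p>4m+7$. What saves the statement is an exact cancellation:
\begin{itemize}
\item the factor $\frac{(2\ell+1)!}{(\ell!)^2}$ produced by the $\mathrm{SO}(3)$-computation in Chen's Lemma 5.3 multiplies $\langle W_{\pi_\infty,0},W_{\pi^\vee_\infty,0}\rangle_\infty$;
\item Lemma 5.4 evaluates this pairing as $\frac{2^{\ell+4}\pi}{2\ell+2}\binom{2\ell+1}{\ell}^{-1}\Gamma(\pi_\infty\times\pi_\infty^\vee,1)$;
\item since $\frac{(2\ell+1)!}{(\ell!)^2}\cdot\frac{1}{2\ell+2}\binom{2\ell+1}{\ell}^{-1}=\frac12$, only a power of $2$ times $\sqrt{-1}\,\pi\,\Gamma(\pi_\infty\times\pi_\infty^\vee,1)$ remains;
\item the factor $\pi$ then cancels against $c_3(\R)=(4\pi)^{-1}$.
\end{itemize}
The same cancellation is visible in \S\ref{archimedean_CBC}. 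This is why $p>m$ and $p\nmid 6$ suffice.

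Two smaller repairs are also needed.

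\emph{Double counting.} Your Step 2 formula $c\,\varphi(N)\langle\phi,\phi'\rangle\, B(\cdot,\cdot)$, and the product $B\cdot\langle W_{\pi_\infty,0},W_{\pi^\vee_\infty,0}\rangle_\infty$ in Step 3, count the archimedean Whittaker pairing twice. The generators are sums $\sum_i \omega_i\otimes W_i\otimes P_i$, so there is no single archimedean vector to feed into Step 1. You should apply Jacquet--Shalika termwise, as in (\ref{petersson_coho_delta}). This gives $[\delta_2(\phi_f),\delta_3(\phi'_f)]=c\,h(k_f)\cdot 3N^3(4\pi)^{-1}L^{imp}(\pi,\Ad,1)\cdot B([\pi_\infty]_2,[\pi^\vee_\infty]_3)$, where $B$ already contains $\langle\cdot,\cdot\rangle_\infty$.

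\emph{Identifying the congruence number.} \cite[Proposition 2.3]{TU22} shows that the pairing of your two generators generates $\eta_\pi(M)=\eta_\pi(N)$. Here your degree-$3$ module carries coefficients $\L_{\mu^\vee}$ and the twisted Hecke action. To identify this with $\eta_\pi(H^3)$ for $H^3=H^3(Y_\Q(k_f),\L_\mu(\O))_{\m_\pi}$, you need two transports:
\begin{itemize}
\item transport by $g\mapsto {}^tg^{-1}$, which sends $K_1(N)$ to the transposed mirahoric and $\L_{\mu^\vee}$ to $\L_\mu$, turning the twisted action into the standard one;
\item conjugate back to $K_1(N)$ by an element supported at the primes dividing $N$.
\end{itemize}
Both maps preserve the $\O$-structures and commute with the Hecke operators away from $Np$, so the congruence numbers agree.
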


The formula in the above theorem involves the congruence number $\eta_{\pi}(H^3)$ of $\pi$ on the top-degree cohomology group $H^3 := {H}^3(Y_\Q(k_f), \L_{\mu}(\O))_{\m_\pi}$. It easily follows from the definitions that $\eta_{\pi}(H^3)$ divides the congruence number $\eta_\pi$ of $\pi$. In order to establish a period divisibility, we need this two congruence numbers to coincide. To this end, me make the following assumption: \\

\begin{assumption}[$H = \TT_\Q$]
The module $H^3 := {H}^3(Y_\Q(k_f), \L_{\mu}(\O))_{\m_\pi}$ is free over ${\TT}_\Q$. \\
\end{assumption}

Since cohomological cuspidal automorphic representations appear with multiplicity one in the top-degree cohomology group, the assumption ($H = \TT_\Q$) implies that $H^3$ is free of rank one over ${\TT}_\Q$. Consequently: 
\begin{equation}
\label{CG_equality}
\eta_{\pi}(H^3) \sim \eta_{\pi} 
\end{equation}

We now provide sufficient conditions under which the above assumption holds. Let $\tilde{\TT}_\Q$ be the \textit{full} Hecke algebra acting on the cohomology, localized at $\m_\pi$. By \textit{full}, we mean here that $\tilde{\TT}_\Q$ is defined by acting on the full cohomology $\tilde{H}_\Q$ of $Y_\Q(k_f)$ with coefficients in $\O$ (not only on its $\O$-torsion-free part, as in \S\ref{hecke_corr}). In particular $\tilde{\TT}_\Q$ may contain $\O$-torsion, and $\TT_\Q = \tilde{\TT}_\Q /(\O-tors)$ is simply the torsion-free quotient of $\tilde{\TT}_\Q$. Assume that $p$ is split in $E$. Then, the Galois representation $\rho_{\m_\pi} : \Gal(\overline{\Q}/\Q) \to\GL(\tilde{\TT}_\Q)$ associated with $\m_\pi$ is proven to exist (see \cite[Theorem 6.1.4]{CGH+20}). We then assume that $\rho_{\m_\pi}$ satisfy the following three conditions:
\begin{itemize}
\item $\rho_{\m_\pi}$ is $N$-minimal;
\item $p-3 > 2m$, where $\mu = (m,m,0) \in X^+(T_3)$ (this ensures that $\rho_{\m_\pi}$ is Fontaine-Laffaille at $p$);
\item the residual representation $\overline{\rho}_{\m_\pi}$ has enormous image.
\end{itemize}
Under these conditions, and conditionally on the conjecture that $\rho_{\m_\pi}$ satisfies local-global compatibilities at minimal, Fontaine-Laffaille, and Taylor-Wiles places, the Calegari-Geraghty theory \cite{CG18} implies that the assumption ($H = \TT_\Q$) holds. In fact, the Calegari-Geraghty theory proves the much stronger result that the full cohomology $\tilde{H}_\Q$ is free over the full Hecke algebra $\tilde{\TT}_\Q$. The interested reader is encouraged to consult the author's thesis \cite{thesis} — in particular Paragraphs 3.2.4 and 3.2.5, as well as the discussion below Theorem 7.2 — for a more detailed presentation of Calegari-Geraghty theory. Be careful that $H^\bullet$ and $\TT$ denote the full cohomology and Hecke algebra there, while their torsion-free parts are denoted by $\bar{H}^\bullet$ and $\bar{\TT}$. \\

We now consider $\Pi = \mathrm{BC}(\pi)$ to be the base change of $\pi$ to $G_E$. $\Pi$ is conjugate self-dual, so it can be associated with its $\e$-periods $\Om_5(\Pi,\e,\pm)$. We then prove the following divisibility between the periods of $\pi$ and the $\e$-periods of $\Pi$:

\begin{theorem}
\label{period_divisibility}
Let $\pi$ and $\Pi = \mathrm{BC}(\pi)$ be as above. Assume that the Galois representation associated to $\Pi$ is residually absolutely irreducible. Assume that the cohomological weight $\mu$ of $\pi$ is $p$-small, and that $p$ does not divide $6N_{E/\Q}(\n)h_E(\n) D_E$. Then, under assumption $(H = \TT_\Q)$, the following divisibility holds:
$$
\nu_\pi \cdot \Om_2(\pi) \cdot \Om_3(\pi)\, \mid \, \Om_5(\Pi,\e,-) \cdot u
$$
where $\nu_\pi := \eta_\pi \cdot \eta_\pi(M_{\TT_\Q})[+]^{-1} \in \O$ and $u \in \overline{\Q}^\x$ is the constant (conjecturally equal to $1$) of \ref{CBC_divisibility}.
\end{theorem}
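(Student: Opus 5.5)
The plan is to combine the three integral formulas already available: the $\e$-adjoint formula \ref{adjoint_L_value} for $\Pi$, the Balasubramanyam--Raghuram--Chen formula \ref{BR-Chen} for $\pi$, and the twisted-adjoint divisibility \ref{CBC_divisibility}. These are glued by the factorization (\ref{decomposition_ad}) of imprimitive completed $L$-functions, $\Lambda^{imp}(\Pi,\Ad,1)=\Lambda^{imp}(\pi,\Ad,1)\cdot\Lambda^{imp}(\pi,\Ad\otimes\chi_E,1)$. This factorization is valid because $\pi$ is only ramified at split primes. The congruence numbers are then matched using \ref{cn_decomposition}, applied to the surjection $\theta:\TT_E\to\TT_\Q$ of \ref{BC_surjectivity}. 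Surjectivity holds because $\pi$ is unramified at primes ramified in $E$, $p$ is unramified in $E$, and $\overline{\rho}$ is absolutely irreducible.

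First, apply \ref{adjoint_L_value} with the sign $-$. Since $\Pi$ is conjugate self-dual, $\Pi^\vee\simeq\Pi^\s$; for a base change $\Pi^\vee\simeq\Pi$, so up to units
\[
\eta_{\l_\Pi}(M)[-]\,\Om_5(\Pi,\e,-)\,\Om_5(\Pi,\e,+)\sim\Lambda^{imp}(\pi,\Ad,1)\,\Lambda^{imp}(\pi,\Ad\otimes\chi_E,1).
\]
Next, I substitute \ref{BR-Chen} together with assumption $(H=\TT_\Q)$, through (\ref{CG_equality}). Since $\pi$ is self-dual, this gives $\Lambda^{imp}(\pi,\Ad,1)\sim\eta_\pi\,\Om_2(\pi)\Om_3(\pi)$; the hypothesis $p\nmid 6N\ph(N)$ follows from $p\nmid 6N_{E/\Q}(\n)h_E(\n)$. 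Finally, \ref{CBC_divisibility} gives
\[
u\,\Om_5(\Pi,\e,+)\,\eta^\#_{\l_\Pi}(M^*)[+]\;\big|\;\Lambda^{imp}(\pi,\Ad\otimes\chi_E,1).
\]
Combining the three yields
\[
u\,\eta^\#_{\l_\Pi}(M^*)[+]\,\eta_\pi\,\Om_2(\pi)\Om_3(\pi)\;\big|\;\eta_{\l_\Pi}(M)[-]\,\Om_5(\Pi,\e,-),
\]
after cancelling $\Om_5(\Pi,\e,+)$.

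It remains to compare $\eta_{\l_\Pi}(M)[-]$ with $\eta^\#_{\l_\Pi}(M^*)[+]$. Apply \ref{cn_decomposition} to $M^*$:
\[
\eta_{\l_\Pi}(M^*)[+]=\eta_{\l_\pi}(M^*_{\TT_\Q})[+]\cdot\eta^\#_{\l_\Pi}(M^*)[+].
\]
I then identify $\eta_{\l_\Pi}(M^*)[+]$ with $\eta_{\l_\Pi}(M)[-]$. For this I use \ref{pairing_lemma_2} with the tautological perfect pairing $M\x M^*\to\O$. To make it $\e$-anti-equivariant, I let $\e$ act on $M^*$ by minus the transpose. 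Its Hecke equivariance uses the semi-linearity $T\circ\e=\e\circ T^\e$ and the $\e$-invariance of $\l_\Pi$. Substituting, the factor $\eta^\#[+]$ cancels, and we are left with
\[
u\,\eta_\pi\,\Om_2\Om_3\;\big|\;\eta_{\l_\pi}(M^*_{\TT_\Q})[+]\,\Om_5(\Pi,\e,-).
\]
This is the statement with $\nu_\pi=\eta_\pi\,\eta_\pi(M_{\TT_\Q})[+]^{-1}$, once $M^*_{\TT_\Q}$ and $M_{\TT_\Q}$ are compared.

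The main obstacle is this bookkeeping of duals and signs: checking that the sign conventions for $\e$ on $M^*$ agree between \ref{lf_lemma} (where $\Ld$ is $\e$-invariant) and \ref{cn_decomposition}, and that $\eta(M^*_{\TT_\Q})[+]$ matches $\eta(M_{\TT_\Q})[+]$. For the latter I would again use the Poincaré pairing (\ref{localized_poincare_pairing}), which is perfect since $\m_\Pi$ is non-Eisenstein, restricted to base-change parts, and apply \ref{pairing_lemma_2}. One must also place $u$ on the correct side, which is a matter of reading the divisibility carefully since $u$ is only algebraic.
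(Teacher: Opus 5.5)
Your route is the paper's. Its proof of \ref{period_divisibility} is nothing more than the factorization (\ref{decomposition_ad}) combined with \ref{adjoint_L_value}, \ref{BR-Chen} plus $(H=\TT_\Q)$, \ref{CBC_divisibility} and \ref{cn_decomposition}, and it says nothing about duals or signs. Everything up to your inequality $u\,\eta^\#_{\l_\Pi}(M^*)[+]\,\eta_\pi\Om_2(\pi)\Om_3(\pi)\mid\eta_{\l_\Pi}(M)[-]\,\Om_5(\Pi,\e,-)$ is fine. The bookkeeping you add after that point, however, does not work as written.

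\textbf{The sign.} In \ref{lf_lemma} the involution on $M^*$ is the transpose one ($\Ld\in M^*[\pm]$ means $\Ld\circ\e=\pm\Ld$), and $\Pd$ is $\e$-invariant. So the factor in \ref{CBC_divisibility} is $\eta^\#_{\l_\Pi}(M^*)[+]$ for the transpose action. Letting $\e$ act on $M^*$ by minus the transpose is only a relabelling: your identity $\eta(M)[-]=\eta(M^*)[+]$ then just says $\eta(M)[-]=\eta(M^*)[-]$ in the transpose convention. Moreover, \ref{cn_decomposition} in your convention produces the transpose-$[-]$ relative congruence number, which does not cancel the factor coming from \ref{CBC_divisibility}.

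The identity you want does hold for the transpose action, but for a different reason. Since $\Pi^\vee\simeq\Pi$ and $\mu_E^\vee=\mu_E$, the perfect pairing (\ref{localized_poincare_pairing}) pairs $M$ with itself, i.e. it gives an isomorphism $M\simeq M^*$. This isomorphism intertwines $T^\vee$ with the transpose of $T$, so it respects $e_{\l_\Pi}$ and $e_\theta$, because $\l_\Pi(T^\vee)=\l_\Pi(T)$ and duals of base changes are base changes. By anti-equivariance it carries $\e$ to minus the transpose, hence $\eta(M)[-]=\eta(M^*)[+]$ (equivalently, $\eta(M)[+]=\eta(M)[-]$).

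A simpler fix is to use \ref{adjoint_L_value} with the sign $+$. The period product is unchanged, since $\Om_5(\Pi^\vee,\e,-)\sim\Om_5(\Pi,\e,-)$. The tautological pairing $M\times M^*\to\O$ is equivariant for the transpose action, so $\eta(M)[+]=\eta(M^*)[+]=\eta((M^*)_{\TT_\Q})[+]\cdot\eta^\#_{\l_\Pi}(M^*)[+]$, which contains exactly the factor of \ref{CBC_divisibility}.

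\textbf{The base-change part.} You cannot compare $(M^*)_{\TT_\Q}$ with $M_{\TT_\Q}$ by restricting the Poincaré pairing to base-change parts and invoking \ref{pairing_lemma_2}, because that restriction is not perfect. For a perfect Hecke-equivariant pairing $M\times N\to\O$, the $\O$-dual of $M_{\TT_\Q}=M\cap e_\theta M_\K$ is the projection $e_\theta N$, not $N\cap e_\theta N_\K$. The quotient of the former by the latter is nonzero as soon as base-change classes are congruent to non-base-change ones, which is precisely the situation of interest.

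What works is the global isomorphism $M\simeq M^*$ above. It maps $M_{\TT_\Q}$ onto $(M^*)_{\TT_\Q}$ but exchanges the signs, so $\eta_\pi((M^*)_{\TT_\Q})[+]=\eta_\pi(M_{\TT_\Q})[-]$. Your corrected chain therefore proves
$u\cdot\eta_\pi\,\eta_\pi((M^*)_{\TT_\Q})[+]^{-1}\,\Om_2(\pi)\Om_3(\pi)\mid\Om_5(\Pi,\e,-)$.

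Reading this as the displayed statement requires two further steps:
\begin{enumerate}
\item Identifying the $[+]$-quantity in $\nu_\pi$ with this one. The paper's sketch, which writes $\eta^\#(H^5)$ and a $[+]$ for both signs, is silent on this.
\item Moving $u$ to the right-hand side. This is legitimate only when $u\in\O^\x$ (e.g. $p\notin S_u$, as in the introduction's version). Since $u$ is merely algebraic, it is not just a matter of reading the divisibility carefully.
\end{enumerate}
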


\begin{proof}
We have the following decomposition of (imprimitive) completed $L$-functions:
$$
\Lambda^{imp}(\Pi,\Ad,s) = \Lambda^{imp}(\pi,\Ad,s) \cdot \Lambda^{imp}(\pi,\Ad \x \chi_{E},s) 
$$
From \ref{cn_decomposition}, we have the following relation of congruences numbers:
$$
\eta_{\l_\Pi}(H^5)[\pm] \, = \, \nu_\pi^{-1} \cdot \eta_{\pi} \cdot \eta_{\l_\Pi}^\#(H^5)[\pm]
$$
where $\nu_\pi := \eta_\pi \cdot \eta_\pi(M_{\TT_\Q})[+]^{-1}$ and $M_{\TT_\Q}$ is defined in \ref{sss_transfer}. The announced divisibility then follows from \ref{CBC_divisibility} and \ref{adjoint_L_value} combined with \ref{BR-Chen} and (\ref{CG_equality}).
\end{proof}

\subsection{Results for non self-dual representations}
\label{non_self_dual}
We keep the notation from the previous paragraph but we no longer assume that the representation $\pi$ is self-dual, nor that it is ramified only at split primes. Consequently its base change $\Pi = \mathrm{BC}(\pi)$ is no longer conjugate self-dual and the $\e$-periods of $\Pi$ are not even defined. However, as a base change, $\Pi$ is still is self-conjugate. In this case, following \cite[\S2.4.4]{TR_SBC} we can attached two middle-degree periods:
$$
\Om_5(\Pi,\s,\pm) \in \C^\x /\O^\x
$$
to the cohomological cuspidal automorphic representation $\Pi$. These automorphic periods are defined within the cuspidal cohomology group of middle degree $q=5$ and will be called the $\s$-periods of $\Pi$. \\

However, unlike what happens for the $\e$-periods, the newform $\phi_f \in \Pi_f$ used to define the $\s$-periods of $\Pi$, which is the essential vector $\phi_\Pi^\circ$ of $\Pi$, is not always a good test vector for the Jacquet-Ye integral period. More precisely, it annihilates the Jacquet-Ye period, unless $\Pi$ satisfies the following condition at ramification places: \\

\begin{quote}
For each prime $w$ where $\Pi$ is ramified, the standard $L$-function $L(\Pi_w,s)$ of $\Pi_w$ has degree $2$. \\
\end{quote}

For instance, this condition is satisfied if the mirahoric level $\n$ of $\Pi$ is squarefree and the representation  $\Pi$ at $\n$ is either in the principal series (induced from two unramified characters and one ramified character) or the partial Steinberg associated with the parabolic of type $(2,1)$.  \\

Nethertheless, under this quite restrictive assumption on the ramification of $\Pi$, the newform $\phi_f = \phi_\Pi^\circ \in \Pi_f$ becomes a good test vector for the Jacquet-Ye period, and we prove the following version of \ref{CBC_divisibility} when $\pi$ is non-necessarily self-dual:

\begin{theorem}
\label{CBC_divisibility_NSD}
Let $\pi$ and $\Pi= \mathrm{BC}(\pi)$ be as above. Assume that the Galois representation associated with $\Pi$ is residually absolutely irreducible. Suppose that the cohomological weight $\mu$ of $\pi$ is $p$-small and that $p \nmid 6N_{E/\Q}(\mathfrak{n}) h_E(\mathfrak{n})D_E$. Then, there exists a sign $\d \in \{ \pm\} $, and a nonzero algebraic number $u \in \bar{\Q}^\x$, depending only on the local components of $\Pi$ above $2$ and ramified primes in $E$, such that we have the following divisibility:
$$
\eta_{\Pi}^\#(M^*)[\d] \quad | \quad \frac{\Lambda^{imp}(\pi,\Ad \otimes \chi_{E},1)}{u \cdot \Om_5(\Pi,\s,\d)}
$$
where $\eta_{\Pi}^\#(M^*)[\d]$ is the $\d$-part for the action of $\s$,
\end{theorem}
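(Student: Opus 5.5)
The argument will run parallel to the proof of \ref{CBC_divisibility}, with the conjugation-duality involution $\e$ replaced by the Galois involution $\s$ and the mixed essential vector replaced by the essential vector $\phi_f=\phi_\Pi^\circ$ of level $K_f=K_1(\n)$. Since $\Pi$ is self-conjugate and $K_1(\n)$ is $\s$-stable (because $\s(\n)=\n$), the involution $\s$ acts semi-linearly on $M=H^5_{cusp}(Y_E(K_f),\L_{\mu_E}(\O))_{\m_\Pi}$. It interchanges the two Künneth summands $H^5_{\{\tau\}}$ and $H^5_{\{\s\tau\}}$, so the $\lambda_\Pi$-part of $M$ splits into two rank-one $\O$-lattices $M_{\lambda_\Pi}[\pm]$, and these define the periods $\Om_5(\Pi,\s,\pm)$ following \cite[\S2.4.4]{TR_SBC}. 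The $\O$-linear form is again $\Pd$, the restriction to $Y_U(K_f)$ composed with the $U(\O)$-equivariant contraction $L_{\mu_E}(\O)|_U\to\O$, which is well defined because $\mu$ is $p$-small. The first step is to check that $\Pd$ is an eigenvector for $\s$ with some eigenvalue $\d\in\{\pm\}$. Here $\s$ preserves $U$ and $Y_U(K_f)$, but it may reverse the orientation of the $5$-dimensional cycle and may act on the contraction by a sign. This is the source of the undetermined sign $\d$ in the statement, and I would track it through the identification $\bigwedge^5\p^*_{3,\C}\simeq\C$ used to define \textcalligra{p}.

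The second step is the vanishing property. By \ref{Jacquet_conjecture}, together with the cohomological interpretation of \S\ref{coho_CBC}, which applies to any $\Pi'\in\mathrm{Coh}(G_E,\mu_E,K_f)$, $\Pd$ vanishes on the $\Pi'$-isotypic parts for every $\Pi'$ that is not a base change. Hence $\Pd_\K$ kills $e_\#M_\K$ for the surjection $\theta:\TT_E\to\TT_\Q$ of \ref{BC_surjectivity}. Applying \ref{lf_lemma} with $\i=\s$ and the sign $\d$ then gives
$$\eta^\#_{\l_\Pi}(M^*)[\d]\ \mid\ \Pd\bigl(\d^\d_\s(\phi_f)\bigr)\big/\Om_5(\Pi,\s,\d).$$

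The third step evaluates $\Pd(\d^\d_\s(\phi_f))$ using \ref{Jacquet_decomposition}, exactly as in \ref{lf_vanishing_CBC}. The unramified factors give $L^{S}(\pi,\Ad\otimes\chi_E,1)$. The archimedean factor is the one computed in \S\ref{archimedean_CBC}; that computation only used $\SO$-equivariance, so it carries over to the $\s$-generator after replacing $\e$ by $\s$. The volume and measure constants are $p$-units under the assumption $p\nmid 6N_{E/\Q}(\n)h_E(\n)D_E$. The factors at $2$ and at the primes ramified in $E$ are collected into $u$, and \ref{algebraicity_u2ram} shows that $u$ is algebraic.

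The main obstacle is the local factor at a ramified prime $v$ once the essential vector is used instead of the mixed one. If $v$ splits, then $\P_v(W_1\otimes W_2)=\langle W_1,W_2^\vee\rangle_v$, so I must compute $\langle W^\circ_{\pi_v},(W^\circ_{\pi_v})^\vee\rangle$. I would expand it along $T_{n-1}$ as in \ref{split_ramified_computations}, using \ref{explicit_essential_values} for $W^\circ_{\pi_v}$. The contragredient function $(W^\circ_{\pi_v})^\vee$ is a translate of the transposed essential vector of $\pi_v^\vee$ by $\varpi_{n-1}^{c}$, so the resulting sum involves Schur values shifted by $(c,\dots,c)$. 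When $\deg L(\Pi_w,s)=2=n-1$, these Schur values factor as $(\b_1\b_2)^c s_\l(\b)$. The sum then becomes, up to the factor $\e(\pi_v,\psi,1/2)$, a nonzero multiple of $L^{imp}(\pi_v\otimes\pi_v^\vee,1)$. When the degree is smaller than $2$, the sum collapses to $0$, which is the failure of the test vector that the statement's hypothesis is designed to exclude. I would carry out this Macdonald-type summation first and check that the constant is a $p$-adic unit. If $v$ is non-split, any remaining local constants are absorbed into $u$. With these local factors in hand, the third step gives $\Pd(\d^\d_\s(\phi_f))\sim\Lambda^{imp}(\pi,\Ad\otimes\chi_E,1)/u$, and substituting this into the divisibility from the second step yields the theorem.
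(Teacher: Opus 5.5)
This is essentially the paper's argument: the paper only says the proof is that of \ref{CBC_divisibility} with $\e$ replaced by $\s$, the mixed essential vector replaced by $\phi_\Pi^\circ$ at level $K_1(\n)$, and $\d$ taken to be the eigenvalue of $\s$ on $\Pd$. Your Schur-shift evaluation of $\langle W^\circ_{\pi_v},(W^\circ_{\pi_v})^\vee\rangle$ at split ramified places, which is nonzero exactly when the local $L$-factor has degree $2$, is precisely why the degree-two ramification condition is imposed.

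The one loose end is your absorption of non-split ramified places into $u$. The statement allows $u$ to depend only on the places above $2$ and the primes ramified in $E$, and $u$ must be nonzero. So at an inert prime where $\pi$ ramifies you would need a genuine nonvanishing computation of the local period of $\phi_\Pi^\circ$, and your proposal does not supply one. If you restrict to $\pi$ ramified only at split primes, your argument is complete; that restriction also secures the hypotheses of \ref{BC_surjectivity}, which you invoke.
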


The proof of this theorem is similar to that of \ref{CBC_divisibility}. Consequently, to avoid overloading the presentation, we only state the results here and refer to \cite[Section 7]{thesis} for a more complete discussion. The ramification condition on $\Pi$ is denoted \textbf{(Ram)} there. The sign $\d \in \{ \pm\} $ is the sign for which the Jacquet-Ye linear form $P$ of \S\ref{linear_forms_CBC} is $\d$-invariant for the action of the Galois involution $\s$ on $M = H^5_{cusp}(Y_E(K_f),\L_{\mu_E}(\C))_{\m_\Pi}$.

\begin{corollaire}
Let $\pi$ and $\Pi= \mathrm{BC}(\pi)$ be as above. Assume that the Galois representation associated with $\Pi$ is residually absolutely irreducible. Assume that the cohomological weight $\mu$ of $\pi$ is $p$-small, and that $p \nmid 6N_{E/\Q}(\n)h_E(\n) D_E$. Then, under assumption $(H = \TT_\Q)$, the following divisibility holds:
$$
\nu_\pi \cdot \Om_2(\pi) \cdot \Om_3(\pi^\vee) \mid  \Om_5(\Pi^\vee,\s,- \d) \cdot u 
$$
where $\d \in \{\pm1\}$ is the sign of \ref{CBC_divisibility_NSD}, $\nu_\pi := \eta_\pi \cdot \eta_\pi(M_{\TT_\Q})[+]^{-1} \in \O$ and $u \in \overline{\Q}^\x$ is the constant (conjecturally equal to $1$) of \ref{CBC_divisibility}.
\end{corollaire}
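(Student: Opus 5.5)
The plan is to run the proof of \ref{period_divisibility} again, replacing the $\e$-involution by the Galois involution $\s$ and the $\e$-periods by the $\s$-periods. The statement to prove is the last corollary, for $\pi$ not necessarily self-dual. It should follow by combining \ref{CBC_divisibility_NSD}, the $\s$-analogue of the adjoint $L$-value formula \ref{adjoint_L_value}, the Balasubramanyam--Raghuram--Chen formula \ref{BR-Chen}, and the equality (\ref{CG_equality}) given by assumption $(H=\TT_\Q)$.

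\textbf{Step 1.} I first establish the $\s$-version of \ref{adjoint_L_value}:
$$\eta_{\l_\Pi}(M)[\pm] \sim \Lambda^{imp}(\Pi,\Ad,1)\big/\big(\Om_5(\Pi,\s,\pm)\cdot\Om_5(\Pi^\vee,\s,\mp)\big).$$
The argument is the same as in \S\ref{proof_adjoint}. I pair $M$ with $N=H^5_{cusp}(Y_E(K_f),\L_{\mu}(\O)^\vee)_{\m_\Pi}$ via the localized Poincaré pairing. This pairing is perfect because $\m_\Pi$ is non-Eisenstein, which follows from residual irreducibility. I then need the pairing to be $\s$-anti-equivariant. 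The reason is that $\s$ swaps the two factors of $\p_\C=\p_{\tau,\C}\oplus\p_{\s\tau,\C}$ and therefore acts on $\bigwedge^{10}\p_\C^*$ by the sign $(-1)^{5\cdot5}=-1$; this is exactly the mechanism behind \ref{B_epsilon}. Next I apply \ref{pairing_lemma_2} with $\i=\s$. Finally I evaluate $[\d_\s^\pm(\phi^\circ_\Pi),\d_\s^\mp(\phi^\circ_{\Pi^\vee})]$ with the Jacquet--Shalika formula (\ref{full_jacquet-shalika_formula}). The ramified factors are computed by \ref{split_ramified_computations}, which requires no self-duality, and the archimedean factors by Chen's computation.

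\textbf{Step 2.} I use \ref{cn_decomposition} for $\theta_{BC}:\TT_E\to\TT_\Q$, which is surjective by \ref{BC_surjectivity}, to write
$$\eta_{\l_\Pi}(M)[-\d]=\eta_\pi(M_{\TT_\Q})[-\d]\cdot\eta^\#_{\l_\Pi}(M)[-\d].$$
I need to relate $\eta^\#(M)[-\d]$ to $\eta^\#(M^*)[\d]$. The perfect $\s$-anti-equivariant pairing identifies $M^*$ with $N$ and exchanges the signs, as in \ref{pairing_lemma_2}, and this is why $\Pi^\vee$ and the sign $-\d$ appear in the statement. I also need $\Lambda^{imp}(\Pi,\Ad,1)=\Lambda^{imp}(\pi,\Ad,1)\Lambda^{imp}(\pi,\Ad\otimes\chi_E,1)$. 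This factorization is (\ref{decomposition_ad}) when $\pi$ ramifies only at split primes. In general, this is the point where the ramification condition on $\Pi$ has to be used to compare the imprimitive factors.

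\textbf{Step 3.} I combine everything:
\begin{itemize}
\item \ref{CBC_divisibility_NSD} gives $\eta^\#[\d]\mid\Lambda^{imp}(\pi,\Ad\otimes\chi_E,1)/(u\,\Om_5(\Pi,\s,\d))$;
\item Step 1 gives $\Lambda^{imp}(\Pi,\Ad,1)\sim\eta(M)[\d]\,\Om_5(\Pi,\s,\d)\,\Om_5(\Pi^\vee,\s,-\d)$;
\item \ref{BR-Chen} together with (\ref{CG_equality}) gives $\Lambda^{imp}(\pi,\Ad,1)\sim\eta_\pi\,\Om_2(\pi)\,\Om_3(\pi^\vee)$.
\end{itemize}
Dividing and cancelling $\Om_5(\Pi,\s,\d)$, the congruence numbers rearrange to $\nu_\pi$, and I obtain
$$\nu_\pi\,\Om_2(\pi)\,\Om_3(\pi^\vee)\mid\Om_5(\Pi^\vee,\s,-\d)\,u.$$

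\textbf{Main obstacle.} I expect the hard part to be the bookkeeping of signs and duals. One has to check that the sign $\d$ of the Jacquet--Ye form, the anti-equivariance of the pairing, and the passage $M^*\leftrightarrow N$ together produce the pair $(\Pi^\vee,-\d)$ and the $[+]$-normalization in $\nu_\pi$. A second point to verify is the archimedean constant for the $\s$-generators, where Chen's calculation has to be adapted.
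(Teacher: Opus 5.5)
Your route is the paper's: the corollary is proved like \ref{period_divisibility}, with $\e$ replaced by $\s$. Both combine (\ref{decomposition_ad}), \ref{cn_decomposition}, \ref{CBC_divisibility_NSD}, the $\s$-analogue of \ref{adjoint_L_value}, \ref{BR-Chen} and (\ref{CG_equality}), and your Steps 1 and 3 follow that pattern.

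The step that does not close as written is Step 2. You decompose $\eta_{\l_\Pi}(M)[-\d]$. Step 3, however, involves only $\eta_{\l_\Pi}(M)[\d]$ (Step 1 at the sign $\d$) and $\eta^\#_{\l_\Pi}(M^*)[\d]$ (from \ref{CBC_divisibility_NSD}). Nothing you invoke relates $\eta_{\l_\Pi}(M)[-\d]$ to $\eta_{\l_\Pi}(M)[\d]$, or $\eta^\#_{\l_\Pi}(M)[-\d]$ to $\eta^\#_{\l_\Pi}(M^*)[\d]$. The anti-equivariant perfect pairing gives $M^*[\d]\simeq N[-\d]$. Hence $\eta^\#_{\l_\Pi}(M^*)[\d]=\eta^\#_{\l_\Pi}(N)[-\d]$, and by \ref{pairing_lemma_2}, $\eta_{\l_\Pi}(M)[\d]=\eta_{\l_\Pi}(N)[-\d]$. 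These are statements about $N$ (dual coefficients, $\Pi^\vee$-isotypic part), not about $M$.

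The fix is to apply \ref{cn_decomposition} to $N$ at the sign $-\d$. More simply, apply it to $M^*$ at the sign $\d$, with the involution $\ell\mapsto\ell\circ\s^{-1}$ of \ref{lf_lemma}. Use $\eta_{\l_\Pi}(M^*)[\d]=\eta_{\l_\Pi}(M)[\d]$, which holds because the tautological pairing $M\times M^*\to\O$ is $\s$-equivariant, so there is no sign flip. This gives
\[
\eta_{\l_\Pi}(M)[\d]=\eta_{\pi}\big((M^*)_{\TT_\Q}\big)[\d]\cdot\eta^\#_{\l_\Pi}(M^*)[\d],
\]
and Step 3 then closes with $\nu=\eta_\pi\cdot\eta_\pi\big((M^*)_{\TT_\Q}\big)[\d]^{-1}\in\O$. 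Matching this $\nu$ with the $\nu_\pi$ of the statement, which is built from $M_{\TT_\Q}$ and the sign $+$, needs a further argument: an $\s$-compatible identification of $M^*$ (equivalently $N$) with $M$, and a sign check. The paper is equally terse on this point. Note also that $\Pi^\vee$ and $-\d$ enter the statement through Step 1, where the adjoint formula pairs $\Om_5(\Pi,\s,\d)$ with $\Om_5(\Pi^\vee,\s,-\d)$. They do not come from $M^*\simeq N$.

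Second, check where $u$ lands in Step 3. In \ref{CBC_divisibility_NSD}, $u$ sits in the denominator. Combining the three relations literally therefore gives $\Om_5(\Pi^\vee,\s,-\d)\in u\,\nu\,\Om_2(\pi)\,\Om_3(\pi^\vee)\,\O$, so $u$ ends up on the left of the divisibility. This implies the displayed form only when $u\in\O$, for instance when $p\notin S_u$. The same remark applies to \ref{period_divisibility}.
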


\bibliographystyle{alpha}
\bibliography{../../Notes/Bibliographie/biblio_report}

\end{document}